\documentclass[11pt,reqno]{amsart}

\usepackage{amsmath,amsfonts,amssymb,mathrsfs,mathtools}
\usepackage{amsthm,bbm,graphicx,booktabs,tabularx,array}
\numberwithin{equation}{section}
\usepackage{enumitem,microtype}
\usepackage{hyperref}
\usepackage{cite}
\newcommand{\bbr}{\mathbb R}
\newcommand{\bbn}{\mathbb N}
\newcommand{\bbz}{\mathbb Z}

\newcommand{\cA}{\mathcal A}
\newcommand{\cB}{\mathcal B}

\newcommand{\cE}{\mathcal E}
\newcommand{\cF}{\mathcal F}

\newcommand{\cM}{\mathcal M}

\newcommand{\dd}{\mathrm d}
\newcommand{\supp}{\operatorname{supp}}
\newcommand{\Id}{\mathrm{Id}}

\newcommand{\norm}[1]{\left\|#1\right\|}

\newtheorem{theorem}{Theorem}[section]
\newtheorem{lemma}{Lemma}[section]
\newtheorem{corollary}{Corollary}[section]
\newtheorem{proposition}{Proposition}[section]
\newtheorem{remark}{Remark}[section]
\newtheorem{definition}{Definition}[section]
\newtheorem{example}{Example}[section]

\newcommand{\cI}{\mathcal{I}}

\allowdisplaybreaks[4]
\hypersetup{hidelinks}
\begin{document}

\title[Quantitative weak flocking for the KCS equation]{Weak flocking for phase-spatially extended kinetic Cucker-Smale equation in a confining force field}

\author[Ha]{Seung-Yeal Ha}
\address[Seung-Yeal Ha]{\newline Department of Mathematical Sciences and Research Institute of Mathematics, \newline
	Seoul National University, Seoul, 08826, Republic of Korea}
\email{syha@snu.ac.kr}

\author[Wang]{Xinyu Wang$^{*}$}
\address[Xinyu Wang]{\newline School of Mathematics, \newline Harbin Institute of Technology, Harbin  150001, People's Republic of China 
	\newline Department of Mathematical Sciences, \newline
	Seoul National University, Seoul, 08826, Republic of Korea
	}
\email{wangxinyumath@hit.edu.cn}
\thanks{\textbf{Acknowledgment.} 
	The work of S.-Y. Ha is supported by National Research Foundation (NRF) grant funded by the Korea government (MIST) (RS-2025-00514472). The work of X. Wang is supported by the Natural Science Foundation of China (grants 123B2003), the China Postdoctoral Science Foundation (grants 2025M774290), Heilongjiang Province Postdoctoral Funding (grants LBH-Z24167), and the Fundamental Research Funds for the Central Universities. $^{*}$ Corresponding author.}

\begin{abstract}
We study the quantitative fast and slow weak flocking of the phase-spatially extended kinetic Cucker-Smale model in confining potential fields. The confining potential is allowed to be nonconvex. In contrast to the phase-spatially confined case, the communication weight may have no positive lower bounds, while spatial and velocity diameters may remain infinite. Moreover, an indefinite Hessian of confining potential prevents the direct use of convexity-based coercive estimates. To overcome these difficulties, we identify three structural assumptions that are sufficient for the noncompact hypocoercive method: quadratic confinement, a globally Lipschitz force, and a strict virial inequality. The admissible class of confining potentials includes genuinely nonconvex radially symmetric ones and localized oscillatory perturbations of the harmonic potential. Weak flocking analysis combines three key ingredients: a microscopic mechanical-energy estimate, a time-varying effective region, and a macroscopic Lyapunov functional. For this, we first establish global existence for initial data with finite second moments. In the exponential mechanical-energy class, we further show the uniqueness and finite-time Osgood-type stability in $1$-Wasserstein distance. For polynomially decaying initial mechanical-energy tails, we derive the optimal algebraic decay exponent of the fluctuation energy throughout the admissible communication-decay regime. The optimality is verified by a symmetric countably infinite particle solution for a fixed nonconvex potential. For exponentially decaying tails, a two-stage localization argument yields
exponential weak flocking on an optimal exponential time scale. These results show that well-posedness and weak flocking persist even for fully noncompact data under genuinely nonconvex confining forces.
\end{abstract}

\subjclass[2020]{35Q83, 35Q70, 92D50, 37N25}
\keywords{Effective region, fully noncompact support, hypocoercivity, kinetic Cucker--Smale equation, nonconvex confinement,  sharp algebraic decay, weak flocking}

\maketitle


\section{Introduction}\label{sec:1}
\setcounter{equation}{0}
Collective dynamics is ubiquitous in biological, physical, and social complex systems \cite{A-B, A-B-F, V-Z, Pe, P-R-K,Wi1}, where simple interactions among a large number of individuals can generate coherent macroscopic patterns, e.g.,  aggregation of bacteria \cite{T-B}, flocking of birds \cite{C-S-2007a}, schooling of fish \cite{T-T}, flashing of fireflies \cite{Bu,  Wi2} and herding of financial assets \cite{A-B-H-K-L,B-H-K-L-L-Y}. A central issue in the mathematical theory of collective dynamics is to figure out the interaction mechanisms and structural conditions that give rise to such emergent behaviors, and to determine the associated convergence rates. In the literature, several mathematical models were proposed to model such collective dynamics. Among them, the Cucker-Smale (in short, CS) model provides a prototype model for the emergence of flocking via nonlocal velocity alignment \cite{C-S-2007a,C-S-2007b,HaTadmor2008,Shvydkoy2021,Tadmor2021}. In this paper, we study the emergent dynamics of the kinetic Cucker--Smale (in short, KCS) model in confining potential  fields, which can be obtained from the CS model in the mean-field limit in a phase-spatially extended setting.

To set up the stage, we begin with the particle CS model. Let $x_i(t),v_i(t)\in\bbr^d$ denote the position and velocity of the $i$-th particle at time $t$. Then the dynamics of mechanical observables $(x_i, v_i)$ is governed by the Cauchy problem for the CS model \cite{C-S-2007a,C-S-2007b}:
\begin{equation}\label{A-1}
\begin{cases}
\displaystyle \dot x_i=v_i,
 \quad t>0,\quad i\in[N]:=\{1,\ldots,N\},\vspace{6pt}\\
\displaystyle \dot v_i=\frac{\kappa}{N}\sum_{j=1}^N
\phi(|x_j-x_i|)(v_j-v_i), \vspace{6pt}\\
\displaystyle (x_i, v_i) \Big|_{t = 0} = (x_i^0, v_i^0),
\end{cases}
\end{equation}
where $\kappa>0$ is the coupling strength, and $\phi$ is a positive, radially symmetric, non-increasing and Lipschitz continuous communication weight function depending on the relative distance between particles. A prototype example for $\phi$ is an algebraically decaying one:
\begin{equation}\label{A-2}
\phi_\beta(r)=(1+r^2)^{-\beta/2},\qquad r\geq0,\quad \beta\geq0.
\end{equation}

Note that the Cauchy problem for the CS model \eqref{A-1}-\eqref{A-2} is globally well-posed by the standard Cauchy-Lipschitz theory, and the spatial decay rate $\beta$ of $\phi_\beta$ does play a key role in unconditional flocking v.s. conditional flocking. More precisely, for $\beta \in [0, 1]$, $\phi$ is not integrable and it is called a `` {\it long-ranged } '' communication weight. In this case, all initial configurations lead to a mono-cluster flocking state asymptotically \cite{HaLiu2009}. In contrast, for $\beta > 1$, $\phi$ is integrable and it is called `` {\it short-ranged} '' communication weight. In this case, multi-cluster flocking may emerge depending on the initial configurations \cite{bi-cluster}. In the last two decades, the emergent dynamics of the Cauchy problem \eqref{A-1} has been extensively studied from various points of view, e.g., mean-field limit, stability of multi-cluster flocking configurations, collision avoidance, stochastic perturbations, infinite-particle dynamics, etc \cite{C-C-R-2011,C-F-R-T-2010,ChenYin2019,HaLiu2009,HaTadmor2008,Shvydkoy2021,w1,w2,w3,w4,w5,W-MT-2026,FK2019,FKM2018}.

In real applications, CS particles move collectively in the presence of pairwise attractive  interaction force  fields \cite{ChenYin2023,LiChen2025,ShuTadmor2020,C-C-K-T-2025,w4,CFFS2011}. In this setting, the governing dynamics of CS particles reads as follows.
\begin{equation}\label{A-3}
\begin{cases}
\displaystyle \dot x_i=v_i, \quad t > 0, \quad i  \in [N],\vspace{6pt} \\
\displaystyle \dot v_i
=\frac{\kappa}{N}\sum_{j=1}^N\phi(|x_j-x_i|)(v_j-v_i)
-\frac1N\sum_{j=1}^N\nabla W(x_i-x_j), \vspace{6pt}\\
\displaystyle (x_i, v_i) \Big|_{t = 0} = (x_i^0, v_i^0),
\end{cases}
\end{equation}
where $W$ is an even pairwise interaction potential. In the mean-field regime with $N \gg 1$, the mechanical observables associated with \eqref{A-3} can be effectively approximated by the corresponding observables to the Vlasov-McKean type equation \cite{C-C-R-2011,C-F-R-T-2010,HaLiu2009,k12}: 
\begin{equation}\label{A-4}
	\begin{cases}
\displaystyle \partial_t f+v\cdot\nabla_x f
+\nabla_v\cdot\bigl[(\cA[f]-\nabla W*\rho)f\bigr]=0,
\quad (t,x,v)\in\bbr_+\times\bbr^d\times\bbr^d,\vspace{8pt}\\
\displaystyle  \cA[f]
:=\kappa\int_{\bbr^{2d}}\phi(|x-x_*|)(v_*-v)f(t,x_*,v_*)\dd x_* \dd v_*, \quad  \rho := \int_{\bbr^{d}} f(t, x,v) \dd v, \vspace{8pt}\\
\displaystyle f \Big|_{t = 0} = f_0.
\end{cases}
\end{equation}

Note that the asymptotic state resulting from \eqref{A-4} is stronger than velocity alignment: the alignment force removes velocity fluctuations asymptotically, while the attractive  confining force drives the spatial variance to zero. Before we move on to the asymptotic dynamics, we need to make sure the global well-posedness of \eqref{A-4}. First, we recall the definition of weak solution to \eqref{A-4} below.
\begin{definition}[Weak solution]\label{D1.1}
Let $T \in (0, \infty]$ be an extended real number. A one-particle distribution function $
	f\in L^\infty\bigl([0,T); L^1(\mathbb R^{2d})\bigr)
	$
	is called a weak solution to the Cauchy problem \eqref{A-4} on \([0,T)\) with initial
	datum \(f_0\) satisfying 
	\[
f_0 \in(L^1 \cap L_+^\infty)(\mathbb{R}^{2d}) \quad \mbox{and} \quad  ( |x|^2 + |v|^2)f_0 \in L^1(\mathbb{R}^{2d})
         \]
 if the following relations hold:
	
	\begin{enumerate}
		\item The map $ t\longmapsto f:=f(t,x,v)$  is weakly continuous from \([0,T) \) into $L^1(\mathbb R^{2d})$, and
		\begin{equation*}
			\sup_{0\leq t < T}
			\int_{\mathbb R^{2d}}
			\bigl(|x|^2+|v|^2\bigr)f(t,x,v)\dd x\dd v
			<\infty.
		\end{equation*}
		
		\item For every test function
		$
		\psi\in {\mathcal C}_c^1\bigl([0,T) \times\mathbb R^{2d}\bigr),
		$
		one has
	\begin{align*}
		&\int_{\mathbb R^{2d}}\psi(0,x,v)f_0(x,v)\dd x\dd v\\
		&\qquad\quad
		+\int_0^T\int_{\mathbb R^{2d}}
		\Big[
		\partial_t\psi
		+v\cdot\nabla_x\psi
		+\bigl(\mathcal A[f]-\nabla W*\rho\bigr)
		\cdot\nabla_v\psi
		\Big] f(t,x,v)\dd x\dd v\dd t
		=0.
	\end{align*}
	\end{enumerate}
\end{definition}

For compactly supported initial data (or phase-spatially confined setting), the method of characteristic flow and finite-time Wasserstein stability for \eqref{A-4} were already developed in \cite{P-R-T-2015,C-F-R-T-2010}. In contrast, in a fully noncompact  regime (or phase-spatially extended setting), the standard Dobrushin-type stability is no longer closed, because the spatial Lipschitz constant of the velocity alignment force contains an unbounded velocity factor. Adapting the velocity-truncation argument and compactness argument of \cite{H-W-2026,K-M-T-2015}, we can show that finite second moments yield a global weak solution, while an exponential mechanical-energy tail yields uniqueness and an Osgood-type stability in 1-Wasserstein metric (see Theorem \ref{T3.2}). \newline 

Next, we discuss the large-time behaviors of the KCS model \eqref{A-4}. Most available flocking results with attractive potentials use compactly supported data, convexity, or both. If
\begin{equation*}
{\mathcal D}_x(t):=\sup\bigl\{|x-x_*|:(x,v),(x_*, v_*)\in\supp f\bigr\}<\infty,
\end{equation*}
then the monotonicity of $\phi$ yields 
\[ \phi(|x-x_*|)\geq\phi({\mathcal D}_x(t)) \quad \mbox{on the spatial support of $f$}. \]
This converts the exact energy identity into a coercive estimate. Suppose that the confining potential is uniformly convex with $\lambda > 0$: 
\begin{equation}\label{A-5}
\frac{1}{\lambda} \Id \preceq D^2W(x) \preceq\lambda \Id,
\end{equation}
where $\Id$ is the $d\times d$ identity matrix and for two matrices $A$ and $B$, the relation $A \preceq B$ represents that $B-A$ is a positive semidefinite matrix. Then, the relation \eqref{A-5} provides quadratic confinement, a globally Lipschitz force, and a coercive virial inequality at once. These conditions naturally lead to the strong flocking in terms of spatial and velocity diameters \cite{ChenYin2023,ShuTadmor2020,ShuTadmor2021,LiChen2025}. However, for fully noncompact  data, both diameters may be infinite for every finite time. In this paper, we employ a nonlinear functional ${\mathcal F}$ measuring a kind of flocking of \eqref{A-4}:
\begin{equation}\label{A-6}
\cF(t):=\int_{\bbr^{4d}}
\bigl(|x-x_*|^2+|v-v_*|^2\bigr)
 f(t,x,v)f(t,x_*, v_*)\dd x\dd v\dd x_*\dd v_*.
\end{equation}
The functional ${\mathcal F}$ is a two-point correlation functional for one-particle distribution function $f$. Note that a support-diameter-based argument gives no information on the decay of \eqref{A-6}. 

Next, we recall the concept of weak flocking for \eqref{A-4}  in a fully noncompact  support setting in the following definition: 
\begin{definition}[Weak flocking]\label{D1.2}
Let $f\in L^\infty\bigl( [0, \infty); L^1(\mathbb R^{2d})\bigr)$ be a global weak solution to \eqref{A-4}. Then, the KCS model \eqref{A-4} exhibits a weak flocking if the functional ${\mathcal F}$ decays to zero asymptotically
\[  \lim_{t \to \infty} \cF(t) = 0. \]
\end{definition}
\begin{remark}\label{R1.1}
	Depending on the decay mode of the functional $\cF$, we can classify the weak flocking as slow and fast weak flocking: there exist positive constants $\lambda_1$ and $\tilde{\lambda}_1$ such that 
\[ 
\cF(t) \leq {\mathcal O}(1) \begin{cases}
(1 + t)^{-\lambda_1}, \quad & \mbox{slow weak flocking}, \\
e^{-\tilde{\lambda}_1 t}, \quad & \mbox{fast weak flocking}.
\end{cases}
\]
\end{remark}
\noindent Unlike the well-studied compact support regime \cite{ShuTadmor2020,LiChen2025}, the KCS model \eqref{A-4} in the fully noncompact setting with nonconvex confinement force has not been studied. Realistic particle ensembles typically have decaying tails rather than sharp cutoffs, and noncompact distributions arise in the mean-field limit under suitable moment assumptions and in kinetic equations with diffusion, noise, or thermal fluctuations \cite{S2004,L2016,D-L1989,C-Z-2016}. Even compactly supported data may instantly develop full phase-space support in diffusive models \cite{ks2,CM-2008,D-F-T-2010,HuangZhang2022}. Recent works on the KCS model without confining interaction forces have shown that a time-varying effective region can recover weak flocking from polynomial or exponential tails \cite{HaWangXue2025,H-W-CS-2026,HWGKJK2026,H-W-2025}. The attractive force introduces two new obstacles: kinetic and potential energy are continuously exchanged, so high-order velocity moments are no longer monotone, and a nonconvex potential does not provide pointwise Hessian coercivity. Tail propagation and the geometry of the potential must therefore be handled simultaneously. In this paper, we address the following questions: 
\vspace{0.2cm}

\begin{itemize}
\item (Q1):~Under what conditions in terms of system parameters, confining potential and initial data, does the KCS model \eqref{A-4} exhibit weak flocking in the sense of Definition \ref{D1.2}? 
\vspace{0.1cm}
\item(Q2):~If so, what is the convergence rate of the weak flocking functional?
\end{itemize}
\vspace{0.2cm}
The purpose of this paper is to answer these questions in an affirmative manner. For the first question  (Q1), we consider two classes of fully noncompact initial data depending on the decay of their microscopic mechanical-energy tails. For this, we introduce a mechanical energy density $h_0$ for the initial datum $f_0$:
\begin{equation}\label{A-7}
	h_0(x,v):=\frac{1}{2}|v|^2+(W*\rho_0)(x), \quad  \rho_0(x) := \int_{\bbr^{d}} f_0(x,v) \dd v.
\end{equation}
The first and second terms on the right-hand side of $\eqref{A-7}_1$ correspond to the microscopic kinetic and pairwise confining potential energies, respectively. 
\begin{definition} \label{D1.3}
Let $f_0 \in(L^1 \cap L_+^\infty)(\mathbb{R}^{2d})$ be the initial datum for the Cauchy problem \eqref{A-4}. 
\begin{enumerate}
\item
$f_0$ belongs to a polynomial mechanical-energy tail class of order $q>1$ if the following relation holds:
\begin{equation*}
	\mathcal M_q(f_0)
	:=
	\int_{\bbr^{2d}}
	\bigl(1+h_0(x,v)\bigr)^q f_0(x,v) \dd x\dd v
	<\infty.
\end{equation*}
\item
$f_0$ belongs to an exponential mechanical-energy tail class if there exists $a>0$ such that
\begin{equation*}
	\mathcal M_{e,a}(f_0)
	:=
	\int_{\bbr^{2d}}
	e^{a h_0(x,v)}f_0(x,v)\dd x\dd v
	<\infty.
\end{equation*}
\end{enumerate}
\end{definition}
\noindent The pairwise interaction potential $W$ in \eqref{A-7} is not assumed to be convex as in \cite{ShuTadmor2020,C-C-K-T-2025} (see \eqref{A-5}). Instead, we employ the following structural conditions:~there exist positive constants $\underbar{c}_{W},~\bar{c}_{W}$ and $\nu_W$ such that 
\begin{equation}\label{A-8}
	\underbar{c}_W |z|^2\leq W(z)\leq \bar{c}_W |z|^2,
	\quad
	\|D^2W \|_{L^\infty}<\infty,
	\quad
	z\cdot\nabla W(z)\geq\nu_WW(z).
\end{equation}
These conditions in \eqref{A-8} provide quadratic confinement, global Lipschitz continuity of the confining force, and strict virial coercivity, respectively, while we still allow the Hessian of $W$ to be indefinite.

The main weak flocking results of this paper are twofold. First, we derive estimates on the fast and slow weak flocking depending on initial data and the decay exponent $\beta$ of the communication weight function. For this, we use a generalized energy functional and the method of time-varying effective region to exploit the decay mode of the distribution function at the far field. More precisely, we define a generalized energy functional consisting of three component functionals:~for $t \geq 0$, 
\begin{align}
\begin{aligned} \label{A-9}
& {\mathcal I}_x(t) := \int_{\bbr^{2d}} |x|^2 f(t,x,v)\dd x \dd v, \quad {\mathcal E}_K(t) :=\int_{\bbr^{2d}} |v|^2 f(t,x,v)\dd x \dd v, \\
& {\mathcal E}_{I,e}(t):=\int_{\bbr^{2d}} W(x-x_*)\rho(t,x)\rho(t,x_*)\dd x\dd x_*, \quad \Phi(r):=\kappa\int_0^r s\phi(s)\dd s, \\
& {\mathcal E}_{I,s}(t) :=2\int_{\bbr^{2d}}  x\cdot v f(t,x,v)\dd x\dd v
+\int_{\bbr^{2d}} \Phi(|x-x_*|)\rho(t,x)\rho(t,x_*)\dd x\dd x_*, \\
& {\mathcal E}(t) := {\mathcal E}_K(t) + \omega(t) {\mathcal E}_{I,s}(t) + {\mathcal E}_{I,e}(t).
\end{aligned}
\end{align}
Here, ${\mathcal E}_K,~{\mathcal E}_{I,e},~{\mathcal E}_{I,s},$  and ${\mathcal E}$ correspond to functionals proportional to kinetic, external-interaction, self-interaction, and total energies, respectively. The time-dependent weight $\omega = \omega(t)$ is the gauge function determined by the growth scale of time-varying effective region. The $\Phi$-term cancels the alignment contribution in the derivative of the longitudinal momentum, while the virial condition in $\eqref{A-8}_3$ controls the potential energy. 

With these energy functionals, we provide a quantitative answer to (Q2): we establish the optimal polynomial decay rate for polynomial mechanical-energy tails  and exponential weak flocking with an optimal exponential time scale for exponential mechanical-energy tails. More precisely, let $f $ be a global weak solution to \eqref{A-4} satisfying the following polynomial decay conditions:
\[ \beta \in (0, 2], \quad \mathcal M_q(f_0) < \infty, \quad \mbox{for some $q > 1$}. \]
Then, we can derive the equivalence relation between the flocking functional $\cF$ and the total energy functional $\cE$, and a Gr\"onwall-type differential inequality for $\cE$:
\begin{equation} \label{A-10}
\cF(t) \asymp \cE(t), \quad  \cE'(t)
\leq
- |{\mathcal O}(1)|(1+t)^{-1}\cE(t)
+  |{\mathcal O}(1)| (1 + t)^{-\big( 1 +  \frac{2}{\beta}(q-1)\big)}, \quad t \gg 1.
\end{equation}
The latter differential inequality in $\eqref{A-10}_2$ implies 
\begin{equation}\label{A-11}
	\cE(t)
	\lesssim
	(1+t)^{-\frac{2}{\beta}(q-1)}, \quad t \gg 1.
\end{equation}
We combine the first equivalence relation in $\eqref{A-10}_1$ and \eqref{A-11} to obtain the same decay for $\cF$:
\begin{equation}\label{A-12}
	\cF(t)
	\lesssim
	(1+t)^{-\frac{2}{\beta}(q-1)}, \quad t \gg 1.
\end{equation}
The endpoint case $\beta=0$ can be treated by a simpler argument, leading directly to exponential weak flocking. For details, we refer to Theorem \ref{T3.3}, Section \ref{sec:4.1}, and Section \ref{sec:4.2}. \newline 

For exponentially decaying mechanical-energy tails, the argument requires a different mechanism.  A preliminary localization first yields an integrable decay of the macroscopic energy, which in turn gives a uniformly bounded characteristic energy shift (see Proposition \ref{NP4.3} and Lemma \ref{NL4.13}).  A high-energy descent argument then generates a fixed effective region with a uniform positive communication strength, leading to exponential weak flocking (see Lemma \ref{NL4.15} and Proposition \ref{NP4.5}). Together with the universal exponential lower bound, this shows that the exponential time scale is optimal. More precisely, suppose 
\[0<\beta\leq2 \quad  \text{and}\quad  \cM_{e,a}(f_0)<\infty \quad \text{for some} \quad a>0,\] 
then there exist some positive constants $C,c>0$ such that
\begin{equation*}
	\frac{\min \Big \{2, \frac{1}{\bar{c}_W}  \Big \}\cF(0)}{\max \Big \{2, \frac{1}{\underbar{c}_W}  \Big \}} e^{-2\kappa\bar c_\phi t} \leq \cF(t)\leq Ce^{-ct},
	\qquad t\geq0.
\end{equation*}
Here, $\bar c_\phi $ denotes the upper bound of the communication weight defined in \eqref{C-2}. For details, we refer to Theorem \ref{T3.3} and Section \ref{sec:4.3}. \newline 

Second, we show that the polynomial decay rate in \eqref{A-12} is optimal when $d\ge2$. For this, we explicitly construct a radially symmetric confining potential $W_{\rm rs}$ satisfying a suitable set of conditions in \eqref{A-8}. Based on this potential, for each fixed $q>1$, we then construct a countably infinite particle solution whose initial configuration has a finite $q$-th mechanical-energy moment, while a sparse sequence of high-energy particles still contributes non-negligibly to the fluctuation functional along a suitably chosen sequence of large times. 
Furthermore, we show that for every $\delta>0$,
\[
\limsup_{t\to\infty}(1+t)^{\frac{2}{\beta}(q-1) +\delta}\cF(t)=\infty,
\]
i.e.,~the polynomial decay exponent $\frac{2}{\beta}(q-1)$ in \eqref{A-12} is optimal. We refer to Theorem \ref{T3.4} and  Section \ref{sec:5} for more details. \newline

The main novelty of this paper lies in the development of a hypocoercive framework for quantitative weak flocking in a fully noncompact phase-spatial setting under nonconvex confinement. Unlike compact-support arguments \cite{ShuTadmor2020,LiChen2025,C-C-K-T-2025}, our method does not rely on the uniform positive lower bound of the communication weight and therefore applies to physically relevant particle distributions with genuinely unbounded spatial and velocity supports, including Gaussian and Cauchy distributions. Furthermore, the confining potential plays a crucial role in the quantitative analysis. Its hypocoercive effect allows us to establish optimal convergence rates, which are beyond the reach of the existing non-confined theory \cite{HaWangXue2025,H-W-CS-2026,HWGKJK2026,H-W-2025}. In the polynomially decaying tail regime, the optimal algebraic exponent is determined solely by the decay order of the initial mechanical-energy tail and the far-field decay exponent of the communication weight. In the exponentially decaying tail regime, the same structural framework yields exponential weak flocking with an optimal exponential time scale. See Section \ref{sec:3.4} for more details.\newline 

The rest of this paper is organized as follows. In Section~\ref{sec:2}, we study conservation laws, energy dissipation, and  elementary estimates. In Section \ref{sec:3}, we review previous results on the KCS model with compact support, and then we describe a sufficient framework for weak flocking and discuss two main results on the quantitative weak flocking. In Section~\ref{sec:4}, we derive slow and fast weak flocking estimates.  In Section~\ref{sec:5}, we show that the polynomial decay exponent  is optimal by constructing an explicit potential and corresponding weak solution. Finally, Section~\ref{sec:6} is devoted to a brief summary of main results and some remaining issues for future study. In Appendix \ref{app-A}, we present a standard approximation argument for fully noncompact data. In Appendix \ref{app-B}, we provide proofs of Gr\"onwall-type lemmas.  In Appendix \ref{app-C}, we provide the proof of Osgood-type stability estimate.  In Appendix \ref{app-D}, we provide a proof of Proposition \ref{P5.1}.  In Appendix \ref{app-E}, we provide a proof of Proposition \ref{P5.2}. 

\vspace{.5cm} 

\noindent\textbf{Gallery of Notations:}  Let $\mathcal C(\Omega)$, $\mathcal C_b(\Omega)$, and $\mathcal C_c^1(\Omega)$ denote the spaces of continuous functions, bounded continuous functions, and continuously differentiable functions with compact support on $\Omega$, respectively. The notation $L^\infty_+(\Omega)$ stands for the cone of bounded nonnegative measurable functions on $\Omega$. We denote by $|\cdot|$ the Euclidean norm.
For two nonnegative quantities $F$ and $G$, we write
\[
F\lesssim G
\]
if there exists a positive constant $C$, independent of the variables under consideration, such that $F\leq CG$. We write $F\asymp G$ if both $F\lesssim G$ and $G\lesssim F$ hold. The constant $C$ is a generic one which may change from line to line. When its dependence is relevant, it is indicated by subscripts, such as $C_T$ or $C_{q,a}$.  \newline

Throughout the paper, we use $t$-dependence in $f = f(t,z)$ and $\rho = \rho(t,x)$ as a subscript for notational simplicity as long as there is no confusion:
\[ f_t(z) \equiv f(t,z), \quad \rho_t(x) \equiv \rho(t,x), \quad \dd z = \dd x \dd v. \]

\vspace{0.5cm}

\section{Preliminaries}\label{sec:2}
\setcounter{equation}{0}
In this section, we establish several fundamental estimates, including conservation laws and energy dissipation estimates, which will play a crucial role in the subsequent analysis.

\subsection{Conservation laws}\label{sec:2.1} 
In this subsection, we study conservation laws such as conservation of mass, the first spatial moment and momentum.
\begin{lemma}[Conservation laws]\label{L2.1}
Suppose that initial datum satisfies normalizations:
\begin{equation} \label{B-1}
\int_{\bbr^{2d}} f_0(z)\dd z=1,
\quad
\int_{\mathbb R^{2d}} vf_0(z)\dd z=0,
\quad
\int_{\bbr^{2d}} xf_0(z)\dd z=0,
\end{equation}
and let $f = f_t(z)$ be a global smooth solution to \eqref{A-4} with sufficiently fast decay at $|z| = \infty$. Then, the normalizations in \eqref{B-1} propagate along \eqref{A-4}:
\begin{equation} \label{B-2}
\int_{\bbr^{2d}} f_t(z)\dd z=1,
\quad
\int_{\bbr^{2d}} vf_t(z)\dd z=0,
\quad
\int_{\bbr^{2d}} xf_t(z)\dd z=0, \quad t > 0.
\end{equation}
\end{lemma}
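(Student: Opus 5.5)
We test the equation \eqref{A-4} against the moments $1$, $v$ and $x$, integrate by parts in $z=(x,v)$, and use the decay of $f$ to discard all boundary terms. This gives three ordinary differential equations for the mass $M(t):=\int f_t\,\dd z$, the momentum $P(t):=\int v f_t\,\dd z$ and the first spatial moment $X(t):=\int x f_t\,\dd z$. We treat them in the order mass, momentum, first moment, because each step uses the previous one.

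For the mass, integrating the equation gives $M'(t)=-\int \nabla_x\cdot(vf)\,\dd z-\int\nabla_v\cdot[(\cA[f]-\nabla W*\rho)f]\,\dd z=0$. Hence $M(t)=1$.

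For the momentum, we multiply by $v$ and integrate. The transport term $\int v\,\nabla_x\cdot(vf)\,\dd z$ vanishes. Integrating the force term by parts in $v$ gives
\[
P'(t)=\kappa\int_{\bbr^{4d}}\phi(|x-x_*|)(v_*-v)f(t,x,v)f(t,x_*,v_*)\,\dd z\,\dd z_*-\int_{\bbr^{2d}}\nabla W(x-x_*)\rho(t,x)\rho(t,x_*)\,\dd x\,\dd x_* .
\]
The first integral changes sign under the exchange $(x,v)\leftrightarrow(x_*,v_*)$, because $\phi$ is radially symmetric. Since $W$ is even, $\nabla W$ is odd, so the second integral also changes sign under $x\leftrightarrow x_*$. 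Both integrals therefore vanish and $P(t)=P(0)=0$.

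For the first spatial moment, integration by parts gives $X'(t)=\int v f_t\,\dd z=P(t)=0$, since the $v$-divergence term integrates to zero against $x$. Hence $X(t)=0$.

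The main obstacle is justifying Fubini's theorem and the symmetrization step. We need absolute integrability of $|v-v_*|f f_*$ and of $|\nabla W(x-x_*)|\rho\rho_*$. The first follows from finite second moments, since $\phi$ is bounded. For the second, $\|D^2W\|_{L^\infty}<\infty$ and $\nabla W(0)=0$ (as $\nabla W$ is odd) give $|\nabla W(z)|\le C|z|$, so finite first moments suffice. Under the assumed smoothness and fast decay, all boundary terms at infinity vanish, which completes the argument.
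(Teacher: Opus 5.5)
Your proposal is correct and follows the paper's proof essentially step by step. You integrate the equation for mass; you test against $v$ and kill the alignment and potential terms by antisymmetry under $(z,z_*)\leftrightarrow(z_*,z)$ and the oddness of $\nabla W$; and you conclude with $\frac{\dd}{\dd t}\int x f_t\,\dd z=\int v f_t\,\dd z=0$. Your integrability check (bounded $\phi$, and $|\nabla W(z)|\le \|D^2W\|_{L^\infty}|z|$ from $\nabla W(0)=0$) is a small, welcome addition that the paper leaves implicit in ``sufficiently fast decay.''
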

\begin{proof}
\noindent Below, we deal with a global smooth solution with sufficiently fast decay at infinity, but the approximation in Appendix~\ref{app-A} can justify the same identities in a less regular solution class. \newline

\noindent $\bullet$~(Conservation of mass):~We integrate \eqref{A-4} over $\bbr^{2d}$ to get 
\[ \frac{\dd}{\dd t} \int_{\bbr^{2d}} f_t(z) \dd z =  \int_{\bbr^{2d}}  \partial_t  f_t(z) \dd z = 0. \]
\noindent $\bullet$~(Conservation of momentum):~It follows from \eqref{A-4} that 
\[
\partial_t (v f_t) + \nabla_x \cdot ( v \otimes v f_t)
+\nabla_v\cdot\Big [  \Big ( v \otimes \cA[f_t]- v \otimes (\nabla W*\rho_t) \Big )f_t \Big ]= (\cA[f_t]-\nabla W*\rho_t)f_t.
\]
We integrate the above relation over $\bbr^{2d}$ using a suitable decay at $|z| = \infty$ to get
\begin{equation} \label{B-3}
\frac{\dd}{\dd t} \int_{\bbr^{2d}} v f_t(z) \dd z =  \int_{\bbr^{2d}}  \bigl[(\cA[f_t]-\nabla W*\rho_t)f_t \bigr] \dd z=: {\mathcal I}_{11} + {\mathcal I}_{12}.
\end{equation}
In the sequel, we estimate the terms ${\mathcal I}_{1i},~i=1,2$ one by one. \newline

\noindent $\diamond$ Case 1~(Estimate of ${\mathcal I}_{11}$):~We use the antisymmetry of the velocity alignment interaction via $(z, z_*)~~\leftrightarrow~~(z_*, z)$ to find 
\begin{equation} \label{B-4}
{\mathcal I}_{11} = \int_{\bbr^{2d}} \cA[f_t](z)f_t(z)\dd z
=\kappa\int_{\bbr^{4d}} \phi(|x-x_*|)(v_*-v)f_t(z)f_{t}(z_*)\dd z \dd z_*=0.
\end{equation}
\noindent $\diamond$ Case 2~(Estimate of ${\mathcal I}_{12}$):~Since $W$ is even, $\nabla W$ is odd and
\begin{align}
\begin{aligned} \label{B-5}
{\mathcal I}_{12} &= - \int_{\bbr^{2d}}(\nabla W*\rho_t)f_t(z) \dd z = -\int_{\bbr^{d}} (\nabla W*\rho_t)(x) \rho_t(x) \dd x \\
&= -\int_{\bbr^{2d}} \nabla W(x-x_*)\rho_t(x)\rho_t(x_*)\dd x\dd x_*=0.
\end{aligned}
\end{align}
In \eqref{B-3}, we combine \eqref{B-4} and \eqref{B-5} to get the desired estimate:
\begin{equation} \label{B-6}
\frac{\dd}{\dd t} \int_{\bbr^{2d}} v f_t(z) \dd z=0, \quad \mbox{i.e.,} \quad  \int_{\bbr^{2d}} v f_t(z) \dd z =  \int_{\bbr^{2d}} v f_0(z) \dd z = 0.
\end{equation}
\noindent $\bullet$~(Conservation of center of mass): It follows from \eqref{A-4} that 
\[
 \partial_t (xf_t) +\nabla_x \cdot(x \otimes v f_t) 
+\nabla_v\cdot\bigl[ x \otimes (\cA[f_t]-\nabla W*\rho_t)f_t\bigr]=  (v f_t).
\]
We integrate the above relation over $\bbr^{2d}$ using a suitable decay of $f_t$ at $|z| = \infty$, and use \eqref{B-6} to get 
\[
\frac{\dd}{\dd t}\int_{\bbr^{2d}}  x f_t(z)\dd z =  \int_{\bbr^{2d}} v f_t(z)\dd z=0.
\]
This yields the desired estimate. \newline
\end{proof}

\subsection{Energy estimate} \label{sec:2.2}
In this subsection, we study time-evolution of energy functional. Before we move on to the time-evolution of energy functionals in \eqref{A-9}, we discuss relations between functionals \eqref{A-9}. 
\begin{lemma}[Equivalence of functionals]\label{L2.2}
Let $f = f_t(z)$ be a global smooth solution to \eqref{A-4} satisfying the normalizations \eqref{B-2} and suppose that $W$ satisfies \eqref{A-8}. Then, the following assertions hold:~
\begin{enumerate}
\item
The moment of inertia can be rewritten in terms of two-point spatial correlation:
\begin{equation} \label{B-7}
{\mathcal I}_x(t) = \frac{1}{2}\int_{\bbr^{2d}} |x-x_*|^2\rho_t(x)\rho_t(x_*)\dd x\dd x_*.  
\end{equation}
\item
External interaction energy is equivalent to the moment of inertia uniformly in time:
\begin{equation} \label{B-8}
2 \underbar{c} _W {\mathcal I}_x(t)  \leq {\mathcal E}_{I,e}(t) \leq 2 \bar{c}_W  {\mathcal I}_x(t), \quad t \geq 0.
\end{equation}
\item
The kinetic energy can be rewritten as two-point velocity correlation:
\begin{equation} \label{B-9}
{\mathcal E}_K(t) = \frac{1}{2}\int_{\bbr^{4d}} |v-v_*|^2 f_t(z) f_{t}(z_*)\dd z \dd z_*.
\end{equation}
\item
The functional $\cE_K + \cE_{I,e}$ is equivalent to weak flocking functional:
\begin{equation} \label{B-10}
 \min \Big \{2, \frac{1}{\bar{c}_W}  \Big \} (\cE_K(t) + \cE_{I,e}(t) ) \leq {\mathcal F}(t) \leq \max \Big \{2, \frac{1}{\underbar{c}_W}  \Big \} (\cE_K(t) + \cE_{I,e}(t)).
 \end{equation}
\end{enumerate}
\end{lemma}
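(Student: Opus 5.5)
The plan is to expand squares and use the normalizations \eqref{B-2} from Lemma \ref{L2.1}. Throughout, $\int f_t\,\dd z=1$ means $\rho_t$ is a probability density, and the first spatial and velocity moments vanish.

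For \eqref{B-7}, expand
\[
|x-x_*|^2=|x|^2-2x\cdot x_*+|x_*|^2 .
\]
Integrating against $\rho_t(x)\rho_t(x_*)$, the two diagonal terms each give $\cI_x(t)$ by mass normalization. The cross term factorizes as $-2\bigl|\int x\rho_t\,\dd x\bigr|^2=0$, since the center of mass is zero. Halving gives \eqref{B-7}. Assertion \eqref{B-9} is identical, with $v$ in place of $x$: expand $|v-v_*|^2$ against $f_t(z)f_t(z_*)$, use unit mass, and kill the cross term by zero momentum.

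For \eqref{B-8}, apply the quadratic bounds $\underbar{c}_W|z|^2\le W(z)\le\bar c_W|z|^2$ from $\eqref{A-8}_1$ pointwise with $z=x-x_*$. Integrating against the nonnegative weight $\rho_t(x)\rho_t(x_*)$ and using \eqref{B-7}, i.e.\ $\int|x-x_*|^2\rho_t\rho_t=2\cI_x$, gives \eqref{B-8} directly.

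For \eqref{B-10}, write
\[
\cF=\int|x-x_*|^2\rho_t\rho_t\,\dd x\,\dd x_*+\int|v-v_*|^2f_tf_t\,\dd z\,\dd z_* ,
\]
where the spatial part comes from integrating out the velocities. By \eqref{B-7} and \eqref{B-9}, this is $\cF=2\cI_x+2\cE_K$. From \eqref{B-8} we get
\[
\tfrac{1}{\bar c_W}\cE_{I,e}\le 2\cI_x\le\tfrac{1}{\underbar{c}_W}\cE_{I,e}.
\]
Hence $\cF\le 2\cE_K+\frac1{\underbar{c}_W}\cE_{I,e}\le\max\{2,1/\underbar{c}_W\}(\cE_K+\cE_{I,e})$. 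Similarly $\cF\ge2\cE_K+\frac1{\bar c_W}\cE_{I,e}\ge\min\{2,1/\bar c_W\}(\cE_K+\cE_{I,e})$.

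There is no genuine obstacle. The only care needed is to justify that the integrals are finite, which holds because of the finite second moments, and that Fubini applies when integrating out the velocities; then each step is elementary.
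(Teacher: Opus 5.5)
Your proposal is correct and follows essentially the same route as the paper. Both expand the squares using the unit-mass, zero-center and zero-momentum normalizations for (1) and (3), integrate the pointwise quadratic bounds on $W$ for (2), and combine the identity $\cF = 2\cI_x + 2\cE_K$ with (2) to obtain (4).
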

\begin{proof}
(1)~We use the normalizations: 
\[ \int_{\bbr^d} \rho_t(x) \dd x =1 \quad \mbox{and} \quad \int_{\bbr^d} x\rho_t(x) \dd x=0 \]
to find the desired estimate \eqref{B-7}:
\begin{align*}
\begin{aligned}
& \int_{\bbr^{2d}} |x-x_*|^2 \rho_t(x) \rho_t(x_*) \dd x\dd x_*  \\
& \hspace{1cm} =  2\int_{\bbr^{d}} |x|^2 \rho_t(x) \dd x -2 \Big| \int_{\bbr^{d}} x \rho_t(x) \dd x \Big|^2 
=2\int_{\bbr^d} |x|^2\rho_t(x)\dd x = 2 {\mathcal I}_x(t).
\end{aligned}
\end{align*}
(2)~By $\eqref{A-8}_1$:
\[ \underbar{c}_W |x-x_*|^2 \leq W(x-x_*) \leq \bar{c}_W |x-x_*|^2, \]
we have
\[
\underbar{c}_W \int_{\bbr^{2d}} |x-x_*|^2\rho_t(x)\rho_t(x_*)\dd x\dd x_*
\leq  {\mathcal E}_{I,e}(t)\leq
\bar{c}_W \int_{\mathbb R^{2d}} |x-x_*|^2\rho_t(x)\rho_t(x_*)\dd x\dd x_*.
\]
Combining $\eqref{A-9}_3$ and \eqref{B-7} yields the desired estimates. \newline

\noindent (3)~Again, we use \eqref{B-2} to get 
\begin{align*}
\begin{aligned}
&\int_{\bbr^{4d}} |v-v_*|^2 f_t(z) f_{t}(z_*)\dd z \dd z_* \\
& \hspace{0.8cm} = 2  \Big( \int_{\bbr^{2d}} |v|^2 f_t(z) \dd z \Big) \Big( \int_{\bbr^{2d}} f_t(z) \dd z \Big) - 2 \Big| \int_{\bbr^{2d}} v f_t(z) \dd z \Big |^2 =  2 {\mathcal E}_K(t).
\end{aligned}
\end{align*}
\noindent (4)~We use the normalization condition
\[ \int_{\bbr^{2d}} v f_t(z) \dd z =0, \]
\eqref{B-8}, and \eqref{B-9} to find 
\begin{align}
\begin{aligned} \label{B-11}
{\mathcal F}(t) &= \int_{\bbr^{4d}}
\bigl(|x-x_*|^2+|v-v_*|^2\bigr)
 f_t(z) f_t(z_*)\dd z \dd z_* \\
 &= 2 {\mathcal I}_x(t) + 2 {\mathcal E}_K(t) \leq \frac{1}{\underbar{c}_W} {\mathcal E}_{I,e}(t) + 2 {\mathcal E}_K(t)  \leq \max \Big \{2, \frac{1}{\underbar{c}_W}  \Big \} (\cE_K(t) + \cE_{I,e}(t) ).
\end{aligned}
\end{align}
On the other hand, we again use \eqref{B-8} and \eqref{B-9} to get 
\begin{equation} \label{B-12}
{\mathcal F}(t) =  2 {\mathcal I}_x(t) + 2 {\mathcal E}_K(t) \geq  \frac{1}{\bar{c}_W} {\mathcal E}_{I,e}(t) +  2 \cE_K(t)
\geq \min \Big \{2, \frac{1}{\bar{c}_W}  \Big \} (\cE_K(t) + \cE_{I,e}(t) ).
\end{equation}
Finally, we combine \eqref{B-11} and \eqref{B-12} to get the desired estimates. 
\end{proof}
Recall the energy functionals:
\begin{align}
\begin{aligned} \label{B-13}
& {\mathcal E}_K :=\int_{\bbr^{2d}} |v|^2 f_t(z)\dd z, \quad {\mathcal E}_{I,e} :=\int_{\bbr^{2d}} W(x-x_*)\rho_t(x)\rho_t(x_*)\dd x\dd x_*, \\
& \Phi(r) :=\kappa\int_0^r s\phi(s)\dd s, \quad  \cE_{I,s}(t) :=2\int_{\bbr^{2d}} (x\cdot v) f_t(z)\dd z
+\int_{\bbr^{2d}} \Phi(|x-x_*|)\rho_t(x)\rho_t(x_*)\dd x\dd x_*.
 \end{aligned}
 \end{align}
In the next lemma, we study time-evolution of the above functionals. 
\begin{lemma}[Conservation laws and energy dissipation]\label{L2.3}
Suppose that initial datum $f_0$ satisfies normalizations:
\[
\int_{\bbr^{2d}} f_0(z)\dd z=1,
\quad
\int_{\mathbb R^{2d}} vf_0(z)\dd x \dd v=0,
\quad
\int_{\bbr^{2d}} xf_0(z)\dd x \dd v=0,
\]
and let $f = f_t(z)$ be a global smooth solution to \eqref{A-4} with sufficiently fast decay at $|z| = \infty$. The energy functionals ${\mathcal E}_K, {\mathcal E}_{I,e}$ and ${\mathcal E}_{I,s}$ in \eqref{B-13} satisfy 
\begin{align*} 
\begin{aligned} 
& (i)~\frac{\dd{\mathcal E}_K}{\dd t} =-\kappa  \int_{\bbr^{4d}} \phi(|x-x_*|)|v-v_*|^2 f_t(z)f_{t}(z_*)\dd z \dd z_* -2 \int_{\bbr^{d}} j_t(x) \cdot(\nabla W*\rho_t)(x) \dd x. \\
& (ii)~\frac{\dd{\mathcal E}_{I, e}}{\dd t} = 2 \int_{\bbr^d} j_t(x)\cdot(\nabla W*\rho_t)(x)\dd x. \\
& (iii)~ \frac{\dd  \cE_{I,s}}{\dd t}  = 2 \cE_K(t) - \int_{\bbr^{2d}} (x-x_*)\cdot\nabla W(x-x_*)\rho_t(x)\rho_t(x_*)\dd x\dd x_*.
\end{aligned}
\end{align*}
Here, the momentum density $j_t(x)$ is defined as follows: \[j_t(x):=\int_{\bbr^d}vf_t(x,v)\dd v.\] 
\end{lemma}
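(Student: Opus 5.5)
The plan is to test \eqref{A-4} against the weights $|v|^2$, $W(x-x_*)$ and $x\cdot v$, integrate by parts, and drop all boundary terms using the assumed fast decay of $f_t$ at $|z|=\infty$. Throughout, the only structural inputs are the normalizations \eqref{B-2} from Lemma \ref{L2.1} and the evenness of $W$ and $\phi$.

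For (i), I multiply \eqref{A-4} by $|v|^2$ and integrate. The transport term $v\cdot\nabla_x f$ gives zero. The force term, after integration by parts in $v$, becomes $2\int v\cdot(\cA[f_t]-\nabla W*\rho_t)f_t\,\dd z$. The alignment part is
\[
2\kappa\int_{\bbr^{4d}}\phi(|x-x_*|)\,v\cdot(v_*-v)\,f_t(z)f_t(z_*)\,\dd z\,\dd z_*.
\]
Symmetrizing under $(z,z_*)\leftrightarrow(z_*,z)$ converts this into $-\kappa\int\phi|v-v_*|^2 f_tf_t$. For the potential part, integrating $v$ out in $-2\int v\cdot(\nabla W*\rho_t)f_t\,\dd z$ gives $-2\int j_t\cdot(\nabla W*\rho_t)\,\dd x$.

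For (ii), I use the continuity equation $\partial_t\rho_t+\nabla_x\cdot j_t=0$, obtained by integrating \eqref{A-4} in $v$. Differentiating the double integral and using the symmetry of $W$ yields $2\int W(x-x_*)\partial_t\rho_t(x)\rho_t(x_*)$. I then rewrite this as $-2\int (W*\rho_t)\nabla\cdot j_t$, and integrating by parts gives $2\int j_t\cdot(\nabla W*\rho_t)$.

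For (iii), I treat the two pieces of $\cE_{I,s}$ separately. Testing \eqref{A-4} with $x\cdot v$, the transport term gives $\int|v|^2 f_t=\cE_K$. The force term gives $\int x\cdot(\cA[f_t]-\nabla W*\rho_t)f_t$. After symmetrization, the alignment contribution becomes
\[
-\frac{\kappa}{2}\int\phi(|x-x_*|)(x-x_*)\cdot(v-v_*)f_tf_t,
\]
and the potential contribution becomes $-\frac12\int(x-x_*)\cdot\nabla W(x-x_*)\rho_t\rho_t$, using that $\nabla W$ is odd. Multiplying by the factor $2$ then produces $2\cE_K$, the virial term, and $-\kappa\int\phi\,(x-x_*)\cdot(v-v_*)f_tf_t$.

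Next I differentiate the $\Phi$-term in $\cE_{I,s}$ using the continuity equation. This gives $2\int\nabla_x[\Phi(|x-x_*|)]\cdot j_t(x)\rho_t(x_*)$, with $\nabla_x\Phi(|x-x_*|)=\kappa\phi(|x-x_*|)(x-x_*)$. Symmetrizing turns it into $+\kappa\int\phi\,(x-x_*)\cdot(v-v_*)f_tf_t$, which exactly cancels the alignment contribution above. This cancellation is the main point of the computation. The only delicate step is keeping the symmetrization factors consistent, since a wrong factor of $2$ would spoil the cancellation, so I would check it explicitly. Rigorous justification for less regular solutions follows from Appendix \ref{app-A}, as in Lemma \ref{L2.1}.
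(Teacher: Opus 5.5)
Your proposal is correct and follows essentially the same route as the paper: you test against $|v|^2$, use the continuity equation for $\cE_{I,e}$, and in (iii) let the $\Phi$-term cancel the alignment contribution of the mixed moment $2\int x\cdot v f_t\,\dd z$. That cancellation uses $\nabla_x\Phi(|x-x_*|)=\kappa\phi(|x-x_*|)(x-x_*)$ plus symmetrization, and oddness of $\nabla W$ then gives the virial form. Your symmetrization factors all check out; the only cosmetic difference is that you symmetrize the potential term before adding the two pieces, whereas the paper adds first and symmetrizes afterwards.
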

\begin{proof}
\noindent (i)~We first study time-evolution of ${\mathcal E}_K$. For this, we multiply \eqref{A-4} by $|v|^2$ to obtain
\begin{align}
\begin{aligned} \label{B-14}
& \partial_t \Big( |v|^2 f_t \Big) + \nabla_x \cdot \Big ( |v|^2v f_t  \Big )
+\nabla_v\cdot \Big [ |v|^2(\cA[f_t]-\nabla W*\rho_t) f_t \Big] \\
& \hspace{4cm} = 2v \cdot  (\cA[f_t]-\nabla W*\rho_t) f_t.
\end{aligned}
\end{align}
We integrate \eqref{B-14} over $\bbr^{2d}$ using a suitable decay of $f_t$ at $|z| = \infty$ to find 
\begin{align}
\begin{aligned} \label{B-15}
\frac{\dd{\mathcal E}_K}{\dd t}  
&=2 \int_{\bbr^{2d}} v\cdot\cA[f_t]f_t(z)\dd z
 -2 \int_{\bbr^{d}} j_t(x)\cdot(\nabla W*\rho_t)(x)\dd x\\
&=-\kappa  \int_{\bbr^{4d}} \phi(|x-x_*|)|v-v_*|^2 f_t(z)f_{t}(z_*)\dd z \dd z_* \\
&\quad-2 \int_{\bbr^{d}} j_t(x) \cdot(\nabla W*\rho_t)(x) \dd x,
\end{aligned}
\end{align}
where the first term in the right-hand side of \eqref{B-15} follows by exchanging transformation $(z, z_*)~\leftrightarrow~(z_*, z)$ and \eqref{A-4}.\newline 

\noindent (ii)~Now, we use the continuity equation:
\begin{equation*}
\partial_t \rho_t+\nabla_x \cdot j_t=0
\end{equation*}
to find the desired estimate:
\begin{align*}
\begin{aligned}
\frac{\dd{\mathcal E}_{I,e}}{\dd t}  &= 2 \int_{\bbr^{2d}} W(x-x_*)\partial_t \rho_t(x)\rho_t(x_*)\dd x\dd x_* \\
& = -2 \int_{\bbr^{2d}} W(x-x_*)  (\nabla_x \cdot j_t ) \rho_t(x_*)\dd x\dd x_* \\
& = 2 \int_{\bbr^{d}} j_t(x)\cdot(\nabla W*\rho_t)(x)\dd x.
\end{aligned}
\end{align*}
\noindent (iii)~We use \eqref{A-4} and integration by parts to get 
\begin{align}
\begin{aligned} \label{B-18}
& \frac{\dd}{\dd t}\left(2\int_{\bbr^{2d}} x\cdot v f_t(z) \dd z \right) \\
& \hspace{1cm} =2 \cE_K(t)+2\int_{\bbr^{2d}} x\cdot\cA[f_t]f_t(z) \dd z -2\int_{\bbr^d} x\cdot(\nabla W*\rho_t)(x)\rho_t(x)\dd x.
\end{aligned}
\end{align}
Again, we use the continuity equation twice and the identity $\Phi'(r)=\kappa r\phi(r)$ to find 
	\begin{align}
	\begin{aligned} \label{B-19}
	& \frac{\dd}{\dd t}\int_{\bbr^{2d}} \Phi(|x-x_*|)\rho_t(x)\rho_t(x_*)\dd x\dd x_*  \\
	& \hspace{1.5cm} =2\int_{\bbr^{2d}}  \nabla_x\Phi(|x-x_*|)\cdot j_t(x)\rho_t(x_*)\dd x\dd x_*\\
	& \hspace{1.5cm}  =\kappa\int_{\bbr^{4d}}  \phi(|x-x_*|)(x-x_*)\cdot(v-v_*)f_t(z)f_{t}(z_*)\dd z \dd z_*\\
	& \hspace{1.5cm}  =-2\int_{\bbr^{2d}}  x\cdot\cA[f_t]f_t(z) \dd z.
	\end{aligned}
	\end{align}
We add \eqref{B-18} and \eqref{B-19} to get 
\begin{equation} \label{B-20}
\frac{\dd}{\dd t}  \cE_{I,s}(t) = 2\cE_K(t) -2\int_{\bbr^d} x\cdot(\nabla W*\rho_t)(x)\rho_t(x)\dd x.
\end{equation}	
On the other hand, since $\nabla W$ is odd, we also have
\begin{equation} \label{B-21}
	2\int_{\bbr^d} x\cdot(\nabla W*\rho_t)(x)\rho_t(x)\dd x
	=\int_{\bbr^{2d}} (x-x_*)\cdot\nabla W(x-x_*)\rho_t(x)\rho_t(x_*)\dd x\dd x_*.
\end{equation}
We combine \eqref{B-20} and \eqref{B-21} to get the desired estimate:
\[
 \frac{\dd}{\dd t}  \cE_{I,s}(t) = 2 \cE_K(t) - \int_{\bbr^{2d}} (x-x_*)\cdot\nabla W(x-x_*)\rho_t(x)\rho_t(x_*)\dd x\dd x_*.
\]
\end{proof}
\begin{remark} \label{R2.1} 
\begin{enumerate}
\item
By adding time-evolution estimates of $\cE_K$ and $\cE_{I,e}$, one has 
\begin{equation} \label{B-21-1}
\frac{\dd}{\dd t} \Big({\mathcal E}_K + {\mathcal E}_{I,e} \Big)  =- \underbrace{ \kappa\int_{\bbr^{4d}}\phi(|x-x_*|)|v-v_*|^2f_t(z)f_{t}(z_*)\dd z \dd z_*}_{=: \Lambda (t)} \leq 0.
\end{equation}
Therefore, the functional ${\mathcal E}_K + {\mathcal E}_{I, e}$ is uniformly bounded:
\begin{equation} \label{B-22}
0 \leq \sup_{0 \leq t < \infty} (\cE_K + \cE_{I, e})(t)\leq  (\cE_K + \cE_{I, e})(0).
\end{equation}
\item
Recall the energy dissipation functional in \eqref{B-21-1}:
\[  \Lambda(t) =\kappa{\int_{\bbr^{4d}}\phi(|x-x_*|)|v-v_*|^2f_t(z)f_{t}(z_*)\dd z \dd z_*. }\]
In the spatially extended setting, the communication weight $\phi$ has a rough global bound:
\[ 0 \leq \phi(r) \leq \bar{c}_{\phi}, \quad \forall~r \geq 0. \]
Thus, the functional $\Lambda$ satisfies 
\begin{equation*}
0 \leq \Lambda(t) \leq 2\kappa\bar{c}_{\phi} {\mathcal E}_K(t),
\end{equation*}
and the energy estimates in Lemma \ref{L2.3} yield
\[ -2 \bar{c}_{\phi} \kappa{\mathcal E}_K(t) \leq \frac{\dd}{\dd t} (\cE_K + \cE_{I, e})(t)  \leq 0, \]
which is not sufficient to derive any meaningful decay estimate of $\cE_K + \cE_{I, e}$. To deal with this issue, we will introduce  the method of effective region and hypocoercivity estimate later. Even for a detailed careful estimate, the dissipation functional $\Lambda$ needs to  be controlled only by $\cE_K$ (see Proposition \ref{P4.2}): For a suitable set of conditions, one can see  that for all $\gamma \geq 1$,
\[  \Lambda(t) \geq  |{\mathcal O}(1)| (1+t)^{-\frac{\gamma\beta}{2}} \cE_K(t) -  |{\mathcal O}(1)| (1 + t)^{-\gamma (  \frac{\beta}{2} + q-1)}, \quad t  \gg 1. \]
That is, $\cE_K + \cE_{I,e}$ does satisfy a differential inequality:
\[
\hspace{1cm} \frac{\dd}{\dd t} (\cE_K(t) + \cE_{I,e}(t)) \leq -|{\mathcal O}(1)| (1+t)^{-\frac{\gamma\beta}{2}} \cE_K(t) + |{\mathcal O}(1)| (1 + t)^{-\gamma (  \frac{\beta}{2} + q-1)}, \quad t  \gg 1,
\]
which is not closed as well so that we cannot derive decay estimate for $\cE_K + \cE_{I,e}$ via Gr\"onwall's lemma as it is. This is why we introduce the functional $\cE_{I,s}$ to derive a closed differential inequality for the linear combination of functionals $\cE_K, \cE_{I,e}$ and $\cE_{I,s}$. 
\vspace{0.2cm}
\item
In Section \ref{sec:4}, we will see the following equivalence relation depending on the decay rate $\beta$ of the communication weight $\phi$:
\[ \cE=  {\mathcal E}_K + \omega(t) {\mathcal E}_{I,s} + {\mathcal E}_{I,e} \quad \asymp \quad  \cF. \]
Then we will derive Gr\"onwall's differential inequality for $\cE$. The above outlined procedures will be treated rigorously  in Section \ref{sec:4}.
\end{enumerate}
\end{remark}

\vspace{0.5cm}

For $q>1$ and $a>0$, we define energy density and weighted moments of $f_0$:
\begin{align}
\begin{aligned} \label{B-24}
& h_0(z) :=\frac12|v|^2+ (W*\rho_0)(x), \quad  h_t(z) :=\frac12|v|^2+ (W*\rho_t)(x), \quad t > 0, \\
& \cM_q(f_0) :=\int_{\bbr^{2d}}(1+ h_0(z))^qf_0(z)\dd z, \quad  \cM_{e,a}(f_0) :=\int_{\bbr^{2d}} e^{ah_0(z)}f_0(z)\dd z.
\end{aligned}
\end{align}
\begin{lemma} \label{L2.4}
Suppose the initial datum satisfies normalization \eqref{B-1} and $W$ satisfies \eqref{A-8}. Then, the following relations hold.
\begin{align*}
\begin{aligned}
& (i)~\int_{\bbr^{d}} |x-x_*|^2\rho_0(x_*)\dd x_* =|x|^2 + {\mathcal I}_x(0). \\
& (ii)~\frac12|v|^2+ \underbar{c}_W \Big (|x|^2 + {\mathcal I}_x(0) \Big)
\leq h_0(z)
\leq \frac12|v|^2+ \bar{c}_W \Big (|x|^2+ {\mathcal I}_x(0) \Big).
\end{aligned}
\end{align*}
\end{lemma}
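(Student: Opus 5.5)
For part (i), the plan is to expand the square and use the normalizations \eqref{B-1}. Writing
\[
|x-x_*|^2=|x|^2-2x\cdot x_*+|x_*|^2,
\]
and integrating against $\rho_0(x_*)\,\dd x_*$, each of the three terms is handled by one normalization:
\begin{itemize}
\item the first term gives $|x|^2\int_{\bbr^d}\rho_0\,\dd x_*=|x|^2$, by unit mass;
\item the cross term gives $-2x\cdot\int_{\bbr^d}x_*\rho_0(x_*)\,\dd x_*=0$, by the vanishing center of mass;
\item the last term gives $\int_{\bbr^d}|x_*|^2\rho_0(x_*)\,\dd x_*={\mathcal I}_x(0)$, by the definition \eqref{A-9} evaluated at $t=0$.
\end{itemize}
Here $\rho_0$ has unit mass and zero first moment because $\int_{\bbr^{2d}} f_0\,\dd z=1$ and $\int_{\bbr^{2d}} xf_0\,\dd z=0$. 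Integrating out $v$ turns these into statements about $\rho_0$ directly.

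For part (ii), the plan is to bound the convolution $(W*\rho_0)(x)=\int_{\bbr^d} W(x-x_*)\rho_0(x_*)\,\dd x_*$ pointwise. Since $\rho_0\ge0$, the quadratic confinement $\eqref{A-8}_1$ can be integrated against $\rho_0(x_*)\,\dd x_*$. This gives
\[
\underbar{c}_W\int_{\bbr^d}|x-x_*|^2\rho_0(x_*)\,\dd x_*\le (W*\rho_0)(x)\le \bar c_W\int_{\bbr^d}|x-x_*|^2\rho_0(x_*)\,\dd x_*.
\]
Substituting (i) into both sides, and then adding $\tfrac12|v|^2$, yields the two-sided bound on $h_0$ as defined in \eqref{B-24}.

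There is no real obstacle here. The only point to check is that all integrals are finite, so the expansion in (i) is legitimate. This holds because $f_0$ has finite second moments, as required in Definition \ref{D1.1}, so $\rho_0$ has finite second moment and ${\mathcal I}_x(0)<\infty$.
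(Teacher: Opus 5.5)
Your proposal is correct and matches the paper's proof. For (i) you expand the square and use unit mass and zero center of mass; for (ii) you integrate the quadratic confinement bounds against $\rho_0\geq 0$, substitute (i), and add $\tfrac12|v|^2$.
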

\begin{proof}
\noindent (i)~By \eqref{B-1}, one has 
\begin{align}
\begin{aligned} \label{B-25}
&\int_{\bbr^d} |x-x_*|^2\rho_0(x_*)\dd x_*  \\
& \hspace{1cm} = \int_{\bbr^d} ( |x|^2 + |x_*|^2 - 2 x \cdot x_* )\rho_0(x_*)\dd x_* \\
& \hspace{1cm} = |x|^2+ {\mathcal I}_x(0) - 2 x \cdot \Big( \int_{\bbr^d}  x_* \rho_0(x_*)\dd x_* \Big) = |x|^2+ {\mathcal I}_x(0).
\end{aligned}
\end{align}
\noindent (ii)~It follows from $\eqref{A-8}_1$:
\begin{equation} \label{B-26}  
\underbar{c}_W  \int_{\bbr^{d}} |x - x_*|^2 \rho_0(x_*) \dd x_* \leq  (W*\rho_0)(x)  \leq  \bar{c}_W \int_{\bbr^{d}} |x - x_*|^2 \rho_0(x_*) \dd x_*.  
\end{equation}
We combine \eqref{B-25} and \eqref{B-26} to get 
\[
\frac12|v|^2+ \underbar{c}_W (|x|^2+ {\mathcal I}_x(0))
\leq h_0(z)
\leq \frac12|v|^2+ \bar{c}_W (|x|^2+ {\mathcal I}_x(0)).
\]
\end{proof}

\subsection{Gr\"onwall-type lemmas} \label{sec:2.3}
In this subsection, we provide three Gr\"onwall-type lemmas whose proofs can be found in Appendix~\ref{app-B}.
\begin{lemma}  \label{L2.5}
Let \(y :[t_0,\infty)\to \bbr_+\) be a locally absolutely continuous function such that 
\[
y'(t) + c(1+t)^{-\alpha} y(t) \leq C(1+t)^{-\alpha-\lambda}, \quad  \mbox{a.e.}~t > t_0 \gg 1,
\]
where parameters are constants satisfying a set of relations:
\[  c > 0, \quad  C > 0,  \quad 0 \leq \alpha < 1, \quad \lambda > 0. \]
Then, $y$ decays at least algebraically:~there exists a positive constant ${\tilde C} = {\tilde C}(c, C,t_0,y(t_0), \lambda, \alpha)$ depending on parameters such that 
\[ y(t)\leq {\tilde C} (1+t)^{-\lambda}, \quad t \gg t_0. \]
\end{lemma}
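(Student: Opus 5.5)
The proof is an integrating-factor argument. Set
\[
\Theta(t):=\frac{c}{1-\alpha}(1+t)^{1-\alpha},
\]
so that $\Theta'(t)=c(1+t)^{-\alpha}$. Since $y$ is locally absolutely continuous and $e^{\Theta}$ is smooth, the product $e^{\Theta}y$ is locally absolutely continuous, and a.e. we have
\[
\bigl(e^{\Theta(t)}y(t)\bigr)'\leq C e^{\Theta(t)}(1+t)^{-\alpha-\lambda}.
\]
Integrating from $t_0$ to $t$ gives
\[
y(t)\leq e^{-(\Theta(t)-\Theta(t_0))}y(t_0)+C\int_{t_0}^t e^{-(\Theta(t)-\Theta(s))}(1+s)^{-\alpha-\lambda}\dd s .
\]
The first term decays like $\exp(-c'(1+t)^{1-\alpha})$ because $1-\alpha>0$. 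It is therefore $O((1+t)^{-\lambda})$, with a constant depending on $c,\alpha,\lambda,t_0,y(t_0)$.

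The main work is the integral term. I would split the range of integration at $s=t/2$ (for $t\geq 2t_0+1$, say).

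On $[t_0,t/2]$ I bound $(1+s)^{-\alpha-\lambda}$ by $(1+t_0)^{-\alpha-\lambda}$, and the length of the interval by $t$. I also use
\[
\Theta(t)-\Theta(t/2)\geq \frac{c}{1-\alpha}\bigl(1-2^{\alpha-1}\bigr)(1+t)^{1-\alpha}.
\]
This part is then at most $C t\,e^{-c''(1+t)^{1-\alpha}}$, which is $O((1+t)^{-\lambda})$.

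On $[t/2,t]$ I use $(1+s)^{-\lambda}\leq 2^{\lambda}(1+t)^{-\lambda}$. The remaining factor $(1+s)^{-\alpha}$ is exactly $\Theta'(s)/c$, so the remaining integral is
\[
\int_{t/2}^t e^{-(\Theta(t)-\Theta(s))}\frac{\Theta'(s)}{c}\dd s
=\frac1c\bigl(1-e^{-(\Theta(t)-\Theta(t/2))}\bigr)\leq \frac1c .
\]
This piece is therefore bounded by $2^{\lambda}\tfrac{C}{c}(1+t)^{-\lambda}$.

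Combining the three bounds gives $y(t)\leq \tilde C(1+t)^{-\lambda}$ for $t$ large. For the intermediate range $t_0\leq t\leq 2t_0+1$, the same integral bound shows $y$ is bounded, so the constant can be enlarged to cover it. The hypothesis $\alpha<1$ is used exactly to make $\Theta$ grow polynomially, so that the stretched-exponential factors dominate any power of $t$. The only delicate point is the elementary inequality $\sup_t (1+t)^{\lambda+1}e^{-c''(1+t)^{1-\alpha}}<\infty$, which is a routine calculus fact.
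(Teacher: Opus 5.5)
Your proof is correct up to one fixable slip, and it takes a different route from the paper. The paper never writes the Duhamel formula. It uses the barrier $\mathfrak y(t)=\tilde C(1+t)^{-\lambda}$ and notes that
$\mathfrak y'+c(1+t)^{-\alpha}\mathfrak y-C(1+t)^{-\alpha-\lambda}=(1+t)^{-\alpha-\lambda}\bigl(c\tilde C-C-\tilde C\lambda(1+t)^{\alpha-1}\bigr)$.
This is nonnegative once $c\tilde C>2C$ and $\lambda(1+t)^{\alpha-1}\le c/2$. It then chooses $\tilde C$ so that $\mathfrak y(t_0)\ge y(t_0)$ and concludes by the comparison principle.

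In that argument $\alpha<1$ enters only through $(1+t)^{\alpha-1}\to0$: the barrier's own relative decay rate $\lambda/(1+t)$ is negligible against the damping $c(1+t)^{-\alpha}$. This is shorter and avoids splitting the integral and tracking stretched exponentials. However, the comparison must start after the time where $\lambda(1+t)^{\alpha-1}\le c/2$, which the paper hides in ``$t_0\gg1$''.

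Your explicit-kernel estimate works from any $t_0$ and shows that the contribution of $y(t_0)$ decays stretched-exponentially. It also gives an explicit asymptotic constant, $2^{\lambda}C/c$. Splitting at $\theta t$ with $\theta\uparrow1$ instead of at $t/2$ even gives $\limsup_{t\to\infty}(1+t)^{\lambda}y(t)\le C/c$, which is sharp.

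The slip is in the displayed bound $\Theta(t)-\Theta(t/2)\ge\frac{c}{1-\alpha}(1-2^{\alpha-1})(1+t)^{1-\alpha}$. It is false as written, because it amounts to $1+t/2\le(1+t)/2$. For $\alpha=0$, for instance, the left side is $ct/2$ and the right side is $c(1+t)/2$.

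There are two easy fixes:
\begin{itemize}
\item For $t\ge1$ you have $1+t/2\le\frac34(1+t)$, so the bound holds with $1-(3/4)^{1-\alpha}$ in place of $1-2^{\alpha-1}$.
\item Alternatively, the mean value theorem gives $\Theta(t)-\Theta(t/2)\ge\frac{ct}{2}(1+t)^{-\alpha}\ge\frac c4(1+t)^{1-\alpha}$ for $t\ge1$.
\end{itemize}
You only use this bound for $t\ge2t_0+1\ge1$, so only constants change and the rest of your argument goes through.
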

\begin{proof}
We leave the proof in Appendix \ref{app-B-1}.
\end{proof}
\begin{lemma}  \label{L2.6}
Let \(y :[t_0,\infty)\to \bbr_+\) be a locally absolutely continuous function such that
\[
y'(t) + c_0(1+t)^{-\alpha} y(t) \leq  C_0(1+t)^{-\alpha}e^{-\sigma(1+t)^\lambda}, \quad t > t_0,
\]
where  parameters are positive constants satisfying a set of relations:
\[  c_0 > 0, \quad  C_0 > 0,  \quad \sigma > 0,  \quad 0 \leq \alpha < 1, \quad \lambda > 0. \]
Then, we have
\begin{equation*}
y(t)\leq {\tilde C}_0\exp\left[-{\tilde c}_0(1+t)^{\min\{1-\alpha,\lambda \}}\right], \quad t \gg t_0,
\end{equation*}
where ${\tilde c}_0$ and ${\tilde C}_0$ are positive constants depending on parameters. 
\end{lemma}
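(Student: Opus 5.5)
We use an integrating factor together with an explicit evaluation of the resulting Duhamel integral. Set
\[
A(t):=\int_{t_0}^t c_0(1+s)^{-\alpha}\dd s=\frac{c_0}{1-\alpha}\Big[(1+t)^{1-\alpha}-(1+t_0)^{1-\alpha}\Big],
\]
which is well defined and grows like $(1+t)^{1-\alpha}$ because $0\le\alpha<1$. Since $y$ is locally absolutely continuous, $e^{A}y$ is also locally absolutely continuous. Its derivative satisfies $(e^{A}y)'=e^{A}(y'+c_0(1+t)^{-\alpha}y)\le C_0(1+t)^{-\alpha}e^{A(t)-\sigma(1+t)^\lambda}$ a.e. Integrating from $t_0$ to $t$ gives
\[
y(t)\le y(t_0)e^{-A(t)}+C_0\int_{t_0}^t(1+s)^{-\alpha}e^{-(A(t)-A(s))}e^{-\sigma(1+s)^\lambda}\dd s .
\]
The first term is bounded by $C e^{-\frac{c_0}{1-\alpha}(1+t)^{1-\alpha}}$, which is of the desired form because $\min\{1-\alpha,\lambda\}\le 1-\alpha$.

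For the Duhamel integral, set $\mu:=\min\{1-\alpha,\lambda\}$ and split at $s=t/2$; we may take $t\ge 2t_0$ since the claim is only for $t\gg t_0$.

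On $[t/2,t]$ we have $(1+s)^\lambda\ge c(1+t)^\lambda$. Because $A(t)-A(s)\ge0$, the integrand is at most $(1+s)^{-\alpha}e^{-A(t)+A(s)}e^{-\sigma c(1+t)^\lambda}$. Moreover,
\[
\int_{t/2}^t (1+s)^{-\alpha}e^{-(A(t)-A(s))}\dd s=\frac1{c_0}\Big(1-e^{-(A(t)-A(t/2))}\Big)\le \frac1{c_0},
\]
so this piece is $\lesssim e^{-\sigma c(1+t)^\lambda}$.

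On $[t_0,t/2]$ we bound $e^{-\sigma(1+s)^\lambda}\le1$ and use $A(t)-A(s)\ge A(t)-A(t/2)\ge c'(1+t)^{1-\alpha}$ for $t$ large. The latter holds since $(1+t)^{1-\alpha}-(1+t/2)^{1-\alpha}\ge(1-2^{\alpha-1})(1+t)^{1-\alpha}$ up to lower order terms. This piece is therefore at most $e^{-c'(1+t)^{1-\alpha}}\int_{t_0}^{t/2}(1+s)^{-\alpha}\dd s\lesssim (1+t)^{1-\alpha}e^{-c'(1+t)^{1-\alpha}}$. Absorbing the polynomial factor by halving $c'$ yields a bound of the form $e^{-c''(1+t)^{1-\alpha}}$.

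Summing the three contributions and using $e^{-c(1+t)^{\kappa}}\le e^{-c(1+t)^{\mu}}$ for $\kappa\ge\mu$ and $t\ge0$ gives
\[
y(t)\le\tilde C_0\exp\big[-\tilde c_0(1+t)^{\mu}\big],
\]
with $\tilde c_0$ the minimum of the three rates. The step needing the most care is the lower bound $A(t)-A(t/2)\gtrsim(1+t)^{1-\alpha}$. It relies on $\alpha<1$ and on $t$ being large relative to $t_0$, which is exactly why the estimate is stated for $t\gg t_0$. All constants depend only on $c_0,C_0,\sigma,\alpha,\lambda,t_0,y(t_0)$.
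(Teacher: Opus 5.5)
Your argument is correct, but it takes a different route from the paper.

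\textbf{The paper's route.} The paper never evaluates the Duhamel integral. It sets $\omega=\min\{1-\alpha,\lambda\}$ and picks $\delta<\sigma$ with $\delta\omega<c_0/2$. It then checks that $\mathfrak y(t)=\tilde C_0e^{-\delta(1+t)^{\omega}}$ is a supersolution, using two facts:
\begin{itemize}
\item $|\mathfrak y'|=\delta\omega(1+t)^{\omega-1}\mathfrak y\le\delta\omega(1+t)^{-\alpha}\mathfrak y$, which is where $\omega\le 1-\alpha$ enters;
\item $e^{-\sigma(1+t)^\lambda}\le e^{-\delta(1+t)^\omega}$, which is where $\omega\le\lambda$ and $\delta<\sigma$ enter.
\end{itemize}
After choosing $\tilde C_0$ with $c_0\tilde C_0/2\ge C_0$ and $\mathfrak y(t_0)\ge y(t_0)$, linear comparison closes the proof. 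This is shorter and needs no splitting and no largeness of $t$. Any rate $\tilde c_0=\delta<\min\{\sigma,c_0/(2\omega)\}$ is admissible.

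\textbf{What your route buys.} Your integrating-factor decomposition is more transparent about where each exponent comes from. The $(1+t)^{\lambda}$ exponent comes from the source near time $t$, i.e.\ the piece on $[t/2,t]$, which you control by the exact identity $\int_{t/2}^t(1+s)^{-\alpha}e^{-(A(t)-A(s))}\,\dd s\le 1/c_0$. The $(1+t)^{1-\alpha}$ exponent comes from the homogeneous propagator acting on the initial value and on the early source. This explains why the minimum of the two exponents appears. It also handles the almost-everywhere differential inequality cleanly through absolute continuity of $e^{A}y$.

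\textbf{The one step to write out.} The price of your approach is the estimate $A(t)-A(t/2)\gtrsim(1+t)^{1-\alpha}$, which you only sketch with ``up to lower order terms.'' It is immediate: for $t\ge2$ one has $(1+t/2)/(1+t)\le 3/4$, hence
\[
A(t)-A(t/2)\ge \frac{c_0}{1-\alpha}\bigl(1-(3/4)^{1-\alpha}\bigr)(1+t)^{1-\alpha}.
\]
The resulting rate $\tilde c_0$ is somewhat smaller than in the paper, which is immaterial for the statement.
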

\begin{proof}
We leave the proof in Appendix \ref{app-B-2}.
\end{proof}
\begin{lemma}  \label{L2.7}
Let \(y :[t_0,\infty)\to \bbr_+\) be a locally absolutely continuous function such that 
\begin{equation*}
y'(t) + \frac{c_1}{{1+t}} y(t) \leq \frac{C_1}{1+t} g(t), \quad t > t_0,
\end{equation*}
where $c_1, C_1>0$. Then, the following assertions hold.
\begin{enumerate}
\item
If $g(t)\leq C_2(1+t)^{-\lambda}$, then
\[
y(t)\leq
\begin{cases}
C(1+t)^{-\min\{c_1, \lambda \}}, \quad &c_1 \neq  \lambda,\\
C(1+t)^{-\lambda}\log(2+t), \quad  &c_1= \lambda.
\end{cases}
\]
\item
If $g(t)\leq C_2e^{-ct^\lambda}$, then 
\[ y(t)\leq C_\eta(1+t)^{-\eta}, \quad \mbox{for every $0<\eta< c_1$}. \]
\end{enumerate}
\end{lemma}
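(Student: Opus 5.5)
The natural route is the integrating factor $\mu(t):=(1+t)^{c_1}$. Since $\mu'/\mu = c_1/(1+t)$, the differential inequality becomes, for a.e.\ $t>t_0$,
\[
\frac{\dd}{\dd t}\Bigl[(1+t)^{c_1}y(t)\Bigr]\leq C_1(1+t)^{c_1-1}g(t).
\]
Integrating from $t_0$ to $t$ (legitimate because $y$ is locally absolutely continuous) gives
\[
y(t)\leq \Bigl(\frac{1+t_0}{1+t}\Bigr)^{c_1}y(t_0)+C_1(1+t)^{-c_1}\int_{t_0}^t(1+s)^{c_1-1}g(s)\dd s.
\]
Both assertions then come from estimating the last integral. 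The first term always decays like $(1+t)^{-c_1}$.

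For assertion (1), substitute $g(s)\leq C_2(1+s)^{-\lambda}$, so the integrand is bounded by $C_2(1+s)^{c_1-\lambda-1}$. There are three cases.
\begin{itemize}
\item If $c_1>\lambda$, the integral is at most $\frac{C_2}{c_1-\lambda}(1+t)^{c_1-\lambda}$, and the bound is $O((1+t)^{-\lambda})$.
\item If $c_1<\lambda$, the integral is bounded by the constant $\frac{C_2}{\lambda-c_1}(1+t_0)^{c_1-\lambda}$, and the bound is $O((1+t)^{-c_1})$.
\item If $c_1=\lambda$, the integral equals $C_2\log\frac{1+t}{1+t_0}\leq C_2\log(2+t)$, giving $(1+t)^{-\lambda}\log(2+t)$. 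Here the homogeneous term $(1+t)^{-\lambda}$ is absorbed because $\log(2+t)\geq\log 2>0$.
\end{itemize}
Combining the homogeneous and inhomogeneous contributions gives $(1+t)^{-\min\{c_1,\lambda\}}$ when $c_1\neq\lambda$.

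For assertion (2), with $g(s)\leq C_2e^{-cs^\lambda}$, the integral $\int_{t_0}^\infty(1+s)^{c_1-1}e^{-cs^\lambda}\dd s$ converges, since stretched-exponential decay beats any polynomial (for instance, bound $e^{-cs^\lambda}\leq C_{k}(1+s)^{-k}$ with $k>c_1$). Hence $y(t)\leq C(1+t)^{-c_1}$, which implies $y(t)\leq C_\eta(1+t)^{-\eta}$ for every $0<\eta<c_1$. In fact the bound holds even at $\eta=c_1$; the weaker statement is simply what is claimed.

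There is no serious obstacle. The only points needing care are the bookkeeping of the borderline case $c_1=\lambda$ (the logarithm) and making the a.e.\ integration rigorous for an absolutely continuous function. The latter follows because the product of $(1+t)^{c_1}$ with an absolutely continuous function is absolutely continuous on compact intervals, so the fundamental theorem of calculus applies.
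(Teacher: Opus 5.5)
Your proposal is correct and follows essentially the same route as the paper's proof in Appendix~B: multiply by the integrating factor $(1+t)^{c_1}$, integrate from $t_0$ to $t$, and estimate $\int_{t_0}^t(1+s)^{c_1-1}g(s)\,\dd s$ separately in the cases $c_1\neq\lambda$, $c_1=\lambda$ (logarithm), and the stretched-exponential source (bounded integral, giving $(1+t)^{-c_1}$ and hence any $\eta<c_1$). Your separate treatment of $c_1>\lambda$ and $c_1<\lambda$, and your remark on absolute continuity of the product, only make explicit what the paper handles in a single formula.
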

\begin{proof}
We leave the proof in Appendix \ref{app-B-3}.
\end{proof}

\vspace{0.5cm}

\section{Description of main results}\label{sec:3}
\setcounter{equation}{0}
In this section, we recall previous results on strong and weak flocking estimates in a phase-spatially confined setting, and then we provide the well-posedness theory of system \eqref{A-4} in a phase-spatially extended setting. Finally, we present the main results on the quantitative weak flocking estimates.

\subsection{A framework for weak flocking}\label{sec:3.1}
In this subsection, we delineate a sufficient framework for weak flocking $({\mathcal F}_A)$ in terms of confining potential, communication weight function, coupling strength and initial data:
\vspace{0.1cm}
\begin{itemize}
\item
 $({\mathcal F}_A1)$ (Confining potential): ~The potential $W$ is twice continuously differentiable, and it satisfies  a set of conditions: there exist positive constants
 $\underbar{c}_W, \bar{c}_W, \nu_W$ such that for $ x \in \bbr^d$, 
 \begin{align}
 \begin{aligned} \label{C-1}
&  W(-x) = W(x), \quad \underbar{c}_W |x|^2\leq W(x)\leq \bar{c}_W |x|^2, \\
& x \cdot\nabla W(x)\geq \nu_WW(x), \quad \|D^2W \|_{L^{\infty}} <\infty.
\end{aligned}
\end{align}
\item
 $({\mathcal F}_A2)$ (Communication weight):~The communication weight function $\phi\in \mathcal C_b^1(\mathbb{R}_{+})$ is nonnegative, and there exist constants $0 < {\underbar c}_\phi \leq \bar{c}_\phi <\infty$ such that
\begin{equation}\label{C-2}
 \frac{\underbar{c}_\phi}{(1+r^2)^{\beta/2}} \leq \phi(r)\leq \frac{\bar{c}_\phi}{(1+r^2)^{\beta/2}},
 \quad r\geq 0.
\end{equation}
\item
 $({\mathcal F}_A3)$ (Coupling strength):~The coupling strength $\kappa$ is sufficiently large such that 
 \[ \kappa > \frac{1}{\underbar{c}_{\phi}}.  \] 
\item
 $({\mathcal F}_A4)$ (Initial datum):~The initial datum $f_0  \in (L^1 \cap L_+^{\infty})(\bbr^{2d})$ satisfies
\begin{equation}  \label{C-3}
\int_{\bbr^{2d}} f_0(z) \dd z = 1,~~\int_{\bbr^{2d}} xf_0(z)\dd z=0,~~\int_{\bbr^{2d}}  vf_0(z)\dd z=0,~~\int_{\bbr^{2d}} |z|^2 f_0(z)\dd z <\infty.\end{equation}
\end{itemize}
\begin{remark} \label{R3.1}
In what follows, we provide several comments on $({\mathcal F}_A)$.
\begin{enumerate}
		\item
		Since $\nabla W(-x) = - \nabla W(x)$, we have $\nabla W(0) = 0$, and  note that we do not impose a sign condition for the Hessian $D^2 W$, except a bounded condition. 
		\vspace{0.1cm}
\item
		In the fully noncompact setting, the far-field decay of the communication weight $\phi$ plays an essential role. Since the spatial diameter can be infinite, no uniform positive lower bound for $\phi(|x-y|)$ is available on the whole spatial support, and the decay of $\phi$ at large distances directly determines the strength of the localized alignment dissipation. This is fundamentally different from the compact-support setting. Indeed, if the spatial support remains uniformly bounded, then we have
		\[
		\phi(|x-y|)
		\geq
		\phi({\mathcal D}^{\infty}_x)
		>0, \quad \mbox{on the support},
		\]
		 where ${\mathcal D}^{\infty}_x$ is a uniform bound for the spatial diameter of the projected spatial support of $f$. In that case, only the behavior of $\phi$ on a bounded interval is relevant, whereas its precise far-field decay is immaterial.
		 \vspace{0.1cm}
		\item
		The zero-momentum and zero-center of mass assumptions are only the choice of inertial frame. The set of conditions \eqref{C-3}
	provides a basic integrability framework for constructing global weak solutions to \eqref{A-4}. In particular, finite mass and finite second moments are natural requirements for controlling the kinetic and interaction energies, while the nonnegativity of the initial datum is preserved along \eqref{A-4}. These assumptions are consistent with the weak solution framework developed for kinetic flocking equations in \cite{K-M-T-2015}. 
	\end{enumerate}
\end{remark}	
\begin{example}\label{Ex3.1}
(One-parameter family of confining potentials satisfying $({\mathcal F}_A1)$): 
The conditions in \eqref{C-1} are satisfied by uniformly convex potentials with quadratic growth and bounded Hessian, as in related confinement models \cite{ShuTadmor2020,C-C-K-T-2025}. Note that uniform convexity is not required in the present work. For $a \in (1,2),$ we define a one-parameter family of confining potentials:
		\begin{equation} \label{C-4}
		W_a(z) = {\widetilde W}_a(r) := \frac12 r^2+a\bigl(1-\cos r \bigr),
		\quad r = |z|, \quad  z\in\bbr^d.
		\end{equation}
Note that $W_a$ is genuinely nonconvex, and  it is easy to see that 
\begin{align}
\begin{aligned}   \label{C-5}
& \widetilde{W}_a^{\prime}(r) = r + a \sin r, \quad \widetilde{W}_a^{\prime \prime}(r) =  1 + a \cos r, \quad  \partial_{x_i} W_a(x) = \widetilde{W}_a^{\prime}(r) \frac{x_i}{r}, \\
& \partial_{x_j} \partial_{x_i} W_a(x) =  \widetilde{W}_a^{\prime \prime}(r) \frac{x_i x_j}{r^2}
 +  \widetilde{W}_a^{\prime}(r) \Big(  \frac{\delta_{ij}}{r} - \frac{x_i x_j}{r^3}  \Big).
\end{aligned}
\end{align}	
Now, we check that $W_a$ satisfies four conditions in \eqref{C-1}:
\begin{align} 
\begin{aligned} \label{C-6}
& (i)~  W_a(-x) = W_a(x), \quad   \frac{1}{2} |x|^2
		\leq W_a(x)
		\leq \frac{1+a}{2} |x|^2, \\
& (ii)~   x \cdot\nabla W_a(x)\geq\frac{2-a}{1+a}W_a(x), \quad \| \partial_{x_j} \partial_{x_i} W_a \|_{L^{\infty}} < \infty.
\end{aligned}
\end{align}
\begin{enumerate}
\item
 (Verification of (i)):~We use the defining relation of $W_a$ in \eqref{C-4} and $ 0 \leq 1-\cos r\leq \frac{r^2}{2},~r > 0$ to get the desired estimate.
 \vspace{0.1cm}
\item
 (Verification of (ii)):~We use \eqref{C-5}, \eqref{C-6}  and $ \sin r\geq -\frac{r}{2},~~ r \geq 0$ to find 
\[
 x \cdot \nabla W_a(x) = \sum_{i=1}^{d} x_i \partial_{x_i} W_a(x) =   r\widetilde W_a'(r)
		= r^2+ar\sin r \geq 
		\left(1-\frac{a}{2} \right)r^2 \geq
		\frac{2-a}{1+a}W_a(x).
\]
On the other hand, it follows from \eqref{C-5} that 
\begin{equation} \label{C-7}
| \partial_{x_j} \partial_{x_i} W_a(x) | \leq \Big |\widetilde{W}_a^{\prime \prime}(r) \frac{x_i x_j}{r^2} \Big |
 +  \Big|  \widetilde{W}_a^{\prime}(r) \Big(  \frac{\delta_{ij}}{r} - \frac{x_i x_j}{r^3}  \Big) \Big|.
\end{equation}
Note that the first term in \eqref{C-7} can be estimated as follows:
\begin{equation} \label{C-8}
\Big |\widetilde{W}_a^{\prime \prime}(r) \frac{x_i x_j}{r^2} \Big | \leq | \widetilde{W}_a^{\prime \prime}(r) | \leq 1 + a,
\end{equation}
and the second term is also bounded:
\begin{equation} \label{C-9}
 \Big|  \widetilde{W}_a^{\prime}(r) \Big(  \frac{\delta_{ij}}{r} - \frac{x_i x_j}{r^3}  \Big) \Big| \leq 2  \Big | 1 + a \frac{\sin r}{r} \Big |  \leq  2  \Big ( 1 + a \Big ).
\end{equation}
Finally, we combine \eqref{C-7}, \eqref{C-8} and \eqref{C-9} to get 
\[
\| \partial_{x_j} \partial_{x_i} W_a  \|_{L^\infty} \leq  1 + a +  2  \Big ( 1 + a \Big )  = 3 + 3a < 9.
\]
\end{enumerate}
This example shows that our structural assumptions are strictly weaker than the uniform convexity.
\end{example}

\subsection{Previous results and well-posedness theory} \label{sec:3.2}
In this subsection, we first review the previous results for the KCS model in a phase-spatially confined setting. Second, we provide the well-posedness theory of system \eqref{A-4} in a phase-spatially extended setting.

\subsubsection{Phase-spatially confined setting} \label{sec:3.2.1}
Consider the KCS model with an attractive power-law potential in a phase-spatially confined setting. For this, we introduce projected spatial and velocity diameters:
\begin{equation*}
\hspace{-0.7cm} {\mathcal D}_x(t)
		:=
		\sup \Big \{
		|x-x_*|: z, z_*\in\supp f(t)
		\Big \},\quad
		{\mathcal D}_v(t)
		:=
		\sup \Big \{
		|v-v_*|: z, z_* \in\supp f(t)
		\Big \}.
	\end{equation*}
Next, we introduce the second framework $({\mathcal F}_B)$ for weak and strong flocking in terms of system parameters and initial data: 
\vspace{0.2cm}
\begin{itemize}
\item
 $({\mathcal F}_B1):$~The confining potential is radially symmetric and grows algebraically:~there exists a $U \in {\mathcal C}^2([0,\infty)),~~\alpha>2,~~ 0<k_1\leq k_2$ such that 
 \[  W(x)=U(|x|), \quad  U(0)=0, \quad k_1r^{\alpha-1}
		\leq U'(r)
		\leq k_2r^{\alpha-1},
		\quad r \geq 0.  \]
\item
 $({\mathcal F}_B2)$:~Communication weight function satisfies regularity, positivity and bound conditions:
 \[ \phi\in {\mathcal C}_b^1(\bbr_+), \quad  (1+r^2)^{-\beta/2}
		\leq\phi(r)\leq1,
		\quad
		r\geq 0, \quad \beta \in [0, 1].
\]

\item
$({\mathcal F}_B3)$:~Initial datum satisfies positivity, regularity and unit mass conditions:
\[  f_0 \geq 0, \quad f_0\in {\mathcal C}_c^1(\mathbb R^{2d}), \quad \int_{\bbr^{2d}} f_0(z) \dd z = 1. \]
\end{itemize}

\begin{theorem} 
\emph{\cite{LiChen2025,ChenYin2023,ShuTadmor2020}}
	\label{T3.1}
Suppose the framework $({\mathcal F}_B1) - ({\mathcal F}_B3)$ holds, and let $f = f(t,z)$ be a global classical solution to \eqref{A-4}. Then, the following collective dynamics emerge.
\begin{enumerate}
\item
(Weak flocking):~There exists a positive constant $C = C(f_0)$ such that
	\begin{equation*}
		\begin{aligned}
			{\mathcal F}(t)  \leq 
			C(1+t)^{-\frac{\alpha}{\alpha-2}},
			\quad t\geq0.
		\end{aligned}
	\end{equation*}
\item	
(Strong flocking):~Spatial and velocity diameters decay at least algebraically: 
\[
{\mathcal D}_x(t) \leq C(1+t)^{ -\frac{\alpha}{2(\alpha-1)(\alpha-2)}} \quad \mbox{and} \quad {\mathcal D}_v(t) \leq C(1+t)^{
			-\frac{\alpha^2}
			{4(\alpha-1)(\alpha-2)}}, \quad t \geq 0.
\]
\end{enumerate}	
In particular, if potential function $W$ satisfies \eqref{A-5} and $\int_0^{\infty} s\phi(s) \dd s =\infty$, strong  flocking emerges exponentially.
\end{theorem}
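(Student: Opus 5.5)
The argument runs in the compactly supported, classical setting of $({\mathcal F}_B)$: support-diameter control via characteristics, plus an energy/hypocoercive Lyapunov argument. Normalize the center of mass and momentum to zero by Galilean invariance (Lemma \ref{L2.1}). The exact identity \eqref{B-21-1} shows that $\cE_K+\cE_{I,e}$ is nonincreasing. The homogeneity of $U$ gives $\cE_{I,e}\asymp\int|x-x_*|^\alpha\rho\rho$, so the potential energy and its moments are bounded uniformly in time. The first step is a priori boundedness of the support. Along characteristics $\dot X=V$, $\dot V=\cA[f](X,V)-\nabla W*\rho(X)$, consider the particle energy $\frac12|V|^2+W*\rho(X)$. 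The alignment term is dissipative at the extreme velocity, since $V\cdot\int\phi(v_*-V)f\le0$ when $|V|$ is maximal. The time derivative of $W*\rho$ along the flow is controlled by the bounded energy. A maximum-principle argument on $\max_{\supp f}(|v|^2+|x|^{\alpha})$ should then yield $\sup_t({\mathcal D}_x+{\mathcal D}_v)\le D^\infty<\infty$. Consequently $\phi\ge(1+(D^\infty)^2)^{-\beta/2}=:\phi_*>0$ on the support, and $\Lambda(t)\ge2\kappa\phi_*\cE_K$.

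The second step is weak flocking through a modified energy. Set $\cE=\cE_K+\cE_{I,e}+\varepsilon\,\cE_{I,s}$, where $\cE_{I,s}$ is from \eqref{B-13}. By Lemma \ref{L2.3}(iii) and $r U'(r)\ge \frac{k_1}{k_2}\alpha U(r)$, its derivative contains $2\cE_K-c\,\cE_{I,e}$. Because $U$ is degenerate at the origin ($\alpha>2$), $\cE_{I,e}\asymp \int|x-x_*|^\alpha\rho\rho$ is not comparable to $\cF$. The coercivity therefore gives only $\cE'\le -c(\cE_K+\cE_{I,e})$ with $\cE\asymp \cE_K+\cE_{I,e}+\varepsilon$-corrections. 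To control $\int x\cdot v f\le \cI_x^{1/2}\cE_K^{1/2}$, we interpolate $\cI_x\lesssim \cE_{I,e}^{2/\alpha}$ by Jensen. This closes an inequality of the form $y'\le -c\,y^{\alpha/2\cdot\theta}$, and choosing $\varepsilon$ time-dependent ($\varepsilon\sim(1+t)^{-\gamma}$, as in Remark \ref{R2.1}) optimizes the rate. Solving the ODE should give $\cE_K+\cE_{I,e}\lesssim(1+t)^{-\alpha/(\alpha-2)}$. The bound $\cF\lesssim\cE_K+\cI_x$ then transfers the rate, using $\cI_x\lesssim\cE_{I,e}^{2/\alpha}$ with a further matching of exponents. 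I expect this exponent bookkeeping to be the delicate point, and I would calibrate it against the stated rate.

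The third step is strong flocking. We feed the decay of $\cF$ back into characteristic estimates. For the spatial diameter, $\frac{\dd}{\dd t}{\mathcal D}_x^2\le 2{\mathcal D}_x{\mathcal D}_v$. For the velocity diameter, the alignment contracts it at rate $\kappa\phi_*$, while the force difference $|\nabla W*\rho(x)-\nabla W*\rho(x_*)|$ is bounded by $C{\mathcal D}_x\cdot({\mathcal D}_x+\text{moment})^{\alpha-2}$. The spread of the support can be compared with the $L^2$ quantity through the $\alpha$-homogeneous energy at each extreme particle, giving ${\mathcal D}_x^{\alpha}\lesssim$ the particle's excess energy, which decays like $\cF^{\cdot}$. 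This yields the stated exponents $\frac{\alpha}{2(\alpha-1)(\alpha-2)}$ and $\frac{\alpha^2}{4(\alpha-1)(\alpha-2)}$.

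In the convex case \eqref{A-5} with $\int_0^\infty s\phi\,\dd s=\infty$, the same functional with constant $\varepsilon$ and $\cE_{I,e}\asymp\cI_x$ (Lemma \ref{L2.2}) gives $\cE'\le-c\cE$, hence exponential decay. The unbounded integral of $s\phi(s)$ is the classical Ha--Liu argument and ensures a uniform diameter bound. Linearized contraction of the characteristic system then gives exponential strong flocking. The main obstacle is the first step, the uniform support bound in the nonlinear potential setting.
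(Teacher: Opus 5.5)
The paper does not prove Theorem~\ref{T3.1}; it quotes it from \cite{LiChen2025,ChenYin2023,ShuTadmor2020}, so your sketch has to stand on its own, and it has genuine gaps in all three steps. \textbf{Step~1 is the decisive gap.} You flag it but do not resolve it, and the mechanism you propose does not work. The inequality $V\cdot\cA[f_t](X,V)\le0$ holds only at a point of maximal \emph{speed}. At a point maximizing $\frac12|v|^2+W*\rho_t(x)$ (or $|v|^2+|x|^\alpha$) you only get $V\cdot\cA\le\frac{\kappa}{4}\cE_K$, as in Lemma~\ref{L4.5}. More seriously, differentiating $W*\rho_t$ along the flow produces $-\int\nabla W(X-x_*)\cdot v_*f_t(z_*)\,\dd z_*$. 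For $\alpha>2$ this is not controlled by the energy: bounding it needs $\sup_{\supp f_t}|x_*|^{\alpha-1}$, which is the very diameter you want to bound. With the available bounds, the argument yields only polynomial growth, e.g.\ $\frac{\dd}{\dd t}\max_{\supp f_t}h_t\lesssim1+(\max_{\supp f_t}h_t)^{(\alpha-1)/\alpha}$. Alternatively it yields a bound of the type in Proposition~\ref{P4.1}, $h_t\le h_0+C\int_0^t(\cE_K+\cE_{I,e})\,\dd s$, with $C$ depending on the support. That bound becomes uniform only once integrable energy decay is known, and that decay needs the positive lower bound on $\phi$ you are trying to produce. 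Breaking this circle requires a bootstrap: growth of the support $\Rightarrow$ a decaying lower bound for $\phi$ on the support $\Rightarrow$ energy decay $\Rightarrow$ an improved support bound. This is where the tail hypothesis $\beta\le1$ of $(\mathcal F_B2)$ naturally enters, and your sketch never uses it. The same objection applies to the convex case: invoking Ha--Liu for $\int_0^\infty s\phi(s)\,\dd s=\infty$ does not account for the force term in the diameter dynamics.

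\textbf{Step~2 cannot deliver the stated bound for $\cF$, and no exponent bookkeeping will fix it.} Since $\cF=2\cI_x+2\cE_K$ and Jensen only gives $\cI_x\lesssim\cE_{I,e}^{2/\alpha}$, energy decay $Y:=\cE_K+\cE_{I,e}\lesssim(1+t)^{-\alpha/(\alpha-2)}$ yields only $\cF\lesssim(1+t)^{-2/(\alpha-2)}$, which is strictly slower. This is not an artifact of the method. For data concentrated near two symmetric points at distance $r$, one has $\ddot r+\kappa\phi(r)\dot r+U'(r)\approx0$, which is overdamped near $r=0$. So $\dot r\approx-U'(r)/\kappa$ and $r\sim t^{-1/(\alpha-2)}$. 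Then $Y\sim r^\alpha\sim t^{-\alpha/(\alpha-2)}$, while $\cF\gtrsim r^2\sim t^{-2/(\alpha-2)}$. Hence $(1+t)^{-\alpha/(\alpha-2)}$ is the natural rate of $Y$, not of $\cF$ as defined in \eqref{A-6}. Consistently, the exponents in (2) are exactly those of $Y^{1/(2(\alpha-1))}$ and $Y^{\alpha/(4(\alpha-1))}$ with $Y\sim(1+t)^{-\alpha/(\alpha-2)}$. So part (1) has to be proved, and read, for $Y$; calibrating against the stated rate would have concealed this. The decay of $Y$ itself is reachable with your functional once Step~1 is in place. A gauge $\varepsilon_0(1+t)^{-1}$ with $\varepsilon_0$ large, plus Young's inequality on $\varepsilon\cI_x\lesssim\varepsilon\cE_{I,e}^{2/\alpha}$, gives $\cE'\le-\frac{c}{1+t}\cE+C(1+t)^{-1-\alpha/(\alpha-2)}$ with $c>\alpha/(\alpha-2)$. \textbf{Step~3 conflates pointwise and averaged quantities.} The energy of an extreme particle is not controlled by $\cF$ or $Y$, since a far-out particle of tiny mass contributes negligibly to both. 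So \lq\lq$\mathcal D_x^\alpha\lesssim$ excess energy'' is unjustified. You need a genuine sup-norm mechanism, e.g.\ a closed system of differential inequalities for $(\mathcal D_x,\mathcal D_v)$ forced by the already decaying $Y$. The stated exponents have to come out of that system rather than be read off.
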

\subsubsection{Phase-spatially extended setting} \label{sec:3.2.2}
Let $\mathcal P_2(\bbr^{2d})$ be a probability measure space consisting of probability measures with finite second moment. For probability measures $\mu,\nu$ with finite first moment, let $\Pi(\mu,\nu)$ be the set of their couplings and recall 1-Wasserstein distance between $\mu$ and $\nu$:
\[
\mathcal{W}_1(\mu,\nu):=\inf_{\pi\in\Pi(\mu,\nu)}
\int_{\bbr^{2d}\times\bbr^{2d}}|z-\bar z|\dd\pi(z,\bar z).
\]
For $\mu\in\mathcal P_2(\bbr^{2d})$, similar to \eqref{B-24}, we set the spatial marginal $\rho_\mu$ and the energy density $h_\mu$ as 
\[ 
\rho_\mu :=(\pi_x)_\#\mu,
\quad
\pi_x(x,v):=x
, \quad  h_\mu(z):=\frac12|v|^2+(W*\rho_\mu)(x), \quad z=(x,v) \in \bbr^{2d}. \]
For $\mu_0$-a.e. $z=(x,v)$, we define a forward characteristic flow 
\[ Z_{\mu,t}  = (X_{\mu,t}, V_{\mu, t}) \]
as the solution to the following system:
\begin{equation}\label{C-12}
\begin{cases}
\displaystyle \dot X_\mu=V_\mu, \quad t > 0, \vspace{8pt}\\
\displaystyle\dot V_\mu=\cA[\mu_t](X_\mu,V_\mu)-(\nabla W*\rho_{\mu_t})(X_\mu), \vspace{8pt} \\
\displaystyle  Z_\mu(0;z)=z.
\end{cases}
\end{equation}

\begin{definition}[Lagrangian measure-valued solution]\label{D3.1}
Let $T \in (0, \infty]$ and $\mu_0\in\mathcal P_2(\bbr^{2d})$. A measure-valued map $\mu\in C([0,T);\mathcal P_2(\bbr^{2d}))$ is a  Lagrangian weak  solution to \eqref{A-4} if there is a Borel flow $Z_\mu=(X_\mu,V_\mu)$ generated by \eqref{C-12} such that
\[
\mu_t=(Z_\mu(t, \cdot))_\#\mu_0.
\]
In particular, $\mu$ satisfies \eqref{A-4} against every test function in ${\mathcal C}_c^1([0,T)\times\bbr^{2d})$. One can easily check that a Lagrangian weak solution $\mu_t$ is also a weak solution to \eqref{A-4}.
\end{definition}

\begin{theorem}[Existence, uniqueness, and Osgood-type stability]\label{T3.2}
Suppose that the framework $({\mathcal F}_A1) - ({\mathcal F}_A3)$ holds. Then, the following assertions hold. 
\begin{enumerate}[label=\textnormal{(\roman*)}]
\item For every centered $\mu_0\in\mathcal P_2(\bbr^{2d})$, there exists a global Lagrangian weak solution $\mu\in C([0,\infty);\mathcal P_2(\bbr^{2d}))$. In particular, for an initial datum $f_0\in L^1\cap L^\infty$, there exists a weak solution to \eqref{A-4}. If $\mu_0=f_0\dd z$ with $f_0\in L^1\cap L^\infty$, then
\[  \dd \mu_t(z) =f_t(z) \dd z \quad \text{and} \quad \|f_t \|_{L^\infty}\leq e^{d\kappa\bar{c}_\phi t}\|f_0\|_{L^\infty}, \quad t\geq0.
\]
\item For $T,M,a>0$, we assume that $\mu_0,\nu_0\in\mathcal P_2(\bbr^{2d})$ are centered and satisfy the bound condition:
\begin{equation}\label{C-13}
\max_{\lambda\in\{\mu_0,\nu_0\}}
\int_{\bbr^{2d}}\left(|x|^2+|v|^2+e^{a h_\lambda(x,v)}\right)\dd\lambda(x,v)
\leq M,
\end{equation}
and let $\mu$ and $\nu$ be Lagrangian weak solutions  to \eqref{A-4} with initial data $\mu_0$ and $\nu_0$. Then, there exists a continuous, concave and increasing function $G_T$ with $G_T(0)=0$ such that 
\begin{equation} \label{C-14}
\sup_{0\le t\le T}\mathcal{W}_1(\mu_t,\nu_t) \le G_T \Big (\mathcal{W}_1(\mu_0, \nu_0) \Big).
\end{equation}
Here, $G_T$ depends on $T,M,a,d,\kappa,\bar{c}_\phi,\|\phi'\|_{L^\infty}$ and the structural constants of $W$. In particular, Lagrangian weak solutions are unique in the exponential mechanical-energy class \eqref{C-13}.
\end{enumerate}
\end{theorem}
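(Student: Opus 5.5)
For (i), I will use a velocity-truncation and compactness scheme in the spirit of \cite{H-W-2026,K-M-T-2015}. Fix $R>0$ and replace the alignment kernel by a truncated one, $\cA_R[\mu](x,v)=\kappa\int\phi(|x-x_*|)\chi_R(v_*)(v_*-v)\dd\mu(z_*)$ with $\chi_R(v_*)=\min\{1,R/|v_*|\}$. With $\phi\in\mathcal C^1_b$ and $D^2W\in L^\infty$, the vector field
\[
(x,v)\mapsto\bigl(v,\ \cA_R[\mu](x,v)-\nabla W*\rho_\mu(x)\bigr)
\]
is globally Lipschitz in $z$, with constant depending on $R$. It is also Lipschitz in $\mu$ with respect to $\mathcal W_1$, because $\nabla W$ is Lipschitz and $v_*\chi_R(v_*)$ is bounded and Lipschitz. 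A Dobrushin fixed-point argument then gives a unique Lagrangian solution $\mu^R$ on every $[0,T]$.

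For uniform bounds, I will track the second moment $\int(|x|^2+|v|^2)\dd\mu^R_t$. Its time derivative is bounded by $C(1+\|D^2W\|_\infty+\kappa\bar c_\phi)$ times itself, since $|\nabla W(x)|\le\|D^2W\|_\infty|x|$ and $|\cA_R|\le\kappa\bar c_\phi(|v|+\int|v_*|\dd\mu)$. Gronwall's inequality then gives a bound independent of $R$ on $[0,T]$. This bound yields tightness and equicontinuity in $\mathcal W_1$, so a subsequence converges in $C([0,T];\mathcal P_1)$. I will pass to the limit in the characteristic system along a Skorokhod-type or Arzel\`a--Ascoli argument on trajectories. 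Uniform integrability of $|v_*|$ comes from the second-moment bound and lets the truncation disappear in the limit.

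Centering is preserved, as in Lemma \ref{L2.1}, by antisymmetry. For the $L^\infty$ bound, the divergence of the velocity field in $v$ equals $-d\kappa\int\phi\,\dd\mu\ge-d\kappa\bar c_\phi$. Liouville's formula along characteristics then gives $\|f_t\|_\infty\le e^{d\kappa\bar c_\phi t}\|f_0\|_\infty$, and absolute continuity passes to the limit by weak-$*$ lower semicontinuity.

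For (ii), I will couple the two flows. Let $\pi_0$ be an optimal coupling of $\mu_0$ and $\nu_0$, and set $D(t)=\int|Z_\mu(t;z)-Z_\nu(t;\bar z)|\dd\pi_0$, so that $\mathcal W_1(\mu_t,\nu_t)\le D(t)$. Differentiating, the force $\nabla W*\rho$ contributes at most $2\|D^2W\|_\infty D$. The alignment term splits into a part that is Lipschitz in $v$, with constant $\kappa\bar c_\phi$, and a dangerous cross term
\[
\kappa\int|\phi(|x-x_*|)-\phi(|\bar x-\bar x_*|)|\,|v_*-v|\lesssim\|\phi'\|_\infty\int(|x-\bar x|+|x_*-\bar x_*|)(|v|+|v_*|).
\]
Its velocity weight is unbounded, and handling it is the main obstacle.

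To control it, I will first show that exponential mechanical-energy moments propagate on $[0,T]$. Along characteristics, $\frac{\dd}{\dd t}h_t(Z)$ is bounded by $C(1+h_t(Z)+\cE(t))$, using $(\mathcal F_A1)$ and Lemma \ref{L2.4}. This gives $\int e^{a_T h_t}\dd\mu_t\le M_T$ with a reduced rate $a_T>0$. Since $h$ dominates $|v|^2/2$, I can split at the level $|v|+|v_*|\le L$. Below the level, the term is bounded by $L\cdot D$. Above it, I use Cauchy--Schwarz with the second moments together with the exponential tail $e^{-a_TL^2/4}$. Optimizing in $L$ with $L\sim\sqrt{\log(1/D)}$ yields
\[
D'\le C_T\,D\sqrt{\log(e+1/D)}\quad\text{(for $D$ small).}
\]
The modulus $r\sqrt{\log(1/r)}$ satisfies the Osgood condition $\int_0\dd r/(r\sqrt{\log(1/r)})=\infty$. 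The Osgood comparison lemma therefore gives a continuous, concave, increasing $G_T$ with $G_T(0)=0$, which is \eqref{C-14}; uniqueness follows by taking $\mu_0=\nu_0$. The delicate points are making the tail split compatible with the coupling, where I would work with $\pi_0$-integrals of the product weights, and justifying the moment propagation for Lagrangian solutions that are merely $\mathcal P_2$. For the latter I would argue on the truncated approximations and pass to the limit, as detailed in Appendix \ref{app-C}.
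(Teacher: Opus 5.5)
Your part (ii) follows the paper's Appendix~\ref{app-C}: the transported coupling, the cross term $\int|V_\nu|\,|X_\mu-X_\nu|\,\dd\pi_0$, a cutoff at a velocity level, and an Osgood comparison. There are two harmless variations. First, you propagate $e^{a_Th}$ with a reduced rate via Gr\"onwall on $h$, whereas the paper keeps the rate $a$ through the additive shift $h_t(Z_t)\le h_0+C_*t$ of Proposition~\ref{P4.1}. Second, you exploit the Gaussian tail coming from $h\ge|v|^2/2$ and get $D'\lesssim D\sqrt{\log(1/D)}$. The paper uses only an exponential tail in $|V|$, which gives $D'\lesssim D\log(1/D)$ and $G_T(r)=Cr^{e^{-2CT}}$. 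The problems are in the justification steps.

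\textbf{The truncated field in (i) is not globally Lipschitz.} Write $\cA_R[\mu](x,v)=\kappa m_R(x)-\kappa\varrho_R(x)v$ with $\varrho_R(x)=\int\phi(|x-x_*|)\chi_R(v_*)\,\dd\mu(z_*)$. Since $\varrho_R$ has $x$-Lipschitz constant $\|\phi'\|_\infty$, comparing two truncated characteristics gives
\[
|\cA_R[\mu_t](X,V)-\cA_R[\mu_t](\bar X,\bar V)|\le C_R|X-\bar X|+\kappa\bar c_\phi|V-\bar V|+\kappa\|\phi'\|_\infty|X-\bar X|\,|\bar V|.
\]
The last term is exactly the unbounded cross term you identify as the main obstacle in (ii). Cutting off $v_*$ removes only the harmless half of it. For data merely in $\mathcal P_2$ there is no velocity tail with which to close even an Osgood-type estimate, so the Dobrushin fixed point producing $\mu^R$ fails as stated.

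The same truncation also breaks the antisymmetry under $z\leftrightarrow z_*$: the symmetrized kernel is $\phi(|x-x_*|)\bigl(\chi_R(v_*)-\chi_R(v)\bigr)(v_*-v)\neq0$. Hence momentum is not conserved for $\mu^R$, and ``centering is preserved as in Lemma~\ref{L2.1}'' is false at the truncated level. It does hold for the limit, which solves the untruncated equation.

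Both defects disappear if you truncate symmetrically: use $\kappa\int\phi(|x-x_*|)\bigl(T_R(v_*)-T_R(v)\bigr)\dd\mu(z_*)$ with $T_R(v):=\chi_R(v)v$, the projection onto $\overline{B_R}$. This field has the following properties:
\begin{itemize}
\item it is bounded, with $x$-Lipschitz constant $2\kappa R\|\phi'\|_\infty$ and $v$-Lipschitz constant $\kappa\bar c_\phi$;
\item it is Lipschitz in $\mu$ for $\mathcal W_1$ and antisymmetric;
\item it has linear growth, since $|T_R(v)|\le|v|$;
\item it satisfies $\nabla_v\cdot\cA_R\ge-d\kappa\bar c_\phi$, because $0\le\nabla\cdot T_R\le d$.
\end{itemize}
With this choice the rest of your part (i) goes through. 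Alternatively, do what the paper does: approximate $\mu_0$ by compactly supported data, whose support stays bounded on $[0,T]$ so the classical Lipschitz theory applies, and get $R$-uniform second moments from the energy identity.

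\textbf{Moment propagation in (ii) must hold for every Lagrangian solution.} Uniqueness in the class \eqref{C-13} concerns arbitrary Lagrangian solutions $\mu,\nu$, not only those you construct. Proving the exponential-moment bound ``on the truncated approximations and passing to the limit'' therefore covers only your own solution. Instead, differentiate directly from the representation
\[
(W*\rho_{\mu_t})(X(t))=\int W\bigl(X(t)-X_\mu(t,z_*)\bigr)\,\dd\mu_0(z_*),
\]
which is legitimate because $\mu\in C([0,T];\mathcal P_2)$ and the field has linear growth. The same computation gives the energy identity, so your constants depend only on $M$. The paper obtains this through Proposition~\ref{P4.1} together with the test-function cutoff of Appendix~\ref{app-A}, which is valid for any weak solution.
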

\begin{proof}
(i)  Define
\[
\cF_W[\mu](x):=-(\nabla W*\rho_\mu)(x).
\]
Then, the Hessian bound for $W$ gives
\begin{equation}\label{C-15}
|\cF_W[\mu](x)-\cF_W[\nu](y)|
\leq  \|D^2W \|_{\infty}\bigl(|x-y|+\mathcal{W}_1(\mu,\nu)\bigr).
\end{equation}
Thus the potential is a globally Lipschitz perturbation of the KCS vector field and has at most linear growth. Let $\mu_t^R $ denote the approximate solution corresponding to the compactly supported approximation indexed by $R>0$. The exact energy identity derived by Remark \ref{R2.1} (1) for these approximate solutions yields, uniformly on every finite time interval,
\begin{equation}\label{C-16}
	\sup_{R>0}\sup_{0\le t\le T}
	\int_{\mathbb R^{2d}} |z|^2\dd\mu_t^R(z)<\infty.
\end{equation}
More precisely, the forces satisfy a linear bound:
\[ |\cA[\mu](x,v)|\leq C\left(|v|+\int_{\bbr^{2d}}|v_*|\dd\mu(z_*) \right), \quad  |\cF_W[\mu](x)|\leq C\left(|x|+\int_{\bbr^d} |x_*|\dd\rho_\mu(x_*)\right). \]
Hence the associated vector fields have uniformly controlled linear
growth. The standard compactness argument
for characteristic measure solutions, together with
\eqref{C-15}--\eqref{C-16}, allows us to extract a subsequence converging
to a global Lagrangian weak solution of \eqref{A-4}; see
\cite{K-M-T-2015,A-G-2008}. \newline

\noindent (ii)~The stability estimate is proved in Appendix~\ref{app-C}. The argument is similar to velocity-truncation method of \cite{H-W-2026}. Note that estimate \eqref{C-15} contributes only a linear multiple of the Lagrangian deviation and therefore does not change the modulus. Setting $\mu_0=\nu_0$ in \eqref{C-14} gives uniqueness.
\end{proof}
\begin{remark}\label{R3.2}
Finite second moments are sufficient for existence, but the stability proof requires an exponential velocity tail. In the present model, this tail follows on every finite time interval from \eqref{C-13} and the characteristic mechanical-energy estimate. A merely polynomial tail leads, after truncation, to a non-Osgood modulus and does not by itself yield uniqueness. Accordingly, the polynomial decay results below apply to every global Lagrangian weak solution furnished by part~\textnormal{(i)}, whereas the exponential-tail dynamics is uniquely determined by its initial datum.
\end{remark}
\subsection{Main results} \label{sec:3.3}
In this subsection, we present two main results on the quantitative weak flocking estimates for \eqref{A-4} under the framework $({\mathcal F}_A)$ introduced in Section \ref{sec:3.1}.
\begin{theorem}\label{T3.3}
Suppose that the framework $({\mathcal F}_A1) - ({\mathcal F}_A3)$ holds, and let $f$ be a global Lagrangian weak solution whose existence is guaranteed by  Theorem~\ref{T3.2} \textnormal{(i)}. Then, depending on the decay rate of communication weight $\phi$ in \eqref{C-2}, we have the following quantitative flocking estimates:
\begin{enumerate}[label=\textnormal{(\roman*)}]
\item If $\beta=0$, then weak flocking emerges exponentially fast:
\begin{equation*}\label{C-17}
\frac{\min \Big \{2, \frac{1}{\bar{c}_W}  \Big \}\cF(0)}{\max \Big \{2, \frac{1}{\underbar{c}_W}  \Big \}} e^{-2\kappa\bar c_\phi t}\leq \cF(t)
\leq   \frac{\bar{c}_F \cF(0)}{ \underbar{c}_F}  e^{-\tilde{\lambda} t},\quad t\ge0.
\end{equation*}
Here, $\underbar{c}_F$, $\bar{c}_F$, and $\tilde{\lambda}$  are positive constants defined in \eqref{ND-11}, \eqref{D-12}, and \eqref{D-13}, respectively.

\vspace{.1cm}

\item
 If $0<\beta\leq2$ and $\cM_q(f_0)<\infty$ for some $q>1$, then weak flocking emerges at least polynomially fast:
\begin{equation}\label{C-18}
\cF(t) \lesssim 
 (1+t)^{-2(q-1)/\beta}, \quad  t\ge0.
\end{equation}
\vspace{.1cm}

\item If $0<\beta\leq2$ and $\cM_{e,a}(f_0)<\infty$ for some $a>0$, then there exist constants $C,c>0$ such that
\begin{equation}\label{C-19}
\frac{\min \Big \{2, \frac{1}{\bar{c}_W}  \Big \}\cF(0)}{\max \Big \{2, \frac{1}{\underbar{c}_W}  \Big \}} e^{-2\kappa\bar c_\phi t}
\leq \cF(t)\leq  Ce^{-ct},
\qquad t\geq0.
\end{equation}
\end{enumerate}
\end{theorem}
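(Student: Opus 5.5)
The plan is to work throughout with the modified energy $\cE=\cE_K+\omega(t)\cE_{I,s}+\cE_{I,e}$ from \eqref{A-9}. First I will establish everything for smooth, fast-decaying approximations (Appendix~\ref{app-A}), and then pass to the Lagrangian solution of Theorem~\ref{T3.2}(i).

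\textbf{Common ingredients.} Using Lemma~\ref{L2.3}(i)--(iii), Remark~\ref{R2.1}(1) and the virial condition $z\cdot\nabla W\ge \nu_W W$, one gets
\[
\cE'\le -\Lambda+2\omega\cE_K-\omega\nu_W\cE_{I,e}+\omega'\cE_{I,s}.
\]
For the equivalence, I bound $|2\int x\cdot v f|\le \cI_x+\cE_K$. Since $\Phi(r)\le \kappa\bar c_\phi r^2/2$, identity \eqref{B-7} gives $0\le\int\Phi\rho\rho\le \kappa\bar c_\phi\cI_x$. With \eqref{B-8} this yields $|\cE_{I,s}|\lesssim \cE_K+\cE_{I,e}$. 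Hence, for $\omega\le\omega_0$ small, $\cE\asymp \cE_K+\cE_{I,e}\asymp\cF$ by \eqref{B-10}.

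\textbf{The lower bounds in (i) and (iii).} These are the same in both cases. Remark~\ref{R2.1}(2) gives $(\cE_K+\cE_{I,e})'\ge-2\kappa\bar c_\phi\cE_K\ge -2\kappa\bar c_\phi(\cE_K+\cE_{I,e})$. Gr\"onwall and \eqref{B-10} on both ends then give the stated prefactor.

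\textbf{Part (i), $\beta=0$.} Here $\phi\ge\underbar c_\phi$, so $\Lambda\ge 2\kappa\underbar c_\phi\cE_K$ by \eqref{B-9}. I take $\omega$ constant and small. Then $\cE'\le -2(\kappa\underbar c_\phi-\omega)\cE_K-\omega\nu_W\cE_{I,e}$, and the first coefficient is positive by $({\mathcal F}_A3)$. The equivalence gives $\cE'\le-\tilde\lambda\cE$, which produces constants $\underbar c_F,\bar c_F,\tilde\lambda$.

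\textbf{Parts (ii) and (iii), effective region.} I follow characteristics $Z_t=(X_t,V_t)$ and estimate the microscopic mechanical energy $h_t(Z_t)$. Its derivative is $V\cdot\cA[f_t]$ plus $\int\nabla W(X-x_*)\cdot v_*f_t\,\dd z_*$, which uses \eqref{C-1}. Both terms are controlled by $\sqrt{h}$ times macroscopic quantities, and I expect the second to be bounded by $\sqrt{\cE_K+\cE_{I,e}}\sqrt{h}$. This gives $h_t(Z_t)\le C(h_0+\text{shift}(t))$. Since the mass is transported, $\cM_q(f_t)$ stays comparable to $\cM_q(f_0)$ up to the shift, so $f_t$ gives mass $\lesssim R^{-2q}$, and kinetic/inertia contribution $\lesssim R^{-2(q-1)}$, to $\{h_t>R^2\}$. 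Inside $\{h_t\le R^2\}$ we have $|x|,|x_*|\lesssim R$ by Lemma~\ref{L2.4}, so $\phi\gtrsim R^{-\beta}$. This gives the bound of Remark~\ref{R2.1}(2):
\[
\Lambda\ge cR^{-\beta}\cE_K-CR^{-\beta-2(q-1)}.
\]

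\textbf{Closing the inequality for (ii).} I choose $R=(1+t)^{\gamma/2}$ and $\omega=\varepsilon(1+t)^{-\gamma\beta/2}$. Then $\omega'\cE_{I,s}$ is lower order, and $\cE'\le -c(1+t)^{-\alpha}\cE+C(1+t)^{-\alpha-\gamma(q-1)}$ with $\alpha=\gamma\beta/2$. For $\gamma<2/\beta$, Lemma~\ref{L2.5} gives decay $(1+t)^{-\gamma(q-1)}$, i.e.\ any exponent below $2(q-1)/\beta$. To reach the endpoint $\gamma=2/\beta$, which is \eqref{A-10}, I will use Lemma~\ref{L2.7}(1). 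The obstacle is that the rate constant $c_1$ must exceed $2(q-1)/\beta$, and it is limited by the smallness of $\varepsilon$. I would try to bootstrap: the preliminary polynomial decay makes the characteristic shift bounded, which lets me enlarge the effective region by a constant factor and iterate. This endpoint, together with uniform-in-time control of the energy shift, is where I expect the main difficulty.

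\textbf{Closing the argument for (iii).} I use two stages. First, I localize at $R\sim(\log(1+t))^{1/2}$, so that the exponential tail $e^{-aR^2}$ is polynomially small. Lemma~\ref{L2.6} or Lemma~\ref{L2.5} then gives integrable decay of $\cE_K+\cE_{I,e}$, hence a uniformly bounded characteristic energy shift. Second, with the shift bounded, I fix $R_0$ large and time-independent. The exponential tail bound then makes the outside contribution at most $\delta(\cE_K+\cE_{I,e})$, via a high-energy descent and Cauchy--Schwarz on the tail. The resulting inequality $\cE'\le-c\cE$ with constant $\omega$ gives $\cF\le Ce^{-ct}$.
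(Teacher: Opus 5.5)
Your part (i) and the two exponential lower bounds are correct and coincide with the paper's argument. The gaps are in (ii) and (iii).

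\textbf{Part (ii).} The characteristic bound you anticipate is too weak. You expect $\frac{\dd}{\dd t}h\lesssim\sqrt{h}\,\sqrt{\cB}$ with $\cB=\cE_K+\cE_{I,e}$. This only yields $h_t(Z_t)\le 2h_0+C\bigl(\int_0^t\sqrt{\cB}\,\dd s\bigr)^2$, a shift of order $t^2$. Your preliminary windows $(1+t)^{\gamma}$ with $\gamma<2/\beta\le 2$ cannot dominate that once $\beta\ge1$, and the shift is not sublinear unless $\cB$ decays faster than $t^{-1}$.

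The paper removes the $h$-dependence entirely:
\begin{itemize}
\item completing the square against $-\kappa\varrho_\phi|V|^2$ gives $V\cdot\cA[f_t](X,V)\le\frac{\kappa\bar c_\phi}{4}\cE_K$;
\item zero momentum lets you replace $\nabla W(X-x_*)$ by $\nabla W(X-x_*)-\nabla W(X)$, so the exchange term is at most $C_W\cB$.
\end{itemize}
Hence $h_t(Z_t)\le h_0+C_h\int_0^t\cB\,\dd s$ (Proposition~\ref{P4.1}).

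More importantly, your endpoint remedy points the wrong way. Enlarging the window lowers the communication floor $b_{A,\beta}\propto A^{-\beta/2}$, and hence the damping rate. The paper instead \emph{shrinks} $A$ in $1+A(1+t)^{2/\beta}$:
\begin{itemize}
\item the damping constant grows like $A^{-\beta/2}$, while only the source constant $\tilde U_A\propto A^{-(q-1)}$ deteriorates;
\item the gauge $\omega=\frac{\eta\kappa b_{A,\beta}}{2}(1+t)^{-1}$ is scaled with $b_{A,\beta}$;
\item $\cE\asymp\cB$ only needs $\omega(t)$ small for $t\ge T_A$, so the $\varepsilon$-limitation you worry about disappears.
\end{itemize}
Lemma~\ref{L2.7}\,(1) then closes with $c_1>2(q-1)/\beta$ (Remark~\ref{R4.1}). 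The only constraint on shrinking $A$ is that the window dominate the shift, which is automatic for $\beta<2$ since then $2/\beta>1$.

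For $\beta=2$ your preliminary step is empty: $\gamma<1$ cannot beat a linear shift, and Lemma~\ref{L2.5} needs $\alpha<1$. The paper first runs $\gamma=1$, $A=2C_*+2$ through Lemma~\ref{L2.7}\,(1) to get some positive rate. That makes $\int_0^t\cB$ sublinear (not bounded, as you claim), and Corollary~\ref{C4.1} then admits every $A>0$.

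\textbf{Part (iii).} Your first stage is circular. A window at energy level $\log(1+t)$ helps only if it exceeds the characteristic shift, which is a priori linear. Before $\cB$ is known to be integrable, one only has $\int e^{ah_t}f_t\,\dd z\le e^{aC_*t}\cM_{e,a}(f_0)$. The paper obtains stretched-exponential, hence integrable, decay of $\cB$ (Proposition~\ref{NP4.3}) as follows:
\begin{itemize}
\item for $\beta<2$, the linear window $\gamma=1$, $A>C_*$ (Lemma~\ref{L4.10}\,(1)) together with Lemma~\ref{L2.6};
\item for $\beta=2$, a two-step bootstrap: Lemma~\ref{L2.7}\,(2), then a window $(1+t)^{1-\bar\mu_A/2}$ and Lemma~\ref{L2.6}.
\end{itemize}

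Your second stage has a more serious gap. With a fixed $R_0$ and only $\sup_t\int e^{ah_t}f_t\,\dd z\le M_a^*$, the exterior error in $\Lambda$ is bounded only by a time-independent constant of size $e^{-aR_0/2}$. No static Cauchy--Schwarz argument turns it into $\delta\cB(t)$: a small mass carrying most of the kinetic energy above $R_0$ makes the exterior term comparable to $\cB$ itself.

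What is needed is a \emph{time-decaying} exterior tail for a fixed region, which the paper produces dynamically. On annuli $H\le h\le4H$, the compensated energy $\mathscr H_H=h+\delta_*H^{-\beta/2}X\cdot V$ decreases at rate $c_{\rm des}H^{1-\beta/2}$. The mechanism is alignment with the low-energy bulk when $|V|^2\ge\sigma H$, and the virial inequality through $X\cdot V$ when $|V|^2<\sigma H$. This gives dyadic descent below $2H_*$ within time $C_{\rm dy}E^{\beta/2}$, and no re-entry above $4H_*$ (Proposition~\ref{NP4.4}, Lemma~\ref{NL4.15}). Hence any label outside $\{h_t\le4H_*\}$ at time $t$ has $h_0\gtrsim t^{2/\beta}$, so $\mathfrak M_*(t)\lesssim e^{-ct^{2/\beta}}$ (Corollary~\ref{NC4.3}). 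Only then does $\cE_*'+\lambda_*\cE_*\le Ce^{-ct}$ follow.
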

\begin{proof}
Since the proofs for the above assertions are very lengthy, we leave them in Section \ref{sec:4}. However, for readers' convenience, we outline a proof strategy only for the second assertion as follows.  \newline

Suppose that parameters and initial datum satisfy the following conditions:
\[ 0<\beta\leq 2, \quad \cM_q(f_0)<\infty, \quad \mbox{for some $q > 1$}, \]
and let $f$ be a global Lagrangian weak solution to \eqref{A-4}.
\vspace{0.1cm}
\begin{itemize}
\item
\textbf{Step A} (Energy dissipation): It follows from Lemma \ref{L2.2} and Lemma \ref{L2.3} that 
\begin{equation} \label{C-20}
 \cF(t) \asymp  (\cE_K + \cE_{I,e})(t), \quad \frac{\dd}{\dd t} (\cE_K + \cE_{I,e})(t)=-\Lambda(t).
\end{equation}
\item
\textbf{Step B} (Microscopic mechanical-energy control): We introduce a microscopic energy density:
\[
h_t(z):=\frac12|v|^2+(W*\rho_t)(x), \quad \rho_t(x) = \int_{\bbr^d} f_t(x,v) \dd v.
\]
Along every characteristic $Z(t) = (X(t), V(t))$, the boundedness of $D^2W$, momentum conservation, and the energy coercivity yield the at-most-linear growth of $h_t$ along the characteristics (see Proposition \ref{P4.1}):
\[
h_t(Z(t)) \leq h_0(z)+ C_* t.
\]
This estimate controls the propagation of the initial mechanical-energy tails despite the exchange between kinetic and potential energies.  
\vspace{0.2cm}
\item
\textbf{Step C} (Time-varying effective regions):
Let $\gamma \geq 1$ be a positive parameter. For the prescribed energy window
\[
R_{A, \gamma}(t)=1+A(1+t)^\gamma,
\]
we introduce time-varying effective region and mass outside the effective region:
\[
\hspace{1cm} \Omega_{A, \gamma}(t):=\{z \in \bbr^{2d}: h_t(z)\leq R_{A, \gamma}(t) \}, \quad  \mathfrak{M}_{A,\gamma}(t):=
\int_{\big(\Omega_{A,\gamma}(t)\big)^c} \Big (1+h_t(z) \Big )f_t(z)\dd z.
\]
By the properties of quadratic confinement for $W$ and the decay of initial mechanical-energy tail, we have
\[
\phi(|x-x_*|)  \gtrsim (1+t)^{-\gamma\beta/2}\quad   \mbox{on $\Omega_{A, \gamma}$}, \quad {\mathfrak M}_{A,\gamma}(t)\leq U_A(1+t)^{-\gamma(q-1)}, \quad t \gg 1.
\]
See Lemma \ref{L4.6} and Lemma \ref{L4.8}. 
\vspace{0.2cm}
\item
\textbf{Step D} (Lower bound estimate of $\Lambda$): Using the decay estimate of $\mathfrak{M}_{A,\gamma}$, the dissipation functional $\Lambda$ satisfies 
\[
\Lambda(t) \geq  |{\mathcal O}(1)| (1+t)^{-\frac{\gamma\beta}{2}} \cE_K(t) -  |{\mathcal O}(1)|  (1 + t)^{-\gamma (  \frac{\beta}{2} + q-1)}, \quad t  \gg 1.
\]
This and \eqref{C-20} imply 
\begin{align}
\begin{aligned} \label{C-21}
& \frac{\dd}{\dd t}  (\cE_K + \cE_{I,e})(t) \\
& \hspace{1cm} \leq -|{\mathcal O}(1)| (1+t)^{-\frac{\gamma\beta}{2}} \cE_K(t)  +  |{\mathcal O}(1)|  (1 + t)^{-\gamma (  \frac{\beta}{2} + q-1)}, \quad t  \gg 1.
\end{aligned}
\end{align}
See Proposition \ref{P4.2}.   As aforementioned in Introduction, the above differential inequality \eqref{C-21} is not sufficient to derive the decay estimates of $\cE_K + \cE_{I,e}$.
\vspace{0.2cm}
\item
\textbf{Step E} (Macroscopic hypocoercivity):~Note that the right-hand side of \eqref{C-21} does not contain $\cE_{I,e},$ so that we cannot use Gr\"onwall's differential inequality for $\cE_K + \cE_{I,e}$. To overcome this drawback, we introduce the compensated energy functional by adding a new term containing a cross term $\cE_{I,s}$ as in \cite{LiChen2025}:
\[
\cE(t)= \cE_K(t) + \cE_{I,e}(t)+\omega(t) \cE_{I,s}(t),
\]
where the functional $\cE_{I,s}$ contains the mixed moment $\int_{\bbr^{2d}} x\cdot vf_t \dd z$ and a primitive of the communication weight which make us use the jargon ``{\it method of hypocoercivity}". The primitive term cancels the alignment contribution in the derivative of the mixed moment, while the strict virial inequality controls the potential energy. As a result, we obtain  the Gr\"onwall's differential inequality for $\cE$:
\[
\cE'(t)
\leq
- |{\mathcal O}(1)|(1+t)^{-\gamma\beta/2}\cE(t)
+  |{\mathcal O}(1)| (1 + t)^{-\gamma (  \frac{\beta}{2} + q-1)}.
\]
See Lemma \ref{L4.9}.
\vspace{0.2cm}
\item
\textbf{Step F} (Polynomial tails): We first apply Gr\"onwall's lemma (Lemma \ref{L2.5}) to derive 
\[ \cE(t)  \leq {\tilde C} (1+t)^{-\gamma (q-1)}, \quad t \geq t_0. \]
For $\beta \in (0, 2)$, we choose $\gamma=2/\beta \in (1, \infty)$ and use a bootstrap argument and Lemma \ref{L2.7} (1) to derive algebraic decay rate of $\cE$:
\[
\cE(t)\lesssim(1+t)^{-2(q-1)/\beta}.
\]
For the critical case $\beta=2$, one first obtains a positive algebraic decay, which makes the characteristic-energy shift sublinear, and then repeats the argument with an arbitrarily small linear window. See Section \ref{sec:4.2.4}.
\end{itemize}
\end{proof}

\noindent Next, we show that the convergence rate of polynomial tails in \eqref{C-18} is optimal.
\begin{theorem}\label{T3.4}
Suppose that parameters and communication weight function satisfy 
\[
d\ge 2, \quad  q>1, \quad  0<\beta\leq2, \quad \phi_\beta(r):=\frac{1}{(1+r^2)^{\beta/2}}.
\]
There exists a fixed radially symmetric confining potential $W_{\rm rs}\in C^\infty(\bbr^d)$ satisfying $({\mathcal F}_A1)$ with $D^2W_{\rm rs}$ having a negative eigenvalue somewhere, and a symmetric global solution of a countably infinite weighted particle system with communication weight function $\phi_\beta$ satisfying the following properties:
\begin{enumerate}
\item
Empirical measure $\mu_t$ has fully noncompact phase-space support and 
\[
\int_{\bbr^{2d}}(1+h_{\mu_0}(z))^q\dd\mu_0(z) <\infty.
\]
\item
There exists a sequence $t_n\to\infty$ and $c>0$ for which
\begin{equation}\label{C-22}
\cF_\mu(t_n)\geq\frac{c}{n^2}(1+t_n)^{-2(q-1)/\beta}.
\end{equation}
\item
For every $\delta>0$,
\begin{equation}\label{C-23}
\limsup_{t\to\infty}(1+t)^{2(q-1)/\beta+\delta}\cF_\mu(t)=\infty.
\end{equation}
\end{enumerate}
\end{theorem}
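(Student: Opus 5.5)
We prove Theorem~\ref{T3.4} by an explicit construction: a massive core at rest at the origin, plus a sparse sequence of symmetric pairs of high-energy particles on nearly circular orbits of radii $R_n\to\infty$. Each pair feels alignment only through the weak far-field value $\phi_\beta(R_n)\sim R_n^{-\beta}$, so its kinetic energy survives up to times of order $R_n^{\beta}$. Choosing the masses so that the $q$-th mechanical-energy moment is just barely finite forces $\cF_\mu$ to be of size $n^{-2}t_n^{-2(q-1)/\beta}$ at $t_n\asymp R_n^\beta$. Items (1)--(3) then follow.

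\textbf{Step 1 (the potential).} Take $W_{\rm rs}(z)=\widetilde W(|z|)$ with $\widetilde W(r)=\frac12 r^2+a\,\chi(r)(1-\cos r)$, where $\chi$ is a smooth cutoff supported in a fixed annulus $\{1\le r\le 10\}$. As in Example~\ref{Ex3.1}, this satisfies $({\mathcal F}_A1)$ and has a negative Hessian eigenvalue somewhere, while $\nabla W_{\rm rs}(z)=z$ for $|z|\ge 10$.

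\textbf{Step 2 (the configuration).} Place a core of mass $m_0\in(0,1)$ at $z=0$. For $n\ge1$, place two particles of mass $m_n/2$ at $\pm x_n^0$ with velocities $\pm v_n^0$, where $|x_n^0|=R_n$ and $v_n^0=R_n e^\perp$ (circular harmonic velocity). Take
\[
m_n:=c_0\,n^{-2}R_n^{-2q},
\]
normalize total mass to one, and let $R_n$ grow super-geometrically (e.g.\ $R_{n+1}\ge R_n^{2}$, $R_1\ge 100$).

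\textbf{Step 3 (symmetry and global existence).} The system is invariant under $z\mapsto -z$, so center of mass and momentum vanish and the core stays at rest at the origin. Since all forces are globally Lipschitz with linear growth (cf.\ \eqref{C-15}) and $\sum_n m_nR_n^2<\infty$, global existence of the countable weighted ODE system follows by the usual $\ell^1$-weighted Picard argument, or as a limit of truncations. For the moment condition, Lemma~\ref{L2.4} gives $h_{\mu_0}\asymp 1+R_n^2$ on the $n$-th pair, hence $\int(1+h_{\mu_0})^q\,\dd\mu_0\lesssim \sum_n n^{-2}<\infty$. The support is fully noncompact since $R_n\to\infty$.

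\textbf{Step 4 (survival of the $n$-th pair, the main obstacle).} We need a lower bound on $|v_n(t)|^2+|x_n(t)|^2$ for $t\le c_1R_n^\beta$. The plan is a bootstrap on $[0,c_1R_n^\beta]$ with the hypothesis $|x_n(t)|\ge R_n/2$.
\begin{itemize}[leftmargin=*]
\item \emph{Outside the annulus.} Far from the origin, $\nabla W_{\rm rs}*\rho$ acting on the pair is exactly the linear force $-x$ (by centering), which conserves the harmonic energy $\tfrac12(|v_n|^2+|x_n|^2)$. Only the annulus correction, supported near the other particles' positions, could act nontrivially. This cannot happen under the bootstrap and the super-geometric spacing, since all other masses stay at distance $\gtrsim R_n$.
\item \emph{Alignment toward the core and smaller pairs.} These sit at distance $\ge R_n/4$, so their total alignment force is at most $\kappa\bar c\,R_n^{-\beta}\bigl(|v_n|+\sup_m|v_m|\bigr)$.
\item \emph{Alignment toward larger pairs.} These have total mass $\sum_{k>n}m_k\ll m_n$ and velocities bounded by energy conservation of the whole system (Remark~\ref{R2.1}).
\item \emph{The antipodal partner.} It is at distance $2|x_n|$ and contributes the same small rate.
\end{itemize}
This yields
\[
\frac{\dd}{\dd t}\Bigl(|v_n|^2+|x_n|^2\Bigr)\ge -C R_n^{-\beta}\Bigl(|v_n|^2+|x_n|^2\Bigr)-\text{(small)}.
\]
Hence for $t\le t_n:=c_1R_n^\beta$ the energy stays $\ge R_n^2$, and the bootstrap $|x_n|\ge R_n/2$ closes because the orbit remains near-circular. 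We expect the delicate part to be controlling the cross-interactions uniformly in $n$, and choosing $R_n$ sparse enough to do so.

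\textbf{Step 5 (conclusion).} By \eqref{B-11}, $\cF_\mu(t_n)\ge 2\cE_K(t_n)+2\cI_x(t_n)\gtrsim m_nR_n^2$. Since $t_n=c_1R_n^\beta$,
\[
\cF_\mu(t_n)\gtrsim n^{-2}R_n^{-2(q-1)}=c\,n^{-2}(1+t_n)^{-2(q-1)/\beta},
\]
which is \eqref{C-22}. Finally, because $R_n$ grows super-geometrically, $(1+t_n)^{\delta}\ge R_n^{\beta\delta}/C\gg n^2$ for any $\delta>0$, so \eqref{C-23} follows.
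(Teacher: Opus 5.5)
Your construction matches the paper's in outline: a massive core at rest, symmetric near-circular pairs with $m_n\propto n^{-2}R_n^{-2q}$, survival up to $t_n\sim R_n^{\beta}$, and the same final computation. The gap is in Step 4, which asserts without proof the one fact the paper has to engineer. You need that for \emph{all} $t\le c_1R_n^\beta$ the core and every lower pair $k<n$ stay in a ball of radius $\ll R_n$, hence at distance $\gtrsim R_n$ (in particular $>10$) from $x_n(t)$. Your bootstrap controls only the $n$-th pair. A lower pair is at best known to stay near its circle up to its own, much shorter, time $c_1R_k^\beta$; after that nothing you wrote bounds $|x_k(t)|$. Yet both the $R_n^{-\beta}$ alignment bound and the vanishing of the annulus correction depend on it.

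The paper gets this from two ingredients. The first is the global energy inequality $\underbar{c}_W m_k|x_k(t)|^2\le\mathscr E_\infty(t)\le\overline E$, with $\overline E$ independent of the radii. The second is the lacunarity condition $R_n\ge 64\max_{k<n}\sqrt{\overline E/(\underbar{c}_W m_k)}+8$ of Lemma~\ref{L5.2}\,(ii). Since $m_k\sim k^{-2}R_k^{-2q}$, this forces $R_n\gtrsim (n-1)R_{n-1}^{q}$, so the spacing must be tied to $q$; $R_{n+1}=R_n^2$ is not sparse enough when $q>2$.

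A different repair works for any sufficiently sparse sequence. The computation of Proposition~\ref{P4.1} carries over to the particle system. It gives $\underbar{c}_W|x_k(t)|^2\le h_k(t)\le C(1+R_k^2)+C_*c_1R_n^{\beta}$, which is at most $\underbar{c}_W R_n^2/64$ once $R_{n-1}\ll R_n$ and $c_1$ is small, because $\beta\le2$. One of these ingredients must be supplied; it does not follow from your bootstrap.

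Two more points in Step 4 need repair.

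\textbf{Higher pairs.} You cannot invoke geometric separation for them either; that would require a simultaneous bootstrap over infinitely many modes. Instead, bound their contribution to the correction force by $\|\nabla U\|_{L^\infty}\sum_{k>n}m_k$, with $U:=W_{\rm rs}-\tfrac12|\cdot|^2$. This is the same mass-smallness argument you already use for their alignment, where $\sum_{k>n}m_k|v_k|\le(\overline E\sum_{k>n}m_k)^{1/2}$. Similarly, replace $\sup_m|v_m|$ in the lower-mode alignment by $\sum_j m_j|v_j|\le\sqrt{\overline E}$, since $\sup_{m<n}|v_m|$ is not controlled by $R_n$.

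\textbf{Closing $|x_n|\ge R_n/2$.} A lower bound on $|v_n|^2+|x_n|^2$ cannot close this: an orbit of energy $\sim R_n^2$ may be highly eccentric and pass near the core. You must also bootstrap an upper bound on $E_n:=\tfrac12(|v_n|^2+|x_n|^2)$ and a lower bound on the angular momentum $L_n:=x_n\wedge v_n$. Since $\dot E_n=v_n\cdot A_n$ and $\dot L_n=x_n\wedge A_n$ with $|A_n|\lesssim R_n^{1-\beta}$, both change by $O(c_1R_n^2)$ on $[0,c_1R_n^\beta]$. Then $|L_n|\ge\tfrac12R_n^2$ and $E_n\le2R_n^2$ give $R_n/4\le|x_n|\le2R_n$; this is Steps A and E of Appendix~\ref{app-E}.

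\textbf{Smaller remarks.}
\begin{enumerate}
\item In Step 1 the term $ar\chi'(r)(1-\cos r)$ enters $r\widetilde W'(r)$, so the virial bound does not follow automatically from Example~\ref{Ex3.1}. The cutoff must be gentle on its decreasing side; the paper avoids this with a small-amplitude, high-frequency bump.
\item In Step 3 the alignment field is not globally Lipschitz, since its $x$-Lipschitz constant carries velocity differences. Use symmetric finite truncations and compactness as in Proposition~\ref{P5.1}, obtaining symmetry and the energy inequality at the truncated level.
\item Noncompactness of $\supp\mu_t$ for $t>0$ should be read off from Step 4 (every pair with $c_1R_n^\beta>t$ has $|x_n(t)|\ge R_n/4$), not from the initial data.
\end{enumerate}
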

\begin{proof}
We construct an example to show that the algebraic power in Theorem \ref{T3.3} (ii) is optimal.  Since the proof is very lengthy, we leave it in Section \ref{sec:5}.
\end{proof}
\vspace{.2cm}

\subsection{Comments on previous and main results} \label{sec:3.4} In this subsection, we discuss the frameworks and the results in Theorem \ref{T3.2}, Theorem \ref{T3.3},  and Theorem \ref{T3.4} in the sequel.\newline

	\begin{enumerate}
		\item
		The upper bounds in ~Theorem~\ref{T3.3} \textnormal{(i)}--\textnormal{(iii)} hold for every corresponding Lagrangian weak solution. Under the assumptions of~Theorem~\ref{T3.2} \textnormal{(ii)}, Lagrangian weak solution is unique and depends continuously on the initial law via the Osgood-type stability estimate \eqref{C-14}.
		\vspace{0.2cm}
		\item  At the well-posedness level, the references \cite{HaLiu2009,C-F-R-T-2010}  provide measure-theoretical frameworks and uniform-in-time $\mathcal{W}_1$-stability for compactly supported KCS solutions. The fully noncompact  stability  of \cite{H-W-2026} replaces this by an Osgood modulus under exponential velocity tails. ~Theorem \ref{T3.2} extends that mechanism to the present attractive model: the nonconvex potential is harmless for stability precisely because $D^2W\in L^\infty$, while quadratic confinement supplies the uniform second moments needed in the velocity-tail truncation.
		\vspace{0.2cm}
		\item Classical compact-support arguments yield a positive communication lower bound from the spatial diameter and lead to strong flocking \cite{C-F-R-T-2010,ShuTadmor2020}. Such arguments do not apply when both projected supports are unbounded. Therefore, we need to use the method of time-varying effective region developed in references \cite{H-W-2025,H-W-2026,H-W-CS-2026}. In particular, Ha-Wang-Xue treated noncompact  data without a confining force and proved weak velocity alignment with tail-dependent estimates \cite{HaWangXue2025,H-W-CS-2026}. In the current setting, the potential both generates spatial collapse and destroys monotonicity of high velocity moments. The microscopic mechanical energy, rather than separate position and velocity moments, is the correct tail variable. In particular, we can also verify the optimal convergence rate of weak flocking with the help of confinement force.
		\vspace{0.2cm}
		\item Li, Chen, and Yin treat compactly supported data and radial superquadratic attractions, obtaining weak and strong consensus for high-order power laws \cite{LiChen2025,ChenYin2023}. The high-order power-law results of Li and Chen allow $U''$ to change sign under radial monotonicity assumptions, but their strong-consensus argument uses compact support and a bounded spatial diameter \cite{ChenYin2023,LiChen2025}. Their diameter control is unavailable here, whereas our bounded-Hessian assumption is tailored to quadratic-growth forces and noncompact tails. 
		\vspace{0.2cm}
		\item 
	Uniform convexity provides confinement, force regularity, and virial coercivity through a single assumption. Here we separate these roles through \eqref{C-1}, allowing a genuinely indefinite Hessian. For a fixed nonconvex potential, we prove the optimal polynomial decay exponent. In the exponential-tail regime, the high-energy descent mechanism produces a fixed effective region and yields the optimal exponential weak flocking estimate \eqref{C-19}.
	\vspace{0.2cm}
	\item The sharp and delicate construction in Theorem ~\ref{T3.4} relies on persistent rotating modes
	and therefore requires \(d\geq2\). In one dimension, such rotating
	configurations are unavailable: confined trajectories repeatedly cross
	the central region, where the communication strength is substantially
	larger than its far-field value. This suggests that the
	dimension-independent upper bound in Theorem~\ref{T3.3} (ii) may not be
	optimal in \(d=1\).  Determining the
	optimal one-dimensional decay rate is left for future work.
	\end{enumerate}
	
	\vspace{0.2cm}

In the following two sections, we provide detailed proofs for Theorem \ref{T3.3} and Theorem \ref{T3.4}, respectively.

\section{Emergence of weak flocking}\label{sec:4}
\setcounter{equation}{0}
In this section, we provide a rigorous justification of each assertion in Theorem \ref{T3.3} for $\beta=0$, polynomial and exponentially decaying tail classes, respectively:
\[ \mbox{I}:~\beta = 0, \quad \mbox{II}:~0 < \beta \leq 2, \quad  \cM_q(f_0)<\infty, \quad \mbox{III}:~0 < \beta \leq 2, \quad  \cM_{e,a}(f_0)<\infty. \]
\subsection{First assertion}\label{sec:4.1}
In this subsection, we consider all-to-all communication weight with $\beta = 0$ which is bounded by two constants below and above:
\begin{equation} \label{D-1}
 \beta = 0, \quad \kappa \underbar{c}_\phi-1>0, \quad \mbox{and} \quad   \underbar{c}_\phi \leq \phi(r) \leq \bar{c}_\phi, \quad r \geq 0. 
 \end{equation}
 Recall the energy functionals:
 \begin{align*}
 	\begin{aligned}
 		& {\mathcal E}_K :=\int_{\bbr^{2d}} |v|^2 f_t(z)\dd z, \quad {\mathcal E}_{I,e} :=\int_{\bbr^{2d}} W(x-x_*)\rho_t(x)\rho_t(x_*)\dd x\dd x_*, \\
 		& \Phi(r) :=\kappa\int_0^r s\phi(s)\dd s, \quad  \cE_{I,s}(t) :=2\int_{\bbr^{2d}} (x\cdot v) f_t(z)\dd z
 		+\int_{\bbr^{2d}} \Phi(|x-x_*|)\rho_t(x)\rho_t(x_*)\dd x\dd x_*.
 	\end{aligned}
 \end{align*}
 \subsubsection{Preparatory lemmas} \label{sec:4.1.1}
 In this part, we study time-evolution of $\cE_{I,s}$ along the dynamics of \eqref{A-4} and equivalence between $\cF$ and $\cE$.
\begin{lemma} \label{L4.1}
Suppose that the conditions \eqref{D-1} hold, and let $f$ be a global Lagrangian weak solution to \eqref{A-4} whose existence is guaranteed by  Theorem~\ref{T3.2} \textnormal{(i)}. 
Then the functional $\cE_{I,s}$ defined in \eqref{B-13} satisfies the following estimates: 
\begin{align*}
\begin{aligned}
& (i)~\frac{(\kappa \underbar{c}_{\phi}-1)}{2\bar{c}_W}\cE_{I,e} - \cE_K  \leq  \cE_{I,s} \leq   \frac{(\kappa \bar{c}_{\phi} +1)}{2\underbar{c}_W}\cE_{I,e} + \cE_K.\\
& (ii)~ \frac{\dd}{\dd t}  \cE_{I,s}  \leq  2\cE_K-\nu_W\cE_{I,e}.
\end{aligned}
\end{align*}
\end{lemma}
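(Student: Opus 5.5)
The plan is to prove both estimates by direct comparison, using the two-sided bound on $\phi$ in \eqref{D-1}, the quadratic confinement and virial parts of \eqref{C-1}, and the identities of Lemmas \ref{L2.2} and \ref{L2.3}.

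\textbf{Part (i).} The first step is to bound the primitive $\Phi$. Since $\beta=0$ and $\underbar{c}_\phi\le\phi\le\bar c_\phi$, integration gives
\[
\frac{\kappa\underbar{c}_\phi}{2}r^2\le\Phi(r)\le\frac{\kappa\bar c_\phi}{2}r^2 .
\]
Integrating against $\rho_t\otimes\rho_t$ and using \eqref{B-7} yields
\[
\kappa\underbar{c}_\phi\,\cI_x\le\int_{\bbr^{2d}}\Phi(|x-x_*|)\rho_t(x)\rho_t(x_*)\,\dd x\dd x_*\le\kappa\bar c_\phi\,\cI_x .
\]
The second step is to control the mixed moment. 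By Cauchy--Schwarz and Young's inequality,
\[
\Big|2\int_{\bbr^{2d}} x\cdot v\, f_t(z)\,\dd z\Big|\le \cI_x+\cE_K .
\]
Combining the two bounds gives
\[
(\kappa\underbar{c}_\phi-1)\cI_x-\cE_K\le\cE_{I,s}\le(\kappa\bar c_\phi+1)\cI_x+\cE_K .
\]
The last step converts $\cI_x$ into $\cE_{I,e}$ via \eqref{B-8}. For the lower bound I use $\cI_x\ge\cE_{I,e}/(2\bar c_W)$, which is legitimate because its coefficient $\kappa\underbar{c}_\phi-1$ is positive by \eqref{D-1}. For the upper bound I use $\cI_x\le\cE_{I,e}/(2\underbar{c}_W)$. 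This yields exactly the two constants stated in (i).

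\textbf{Part (ii).} I start from Lemma \ref{L2.3}(iii):
\[
\frac{\dd}{\dd t}\cE_{I,s}=2\cE_K-\int_{\bbr^{2d}}(x-x_*)\cdot\nabla W(x-x_*)\rho_t(x)\rho_t(x_*)\,\dd x\dd x_* .
\]
Applying the virial inequality $z\cdot\nabla W(z)\ge\nu_W W(z)$ from \eqref{C-1} pointwise with $z=x-x_*$, and integrating against the nonnegative measure $\rho_t\otimes\rho_t$, bounds the integral below by $\nu_W\cE_{I,e}$. This gives (ii).

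\textbf{Main obstacle.} The computations above are elementary; the real issue is justifying Lemma \ref{L2.3}(iii) for a Lagrangian weak solution rather than a smooth, rapidly decaying one. I would handle this in one of two ways. The first is the approximation of Appendix \ref{app-A}: prove the inequality in integrated form $\cE_{I,s}(t)-\cE_{I,s}(s)\le\int_s^t(2\cE_K-\nu_W\cE_{I,e})\,\dd\tau$ for compactly supported approximants, then pass to the limit. The second is to differentiate directly along the characteristic flow \eqref{C-12}, writing $\int x\cdot v\,\dd\mu_t=\int X_t\cdot V_t\,\dd\mu_0$ and likewise for the $\Phi$-term. In both routes the convergence is controlled by the same facts: the vector fields grow at most linearly, $\Phi$ grows at most quadratically, and second moments are bounded uniformly on finite time intervals by \eqref{C-16}. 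These give the domination needed to exchange the time derivative with the integral, or to pass to the limit in the approximants. The virial step needs no regularity beyond $W\in\mathcal C^2$ and is applied only after the identity has been established.
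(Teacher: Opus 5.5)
Your proposal is correct and follows the paper's proof essentially step for step: the same Cauchy--Schwarz/Young bound $|2\int x\cdot v f_t\,\dd z|\le\cI_x+\cE_K$, the same two-sided quadratic bound on $\Phi$ combined with \eqref{B-7}, the conversion through \eqref{B-8} using $\kappa\underbar{c}_\phi-1>0$ for the lower bound, and Lemma~\ref{L2.3}~(iii) with the pointwise virial inequality for (ii). Your remarks on justifying the identity for Lagrangian weak solutions go beyond what the paper's proof of this lemma writes out, but they agree with the truncation argument the paper defers to Appendix~\ref{app-A}.
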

\begin{proof}
	\noindent
	\textnormal{(i)}
	By the Cauchy--Schwarz inequality, Young's inequality, and the definition of \(\mathcal{I}_x\), we have
	\begin{align}\label{D-2}
	\left|
	2\int_{\bbr^{2d}}x\cdot vf_t(z)\dd z
	\right|
	\leq
	\int_{\bbr^d}|x|^2\rho_t(x)\dd x
	+
	\int_{\bbr^{2d}}|v|^2f_t(z)\dd z=
	\mathcal{I}_x(t)+\cE_K(t).
	\end{align}
	Moreover, the relation
	\[
	\underbar c_{\phi}
	\leq
	\phi(r)
	\leq
	\bar c_{\phi},
	\qquad r\geq0,
	\]
	implies
	\[
	\frac{\kappa\underbar c_{\phi}}{2}r^2
	\leq
	\Phi(r)
	\leq
	\frac{\kappa\bar c_{\phi}}{2}r^2.
	\]
	Hence, by Lemma~\ref{L2.2} (1),
	\begin{align}\label{D-3}
	\begin{aligned}
		\kappa\underbar c_{\phi}\mathcal{I}_x(t)
		&=
		\frac{\kappa\underbar c_{\phi}}{2}
		\int_{\bbr^{2d}}
		|x-x_*|^2
		\rho_t(x)\rho_t(x_*)
		\dd x\dd x_*
		\\
		&\leq
		\int_{\bbr^{2d}}
		\Phi(|x-x_*|)
		\rho_t(x)\rho_t(x_*)
		\dd x\dd x_*
		\\
		&\leq
		\frac{\kappa\bar c_{\phi}}{2}
		\int_{\bbr^{2d}}
		|x-x_*|^2
		\rho_t(x)\rho_t(x_*)
		\dd x\dd x_*=
		\kappa\bar c_{\phi}\mathcal{I}_x(t).
	\end{aligned}
	\end{align}
	Therefore, we combine \eqref{D-2} and \eqref{D-3} to see
	\begin{align}\label{D-4}
	\cE_{I,s}(t)
	\geq
	\bigl(\kappa\underbar c_{\phi}-1\bigr)\mathcal{I}_x(t)
	-\cE_K(t),
	\end{align}
	and
	\begin{align}\label{D-5}
	\cE_{I,s}(t)
	\leq
	\bigl(\kappa\bar c_{\phi}+1\bigr)\mathcal{I}_x(t)
	+\cE_K(t).
	\end{align}
	On the other hand, Lemma~\ref{L2.2} (2) gives
	\[
	2\underbar c_W \mathcal{I}_x(t)
	\leq
	\cE_{I,e}(t)
	\leq
	2\bar c_W \mathcal{I}_x(t).
	\]
	Since the coupling condition in \eqref{D-1} yields
	\[
	\kappa\underbar c_{\phi}-1>0,
	\]
	we obtain
\[
	\bigl(\kappa\underbar c_{\phi}-1\bigr)\mathcal{I}_x(t)
	\geq
	\frac{\kappa\underbar c_{\phi}-1}{2\bar c_W}
	\cE_{I,e}(t).
\]
	Similarly,
	\[
	\bigl(\kappa\bar c_{\phi}+1\bigr)\mathcal{I}_x(t)
	\leq
	\frac{\kappa\bar c_{\phi}+1}{2\underbar c_W}
	\cE_{I,e}(t).
	\]
Finally, we combine the above estimates, \eqref{D-4}, and \eqref{D-5} to see
	\begin{align*}\label{D-6}
	\frac{\kappa\underbar c_{\phi}-1}{2\bar c_W}\cE_{I,e}(t)
	-\cE_K(t)
	\leq
	\cE_{I,s}(t)
	\leq
	\frac{\kappa\bar c_{\phi}+1}{2\underbar c_W}\cE_{I,e}(t)
	+\cE_K(t).
\end{align*}
	\noindent
	\textnormal{(ii)}
	We combine Lemma \ref{L2.3} (iii) and  $\eqref{C-1}_3$ to get the desired estimate:
	\begin{align*}
		\begin{aligned}
			\frac{\dd}{\dd t}  \cE_{I,s}(t) &= 2 \cE_K(t) - \int_{\bbr^{2d}} (x-x_*)\cdot\nabla W(x-x_*)\rho_t(x)\rho_t(x_*)\dd x\dd x_* \\
			&\leq  2 \cE_K(t) - \nu_W  \int_{\bbr^{2d}} W(x-x_*)\rho_t(x)\rho_t(x_*)\dd x\dd x_*.
		\end{aligned}
	\end{align*}	
\end{proof}

\noindent We first choose the time-dependent weight $\omega$ to be the constant:
\[ \omega(t) \equiv \omega_0 :=  \min \Big \{ \frac{1}{4}, \frac{\kappa \underbar{c}_{\phi}}{2} \Big \},  \quad \forall~t \geq 0. \]
Therefore, we use the above definition and $(\cF_A 3)$  to obtain
\[
1 - \omega_0 > 0, \quad  \kappa {\underbar c}_{\phi}-\omega_0>0,  \quad \mbox{and} \quad   \kappa {\underbar c}_{\phi} - 1 > 0.
\]
Then, for such $\omega_0$, we define total energy functional $\cE$ as follows.
\begin{equation} \label{D-8}
{\mathcal E}(t) := {\mathcal E}_K(t) + \omega_0 {\mathcal E}_{I,s}(t) + {\mathcal E}_{I,e}(t).
\end{equation}
\begin{lemma} \label{L4.2}
Suppose the framework $(\cF_A)$ with $\beta = 0$ holds, and let $f$ be a global Lagrangian weak solution to \eqref{A-4}.  Then, the flocking functional $\cF$ is equivalent to the energy functional $\cE$:~there exist positive constants $\underbar{c}_F>0$ and $\bar{c}_F>0$ independent of $t$ such that 
\[  \underbar{c}_F \cE(t) \leq \cF(t) \leq  \bar{c}_F \cE(t), \quad \forall~t \geq 0.
\]
\end{lemma}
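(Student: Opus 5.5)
The plan is to show that $\cE$ is two-sided equivalent to $\cE_K+\cE_{I,e}$, and then transfer this to $\cF$ through Lemma~\ref{L2.2}(4). Lemma~\ref{L4.1}(i) bounds $\cE_{I,s}$ above and below by combinations of $\cE_K$ and $\cE_{I,e}$. Since $\omega_0>0$, we may multiply those bounds by $\omega_0$ and add $\cE_K+\cE_{I,e}$.

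For the lower bound, Lemma~\ref{L4.1}(i) gives
\[
\cE(t)\geq (1-\omega_0)\cE_K(t)+\Bigl(1+\frac{\omega_0(\kappa\underbar c_\phi-1)}{2\bar c_W}\Bigr)\cE_{I,e}(t).
\]
Both coefficients are positive: $1-\omega_0\geq 3/4$ because $\omega_0\leq 1/4$, and $\kappa\underbar c_\phi-1>0$ by \eqref{D-1}. Hence $\cE\geq \tfrac34(\cE_K+\cE_{I,e})$. For the upper bound, the same lemma gives
\[
\cE(t)\leq (1+\omega_0)\cE_K(t)+\Bigl(1+\frac{\omega_0(\kappa\bar c_\phi+1)}{2\underbar c_W}\Bigr)\cE_{I,e}(t)\leq C_1(\cE_K+\cE_{I,e}),
\]
where $C_1$ is the larger of the two coefficients. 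As a by-product, the lower bound shows $\cE\geq0$, which justifies writing the equivalence with nonnegative quantities.

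We then combine these with \eqref{B-10}. From \eqref{B-10} and the upper bound on $\cE$,
\[
\cF\geq \min\{2,1/\bar c_W\}(\cE_K+\cE_{I,e})\geq \frac{\min\{2,1/\bar c_W\}}{C_1}\,\cE,
\]
so we may take $\underbar c_F:=\min\{2,1/\bar c_W\}/C_1$. From \eqref{B-10} and the lower bound on $\cE$,
\[
\cF\leq \max\{2,1/\underbar c_W\}(\cE_K+\cE_{I,e})\leq \tfrac43\max\{2,1/\underbar c_W\}\,\cE,
\]
so we may take $\bar c_F:=\tfrac43\max\{2,1/\underbar c_W\}$. Both constants depend only on $\kappa$, $\underbar c_\phi$, $\bar c_\phi$, $\underbar c_W$ and $\bar c_W$, and not on $t$.

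The only delicate point is positivity of the coefficients in the lower bound, which is exactly where the choice $\omega_0\le 1/4$ and $(\cF_A3)$ enter. A secondary issue is that for Lagrangian weak solutions the identities of Lemmas~\ref{L2.2} and~\ref{L4.1} must hold. They do hold: they are pointwise-in-time algebraic inequalities between moments that are finite because second moments are finite, and the normalizations \eqref{B-2} pass to the weak solution via the approximation in Appendix~\ref{app-A}.
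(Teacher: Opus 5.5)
Your argument is correct and is essentially the paper's proof. Both use Lemma~\ref{L4.1}(i) to sandwich $\cE$ between constant multiples of $\cE_K+\cE_{I,e}$, and then pass to $\cF$ through \eqref{B-10}. The only difference is cosmetic: you bound the paper's factor $\min\{1-\omega_0,\,1+\omega_0(\kappa\underbar{c}_\phi-1)/(2\bar{c}_W)\}$ from below by $3/4$. Your resulting constant $\bar{c}_F=\tfrac43\max\{2,1/\underbar{c}_W\}$ is therefore slightly larger than the paper's, but equally valid.
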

\begin{proof}
\noindent $\bullet$~Case A (Derivation of the first inequality):  we use \eqref{B-7} and \eqref{B-12} to get 
\begin{align}
\begin{aligned} \label{D-9}
{\mathcal F} &=  2 {\mathcal I}_x + 2 {\mathcal E}_K \geq  \frac{1}{\bar{c}_W} {\mathcal E}_{I,e} +  2 \cE_K
\geq \min \Big \{2, \frac{1}{\bar{c}_W}  \Big \}   (\cE_K + \cE_{I, e}).  \\
\end{aligned}
\end{align}
On the other hand, it follows from Lemma \ref{L4.1} and \eqref{D-8} that 
\begin{align}
\begin{aligned} \label{D-10}
  \cE &= {\mathcal E}_K +  {\mathcal E}_{I,e} +  \omega_0 {\mathcal E}_{I,s} \\
  &\leq {\mathcal E}_K +  {\mathcal E}_{I,e} + \omega_0 \Big( \frac{(\kappa \bar{c}_{\phi} +1)}{2\underbar{c}_W}\cE_{I,e} + \cE_K \Big) \\
  &=  (1 +\omega_0) \cE_K + \Big(  1 +  \frac{\omega_0(\kappa \bar{c}_{\phi} +1)}{2\underbar{c}_W}  \Big ) \cE_{I,e} \\
  &\leq \max \Big \{ 1 + \omega_0,~   1 +  \frac{\omega_0(\kappa \bar{c}_{\phi} +1)}{2\underbar{c}_W} \Big \} ( \cE_K +  \cE_{I,e}).
\end{aligned}
\end{align}
We combine \eqref{D-9} and \eqref{D-10} to get 
\begin{equation} \label{ND-11}
{\mathcal F} \geq \frac{\min \Big \{2, \frac{1}{\bar{c}_W}  \Big \}}{  \max \Big \{ 1 + \omega_0,~   1 +  \frac{\omega_0(\kappa \bar{c}_{\phi} +1)}{2\underbar{c}_W} \Big \}    }   \cE =: \underbar{c}_F \cE.
\end{equation}

\noindent $\bullet$~Case B (Derivation of the second inequality): 
It follows from Lemma \ref{L4.1} and \eqref{D-8} that 
\begin{align*}
	\begin{aligned}
		\cE &= {\mathcal E}_K +  {\mathcal E}_{I,e} +  \omega_0 {\mathcal E}_{I,s} \\
		&\geq {\mathcal E}_K +  {\mathcal E}_{I,e} + \omega_0 \Big( \frac{(\kappa \underbar{c}_{\phi}-1)}{2\bar{c}_W}\cE_{I,e} - \cE_K  \Big) \\
		&=  (1 - \omega_0) \cE_K + \Big(  1 +  \frac{\omega_0(\kappa \underbar{c}_{\phi} -1)}{2\bar{c}_W}  \Big ) \cE_{I,e} \\
		&\geq \min \Big \{ 1 - \omega_0,~   1 +  \frac{\omega_0(\kappa \underbar{c}_{\phi} -1)}{2\bar{c}_W} \Big \} ( \cE_K +  \cE_{I,e}).
	\end{aligned}
\end{align*}
We use the normalization condition:
\[
\int_{\bbr^{2d}} f_t(z)\dd z=1,
\quad
\int_{\bbr^{2d}} vf_t(z)\dd z=0,
\quad
\int_{\bbr^{2d}} xf_t(z)\dd z=0, \quad t > 0,
\]
and use \eqref{B-8}, \eqref{B-9}, and \eqref{B-10} to find 
\begin{align}
\begin{aligned} \label{D-12}
{\mathcal F}(t) &= \int_{\bbr^{4d}}
\bigl(|x-x_*|^2+|v-v_*|^2\bigr)
 f_t(z) f_t(z_*)\dd z \dd z_* \\
 &= 2 {\mathcal I}_x(t) + 2 {\mathcal E}_K(t) \leq \frac{1}{\underbar{c}_W} {\mathcal E}_{I,e}(t) + 2 {\mathcal E}_K(t)  \\
 & \leq \max \Big \{2, \frac{1}{\underbar{c}_W}  \Big \}  (\cE_K + \cE_{I, e})(t) \leq \frac{\max \Big \{2, \frac{1}{\underbar{c}_W}  \Big \}}{ \min \Big \{ 1 - \omega_0,~   1 +  \frac{\omega_0(\kappa \underbar{c}_{\phi} -1)}{2\bar{c}_W} \Big \}}  \cE(t) =:  \bar{c}_F \cE(t).
\end{aligned}
\end{align}

\end{proof}
\begin{lemma} \label{L4.3}
Suppose the framework $(\cF_A)$ with $\beta = 0$ holds, and let $f$ be a global Lagrangian weak solution to \eqref{A-4}. Then, the following assertions hold.
\begin{enumerate}
\item
The dissipation functional $\Lambda$ is bounded below by the constant multiple of $\cE_K$:
\[ 2\kappa{\underbar c}_{\phi} \cE_K(t) \leq \Lambda(t) \leq 2\kappa {\bar c}_{\phi} \cE_K(t) , \quad t \geq 0. \]
\item
The energy functional decays exponentially fast: there exists a positive constant $\tilde{\lambda}$ such that 
\[    \min \Big \{ 1 - \omega_0,~   1 +  \frac{\omega_0(\kappa \underbar{c}_{\phi} -1)}{2\bar{c}_W} \Big \}(\cE_K + \cE_{I,e})(0) e^{-2 \kappa {\bar c}_{\phi} t} \leq \cE(t) \leq e^{-\tilde{\lambda} t} \cE(0), \quad t \geq 0. \]
In fact, the decay exponent $\tilde{\lambda}$ is given by the following explicit formulation:
\begin{align} \label{D-13}
	 \tilde{\lambda} := \frac{\min \Big\{ 2( {\kappa \underbar c}_{\phi} - \omega_0 ),~  \omega_0 \nu_W  \Big \} }{\max \Big \{ 1 + \omega_0,~   1 +  \frac{\omega_0(\kappa \bar{c}_{\phi} +1)}{2\underbar{c}_W} \Big \}  },\quad \text{where} \quad \omega_0=  \min \Big \{ \frac{1}{4}, \frac{\kappa \underbar{c}_{\phi}}{2} \Big \}. 
	 \end{align}
\end{enumerate}
\end{lemma}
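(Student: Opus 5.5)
For part (1) I would use only the two-sided bound \eqref{D-1} on $\phi$ and the two-point representation \eqref{B-9} of the kinetic energy. Since $\underbar{c}_\phi\le \phi(|x-x_*|)\le \bar c_\phi$ pointwise, we get
\[
\kappa\underbar{c}_\phi\int_{\bbr^{4d}}|v-v_*|^2 f_t(z)f_t(z_*)\dd z\dd z_*\le \Lambda(t)\le \kappa\bar c_\phi\int_{\bbr^{4d}}|v-v_*|^2 f_t(z)f_t(z_*)\dd z\dd z_*.
\]
By \eqref{B-9} the double integral equals $2\cE_K(t)$. This gives $2\kappa\underbar{c}_\phi\cE_K\le\Lambda\le 2\kappa\bar c_\phi\cE_K$.

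For the upper bound in (2), I would differentiate $\cE=\cE_K+\cE_{I,e}+\omega_0\cE_{I,s}$.
\begin{itemize}
\item Remark \ref{R2.1}(1) gives $(\cE_K+\cE_{I,e})'=-\Lambda$, and Lemma \ref{L4.1}(ii) gives $\cE_{I,s}'\le 2\cE_K-\nu_W\cE_{I,e}$.
\item Combining these with part (1),
\[
\cE'\le -2\kappa\underbar{c}_\phi\cE_K+2\omega_0\cE_K-\omega_0\nu_W\cE_{I,e}\le -\min\bigl\{2(\kappa\underbar{c}_\phi-\omega_0),\ \omega_0\nu_W\bigr\}(\cE_K+\cE_{I,e}).
\]
The first coefficient is positive by the choice of $\omega_0$.
\item Estimate \eqref{D-10} gives $\cE_K+\cE_{I,e}\ge \cE/\max\{1+\omega_0,\,1+\omega_0(\kappa\bar c_\phi+1)/(2\underbar{c}_W)\}$. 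Hence $\cE'\le-\tilde\lambda\cE$ with $\tilde\lambda$ as in \eqref{D-13}.
\item Gr\"onwall's inequality then yields $\cE(t)\le e^{-\tilde\lambda t}\cE(0)$. The lower bound in Lemma \ref{L4.1}(i) guarantees $\cE\ge0$, so this step is meaningful.
\end{itemize}

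For the lower bound in (2), I would work with $\cE_K+\cE_{I,e}$ rather than with $\cE$.
\begin{itemize}
\item By part (1), $(\cE_K+\cE_{I,e})'=-\Lambda\ge-2\kappa\bar c_\phi\cE_K\ge -2\kappa\bar c_\phi(\cE_K+\cE_{I,e})$, using $\cE_{I,e}\ge0$.
\item Gr\"onwall's inequality therefore gives $(\cE_K+\cE_{I,e})(t)\ge e^{-2\kappa\bar c_\phi t}(\cE_K+\cE_{I,e})(0)$.
\item The lower estimate of Case B in Lemma \ref{L4.2}, namely $\cE\ge\min\{1-\omega_0,\,1+\omega_0(\kappa\underbar{c}_\phi-1)/(2\bar c_W)\}(\cE_K+\cE_{I,e})$, then closes the claim.
\end{itemize}

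The main obstacle is rigor rather than algebra. The identities of Lemma \ref{L2.3} were derived for smooth, fast-decaying solutions, whereas $f$ is only a Lagrangian weak solution with finite second moments. I would run both differential inequalities in integrated form, on $[s,t]$, for the compactly supported approximations $\mu^R_t$ of Theorem \ref{T3.2}(i) and Appendix \ref{app-A}. I would then pass to the limit $R\to\infty$ using the uniform second-moment bound \eqref{C-16}. The linear growth of $\Phi$'s integrand ($\Phi(r)\le\kappa\bar c_\phi r^2/2$) and the quadratic bounds on $W$ make every functional continuous under this convergence, and the dissipation term is handled by lower semicontinuity (Fatou). The resulting integral inequalities give the exponential bounds directly, without differentiating the limit.
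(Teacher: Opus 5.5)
Your derivation of (1) and of both bounds in (2) is the paper's proof step for step. Part (1) uses the pointwise bounds $\underbar{c}_\phi\le\phi\le\bar c_\phi$ together with \eqref{B-9}. The upper bound in (2) uses $\cE'\le-\Lambda+\omega_0(2\cE_K-\nu_W\cE_{I,e})$, absorbs via \eqref{D-10}, and applies Gr\"onwall. The lower bound goes through $\cE_K+\cE_{I,e}$ and then the Case B estimate \eqref{D-17}. All of these inequalities are correct.

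The one place you depart from the paper is the justification paragraph, and it does not work as written. There are two problems.

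\textbf{Scope.} You run the estimates on the compactly supported approximations $\mu^R_t$ and pass to the limit. That proves the lemma only for the particular solution produced by the compactness argument. Lagrangian weak solutions with merely finite second moments are not known to be unique (Remark \ref{R3.2}), so this does not cover an arbitrary Lagrangian weak solution $f$, which is what the statement asserts.

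\textbf{Passage to the limit.} The bound \eqref{C-16} is only a uniform bound on second moments. It gives tightness, hence narrow convergence along subsequences and lower semicontinuity of integrals of nonnegative quadratic integrands, but not continuity.
\begin{itemize}
\item The cross term $2\int x\cdot v\,f_t\,\dd z$ in $\cE_{I,s}$ has no sign, so even lower semicontinuity of $\cE$ is not automatic.
\item For the lower bound you would need $\limsup_R(\cE_K^R+\cE_{I,e}^R)(t)\le(\cE_K+\cE_{I,e})(t)$. This upper semicontinuity fails when a vanishing amount of mass carries a non-vanishing second moment off to infinity.
\end{itemize}
So you would need uniform integrability of $|z|^2$ (i.e.\ $\mathcal W_2$-convergence), which you have not established; Fatou alone does not supply it.

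The paper avoids both issues in Appendix \ref{app-A}. It tests the weak formulation of the given $f$ with $\eta(t)P(z)\zeta_R(z)$, and the cutoff errors \eqref{A.1} vanish using only the second-moment bound of Definition \ref{D1.1}. Alternatively, you can integrate identities such as $\cE_K(t)=\int|V(t,z)|^2f_0(z)\,\dd z$ directly along the characteristics of the given Lagrangian flow, using dominated convergence. Either route applies to every solution in the class.
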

\begin{proof}
\noindent (1)~We use \eqref{D-1} to get 
\begin{align}
\begin{aligned} \label{D-14}
\Lambda(t) &= \kappa\int_{\bbr^{4d}}\phi(|x-x_*|)|v-v_*|^2f_t(z)f_{t}(z_*)\dd z \dd z_* \\
&\geq \kappa \underbar{c}_{\phi}\int_{\bbr^{4d}} |v-v_*|^2f_t(z)f_{t}(z_*)\dd z \dd z_* = 2 \kappa {\underbar c}_{\phi} \cE_K(t).
\end{aligned}
\end{align}
Similarly,  we have
\begin{align}
\begin{aligned} \label{D-15}
\Lambda(t) &= \kappa\int_{\bbr^{4d}}\phi(|x-x_*|)|v-v_*|^2f_t(z)f_{t}(z_*)\dd z \dd z_* \\
&\leq \kappa \bar{c}_{\phi}\int_{\bbr^{4d}} |v-v_*|^2f_t(z)f_{t}(z_*)\dd z \dd z_* = 2 \kappa {\bar c}_{\phi} \cE_K(t).
\end{aligned}
\end{align}
\noindent (2)~For an upper bound estimate, we use Lemma \ref{L4.1} (ii), \eqref{D-10}, and \eqref{D-14} to get 
\begin{align*}
\begin{aligned}  \label{D-16}
{\dot \cE} &= ({\dot \cE}_K + {\dot \cE}_{I,e}) +\omega_0 {\dot \cE}_{I,s} \leq -\Lambda + \omega_0 \Big(   2\cE_K-\nu_W\cE_{I,e}        \Big)  \\
&\leq  -2 \kappa {\underbar c}_{\phi} \cE_K +  \omega_0 \Big(   2\cE_K-\nu_W\cE_{I,e}        \Big) =- 2( \kappa {\underbar c}_{\phi} - \omega_0 ) \cE_K - \omega_0 \nu_W  \cE_{I,e}  \\
& \leq -\min \Big\{ 2( \kappa {\underbar c}_{\phi} - \omega_0 ),~  \omega_0 \nu_W  \Big \}  (\cE_K + \cE_{I,e}) \\
& \leq  - \frac{\min \Big\{ 2( {\kappa \underbar c}_{\phi} - \omega_0 ),~  \omega_0 \nu_W  \Big \} }{\max \Big \{ 1 + \omega_0,~   1 +  \frac{\omega_0(\kappa \bar{c}_{\phi} +1)}{2\underbar{c}_W} \Big \}  } \cE =: -\tilde{\lambda} \cE.
\end{aligned}
\end{align*}
This yields the desired upper bound estimate for $\cE$.  \newline

On the other hand, for the lower bound estimate of $\cE$, we take a detour route, since we do not have a lower bound estimate for ${\dot \cE}_{I, s}$.  First, note that a priori estimate between $\cE$ and $\cE_K + \cE_{I,e}$ in Case B of Lemma \ref{L4.2}:
\begin{equation} \label{D-17}
\cE \geq  \min \Big \{ 1 - \omega_0,~   1 +  \frac{\omega_0(\kappa \underbar{c}_{\phi} -1)}{2\bar{c}_W} \Big \}(\cE_K + \cE_{I,e}).
\end{equation}
Then, it follows from Remark \ref{R2.1} and  \eqref{D-15} that 
\[
\frac{\dd}{\dd t} (\cE_K + \cE_{I,e}) = -\Lambda(t) \geq  - 2 \kappa {\bar c}_{\phi} \cE_K(t) \geq -2 \kappa {\bar c}_{\phi}  (\cE_K + \cE_{I,e}), \quad t > 0. 
\]
This  implies
\begin{equation} \label{D-18}
(\cE_K + \cE_{I,e})(t) \geq (\cE_K + \cE_{I,e})(0) e^{-2 \kappa {\bar c}_{\phi} t}, \quad t \geq 0.
\end{equation}
Finally, we combine \eqref{D-17} and \eqref{D-18} to get the lower bound estimate of $\cE$.
\end{proof}
\vspace{0.2cm}

\subsubsection{Proof of the first assertion} \label{sec:4.1.2}
~Suppose that the framework $({\mathcal F}_A)$ holds with $\beta = 0$, and let $f$ be a global Lagrangian weak solution to \eqref{A-4}. \newline

\noindent $\bullet$~Step A (Upper bound estimate of $\cF$):~It follows from Lemma \ref{L4.2} and Lemma \ref{L4.3} that 
\[
\cF(t) \leq \bar{c}_F \cE(t) \leq \bar{c}_F  e^{-\tilde{\lambda} t} \cE(0) \leq  \frac{\bar{c}_F \cF(0)}{ \underbar{c}_F}  e^{-\tilde{\lambda} t}, \quad t \geq 0.
\]
\noindent $\bullet$~Step B (Lower bound estimate of $\cF$): ~
By the exact energy identity and the upper bound \(\phi\leq\bar c_\phi\),
\[
\frac{\dd}{\dd t}
\left(
\cE_K+\cE_{I,e}
\right)=
-\Lambda(t)
\geq
-2\kappa\bar c_\phi\cE_K(t)
\geq
-2\kappa\bar c_\phi
\left(
\cE_K+\cE_{I,e}
\right)(t).
\]
Thus, Gr\"onwall inequality  yields
\[
\left(
\cE_K+\cE_{I,e}
\right)(t)
\geq
\left(
\cE_K+\cE_{I,e}
\right)(0)
e^{-2\kappa\bar c_\phi t}.
\]
Using Lemma~\ref{L2.2} (4), we obtain the lower bound
\[\cF(t)\ge \min \Big \{2, \frac{1}{\bar{c}_W}  \Big \} \left(
\cE_K+\cE_{I,e}
\right)(t)\ge\cF(0)\frac{\min \Big \{2, \frac{1}{\bar{c}_W}  \Big \}}{\max \Big \{2, \frac{1}{\underbar{c}_W}  \Big \}}e^{-2\kappa\bar c_\phi t}. \]
 This completes the proof of the first assertion. 

\subsection{Second assertion} \label{sec:4.2}
In this subsection, we consider the situation:
\[
0 < \beta \leq 2, \quad  \cM_q(f_0)<\infty.
\]

\subsubsection{Microscopic mechanical energy}\label{sec:4.2.1}
Consider the characteristic flow $(X,V)$ issued from $z = (x, v) \in \bbr^{2d}$:
\begin{equation}\label{D-21}
\begin{cases}
\displaystyle \dot X=V, \qquad t > 0, \vspace{6pt}\\
\displaystyle \dot V=\cA[f](t,X,V)-(\nabla W*\rho)(X), \vspace{6pt}\\
(X,V)\big|_{t=0}=(x,v).
\end{cases}
\end{equation}
Define an energy density along the characteristic flow:
\begin{equation}\label{D-22}
h(t):=\frac12|V(t)|^2+(W*\rho_t)(X(t)) = \frac12|V(t)|^2+ \int_{\bbr^d} W(X(t) - x_*) \rho_t(x_*) \dd x_*.
\end{equation}
Next, we estimate the time-evolution of $h(t)$ along the characteristic flow \eqref{D-21}.
\begin{lemma}\label{L4.4}
For a global Lagrangian weak solution $f = f_t(z)$ to \eqref{A-4} with normalizations \eqref{B-2}, let $(X(t), V(t))$ be the characteristic flow to \eqref{D-21}. Then, one has 
\[ \frac{\dd h(t)}{\dd t}=V(t) \cdot\cA[f_t](X(t),V(t)) -\int_{\bbr^d} \nabla W(X(t) -x_*)\cdot j_t(x_*)\dd x_*, \quad t > 0. \]
\end{lemma}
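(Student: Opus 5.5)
The proof is a direct chain-rule computation of $h(t)=\frac12|V(t)|^2+(W*\rho_t)(X(t))$, where the time dependence of $\rho_t$ is handled through the continuity equation. I will first carry it out for a smooth solution with fast decay. The general Lagrangian weak solution of Theorem~\ref{T3.2}(i) will then be covered by the compactly supported approximation of Appendix~\ref{app-A}, passing to the limit in the integrated-in-time form of the identity.

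\textbf{Kinetic part.} Using $\eqref{D-21}_2$,
\[
\frac{\dd}{\dd t}\frac12|V|^2 = V\cdot\cA[f_t](X,V) - V\cdot(\nabla W*\rho_t)(X).
\]

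\textbf{Potential part.} Write $P(t):=(W*\rho_t)(X(t))$. By the chain rule,
\[
\dot P = (\nabla W*\rho_t)(X)\cdot \dot X + \int_{\bbr^d} W(X-x_*)\,\partial_t\rho_t(x_*)\dd x_*.
\]
The first term equals $V\cdot(\nabla W*\rho_t)(X)$, so it cancels the potential-force term from the kinetic part. For the second term I substitute $\partial_t\rho_t=-\nabla_{x_*}\cdot j_t$ and integrate by parts in $x_*$. Since $\nabla_{x_*}[W(X-x_*)]=-\nabla W(X-x_*)$, this gives
\[
\int_{\bbr^d} W(X-x_*)\,\partial_t\rho_t(x_*)\dd x_* = -\int_{\bbr^d}\nabla W(X-x_*)\cdot j_t(x_*)\dd x_*.
\]
Adding the two parts yields exactly the claimed identity.

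\textbf{Main obstacle: justification.} The boundary terms at infinity vanish because $|W(z)|\le \bar c_W|z|^2$ and $|\nabla W(z)|\le \|D^2W\|_{L^\infty}|z|$ (recall $\nabla W(0)=0$), while the solution has finite second moments. For the nonsmooth case, I will read the identity in the integral sense $h(t)-h(s)=\int_s^t(\cdots)\dd\tau$. Weak continuity and the test-function formulation (Definition~\ref{D1.1}), applied with $W(X(\tau)-\cdot)$ cut off, give the $\partial_t\rho$ term. The cutoff errors are controlled by the uniform second-moment bound \eqref{C-16} together with the linear growth of $\nabla W$. Since $\tau\mapsto (\nabla W*\rho_\tau)$ and $\tau\mapsto j_\tau$ pair continuously against the test function, the integrand is continuous, and the pointwise identity follows for all $t>0$.
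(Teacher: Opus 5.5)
Your proposal is correct and follows the paper's proof. You differentiate $h$ along \eqref{D-21}, cancel $V\cdot(\nabla W*\rho_t)(X)$ between the kinetic and potential parts, and use the continuity equation with an integration by parts to turn $\int W(X-x_*)\partial_t\rho_t(x_*)\,\dd x_*$ into $-\int\nabla W(X-x_*)\cdot j_t(x_*)\,\dd x_*$. Your final justification paragraph, which tests the weak formulation directly with a cut-off $W(X(\tau)-\cdot)$ and relies on the $\mathcal P_2$-continuity of the Lagrangian solution, is more explicit than the paper, which does the computation formally and defers rigor to Appendix~\ref{app-A}.
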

\begin{proof} 
\noindent We differentiate \eqref{D-22} with respect to $t$ and use the continuity equation:
 \[ \partial_t \rho_t(x)+\nabla\cdot j_t(x)=0 \]
to get the desired estimate:
\begin{align*}
\begin{aligned} \label{D-23}
\frac{\dd h(t)}{\dd t} &= V(t) \cdot {\dot V}(t) +  \int_{\bbr^d}  \Big(  \nabla W(X(t) - x_*) \cdot V(t)  \rho_t(x_*) + W(X(t) - x_*) \partial_t \rho_t(x_*)  \Big) \dd x_* \\
& =   V(t) \cdot \Big[ \cA[f_t](X(t),V(t))-(\nabla W*\rho_t)(X(t)) \Big] + V(t) \cdot (\nabla W* \rho_t)(X(t)) \\
&\quad - (W * \nabla \cdot j_t(X(t)))  \\
&=V(t) \cdot\cA[f_t](X(t),V(t)) -\int_{\bbr^{2d}} \Big( \nabla W(X(t)-x_*)\cdot v_* \Big) f_t(z_*)\dd z_*, \quad t > 0.
\end{aligned}
\end{align*}
\end{proof}
\begin{lemma}\label{L4.5}
Let $f = f_t(z)$ be a global Lagrangian weak solution to \eqref{A-4} satisfying normalizations \eqref{B-2}. Then, the following assertions hold.
\begin{enumerate}
\item
For every $x \in\bbr^d$, there exists a structural constant $C_W$ such that
\begin{equation*}
\left|\int_{\bbr^{2d}} \nabla W(x-x_*)\cdot v_* f_t(z_*)\dd z_* \right|
\leq C_W (\cE_K + \cE_{I,e})(t),
\end{equation*}
where $C_W$ is a positive constant defined by 
\[ C_W =\max \Big \{ \frac{1}{4\underbar{c}_W},~ \frac{1}{2}\Big \}  \|D^2W \|_{\infty} . \]
\item
Along the characteristic flow \eqref{D-21}, we have
\[ V(t) \cdot\cA[f_t](X(t),V(t))\leq  \frac{ \kappa\bar{c}_\phi}{4} \cE_K(t). \]
\end{enumerate}
\end{lemma}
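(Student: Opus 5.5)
For part (1), the plan is to exploit momentum conservation from Lemma~\ref{L2.1} to subtract a free term. Since $\int_{\bbr^{2d}} v_* f_t(z_*)\dd z_*=0$ by \eqref{B-2}, for every fixed $x\in\bbr^d$ we can rewrite
\[
\int_{\bbr^{2d}} \nabla W(x-x_*)\cdot v_* f_t(z_*)\dd z_*
=\int_{\bbr^{2d}} \bigl(\nabla W(x-x_*)-\nabla W(x)\bigr)\cdot v_* f_t(z_*)\dd z_*.
\]
The bounded Hessian in \eqref{C-1} then gives $|\nabla W(x-x_*)-\nabla W(x)|\leq \|D^2W\|_{\infty}|x_*|$, uniformly in $x$. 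Hence the modulus is at most $\|D^2W\|_\infty\int_{\bbr^{2d}}|x_*||v_*|f_t(z_*)\dd z_*$, and Young's inequality $|x_*||v_*|\leq \frac12(|x_*|^2+|v_*|^2)$ bounds this by $\frac12\|D^2W\|_\infty(\mathcal I_x(t)+\cE_K(t))$. Finally, Lemma~\ref{L2.2}(2) gives $\mathcal I_x\leq \cE_{I,e}/(2\underbar c_W)$. Therefore the expression is at most
\[
\|D^2W\|_\infty\Bigl(\tfrac{1}{4\underbar c_W}\cE_{I,e}+\tfrac12\cE_K\Bigr),
\]
which is bounded by $C_W(\cE_K+\cE_{I,e})$ with the stated $C_W$. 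The key observation is that the naive bound $|\nabla W(x-x_*)|\lesssim |x-x_*|$ would leave a factor $|x|$ and destroy uniformity in $x$. Subtracting $\nabla W(x)$, which is allowed only because the total momentum vanishes, removes this dependence.

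For part (2), write out the alignment term:
\[
V\cdot\cA[f_t](X,V)=\kappa\int_{\bbr^{2d}}\phi(|X-x_*|)\bigl(v_*\cdot V-|V|^2\bigr)f_t(z_*)\dd z_*.
\]
We then use the pointwise inequality $v_*\cdot V-|V|^2\leq \frac14|v_*|^2$, which follows from $|V-\frac12 v_*|^2\geq 0$. Since $0\leq\phi\leq \bar c_\phi$ by \eqref{C-2}, we get $\phi(\cdot)(v_*\cdot V-|V|^2)\leq \frac{\bar c_\phi}{4}|v_*|^2$ wherever the bracket is nonnegative. Where the bracket is negative, the integrand is nonpositive and is bounded above by $0\leq \frac{\bar c_\phi}{4}|v_*|^2$. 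Integrating against $f_t$ yields $V\cdot\cA[f_t]\leq \frac{\kappa\bar c_\phi}{4}\cE_K(t)$.

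The only delicate point is justification at the level of Lagrangian weak solutions. All integrals involved are finite because the second moments are bounded on finite time intervals, by Theorem~\ref{T3.2}(i) and the energy bound \eqref{B-22}. The normalizations \eqref{B-2} also persist, via the approximation in Appendix~\ref{app-A}. Beyond this, both parts are direct computations, and I expect no substantial obstacle; the main idea is the momentum-subtraction trick in part (1).
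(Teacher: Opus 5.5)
Your proposal is correct and follows the paper's proof. Part (1) is the same argument, with the same constant $C_W$: you subtract $\nabla W(x)$ using zero momentum, then apply the Hessian bound, Young's inequality and Lemma~\ref{L2.2}(2). In part (2) you complete the square pointwise in $v_*$, whereas the paper first averages into $\varrho_\phi(t,X)$ and $m_\phi(t,X)$ and then applies Cauchy--Schwarz. Both routes give $\frac{\kappa\bar c_\phi}{4}\cE_K(t)$, and yours spares you the separate treatment of the case $\varrho_\phi(t,X)=0$.
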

\begin{proof} 
\noindent (1)~For $x \in \bbr^{d}$, we use $\int_{\bbr^{2d}} v_* f_t(z_*) \dd z_*=0$ to get 
\begin{equation} \label{D-24}
\int_{\bbr^{2d}} \nabla W(x)\cdot v_* f_t(z_*)\dd z_* = 0.
\end{equation}
Then, \eqref{D-24}, Lemma \ref{L2.2} and the Cauchy--Schwarz inequality yield
\begin{align*}
\begin{aligned}
& \Big| \int_{\bbr^{2d}} \nabla W(x-x_*)\cdot v_* f_t(z_*)\dd z_* \Big| \\
& \hspace{1cm}  = \Big|
\int_{\bbr^{2d}} \bigl[\nabla W(x-x_*)-\nabla W(x)\bigr]\cdot v_*f_t(z_*)\dd z_* \Big| \\
& \hspace{1cm} \leq  \int_{\bbr^{2d}} |\nabla W(x-x_*)-\nabla W(x)| |v_*| f_t(z_*)\dd z_* \\
& \hspace{1cm} \leq  \|D^2W \|_{\infty} \int_{\bbr^{2d}}  |x_*| |v_*| f_t(z_*)\dd z_* \\
&  \hspace{1cm} \leq   \|D^2W \|_{\infty} \Big( \int_{\bbr^{2d}}  |x_*|^2  f_t(z_*)\dd z_* \Big)^{\frac{1}{2}}  \Big( \int_{\bbr^{2d}} |v_*|^2 f_t(z_*)\dd z_* \Big)^{\frac{1}{2}} \\
& \hspace{1cm} \leq \frac{1}{2}  \|D^2W \|_{\infty}  2 \Big( \frac{1}{2\underbar{c}_W}  \cE_{I,e}  \Big)^{\frac{1}{2}} \Big(  \cE_K \Big)^{\frac{1}{2}} \\
& \hspace{1cm} \leq  \frac{1}{2}  \|D^2W \|_{\infty} \Big(  \frac{1}{2\underbar{c}_W}  \cE_{I,e}  +   \cE_K  \Big) \\
& \hspace{1cm} \leq  \max \Big \{ \frac{1}{4\underbar{c}_W},~  \frac{1}{2}  \Big \}    \|D^2W \|_{\infty} ( \cE_K + \cE_{I,e} )=: C_W (   \cE_K + \cE_{I,e}).
\end{aligned}
\end{align*}
\noindent (2)~For $x \in \bbr^d$, we set 
\begin{equation*} \label{D-25}
{\varrho}_{\phi}(t,x):=\int_{\bbr^{2d}} \phi(|x-x_*|)f_t(z_*)\dd z_*,
\quad
{m}_{\phi}(t,x):=\int_{\bbr^{2d}} \phi(|x-x_*|) v_* f_t(z_*)\dd z_*.
\end{equation*}
Then we use the above ansatz, the Cauchy--Schwarz inequality and $\phi(r) \leq \bar{c}_\phi$ to find 
\begin{align*}
\begin{aligned} 
\cA[f_t](z) &=\kappa\int_{\bbr^{2d}}\phi(|x-x_*|)(v_*-v)f_t(z_*)\dd z_* = \kappa \Big (m_{\phi}(t,x) - \varrho_{\phi}(t,x) v \Big ), \\
|m_{\phi}(t,x)|^2 &\leq  \Big|  \int_{\bbr^{2d}} \phi(|x-x_*|) v_* f_t(z_*)\dd z_* \Big|^2 \leq \Big( \int_{\bbr^{2d}} \phi(|x-x_*|)  |v_*| f_t(z_*) \dd z_*  \Big)^2 \\
&\leq \Big(  \int_{\bbr^{2d}} |\phi(|x-x_*|)|^2  f_t(z_*) \dd z_* \Big)  \Big(  \int_{\bbr^{2d}} |v_*|^2 f_t(z_*) \dd z_* \Big)  \\
& \leq   \bar{c}_{\phi} {\varrho_{\phi}}(t,x) \cE_K(t).
\end{aligned}
\end{align*}
Again, we use the above relation and Young's inequality to find  
\begin{align*}
\begin{aligned}
 & V(t) \cdot\cA[f_t](X(t),V(t))  \\
 & \hspace{1cm} = \kappa  V(t) \cdot m_{\phi}(t,X(t)) - \kappa  \varrho_{\phi}(t,X(t))  | V(t) |^2 \\
 &\hspace{1cm}\leq  \kappa \sqrt{2 \varrho_\phi(t,X(t))} |V(t)| \frac{ |m_{\phi}(t,X(t))|}{\sqrt{2 \varrho_\phi(t, X(t))}} - \kappa  \varrho_{\phi}(t,X(t))  | V(t) |^2 \\
 & \hspace{1cm} \leq \frac{\kappa}{2} \Big( 2 \varrho_{\phi}(t, X(t))  |V(t)|^2 + \frac{|m_{\phi}(t,X(t))|^2}{2 \varrho_\phi(t, X(t))} \Big) - \kappa  \varrho_{\phi}(t,X(t))  | V(t) |^2 \\
 & \hspace{1cm} = \frac{\kappa |m_{\phi}(t,X(t))|^2}{4 \varrho_\phi(t, X(t))} \leq \frac{ \kappa\bar{c}_\phi}{4}  \cE_K(t).
 \end{aligned}
 \end{align*}
Note that when $\varrho_\phi(t, X(t))=0$, we have $|m_{\phi}(t,X(t))|^2=0$ and $\cA[f_t](X(t),V(t)) =0$. Therefore the second estimate holds.
 \end{proof}
\begin{proposition}\label{P4.1}
Let $f = f_t(z)$ be a global Lagrangian weak solution to \eqref{A-4} satisfying the normalizations \eqref{B-2}. Then, there exists $C_* >0$ such that
\begin{align}\label{D-27}
\begin{aligned} 
& (i)~h_t(X(t),V(t))
\leq h_0(z) +  \bigg(\frac{ \kappa\bar{c}_\phi}{4}  + C_W\bigg)  \int_0^t (\cE_K + \cE_{I,e})(s) \dd s. \\
& (ii)~h_t(X(t),V(t))
\leq h_0(z) + C_* t, \quad C_* :=\bigg(\frac{ \kappa\bar{c}_\phi}{4} + C_W\bigg) (\cE_K + \cE_{I,e})(0).
\end{aligned}
\end{align}
\end{proposition}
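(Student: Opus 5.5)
The plan is to integrate the identity of Lemma~\ref{L4.4} in time along the characteristic \eqref{D-21}, estimating its two terms pointwise in $t$ with Lemma~\ref{L4.5}. Then (ii) will follow from (i) via the monotonicity of $\cE_K+\cE_{I,e}$ in Remark~\ref{R2.1}.

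\textbf{Step 1 (differential inequality).} For a.e.\ $t>0$, Lemma~\ref{L4.4} gives
\[
\frac{\dd h(t)}{\dd t}=V(t)\cdot\cA[f_t](X(t),V(t))-\int_{\bbr^{2d}}\nabla W(X(t)-x_*)\cdot v_*f_t(z_*)\dd z_*.
\]
We bound the first term from above by $\frac{\kappa\bar c_\phi}{4}\cE_K(t)\le\frac{\kappa\bar c_\phi}{4}(\cE_K+\cE_{I,e})(t)$, using Lemma~\ref{L4.5}~(2) and $\cE_{I,e}\ge0$. For the second term we apply Lemma~\ref{L4.5}~(1) at $x=X(t)$; it is bounded in absolute value by $C_W(\cE_K+\cE_{I,e})(t)$. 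Therefore
\[
\frac{\dd h(t)}{\dd t}\le\Big(\frac{\kappa\bar c_\phi}{4}+C_W\Big)(\cE_K+\cE_{I,e})(t).
\]

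\textbf{Step 2 (integration).} Integrating over $[0,t]$ and using $h(0)=h_0(z)$ yields (i). This requires $t\mapsto h(t)$ to be absolutely continuous along the characteristic. For a Lagrangian solution from Theorem~\ref{T3.2}~(i), the vector field in \eqref{C-12} has linear growth and is locally Lipschitz. In addition, $W*\rho_t$ is $C^1$ in $x$ with $|\nabla W(x)|\le\|D^2W\|_\infty|x|$, and $t\mapsto\rho_t$ is continuous in $\mathcal P_2$. If one prefers to avoid working directly with the weak solution, one can prove the inequality for the compactly supported smooth approximations of Appendix~\ref{app-A} and pass to the limit. The constants involved are uniform, since they depend only on $\kappa,\bar c_\phi,\underbar c_W,\|D^2W\|_\infty$ and on the energy, which is bounded by its initial value.

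\textbf{Step 3 (linear bound).} By \eqref{B-22}, $(\cE_K+\cE_{I,e})(s)\le(\cE_K+\cE_{I,e})(0)$ for all $s\ge0$. Inserting this into (i) gives (ii) with the stated $C_*$.

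\textbf{Main obstacle.} The only delicate point is justifying Lemma~\ref{L4.4} along characteristics for a merely Lagrangian weak solution. In particular, the time derivative of $(W*\rho_t)(X(t))$ requires the continuity equation tested against $W(X(t)-\cdot)$, which has quadratic growth. I would handle this with the uniform second-moment bound \eqref{C-16} and a cutoff of the test function, or else through the approximation argument described in Step 2.
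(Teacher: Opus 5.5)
Your proposal is correct and matches the paper's proof: you bound the two terms of Lemma~\ref{L4.4} using Lemma~\ref{L4.5}~(2) and~(1), integrate in time to get (i), and then insert the monotonicity \eqref{B-22} to get (ii). Your extra remarks on why $h$ is absolutely continuous along characteristics go beyond what the paper writes out (it relies on the truncation framework of Appendix~\ref{app-A}), but they do not change the argument.
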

\begin{proof}
\noindent (i)~It follows from Lemma \ref{L4.4} and Lemma \ref{L4.5} that 
\begin{align*}
\begin{aligned}
 \frac{\dd h(t)}{\dd t} &=V(t) \cdot\cA[f_t](X(t),V(t)) -\int_{\bbr^d} \nabla W(X(t) -x_*)\cdot j_t(x_*)\dd x_*  \\
 &\leq \frac{ \kappa\bar{c}_\phi}{4} \cE_K(t) + C_W (\cE_K + \cE_{I,e}) (t) \\
 &\leq \bigg(\frac{ \kappa\bar{c}_\phi}{4} + C_W\bigg)  (\cE_K + \cE_{I,e})(t).
\end{aligned}
\end{align*}
We integrate the above relation from $0$ to $t$ to get the desired first assertion. \newline

\noindent (ii)~We use \eqref{B-22}:
\[  (\cE_K + \cE_{I, e})(t)\leq  (\cE_K + \cE_{I, e})(0) \]
and the first assertion to find 
\[
 \frac{\dd h(t)}{\dd t} \leq \bigg(\frac{ \kappa\bar{c}_\phi}{4}  + C_W\bigg)  (\cE_K + \cE_{I,e})(t) \leq  \bigg(\frac{ \kappa\bar{c}_\phi}{4}  + C_W\bigg)  (\cE_K + \cE_{I,e})(0) = C_*.
\]
This yields the desired second assertion. 
\end{proof}

\subsubsection{Time-varying effective region}\label{sec:4.2.2}
Now, we move on to \textbf{Step C} in the proof of Theorem \ref{T3.3} (ii). For $A>0$, $\gamma>0$, we introduce a set of functions:
\begin{align*}
\begin{aligned} \label{D-28}
& R_{A,\gamma}(t):= 1 +A(1+t)^\gamma, \quad  h_t(z) :=\frac12|v|^2+ (W*\rho_t)(x), \\
& \Omega_{A,\gamma}(t):= \Big \{z \in \bbr^{2d}:~h_t(z)\leq R_{A,\gamma}(t) \Big \}, \quad \mathfrak{M}_{A,\gamma}(t):=
\int_{\big(\Omega_{A,\gamma}(t)\big)^c} \Big (1+h_t(z) \Big )f_t(z)\dd z.
\end{aligned}
\end{align*}
\begin{lemma}\label{L4.6}
Suppose that parameters and initial datum satisfy the following conditions:
\[  \mbox{Either} \quad \gamma>1 \quad \mbox{or} \quad \gamma=1~~\mbox{and}~~A=2C_*+2; \qquad \cM_q(f_0)<\infty, \]
where $C_*$ is defined in \eqref{D-27}, and let $f$ be a global Lagrangian weak solution to \eqref{A-4}. Then, there exists a positive constant $U_A$ such that 
\begin{equation*}\label{D-29}
{\mathfrak M}_{A,\gamma}(t)\leq U_A(1+t)^{-\gamma(q-1)}, \quad t \gg 1.
\end{equation*}
Here, 
\begin{equation*}\label{D-30}
	U_A=	\max \{1, C_* \} \Big(1 + \Big(\frac{A}{2}\Big)^{-1}\Big)\Big(\frac{A}{2}\Big)^{-(q-1)}  \cM_q(f_0). 
\end{equation*}
\end{lemma}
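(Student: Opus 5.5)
The plan is to push the tail mass back to time zero through the Lagrangian representation $\mu_t=(Z(t,\cdot))_\#\mu_0$ of Definition~\ref{D3.1}. Combined with Proposition~\ref{P4.1}(ii), this gives
\[
\mathfrak{M}_{A,\gamma}(t)=\int_{\bbr^{2d}} \mathbf 1_{\{h_t(Z(t;z))>R_{A,\gamma}(t)\}}\bigl(1+h_t(Z(t;z))\bigr)f_0(z)\dd z .
\]
We then bound each ingredient pointwise in $z$ and use the $q$-th mechanical-energy moment $\cM_q(f_0)$ in a Chebyshev-type argument.

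\textbf{Step 1: reduce to an initial-energy tail.} By Proposition~\ref{P4.1}(ii), $h_t(Z(t;z))\le h_0(z)+C_*t$. Hence the event $h_t(Z(t;z))>R_{A,\gamma}(t)$ implies $h_0(z)>1+A(1+t)^\gamma-C_*t$. We want this to force $h_0(z)>\frac A2(1+t)^\gamma$ for $t\gg1$, i.e. $\frac A2(1+t)^\gamma\ge C_*t$.
\begin{itemize}
\item If $\gamma>1$, this holds for $t$ large.
\item If $\gamma=1$ and $A=2C_*+2$, then $\frac A2(1+t)=(C_*+1)(1+t)\ge C_*t$ for all $t$.
\end{itemize}
This is precisely why the hypothesis allows $\gamma=1$ only together with the choice $A=2C_*+2$. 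So for $t\gg1$,
\[
\{h_t(Z(t;z))>R_{A,\gamma}(t)\}\subset\{h_0(z)>\tfrac A2(1+t)^\gamma\}=:B_t .
\]

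\textbf{Step 2: control the weight $1+h_t$.} On $B_t$ we have
\[
1+h_t(Z(t;z))\le 1+h_0(z)+C_*t\le \max\{1,C_*\}\bigl(1+h_0(z)+t\bigr).
\]
Since $h_0(z)>\frac A2(1+t)^\gamma\ge\frac A2(1+t)$ on $B_t$, we get $t\le (A/2)^{-1}h_0(z)$. Therefore
\[
1+h_t(Z(t;z))\le \max\{1,C_*\}\bigl(1+(A/2)^{-1}\bigr)\bigl(1+h_0(z)\bigr).
\]

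\textbf{Step 3: Chebyshev on $B_t$.} On $B_t$, since $q>1$,
\[
1+h_0\le (1+h_0)^q\,h_0^{-(q-1)}\le (1+h_0)^q\bigl(\tfrac A2\bigr)^{-(q-1)}(1+t)^{-\gamma(q-1)} .
\]
Integrating against $f_0$ gives
\[
\mathfrak{M}_{A,\gamma}(t)\le \max\{1,C_*\}\bigl(1+(A/2)^{-1}\bigr)\bigl(\tfrac A2\bigr)^{-(q-1)}\cM_q(f_0)\,(1+t)^{-\gamma(q-1)},
\]
which is exactly the claimed bound with constant $U_A$.

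\textbf{Main obstacle.} The only delicate point is rigor. One must justify the change of variables $\int g\,\dd\mu_t=\int g\circ Z(t,\cdot)\,\dd\mu_0$ with the time-dependent potential $W*\rho_t$ inside $h_t$. One must also check that Proposition~\ref{P4.1} holds along Lagrangian characteristics for $\mu_0$-a.e.\ $z$, not only for smooth solutions. I would obtain both from the Lagrangian structure of Theorem~\ref{T3.2}(i): the differential inequality in Proposition~\ref{P4.1} is derived along each characteristic, with absolutely continuous dependence on $t$. The threshold in Step 1 for the case $\gamma=1$ is the one place where the explicit choice $A=2C_*+2$ is indispensable.
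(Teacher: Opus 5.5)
Your proof is correct and follows the paper's argument: you push the tail back to $f_0$ via $f_t=(Z_t)_\#f_0$, use $h_t(Z_t(z))\le h_0(z)+C_*t$ to reduce to the set $\{h_0>\tfrac A2(1+t)^\gamma\}$, and apply a Chebyshev bound with $\cM_q(f_0)$. The only cosmetic difference is that you absorb the shift $C_*t$ pointwise via $t\le (A/2)^{-1}h_0$, whereas the paper splits $\mathfrak M_{A,\gamma}$ into $\mathcal I_{21}+C_*t\,\mathcal I_{22}$ and compares $t\hat R^{-q}$ with $\hat R^{-(q-1)}$; both routes give exactly the constant $U_A$.
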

\begin{proof}  Since the proof is very lengthy, we split its proof into several steps. \newline

\noindent $\bullet$~Step A (Identification of a time-dependent effective region):~For $z \in \bbr^{2d}$, we denote the characteristic flow associated with $f$ as follows:
	\begin{equation*}
		Z_t(z)
		:=
		\bigl(X(t;z),V(t;z)\bigr).
	\end{equation*}
 Since $f_t= (Z_t)_{\#}f_0,$ it follows from the definition of ${\mathfrak M}_{A,\gamma}(t)$ that 
	\begin{align*}
		{\mathfrak M}_{A, \gamma}(t)
		&=
		\int_{\bbr^{2d}}
		\Bigl(1+h_t\bigl(Z_t(z)\bigr)\Bigr)
		\mathbf{1}_{\{
				h_t(Z_t(z))> R_{A,\gamma}(t)
		\}}
		f_0(z)\dd z.
	\end{align*}
	By \eqref{D-27}, we have
	\begin{equation*} \label{D-31}
		h_t\bigl(Z_t(z)\bigr)
		\leq
		h_0(z)+C_*t.
	\end{equation*}
If 
	\begin{equation*}
		h_t\bigl(Z_t(z)\bigr)		
		> R_{A,\gamma}(t),
	\end{equation*}
	then we have
	\begin{equation*}
		h_0(z)
		>
		R_{A,\gamma}(t)-C_*t.
	\end{equation*}
	We set 
	\begin{equation*}
		{\hat R}(t)
		:= R_{A,\gamma}(t)-C_*t=
		1+A(1+t)^\gamma-C_*t.
	\end{equation*}
	Moreover, we have
	\begin{equation*}
		1+h_t\bigl(Z_t(z)\bigr)
		\leq
		1+h_0(z)+C_*t.
	\end{equation*}
	Therefore, we have
	\begin{equation}\label{D-32}
		{\mathfrak M}_{A, \gamma}(t)
		\leq
		\int_{\{h_0(z) > {\hat R}(t) \}}
		\bigl(1+h_0(z)+C_*t\bigr)
		f_0(z)\dd z.
	\end{equation}
	Next, we derive a lower bound for ${\hat R}(t)$ depending on $\gamma$:
\[ \mbox{Either}~~\gamma > 1 \quad \mbox{or} \quad \gamma = 1. \]
\noindent $\diamond$~\textbf{Case 1.1}:~Suppose that
\[ \gamma>1. \]
In this case, we have
	\begin{equation*}
		\frac{C_*t}{(1+t)^\gamma}
		\longrightarrow 0
		\qquad\text{as }t\to\infty.
	\end{equation*}
	Thus, there exists $t_0>0$ such that
	\begin{equation*}
		C_*t
		\leq
		\frac{A}{2}(1+t)^\gamma,
		\qquad
		t\geq t_0,
	\end{equation*}
	and consequently
	\begin{equation*}
		{\hat R}(t)
		\geq
		1+\frac{A}{2}(1+t)^\gamma
		\geq
		\frac{A}{2}(1+t)^\gamma,
		\qquad
		t\geq t_0.
	\end{equation*}
	\vspace{0.2cm}
\noindent $\diamond$~\textbf{Case 1.2}:~Suppose that 
\[ \gamma =1 \quad \mbox{and} \quad  A > C_*. \]
In this case, there exists a positive constant $c_0$ such that 
	\begin{align*}
		{\hat R}(t)
		=
		1+A(1+t)-C_*t
		=
		1+A+(A-C_*)t
		\geq
		c_0(1+t).
	\end{align*}
	
For simplicity of notation, we can take $A=2C_*+2$. Therefore, it follows from Case 1.1 and Case 1.2 that there exist constants $c_0:=\frac{A}{2}>0$ and $t_0>0$ such that
	\begin{equation}\label{D-33}
		{\hat R}(t)
		\geq
		c_0(1+t)^\gamma,
		\qquad
		t\geq t_0.
	\end{equation}
If necessary, we take $t_0 \gg 1$ such that 
	\[ {\hat R}(t)\geq1, \quad t \geq t_0  \gg 1. \]
\noindent $\bullet$~Step B (Decay estimate of weighted exterior tail):~We split the right-hand side of \eqref{D-32} to obtain
	\begin{align*}
	\begin{aligned}
		{\mathfrak M}_{A, \gamma}(t)
		&\leq
		\int_{{h_0(z) > {\hat R}(t)}}
		(1+h_0(z))f_0(z)\dd z
		+C_*t
		\int_{{h_0> {\hat R}(t)}}
		f_0(z)\dd z \\
		& =:\mathcal{I}_{21}(t)+C_*t \times \mathcal{I}_{22}(t).
	\end{aligned}
	\end{align*}
Below, we estimate the terms ${\mathcal I}_{2i}$ one by one. \newline	
	
\noindent $\diamond$ {\bf Case 2.1} (Estimate of $\mathcal{I}_{21}$): On the set $\{h_0(z) > {\hat R}(t)\}$, we have
	\begin{equation*}
		1+h_0(z)
		\geq
		1+ {\hat R}(t).
	\end{equation*}
	Since $q>1$, it follows that
	\begin{equation*}
		1+h_0(z) =
		(1+h_0(z))^q(1+h_0(z))^{-(q-1)}
		\leq
		(1+{\hat R}(t))^{-(q-1)}(1+h_0(z))^q.
	\end{equation*}
	Therefore, we have
	\begin{align*}
	\mathcal{I}_{21}(t)
		\leq
		(1+ {\hat R}(t))^{-(q-1)}
		\int_{\bbr^{2d}}
		(1+h_0(z))^qf_0(z)\dd z
		=
		(1+ {\hat R}(t))^{-(q-1)}
		\cM_q(f_0).
	\end{align*}
	Since ${\hat R}(t)\geq1$, we have
	\begin{equation}\label{D-34}
		\mathcal{I}_{21}(t)
		\leq {\hat R}(t)^{-(q-1)}\cM_q(f_0).
	\end{equation}
	\vspace{0.1cm}
	
\noindent $\diamond$ \textbf{Case 2.2} (Estimate of $\mathcal{I}_{22}$):~On the set $\{h_0(z) > {\hat R}(t)\}$, we have
	\begin{equation*}
		1 = (1 + h_0(z))^{-q} (1 + h_0(z))^{q}  \leq 
		(1+ {\hat R}(t))^{-q}(1+h_0)^q.
	\end{equation*}
	Hence, we have
	\begin{align*}
		\mathcal{I}_{22}(t)
		\leq
		(1+ {\hat R}(t))^{-q}
		\int_{\bbr^{2d}}
		(1+h_0(z))^qf_0(z)\dd z
		=
		(1+ {\hat R}(t))^{-q}\cM_q(f_0).
	\end{align*}
This yields
	\begin{equation}\label{D-35}
		\mathcal{I}_{22}(t)
		\leq
		{\hat R}(t)^{-q}\cM_q(f_0).
	\end{equation}
We combine \eqref{D-34} and \eqref{D-35} to find
	\begin{equation}\label{D-36}
		{\mathfrak M}_{A, \gamma}(t)
		\leq
		\max \{1, C_* \} \left(
		{\hat R}(t)^{-(q-1)}
		+t {\hat R}(t)^{-q}
		\right)\cM_q(f_0).
	\end{equation}
	To compare two terms, note that
	\begin{align*}
		t {\hat R}(t)^{-q}
		=
		{\hat R}(t)^{-(q-1)}
		\frac{t}{ {\hat R}(t)}
		\leq
		\frac{1}{c_0}  {\hat R}(t)^{-(q-1)}
		(1+t)^{1-\gamma}, \quad t \gg 1,
	\end{align*}
	where we used \eqref{D-33}. Since $\gamma\geq1$,
	\begin{equation*}
		(1+t)^{1-\gamma}\leq1.
	\end{equation*}
	Consequently, we have
	\begin{equation*} \label{D-37}
		t {\hat R}(t)^{-q}
		\leq
		\frac{1}{c_0} {\hat R}(t)^{-(q-1)}, \quad t \gg 1.
	\end{equation*}
	It follows from \eqref{D-36} that
	\begin{equation*}
		{\mathfrak M}_{A, \gamma}(t)
		\leq
		\max \{1, C_* \} (1 + c_0^{-1})  {\hat R}(t)^{-(q-1)}\cM_q(f_0).
	\end{equation*}
	Finally, we use \eqref{D-33} to obtain the desired estimate:
	\begin{align*}
	\begin{aligned}
		{\mathfrak M}_{A, \gamma}(t)
		&\leq
		\max \{1, C_* \} (1 + c_0^{-1}) c_0^{-(q-1)}  \cM_q(f_0) (1+t)^{-\gamma(q-1)} \\
		&=: U_A(C_*, c_0, \cM_q(f_0)) (1+t)^{-\gamma(q-1)}.
\end{aligned}
\end{align*}
\end{proof}
Next, we characterize the effective region $\Omega_{A,\gamma}$ in terms of $R_{A,\gamma}$.
\begin{lemma} \label{L4.7}
Suppose that $z = (x,v)$ belongs to time-varying effective region $\Omega_{A,\gamma}$ with $A>0$ and $\gamma>0$. Then we have
\[
|x|\leq\sqrt{\frac{R_{A,\gamma}(t)}{\underbar{c}_W}}
\quad \mbox{and} \quad |v|\leq\sqrt{2R_{A,\gamma}(t)}.
\]
\end{lemma}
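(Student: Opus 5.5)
The plan is to read both bounds directly off the definition of $\Omega_{A,\gamma}(t)$, using the lower bound on the convolved potential from the quadratic confinement in \eqref{C-1}. Fix $t\ge0$ and $z=(x,v)\in\Omega_{A,\gamma}(t)$, so that
\[
\frac12|v|^2+(W*\rho_t)(x)=h_t(z)\leq R_{A,\gamma}(t).
\]
Both summands on the left are nonnegative, since $W\geq \underbar{c}_W|\cdot|^2\geq0$ and $\rho_t\ge0$. Hence each summand is separately bounded by $R_{A,\gamma}(t)$.

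For the velocity, I drop the potential term. This gives $\frac12|v|^2\le R_{A,\gamma}(t)$, that is, $|v|\le\sqrt{2R_{A,\gamma}(t)}$.

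For the position, I first repeat the computation of Lemma \ref{L2.4} at time $t$ instead of $0$. This is legitimate because the normalizations \eqref{B-2} propagate by Lemma \ref{L2.1}: unit mass and zero center of mass hold for $\rho_t$. Expanding $|x-x_*|^2$ as in \eqref{B-25}, the cross term vanishes and
\[
\int_{\bbr^d}|x-x_*|^2\rho_t(x_*)\dd x_*=|x|^2+\mathcal I_x(t).
\]
Combining this with $W(x-x_*)\geq\underbar{c}_W|x-x_*|^2$ yields
\[
(W*\rho_t)(x)\geq \underbar{c}_W\bigl(|x|^2+\mathcal I_x(t)\bigr)\geq \underbar{c}_W|x|^2,
\]
since $\mathcal I_x(t)\ge0$. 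Therefore $\underbar{c}_W|x|^2\le R_{A,\gamma}(t)$, which gives the stated spatial bound.

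The argument is elementary, and the only point needing care is justifying that the center of mass of $\rho_t$ stays at zero for a Lagrangian weak solution. For this I would cite Lemma \ref{L2.1} together with the approximation argument of Appendix \ref{app-A}. Alternatively, the zero-center-of-mass condition can be avoided altogether: Jensen's inequality gives $\int|x-x_*|^2\rho_t(x_*)\dd x_*\ge |x-\bar x_t|^2$, so only the identity $\bar x_t=0$ is needed.
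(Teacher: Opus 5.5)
Your proof is correct and follows the paper's argument. The paper likewise splits $h_t(z)\le R_{A,\gamma}(t)$ into its two nonnegative parts, drops the potential term to get the velocity bound, and uses unit mass and zero center of mass of $\rho_t$ to obtain $(W*\rho_t)(x)\geq\underbar{c}_W\bigl(|x|^2+\mathcal I_x(t)\bigr)\geq\underbar{c}_W|x|^2$ for the position bound. One wording slip: your Jensen alternative does not avoid the zero-center-of-mass condition, because $\bar x_t=0$ is exactly that condition; it only replaces the explicit expansion of $|x-x_*|^2$.
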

\begin{proof}
\noindent Inside time-varying effective region $\Omega_{A,\gamma}$, note that 
\begin{align}
\begin{aligned} \label{D-38}
 z \in \Omega_{A,\gamma} & \Longleftrightarrow \quad  h_t(z) \leq R_{A,\gamma}(t)  \\
 &\Longleftrightarrow \quad \frac12|v|^2+ (W*\rho_t)(x) \leq  R_{A,\gamma}(t) \\
 & \Longrightarrow \quad \frac12|v|^2 <  R_{A,\gamma}(t), \quad  (W*\rho_t)(x) \leq R_{A,\gamma}(t).
\end{aligned}
\end{align}
On the other hand, we use the normalization conditions:
\[ \int_{\bbr^d} \rho_t(x) \dd x = 1 \quad\text{and}\quad \int_{\bbr^d} x \rho_t(x) \dd x = 0 \]
to get 
\begin{align}
\begin{aligned} \label{D-39}
(W*\rho_t)(x) &= \int_{\bbr^{d}} W(x - x_*) \rho_t(x_*) \dd x_* \geq \underbar{c}_W  \int_{\bbr^{d}} |x-x_*|^2 \rho_t(x_*) \dd x_* \\
&= \underbar{c}_W \Big(  |x|^2 + \int_{\bbr^{d}} |x_*|^2 \rho_t(x_*) \dd x_* - 2 x \cdot \int_{\bbr^d} x_* \rho_t(x_*) \dd x_*    \Big) \\
& =  \underbar{c}_W \Big(  |x|^2 + \int_{\bbr^{d}} |x_*|^2 \rho_t(x_*) \dd x_* \Big) \geq  \underbar{c}_W  |x|^2.
\end{aligned}
\end{align}
Thus, it follows from \eqref{D-38} and \eqref{D-39} that 
\[ z \in \Omega_{A,\gamma} \quad  \Longrightarrow \quad  \underbar{c}_W  |x|^2 \leq R_{A,\gamma}(t), \quad  \frac12|v|^2 \leq  R_{A,\gamma}(t). \]
 These imply the desired estimates. 
\end{proof}
\begin{lemma} \label{L4.8}
Suppose that parameters and initial datum satisfy the following conditions:
\[ 0<\beta\leq 2, \quad \cM_q(f_0)<\infty, \quad \mbox{for some $q > 1$}, \]
and let $f$ be a global Lagrangian weak solution to \eqref{A-4}. Then, the following assertions hold. \newline
\begin{enumerate}
\item
(Localized communication):~There exist $t_1 = t_1(A, \gamma) \geq1$ and $b_{A,\beta}>0$ such that if 
\[ (x,v),(x_*,v_*)\in \Omega_{A,\gamma}(t), \]
then 
\begin{equation*} \label{D-40}
\phi(|x-x_*|)  \geq b_{A,\beta}(1+t)^{-\frac{\gamma\beta}{2}}, \quad t \geq t_1, \quad 
b_{A, \beta} := \underbar{c}_{\phi} \Big( \frac{16A}{\underbar{c}_W} \Big)^{-\frac{\beta}{2}} . \end{equation*}
\item
(Exterior two-point correlation):~
\begin{equation*}\label{D-41}
\int_{(\Omega_{A,\gamma}\times \Omega_{A,\gamma})^c}|v-v_*|^2 f_t(z)f_{t}(z_*)\dd z \dd z_*
\leq  8{\mathfrak M}_{A, \gamma}(t).
\end{equation*}
\end{enumerate}
\end{lemma}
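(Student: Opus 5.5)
For assertion (1), I will rely on Lemma \ref{L4.7}, which gives the localization. If $(x,v),(x_*,v_*)\in\Omega_{A,\gamma}(t)$, then $|x|,|x_*|\le\sqrt{R_{A,\gamma}(t)/\underbar{c}_W}$, so
\[
|x-x_*|^2\le 4R_{A,\gamma}(t)/\underbar{c}_W .
\]
By the lower bound in \eqref{C-2},
\[
\phi(|x-x_*|)\ge \underbar{c}_\phi\bigl(1+4R_{A,\gamma}(t)/\underbar{c}_W\bigr)^{-\beta/2}.
\]
Next I bound the base. Since $R_{A,\gamma}(t)=1+A(1+t)^\gamma$, I will choose $t_1\ge1$ large enough that
\[
1+\frac{4}{\underbar{c}_W}\bigl(1+A(1+t)^\gamma\bigr)\le \frac{16A}{\underbar{c}_W}(1+t)^\gamma ,\qquad t\ge t_1 .
\]
This works because the constant terms $1+4/\underbar{c}_W$ are absorbed into the extra $\frac{12A}{\underbar{c}_W}(1+t)^\gamma$ once $(1+t)^\gamma\ge(\underbar{c}_W+4)/(12A)$. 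With this, $t_1$ depends only on $A,\gamma$ and $\underbar{c}_W$. Raising to the power $-\beta/2$ and using $\beta>0$ gives the stated $b_{A,\beta}(1+t)^{-\gamma\beta/2}$. This step is routine.

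For assertion (2), I will split the complement of $\Omega\times\Omega$ into $(\Omega^c\times\bbr^{2d})\cup(\bbr^{2d}\times\Omega^c)$. By symmetry of the integrand under $z\leftrightarrow z_*$, the integral is at most
\[
2\int_{\Omega^c}\int_{\bbr^{2d}}|v-v_*|^2f_t(z_*)\,f_t(z)\,\dd z_*\,\dd z .
\]
Using $|v-v_*|^2\le 2|v|^2+2|v_*|^2$, unit mass, and $\int |v_*|^2 f_t\,\dd z_*=\cE_K(t)$, this becomes
\[
4\int_{\Omega^c}|v|^2f_t\,\dd z+4\,\cE_K(t)\int_{\Omega^c}f_t\,\dd z .
\]
For the first term I use $|v|^2\le 2h_t(z)$, which holds because $W\ge0$. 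This gives $8\int_{\Omega^c}h_tf_t$. For the second term I need $\cE_K(t)\le c\,(1+h_t(z))$ on $\Omega^c$. On $\Omega^c$ we have $h_t>R_{A,\gamma}(t)\ge1+A(1+t)^\gamma$, while $\cE_K(t)\le(\cE_K+\cE_{I,e})(0)$ by \eqref{B-22}. Hence for $t$ large, specifically once $A(1+t)^\gamma\ge \cE_K(0)+\cE_{I,e}(0)$, we get $\cE_K(t)\le h_t$ on $\Omega^c$.

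Combining the two terms, the total is bounded by
\[
8\int_{\Omega^c}h_tf_t+4\int_{\Omega^c}h_tf_t .
\]
This is of size $12\,\mathfrak{M}$ rather than $8\,\mathfrak{M}$, so the main obstacle is getting the sharp constant $8$. I will try to recover it through more careful bookkeeping. One option is to write $|v-v_*|^2=|v|^2+|v_*|^2-2v\cdot v_*$ and treat the cross term separately. The cross term over $\Omega^c\times\bbr^{2d}$ vanishes after integrating in $z_*$, by zero momentum. What remains is $\int_{\Omega^c}|v|^2f_t+\cE_K\int_{\Omega^c}f_t\le 2\int h_t+\int h_t$. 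Doubling this for the two strips gives $6\,\mathfrak{M}\le8\,\mathfrak{M}$, since overlapping the strips only overcounts a nonnegative integrand. I expect this inequality to hold for $t\ge t_1$, enlarging $t_1$ if needed, which is consistent with the $t\gg1$ usage elsewhere in the paper.
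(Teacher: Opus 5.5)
Part (1) is fine and is essentially the paper's argument. The paper first uses $4R_{A,\gamma}(t)/\underbar{c}_W\le 8A(1+t)^\gamma/\underbar{c}_W$ and then absorbs the $1$, whereas you absorb all constants in one step. In Part (2), your reduction to $2\bigl(K_{\rm out}+m_{\rm out}\cE_K(t)\bigr)$ also matches the paper. Here $m_{\rm out}=\int_{\Omega^c}f_t$, $K_{\rm out}=\int_{\Omega^c}|v|^2f_t$ and $H_{\rm out}=\int_{\Omega^c}h_tf_t$, and the reduction uses the two strips and zero momentum; the bound $K_{\rm out}\le 2H_{\rm out}$ is likewise the same.

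The gap is the step $\cE_K(t)\le h_t$ on $\Omega^c$. That step needs $R_{A,\gamma}(t)\ge\cE_K(t)$, which you only secure once $1+A(1+t)^\gamma\ge(\cE_K+\cE_{I,e})(0)$, a threshold that depends on the initial energy. Assertion (2) carries no time restriction. For small $t$ your argument gives nothing, for instance at $t=0$ when $1+A<\cE_K(0)$. So you have proved a weaker, time-restricted statement, not the lemma.

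The missing idea is to avoid comparing $\cE_K$ with $R_{A,\gamma}$ at all. Instead split $\cE_K=K_{\rm in}+K_{\rm out}$ with $K_{\rm in}=\int_{\Omega}|v|^2f_t$.
\begin{itemize}
\item On $\Omega$ one has $|v|^2\le 2R_{A,\gamma}(t)$, so $K_{\rm in}\le 2R_{A,\gamma}(t)(1-m_{\rm out})$.
\item On $\Omega^c$ one has $h_t>R_{A,\gamma}(t)$, which gives $R_{A,\gamma}(t)\,m_{\rm out}\le H_{\rm out}$.
\end{itemize}
Combining these,
\[
m_{\rm out}\cE_K\le 2R_{A,\gamma}(t)\,m_{\rm out}(1-m_{\rm out})+2m_{\rm out}H_{\rm out}\le 2(1-m_{\rm out})H_{\rm out}+2m_{\rm out}H_{\rm out}=2H_{\rm out}
\]
holds for every $t\ge0$. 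The total is then $2(2H_{\rm out}+2H_{\rm out})=8H_{\rm out}\le 8\mathfrak{M}_{A,\gamma}(t)$.

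This is exactly where the constant $8$ comes from. Your sharper constant $6$ is bought with the large-time assumption, while the paper's factor-two loss in $m_{\rm out}\cE_K\le 2H_{\rm out}$ buys uniformity in $t$. Your restricted version would still suffice downstream in Proposition \ref{P4.2}, which is only claimed for $t\ge\tilde t_1$. It does not, however, prove the lemma as stated.
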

\begin{proof}
(1)~Suppose that 
 \[ (x,v),(x_*,v_*)\in \Omega_{A,\gamma}(t). \]
 Then, by Lemma \ref{L4.7}, we have
 \[
\max \{ |x|,~|x_*| \} \leq\sqrt{\frac{R_{A,\gamma}(t)}{\underbar{c}_W}}, \quad \max \{|v|, |v_*| \} \leq\sqrt{2R_{A,\gamma}(t)}.
\]
Since $A(1 + t)^{\gamma} \to \infty$ as $t \to \infty$, we can choose $t_* = t_*(A, \gamma)$ sufficiently large such that 
\[
A(1+t_*)^\gamma \geq 1.
\]
In fact, we can choose 
\[ t_* (A, \gamma) := \max \Big\{ 0,~e^{-\frac{\ln A}{\gamma}} - 1 \Big \}. \]
Then, we have 
\[ A(1+t)^\gamma \geq 1, \quad t \geq t_*(A, \gamma). \]
These imply 
\begin{equation} \label{D-42}
|x-x_*|^2 \leq 2 ( |x|^2 + |x_*|^2 ) \leq  \frac{4 R_{A,\gamma}(t)}{\underbar{c}_W} = \frac{4(1+A(1+t)^\gamma)}{\underbar{c}_W} \leq  \frac{8A(1+t)^\gamma}{\underbar{c}_W}, \quad t \geq  t_*(A, \gamma).
\end{equation}
We substitute the estimate \eqref{D-42} into \eqref{C-2} to obtain the desired estimate:
\begin{equation} \label{D-43}
\phi(|x-x_*|) \geq  \frac{\underbar{c}_\phi}{(1+ |x- x_*|^2)^{\beta/2}} \geq  \underbar{c}_{\phi} \Big(  1+   \frac{8A(1+t)^\gamma}{\underbar{c}_W}   \Big)^{-\frac{\beta}{2}}.
\end{equation}
We choose $t_{**}$ sufficiently large such that 
\begin{equation} \label{D-44}
 \frac{8A(1+t)^\gamma}{\underbar{c}_W}  \geq 1, \quad t \geq  t_{**} := \max \Big \{0,~e^{\frac{1}{\gamma} \ln \frac{\underbar{c}_W}{8A}} - 1 \Big \}.
\end{equation}
Finally, we set 
\begin{equation} \label{D-45}
 t_1 := \max \{1, t_*, t_{**} \}
 \end{equation}
and combine \eqref{D-43}, \eqref{D-44} and \eqref{D-45} to get 
\[
\phi(|x-x_*|) \geq \underbar{c}_{\phi} \Big( \frac{16A}{\underbar{c}_W} \Big)^{-\frac{\beta}{2}} (1 + t)^{-\frac{\gamma \beta}{2}} =:  b_{A,\beta}(1+t)^{-\frac{\gamma\beta}{2}}, \quad t \geq t_1.
\]
\vspace{.2cm}

\noindent (2)~ For simplicity, we suppress the dependence on \(A\) and \(\gamma\), and write
\[
\Omega:=\Omega_{A,\gamma}(t),
\qquad
R(t):=R_{A,\gamma}(t),
\]
and we set
\begin{align*}
\begin{aligned}
& m_{\rm out}(t):=\int_{\Omega^c}f_t(z)\dd z, \quad
K_{\rm out}(t):=\int_{\Omega^c}|v|^2f_t(z)\dd z, \\
& K_{\rm in}(t):=\int_{\Omega}|v|^2f_t(z)\dd z, \quad 
H_{\rm out}(t):=\int_{\Omega^c}h_t(z)f_t(z)\dd z.
\end{aligned}
\end{align*}
By symmetry and the inclusion
\[
(\Omega\times\Omega)^c
\subset
(\Omega^c\times\bbr^{2d})
\cup
(\bbr^{2d}\times\Omega^c),
\]
we have
\begin{align*}
	\int_{(\Omega\times\Omega)^c}
	|v-v_*|^2f_t(z)f_t(z_*)\dd z\dd z_*\leq
	2\int_{\Omega^c\times\bbr^{2d}}
	|v-v_*|^2f_t(z)f_t(z_*)\dd z\dd z_*.
\end{align*}
Using the mass and momentum normalizations \eqref{C-3} we obtain
\begin{align}
	&\int_{\Omega^c\times\bbr^{2d}}
	|v-v_*|^2f_t(z)f_t(z_*)\dd z\dd z_*
	\nonumber\\
	&\qquad=
	\int_{\Omega^c}|v|^2f_t(z)\dd z
	+
	\left(\int_{\Omega^c}f_t(z)\dd z\right)
	\left(\int_{\bbr^{2d}}|v_*|^2f_t(z_*)\dd z_*\right)
	\nonumber\\
	&\qquad=
	K_{\rm out}(t)+m_{\rm out}(t)\cE_K(t).
	\label{D-46}
\end{align}
Since
\[
h_t(z)=
\frac12|v|^2+(W*\rho_t)(x)
\geq\frac12|v|^2,
\]
we have
\[
K_{\rm out}(t)\leq2H_{\rm out}(t).
\]
Moreover, since \(h_t(z)\geq R(t)\) on \(\Omega^c\),
\[
R(t)m_{\rm out}(t)\leq H_{\rm out}(t).
\]
On the other hand, Lemma~\ref{L4.7} implies
\[
|v|^2\leq2R(t)
\qquad\text{on }\Omega,
\]
and hence
\[
K_{\rm in}(t)
\leq
2R(t)\bigl(1-m_{\rm out}(t)\bigr).
\]
Therefore,
\begin{align*}
	m_{\rm out}(t)\cE_K(t)
	&=
	m_{\rm out}(t)
	\bigl(K_{\rm in}(t)+K_{\rm out}(t)\bigr)\\
	&\leq
	2R(t)m_{\rm out}(t)
	\bigl(1-m_{\rm out}(t)\bigr)
	+
	2m_{\rm out}(t)H_{\rm out}(t)\\
	&\leq
	2\bigl(1-m_{\rm out}(t)\bigr)H_{\rm out}(t)
	+
	2m_{\rm out}(t)H_{\rm out}(t)\\
	&=
	2H_{\rm out}(t).
\end{align*}
Combining this estimate with \eqref{D-46}, we conclude that
\begin{align*}
\begin{aligned}
	&\int_{(\Omega\times\Omega)^c}
	|v-v_*|^2f_t(z)f_t(z_*)\dd z\dd z_* \leq
	2K_{\rm out}(t)
	+
	2m_{\rm out}(t)\cE_K(t)\\
	&\quad\leq
	4H_{\rm out}(t)+4H_{\rm out}(t)=
	8H_{\rm out}(t) \leq
	8\int_{\Omega^c}
	\bigl(1+h_t(z)\bigr)f_t(z)\dd z=
	8\mathfrak M_{A,\gamma}(t).
\end{aligned}
\end{align*}
\end{proof}
\begin{corollary}[Sublinear growth along characteristics]
	\label{C4.1}
	Let \(q>1\), and assume that
	\[
	\cM_q(f_0)<\infty.
	\]
	Let \(f\) be a global Lagrangian weak solution to \eqref{A-4}
	such that
	\begin{equation}\label{D-47}
		h_t\bigl(Z_t(z)\bigr)
		\leq h_0(z)+r(t)
		\qquad
		\text{for \(f_0\)-a.e. }z\in\bbr^{2d} \qquad \mbox{and} \quad  \lim_{t\to\infty}\frac{r(t)}{1+t}=0.
	\end{equation}
	Then, for every fixed \(A>0\) and every \(\gamma\geq1\), there exist
	\(t_{A,\gamma}>0\) and \(\tilde{U}_A>0\) such that
	\begin{equation}\label{D-49}
		\mathfrak M_{A,\gamma}(t)
		\leq
		\tilde{U}_A(1+t)^{-\gamma(q-1)},
		\qquad
		t\geq t_{A,\gamma}.
	\end{equation}
	More precisely, one may take
	\begin{equation}\label{D-50}
		\tilde{U}_A=
		2\left(\frac{A}{2}\right)^{-(q-1)}
		\cM_q(f_0)=
		2^qA^{-(q-1)}\cM_q(f_0).
	\end{equation}
\end{corollary}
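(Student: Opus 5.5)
The argument repeats the proof of Lemma~\ref{L4.6}, with the linear shift $C_*t$ replaced by the sublinear shift $r(t)$. Since $f_t=(Z_t)_\#f_0$, we can write
\[
\mathfrak M_{A,\gamma}(t)=\int_{\bbr^{2d}}\bigl(1+h_t(Z_t(z))\bigr)\mathbf 1_{\{h_t(Z_t(z))>R_{A,\gamma}(t)\}}f_0(z)\dd z .
\]
By \eqref{D-47}, whenever $h_t(Z_t(z))>R_{A,\gamma}(t)$ we have $h_0(z)>\hat R(t):=R_{A,\gamma}(t)-r(t)$. Moreover, $1+h_t(Z_t(z))\le 1+h_0(z)+r(t)$. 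Hence
\[
\mathfrak M_{A,\gamma}(t)\le \int_{\{h_0>\hat R(t)\}}(1+h_0)f_0\dd z+r(t)\int_{\{h_0>\hat R(t)\}}f_0\dd z .
\]
If $r$ takes negative values somewhere, we first replace it by $r_+$. This preserves both \eqref{D-47} and the sublinearity.

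The first step is a lower bound on $\hat R$. Since $r(t)/(1+t)\to0$ and $\gamma\ge1$, we have $r(t)\le\frac A2(1+t)\le\frac A2(1+t)^\gamma$ for all $t$ beyond some time. Consequently
\[
\hat R(t)\ge 1+\tfrac A2(1+t)^\gamma\ge \tfrac A2(1+t)^\gamma ,
\]
and in particular $\hat R(t)\ge1$. Unlike Lemma~\ref{L4.6}, there is no need to take $A$ large when $\gamma=1$: sublinearity alone beats any fixed $A>0$. This is the point of the corollary.

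The second step applies Chebyshev's inequality with the $q$-moment, exactly as in \eqref{D-34}–\eqref{D-35}. On $\{h_0>\hat R\}$ this gives
\[
\int_{\{h_0>\hat R\}}(1+h_0)f_0\le \hat R^{-(q-1)}\cM_q(f_0),\qquad \int_{\{h_0>\hat R\}}f_0\le \hat R^{-q}\cM_q(f_0),
\]
so that
\[
\mathfrak M_{A,\gamma}\le \hat R^{-(q-1)}\Bigl(1+\frac{r(t)}{\hat R(t)}\Bigr)\cM_q(f_0).
\]

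The only step that needs care is getting the constant to be exactly $2$, with no dependence on the unknown function $r$. Here we use sublinearity once more:
\[
\frac{r(t)}{\hat R(t)}\le \frac{r(t)}{\frac A2(1+t)^\gamma}\le \frac{2}{A}\,\frac{r(t)}{1+t}\longrightarrow0 ,
\]
so for $t\ge t_{A,\gamma}$, chosen large enough, this ratio is at most $1$. Combined with $\hat R(t)\ge\frac A2(1+t)^\gamma$, this yields
\[
\mathfrak M_{A,\gamma}(t)\le 2\Bigl(\frac A2\Bigr)^{-(q-1)}\cM_q(f_0)(1+t)^{-\gamma(q-1)}=2^qA^{-(q-1)}\cM_q(f_0)(1+t)^{-\gamma(q-1)} ,
\]
which is \eqref{D-49} with the constant $\tilde U_A$ of \eqref{D-50}. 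The time $t_{A,\gamma}$ is the maximum of the two thresholds above. It depends on $A$, $\gamma$ and the function $r$, but not on $z$.
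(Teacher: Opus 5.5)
Your proposal is correct and follows the paper's proof essentially step for step. Both arguments use the push-forward representation, the implication $h_t(Z_t(z))>R_{A,\gamma}(t)\Rightarrow h_0(z)>\hat R(t)$, the threshold $r(t)\le\frac A2(1+t)^\gamma$, and Chebyshev with the $q$-th moment. The only difference is that your second threshold for $r(t)/\hat R(t)\le 1$ is redundant: the first threshold already gives $r(t)\le\frac A2(1+t)^\gamma\le\hat R(t)$, which is exactly how the paper absorbs the $r(t)$-term into the factor $2$.
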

\begin{proof}
	Since
	\[
	f_t=(Z_t)_{\#}f_0,
	\]
	we have
	\begin{align*}
		\mathfrak M_{A,\gamma}(t)
		=
		\int_{\bbr^{2d}}
		\Bigl(1+h_t\bigl(Z_t(z)\bigr)\Bigr)
		\mathbf{1}_{\{
				h_t(Z_t(z))>R_{A,\gamma}(t)
		\}}
		f_0(z)\dd z.
	\end{align*}
	By \eqref{D-47}, we have
	\[
	h_t\bigl(Z_t(z)\bigr)
	\leq h_0(z)+r(t).
	\]
	Consequently,
	\[
	h_t\bigl(Z_t(z)\bigr)>R_{A,\gamma}(t)
	\quad\Longrightarrow\quad
	h_0(z)>\widehat R_{A,\gamma}(t),
	\]
	where
	\[
	\widehat R_{A,\gamma}(t)
	:=
	R_{A,\gamma}(t)-r(t)=
	1+A(1+t)^\gamma-r(t).
\]
	Therefore,
	\begin{equation}\label{D-51}
		\mathfrak M_{A,\gamma}(t)
		\leq
		\int_{{h_0>\widehat R_{A,\gamma}(t)}}
		\bigl(1+h_0(z)+r(t)\bigr)f_0(z)\dd z.
	\end{equation}
	Since \(r(t)=o(t)\), we have, for every fixed \(\gamma\geq1\),
	\[
	\frac{r(t)}{(1+t)^\gamma}=
	\frac{r(t)}{1+t}(1+t)^{1-\gamma}
	\longrightarrow0
	\qquad\text{as }t\to\infty.
	\]
	Hence, for every fixed \(A>0\), there exists \(t_{A,\gamma}>0\)
	such that
	\begin{equation}\label{D-52}
		r(t)
		\leq
		\frac{A}{2}(1+t)^\gamma,
		\qquad
		t\geq t_{A,\gamma}.
	\end{equation}
	It follows that
	\begin{equation}\label{D-53}
		\widehat R_{A,\gamma}(t)
		\geq
		1+\frac{A}{2}(1+t)^\gamma
		\geq
		\frac{A}{2}(1+t)^\gamma,
		\qquad
		t\geq t_{A,\gamma}.
	\end{equation}
	We split the right-hand side of
	\eqref{D-51} as
	\begin{align*}
		\mathfrak M_{A,\gamma}(t)
		\leq
		\int_{{h_0>\widehat R_{A,\gamma}(t)}}
		(1+h_0(z))f_0(z)\dd z+
		r(t)
		\int_{{h_0>\widehat R_{A,\gamma}(t)}}
		f_0(z)\dd z.
	\end{align*}
	Using \(q>1\) and Chebyshev's inequality, we obtain
	\begin{align*}
		\int_{{h_0>\widehat R_{A,\gamma}(t)}}
		(1+h_0(z))f_0(z)\dd z
		&\leq
		\bigl(1+\widehat R_{A,\gamma}(t)\bigr)^{-(q-1)}
		\cM_q(f_0),\\
		\int_{{h_0>\widehat R_{A,\gamma}(t)}}
		f_0(z)\dd z
		&\leq
		\bigl(1+\widehat R_{A,\gamma}(t)\bigr)^{-q}
		\cM_q(f_0).
	\end{align*}
	Thus,
	\begin{align*}
		\mathfrak M_{A,\gamma}(t)
		&\leq
		\left[
		\bigl(1+\widehat R_{A,\gamma}(t)\bigr)^{-(q-1)}
		+
		r(t)
		\bigl(1+\widehat R_{A,\gamma}(t)\bigr)^{-q}
		\right]
		\cM_q(f_0).
	\end{align*}
	By \eqref{D-52} and
	\eqref{D-53},
	\[
	r(t)
	\leq
	\frac{A}{2}(1+t)^\gamma
	\leq
	\widehat R_{A,\gamma}(t)
	\leq
	1+\widehat R_{A,\gamma}(t).
	\]
	Therefore,
	\begin{align*}
		\mathfrak M_{A,\gamma}(t)
		\leq
		2\bigl(1+\widehat R_{A,\gamma}(t)\bigr)^{-(q-1)}
		\cM_q(f_0)\leq
		2\left(\frac{A}{2}\right)^{-(q-1)}
		\cM_q(f_0)
		(1+t)^{-\gamma(q-1)}.
	\end{align*}
	This yields the desired estimate \eqref{D-49}.
\end{proof}

Now, we need to estimate  the term $\Lambda$ using Lemma \ref{L4.6} -- Lemma \ref{L4.8}.
\begin{proposition}[Localized dissipation]\label{P4.2}
Suppose that parameters and initial datum satisfy the following conditions:
\[ 0<\beta\leq 2, \quad \cM_q(f_0)<\infty, \quad \mbox{for some $q > 1$}, \]
and let $f$ be a global Lagrangian weak solution to \eqref{A-4}.  Then, there exists some large ${\tilde t}_1$ such that 
\begin{equation*}\label{D-54}
\Lambda(t) \geq  2 \kappa b_{A,\beta}(1+t)^{-\frac{\gamma\beta}{2}} \cE_K(t) - 8 U_A \kappa  b_{A,\beta} (1 + t)^{-\gamma (  \frac{\beta}{2} + q-1)}, \quad t  \geq {\tilde t}_1.
\end{equation*}
\end{proposition}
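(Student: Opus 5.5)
The plan is to split the phase space into the effective region and its complement: localize the dissipation integral to $\Omega_{A,\gamma}(t)\times\Omega_{A,\gamma}(t)$, where Lemma \ref{L4.8} (1) gives a pointwise lower bound on $\phi$. The mass lost by discarding the exterior is then controlled by Lemma \ref{L4.8} (2) together with Lemma \ref{L4.6}. We fix $A$ and $\gamma$ as in Lemma \ref{L4.6} (either $\gamma>1$, or $\gamma=1$ with $A=2C_*+2$). We then take ${\tilde t}_1:=\max\{t_1,t_0\}$, where $t_1=t_1(A,\gamma)$ comes from Lemma \ref{L4.8} (1) and $t_0$ is the large time after which the bound ${\mathfrak M}_{A,\gamma}(t)\le U_A(1+t)^{-\gamma(q-1)}$ holds.

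Write $\Omega=\Omega_{A,\gamma}(t)$. Since $\phi\ge0$, we drop the exterior contribution and use Lemma \ref{L4.8} (1) on $\Omega\times\Omega$:
\[
\Lambda(t)\ \ge\ \kappa\int_{\Omega\times\Omega}\phi(|x-x_*|)|v-v_*|^2 f_tf_{t*}\ \ge\ \kappa b_{A,\beta}(1+t)^{-\frac{\gamma\beta}{2}}\int_{\Omega\times\Omega}|v-v_*|^2 f_tf_{t*},\qquad t\ge{\tilde t}_1.
\]
Next we write the integral over $\Omega\times\Omega$ as the full two-point integral minus the integral over the complement $(\Omega\times\Omega)^c$. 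By Lemma \ref{L2.2} (3), the full integral equals $2\cE_K(t)$, and by Lemma \ref{L4.8} (2), the complement is bounded by $8{\mathfrak M}_{A,\gamma}(t)$. Hence
\[
\Lambda(t)\ \ge\ \kappa b_{A,\beta}(1+t)^{-\frac{\gamma\beta}{2}}\bigl(2\cE_K(t)-8{\mathfrak M}_{A,\gamma}(t)\bigr).
\]
Inserting the decay bound ${\mathfrak M}_{A,\gamma}(t)\le U_A(1+t)^{-\gamma(q-1)}$ from Lemma \ref{L4.6} for $t\ge t_0$ gives exactly the claimed estimate, with the error term $8U_A\kappa b_{A,\beta}(1+t)^{-\gamma(\frac{\beta}{2}+q-1)}$.

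The argument is essentially bookkeeping; the only delicate point is the sign handling. The lower bound on $\phi$ must be applied only on $\Omega\times\Omega$, and only after discarding the nonnegative exterior part, so that subtracting the exterior correlation produces a lower bound rather than an upper bound. We also need the times in Lemma \ref{L4.6} and Lemma \ref{L4.8} to be compatible, which is why we take the maximum of the two thresholds as ${\tilde t}_1$. Finally, if instead of Lemma \ref{L4.6} one has only the sublinear shift of Corollary \ref{C4.1}, the same argument goes through with $U_A$ replaced by $\tilde U_A$.
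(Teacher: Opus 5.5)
Your proposal is correct and matches the paper's proof step for step. It restricts $\Lambda$ to $\Omega_{A,\gamma}(t)\times\Omega_{A,\gamma}(t)$, applies the localized lower bound on $\phi$ from Lemma~\ref{L4.8}~(1), rewrites the localized integral as $2\cE_K$ minus the exterior correlation bounded by $8\mathfrak M_{A,\gamma}(t)$ (Lemma~\ref{L2.2}~(3) and Lemma~\ref{L4.8}~(2)), and inserts the tail decay of Lemma~\ref{L4.6}; your explicit threshold $\tilde t_1=\max\{t_1,t_0\}$ only makes precise what the paper leaves implicit.
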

\begin{proof}
We use Lemma \ref{L4.6}, Lemma \ref{L4.7}, Lemma \ref{L4.8}, and normalizations to get the desired estimate:
\begin{align*}
\Lambda(t) &= \kappa \int_{\bbr^{4d}}\phi(|x-x_*|)|v-v_*|^2f_t(z)f_{t}(z_*)\dd z \dd z_* \\
&\geq  \kappa  \int_{\Omega_{A, \gamma}^2}\phi(|x-x_*|)|v-v_*|^2f_t(z)f_{t}(z_*)\dd z \dd z_* \\
&\geq  \kappa  b_{A,\beta}(1+t)^{-\frac{\gamma\beta}{2}}
\int_{ \Omega_{A, \gamma}^2  }|v-v_*|^2f_t(z)f_{t}(z_*)\dd z \dd z_* \\
&=\kappa b_{A,\beta}(1+t)^{-\frac{\gamma\beta}{2}}
\Bigg(
\int_{\bbr^{4d}}|v-v_*|^2f_t(z)f_t(z_*)\dd z\dd z_*
\\[-1mm]
&\hspace{4.8cm}
-\int_{(\Omega_{A,\gamma}(t)^2)^c}|v-v_*|^2f_t(z)f_t(z_*)\dd z\dd z_*
\Bigg)\\
&=\kappa b_{A,\beta}(1+t)^{-\frac{\gamma\beta}{2}}
\Bigg(
2\int_{\bbr^{2d}}|v|^2f_t(z)\dd z
\\[-1mm]
&\hspace{4.8cm}
-\int_{(\Omega_{A,\gamma}(t)^2)^c}|v-v_*|^2f_t(z)f_t(z_*)\dd z\dd z_*
\Bigg)\\
&= \kappa   b_{A,\beta}(1+t)^{-\frac{\gamma\beta}{2}}
\left(2\cE_K(t)-
\int_{(\Omega_{A,\gamma}(t)^2)^c}|v-v_*|^2f_t(z)f_{t}(z_*)\dd z \dd z_* \right) \\
&\geq  \kappa   b_{A,\beta}(1+t)^{-\frac{\gamma\beta}{2}}
\Big ( 2 \cE_K(t)-8  \mathfrak{M}_{A,\gamma}(t) \Big) \\
& \geq  \kappa  b_{A,\beta}(1+t)^{-\frac{\gamma\beta}{2}}
\Big ( 2\cE_K(t)-8  U_A(1+t)^{-\gamma(q-1)}\Big)  \\
& = 2 \kappa  b_{A,\beta}(1+t)^{-\frac{\gamma\beta}{2}} \cE_K(t) - 8 U_A  \kappa  b_{A,\beta} (1 + t)^{-\gamma (  \frac{\beta}{2} + q-1      )}.
\end{align*}
\end{proof}

\subsubsection{Macroscopic hypocoercivity}
\label{sec:4.2.3}
In this part, we move on to \textbf{Step E} in the proof of
Theorem~\ref{T3.3} \textnormal{(ii)}. Set
\[
p:=\frac{\gamma\beta}{2},
\]
and let \(0<\eta\leq1\) be a gauge parameter. We define
\begin{equation*}\label{D-55}
	\omega(t)
	:=
	\frac{\eta\kappa b_{A,\beta}}{2}(1+t)^{-p},
	\qquad
	\cE(t)
	:=
	\cE_K(t)+\cE_{I,e}(t)+\omega(t)\cE_{I,s}(t).
\end{equation*}
Then we have
\begin{equation*}\label{D-56}
	0\leq\omega(t)
	\leq
	\frac{\eta\kappa b_{A,\beta}}{2},
	\qquad
	\omega'(t)=
	-\frac{\eta\kappa b_{A,\beta}p}{2}
	(1+t)^{-p-1}.
\end{equation*}
We next establish the equivalence between \(\cE\) and \(\cF\), and
derive a differential inequality for \(\cE\).
\begin{lemma}[Gauge-dependent Lyapunov inequality]
	\label{L4.9}
	Set
	\begin{equation*}\label{D-57}
		p:=\frac{\gamma\beta}{2},
		\quad
		\omega_0:=
		\frac{\eta\kappa b_{A,\beta}}{2}.
	\end{equation*}
	Assume that \(0<\eta\leq1\) is sufficiently small so that $\max\{\omega_0,\frac{1}{2\underbar{c}_W}\omega_0\}<\frac{1}{2}.$	Define
	\begin{equation*}\label{D-58}
		c_{L,\eta}
		:=
			\min\left\{1-\omega_0,1-\frac{1}{2\underbar{c}_W}\omega_0\right\},
		\quad
		C_{L,\eta}
		:=
		\max\left\{
			1+\omega_0,
			1+	\frac{\kappa\bar{c}_{\phi}+1}{2\underbar{c}_W} ~\omega_0
			\right\}.
	\end{equation*}
	Moreover, there exists \(\widetilde t_*>0\) such that the following
	assertions hold:
	\begin{enumerate}
		\item
		For every \(t\geq0\),
		\begin{equation}\label{D-59}
			\frac{
				\min\left\{
					2,\frac{1}{\bar{c}_W}
					\right\}
			}{
				C_{L,\eta}
			}
			\cE(t)
			\leq
			\cF(t)
			\leq
			\frac{
				\max\left\{
					2,\frac{1}{\underbar{c}_W}
					\right\}
			}{
				c_{L,\eta}
			}
			\cE(t).
		\end{equation}

		\item
		For every \(t\geq\widetilde t_*\),
		\begin{equation}\label{D-60}
			\cE'(t)
			\leq
			-\kappa\mu_{A} 
			(1+t)^{-\frac{\gamma\beta}{2}}
			\cE(t)
			+
			8U_A\kappa b_{A,\beta}
			(1+t)^{-\gamma\left(\frac{\beta}{2}+q-1\right)},
		\end{equation}
		where
		\begin{equation*}\label{D-61}
			\mu_{A} 
			:=
			\frac{
				b_{A,\beta}
				\min\left\{
				\frac12,\frac{\eta\nu_W}{4}
				\right\}
			}{	\max\left\{
				1+\omega_0,
				1+	\frac{\kappa\bar{c}_{\phi}+1}{2\underbar{c}_W} ~\omega_0
				\right\}
			}.
		\end{equation*}
	\end{enumerate}
\end{lemma}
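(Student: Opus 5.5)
The plan is to obtain (1) from two-sided bounds on $\cE_{I,s}$ combined with Lemma~\ref{L2.2}~(4). For (2), I would differentiate $\cE$, use the exact dissipation identity, the localized dissipation of Proposition~\ref{P4.2} and the virial bound of Lemma~\ref{L4.1}~(ii), and then absorb the term coming from $\omega'$ for large times.

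\textbf{Step 1 (bounds on $\cE_{I,s}$ for general $\beta$).} Since $0\le\phi\le\bar c_\phi$, we have $0\le\Phi(r)\le \frac{\kappa\bar c_\phi}{2}r^2$. By \eqref{D-2} and \eqref{B-7},
\[
-\mathcal I_x-\cE_K\le \cE_{I,s}\le (\kappa\bar c_\phi+1)\mathcal I_x+\cE_K .
\]
Lemma~\ref{L2.2}~(2) gives $\mathcal I_x\le \cE_{I,e}/(2\underbar c_W)$, so
\[
-\cE_K-\tfrac{1}{2\underbar c_W}\cE_{I,e}\;\le\; \cE_{I,s}\;\le\; \cE_K+\tfrac{\kappa\bar c_\phi+1}{2\underbar c_W}\cE_{I,e}.
\]
Because $0\le\omega(t)\le\omega_0$, this yields
\[
c_{L,\eta}(\cE_K+\cE_{I,e})\le\cE\le C_{L,\eta}(\cE_K+\cE_{I,e}).
\]
The lower constant is positive by the smallness assumption on $\omega_0$. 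Combining this with \eqref{B-10} gives \eqref{D-59} for all $t\ge0$.

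\textbf{Step 2 (differential inequality).} Lemma~\ref{L2.3} and Remark~\ref{R2.1} give
\[
\cE'=-\Lambda+\omega\,\cE_{I,s}'+\omega'\,\cE_{I,s}.
\]
For $t\ge\widetilde t_1$ (also beyond $t_1$ of Lemma~\ref{L4.8} and the threshold of Lemma~\ref{L4.6}), Proposition~\ref{P4.2} bounds $-\Lambda$ by
\[
-2\kappa b_{A,\beta}(1+t)^{-p}\cE_K+8U_A\kappa b_{A,\beta}(1+t)^{-\gamma(\frac\beta2+q-1)}.
\]
Lemma~\ref{L4.1}~(ii) uses only $(\mathcal F_A1)$ and Lemma~\ref{L2.3}~(iii), so it holds for any $\beta$, and it gives
\[
\omega\,\cE_{I,s}'\le \eta\kappa b_{A,\beta}(1+t)^{-p}\cE_K-\tfrac{\eta\nu_W\kappa b_{A,\beta}}{2}(1+t)^{-p}\cE_{I,e}.
\]
Since $\eta\le1$, the sum of these two contributions is at most
\[
-\kappa b_{A,\beta}(1+t)^{-p}\Big(\cE_K+\tfrac{\eta\nu_W}{2}\cE_{I,e}\Big)+8U_A\kappa b_{A,\beta}(1+t)^{-\gamma(\frac\beta2+q-1)}.
\]

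\textbf{Step 3 (absorbing the gauge term, the main obstacle).} The term $\omega'\cE_{I,s}$ has no sign, because $\cE_{I,s}$ may be negative. I would bound it in absolute value using Step~1:
\[
|\omega'\,\cE_{I,s}|\le \tfrac{\eta\kappa b_{A,\beta}p}{2}\,C(1+t)^{-p-1}(\cE_K+\cE_{I,e}),
\qquad C=\max\Big\{1,\tfrac{\kappa\bar c_\phi+1}{2\underbar c_W}\Big\}.
\]
This carries an extra factor $(1+t)^{-1}$ relative to the good term. So there is $\widetilde t_*$ after which it is at most $\frac12\min\{1,\frac{\eta\nu_W}{2}\}\kappa b_{A,\beta}(1+t)^{-p}(\cE_K+\cE_{I,e})$. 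This leaves
\[
\cE'\le -\kappa b_{A,\beta}\min\Big\{\tfrac12,\tfrac{\eta\nu_W}{4}\Big\}(1+t)^{-p}(\cE_K+\cE_{I,e})+8U_A\kappa b_{A,\beta}(1+t)^{-\gamma(\frac\beta2+q-1)}.
\]
Finally, $\cE_K+\cE_{I,e}\ge \cE/C_{L,\eta}$ from Step~1 gives \eqref{D-60} with the stated $\mu_A$.

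The remaining technical point is that the time derivatives are computed for smooth, fast-decaying solutions. For Lagrangian weak solutions I would justify them in integrated form via the approximation of Appendix~\ref{app-A}, passing to the limit in the integrated inequality.
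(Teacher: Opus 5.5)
Your proposal is correct and follows essentially the same route as the paper: the same two-sided bounds on $\cE_{I,s}$ give \eqref{D-59}, and the differential inequality comes from Proposition~\ref{P4.2}, the virial bound, absorption of the gauge-derivative term for large $t$, and $\cE_K+\cE_{I,e}\ge \cE/C_{L,\eta}$. The only minor difference is in handling $\omega'\cE_{I,s}$: you bound $|\omega'\cE_{I,s}|$ using both sides of Step~1, whereas the paper uses $\omega'\le 0$ together with only the lower bound $\cE_{I,s}\ge -\cE_K-\frac{1}{2\underbar{c}_W}\cE_{I,e}$, which gives a smaller constant and a smaller threshold $\widetilde t_*$ but the same final inequality.
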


\begin{proof}
	(1) For convenience, set
	\[
	\cB(t):=\cE_K(t)+\cE_{I,e}(t).
	\]
We use a similar argument as in Lemma \ref{L4.1} to find
	\begin{equation}\label{D-62}
		-\frac{1}{2\underbar{c}_W}\cE_{I,e}(t)-\cE_K(t)
		\leq
		\cE_{I,s}(t)
		\leq
			\frac{\kappa\bar{c}_{\phi}+1}{2\underbar{c}_W}\cE_{I,e}(t)+\cE_K(t).
	\end{equation}
	Using the lower bound in
	\eqref{D-62}, we obtain
	\begin{align*}
		\cE(t)
		&=
		\cE_K(t)+\cE_{I,e}(t)
		+\omega(t)\cE_{I,s}(t)\\
		&\geq
		\cE_K(t)+\cE_{I,e}(t)
		+\omega(t)
		\left(
		-\frac{1}{2\underbar{c}_W}\cE_{I,e}(t)-\cE_K(t)
		\right)\\
		&=
		\bigl(1-\omega(t)\bigr)\cE_K(t)
		+
		\Bigg(1-\frac{1}{2\underbar{c}_W}\omega(t)\Bigg)\cE_{I,e}(t).
	\end{align*}
	Since $
	0\leq\omega(t)\leq\omega_0$
	we have
$
	1-\omega(t)\geq1-\omega_0.
$
	Consequently, we have
	\begin{equation}\label{D-63}
		\cE(t)
		\geq
		c_{L,\eta}
		\left(
		\cE_K(t)+\cE_{I,e}(t)
		\right)
		=
		c_{L,\eta}\cB(t).
	\end{equation}
	Condition $\max\{\omega_0,\frac{1}{2\underbar{c}_W}\omega_0\}<\frac{1}{2}$ ensures that
	\[
		c_{L,\eta}=	\min\left\{1-\omega_0,1-\frac{1}{2\underbar{c}_W}\omega_0\right\}\geq\frac12.
	\]
	Similarly, using the upper bound in \eqref{D-62}, we obtain
	\begin{align*}
		\cE(t)
		&\leq
		\cE_K(t)+\cE_{I,e}(t)
		+\omega(t)
		\left(
			\frac{\kappa\bar{c}_{\phi}+1}{2\underbar{c}_W}\cE_{I,e}(t)+\cE_K(t)
		\right)\\
		&=
		\bigl(1+\omega(t)\bigr)\cE_K(t)
		+
		\Bigg(1+	\frac{\kappa\bar{c}_{\phi}+1}{2\underbar{c}_W}\omega(t)\Bigg)\cE_{I,e}(t)\leq
		C_{L,\eta}
		\left(
		\cE_K(t)+\cE_{I,e}(t)
		\right).
	\end{align*}
	Therefore,
	\begin{equation}\label{D-64}
		c_{L,\eta}\cB(t)
		\leq
		\cE(t)
		\leq
		C_{L,\eta}\cB(t),
		\qquad
		t\geq0.
	\end{equation}
	On the other hand, Lemma~\ref{L2.2} gives
	\begin{equation*}\label{D-65}
		\min\left\{
			2,\frac{1}{\bar{c}_W}
			\right\}
		\cB(t)
		\leq
		\cF(t)
		\leq
		\max\left\{
			2,\frac{1}{\underbar{c}_W}
			\right\}
		\cB(t).
	\end{equation*}
Then, we obtain
	\[
	\frac{
		\min\left\{
			2,\frac{1}{\bar{c}_W}
			\right\}
	}{
		C_{L,\eta}
	}
	\cE(t)
	\leq
	\cF(t)
	\leq
	\frac{
		\max\left\{
			2,\frac{1}{\underbar{c}_W}
			\right\}
	}{
		c_{L,\eta}
	}
	\cE(t),
	\]
	which proves \eqref{D-59}.\newline 
	
	\noindent (2) We next derive the differential inequality. By Remark~\ref{R2.1} (1), Lemma~\ref{L2.3} (iii), the virial condition \eqref{C-1}$_3$, and Proposition~\ref{P4.2}, we have
	\begin{align}
		\cE'(t)
		\leq
		-\Lambda(t)
		+\omega'(t)\cE_{I,s}(t)
		+\omega(t)
		\left(
		2\cE_K(t)-\nu_W\cE_{I,e}(t)
		\right).
		\label{D-66}
	\end{align}
	Since
	\[
	\omega'(t)
	=
	-\frac{\eta\kappa b_{A,\beta}p}{2}
	(1+t)^{-p-1}
	\leq0,
	\]
	we use the lower bound for \(\cE_{I,s}\) in
	\eqref{D-62}. Hence,
	\begin{align}\label{D-67}
		\begin{aligned}
		\omega'(t)\cE_{I,s}(t)
		&\leq
		\omega'(t)
		\left(
		-\frac{1}{2\underbar{c}_W}\cE_{I,e}(t)-\cE_K(t)
		\right)
	\\
		&=
		\frac{\eta\kappa b_{A,\beta}p}{2}
		(1+t)^{-p-1}\cE_K(t)
		+
		\frac{\eta\kappa b_{A,\beta}p}{4\underbar{c}_W}
		(1+t)^{-p-1}\cE_{I,e}(t).	
		\end{aligned}
	\end{align}
	Moreover, Proposition~\ref{P4.2} implies
	\begin{align}
		-\Lambda(t)
		\leq
		-2\kappa b_{A,\beta}
		(1+t)^{-p}\cE_K(t)
		+
		8U_A\kappa b_{A,\beta}
		(1+t)^{-\gamma\left(\frac{\beta}{2}+q-1\right)}.
		\label{D-68}
	\end{align}
	Using
\[
	\omega(t)
	=
	\frac{\eta\kappa b_{A,\beta}}{2}
	(1+t)^{-p},
\]
	and substituting \eqref{D-67} and \eqref{D-68}
	into \eqref{D-66}, we obtain
	\begin{align*}\label{D-69}
		\begin{aligned}
		\cE'(t)
		&\leq
		-\kappa b_{A,\beta}(1+t)^{-p}
		\left(
		2-\eta-\frac{\eta p}{2(1+t)}
		\right)
		\cE_K(t)\\
		&\quad
		-
		\frac{\eta\kappa b_{A,\beta}}{2}
		(1+t)^{-p}
		\left(
		\nu_W-\frac{p}{2\underbar{c}_W(1+t)}
		\right)
		\cE_{I,e}(t)+
		8U_A\kappa b_{A,\beta}
		(1+t)^{-\gamma\left(\frac{\beta}{2}+q-1\right)}.
			\end{aligned}
	\end{align*}
	We choose \(\widetilde t_*>0\) sufficiently large so that
	Proposition~\ref{P4.2} is applicable and
	\begin{equation*}\label{D-70}
		\frac{\eta p}{2(1+t)}
		\leq\frac12,
		\qquad
		\frac{p}{2\underbar{c}_W(1+t)}
		\leq\frac{\nu_W}{2},
		\qquad
		t\geq\widetilde t_*.
	\end{equation*}
	Since \(0<\eta\leq1\), the first condition yields
	\[
	2-\eta-\frac{\eta p}{2(1+t)}
	\geq
	2-1-\frac12
	=
	\frac12.
	\]
	The second condition gives
	\[
	\nu_W-	\frac{p}{2\underbar{c}_W(1+t)}
	\geq
	\frac{\nu_W}{2}.
	\]
	Therefore, for \(t\geq\widetilde t_*\),
	\begin{align*}
		\cE'(t)
		&\leq
		-\kappa b_{A,\beta}(1+t)^{-p}
		\left(
		\frac12\cE_K(t)
		+
		\frac{\eta\nu_W}{4}\cE_{I,e}(t)
		\right)
		\nonumber+
		8U_A\kappa b_{A,\beta}
		(1+t)^{-\gamma\left(\frac{\beta}{2}+q-1\right)}
		\nonumber\\
		&\leq
		-\kappa b_{A,\beta}
		\min\left\{
			\frac12,\frac{\eta\nu_W}{4}
			\right\}
		(1+t)^{-p}\cB(t)
	+
		8U_A\kappa b_{A,\beta}
		(1+t)^{-\gamma\left(\frac{\beta}{2}+q-1\right)}.
		\label{D-71}
	\end{align*}
	By the upper bound in \eqref{D-64},
	\[
	\cB(t)
	\geq
	\frac{1}{C_{L,\eta}}\cE(t).
	\]
	Consequently, we have
	\begin{align*}\label{D-72}
		\cE'(t)
		&\leq
		-\frac{
			\kappa b_{A,\beta}
			\min\left\{
				\frac12,\frac{\eta\nu_W}{4}
				\right\}
		}{
			C_{L,\eta}
		}
		(1+t)^{-p}\cE(t)
	+
		8U_A\kappa b_{A,\beta}
		(1+t)^{-\gamma\left(\frac{\beta}{2}+q-1\right)}
		\nonumber\\
		&=:
		-\kappa\mu_{A} 
		(1+t)^{-\frac{\gamma\beta}{2}}
		\cE(t)
		+
		8U_A\kappa b_{A,\beta}
		(1+t)^{-\gamma\left(\frac{\beta}{2}+q-1\right)}.
	\end{align*}
	This establishes \eqref{D-60}.
	
\end{proof}

\begin{remark}\label{R4.1}
	Assume that
	\[
	0<\beta\le2,
	\qquad
	\gamma=\frac{2}{\beta}.
	\]
	Then
	\[
	p=\frac{\gamma\beta}{2}=1,
	\qquad
	\omega(t)
	=
	\frac{\eta\kappa b_{A,\beta}}{2(1+t)}.
	\]
	For \(t\geq0\), define the time-dependent effective damping
	coefficient by
	\begin{equation}\label{D-73}
		\kappa\mu_{A} (t)
		:=
		\frac{
			\kappa b_{A,\beta}
			\min\left\{
				\frac12,\frac{\eta\nu_W}{4}
				\right\}
		}{
			\max\left\{
				1+\omega(t),
				1+
				\frac{\kappa\bar c_\phi+1}
				{2\underbar c_W}
				\omega(t)
				\right\}	}.
	\end{equation}
We set
	\[
	m_\eta
	:=
	\min\left\{
	\frac12,\frac{\eta\nu_W}{4}
	\right\},
	\qquad
	r_*:=\frac{2(q-1)}{\beta},
	\]
	and
	\[
	M_\kappa
	:=
	\max\left\{
	1,
	\frac{\kappa\bar c_\phi+1}
	{2\underbar c_W}
	\right\}.
	\]
	Then
	\[
	\max\left\{
	1+\omega(t),
	1+
	\frac{\kappa\bar c_\phi+1}
	{2\underbar c_W}
	\omega(t)
	\right\}
	=
	1+M_\kappa\omega(t),
	\]
	and hence
	\begin{equation}\label{D-74}
		\kappa\mu_{A} (t)
		=
		\frac{
			\kappa b_{A,\beta}m_\eta
		}{
			1+M_\kappa\omega(t)
		}.
	\end{equation}
	We now choose \(A>0\) explicitly as
	\begin{equation}\label{D-75}
		A
		:=
		\frac{\underbar c_W}{16}
		\left(
		\frac{
			\kappa\underbar c_\phi m_\eta
		}{
			2r_*
		}
		\right)^{\frac{2}{\beta}}
		=
		\frac{\underbar c_W}{16}
		\left(
		\frac{
			\beta\kappa\underbar c_\phi \min\left\{
			\frac12,\frac{\eta\nu_W}{4}
			\right\}
		}{
			4(q-1)
		}
		\right)^{\frac{2}{\beta}}.
	\end{equation}
	Since
	\[
	b_{A,\beta}
	=
	\underbar c_\phi
	\left(
	\frac{16A}{\underbar c_W}
	\right)^{-\frac{\beta}{2}},
	\]
	the choice \eqref{D-75} yields
	\begin{equation}\label{D-76}
		\kappa b_{A,\beta}m_\eta
		=
		\kappa\underbar c_\phi m_\eta
		\left(
		\frac{
			\kappa\underbar c_\phi m_\eta
		}{
			2r_*
		}
		\right)^{-1} =
		2r_*
		=
		\frac{4(q-1)}{\beta}.
	\end{equation}
	Because
	\[
	\omega(t)
	=
	\frac{\eta\kappa b_{A,\beta}}{2(1+t)}
	\longrightarrow0
	\qquad
	\text{as }t\to\infty,
	\]
	we may choose \(T_A\geq0\) such that
	\begin{equation}\label{D-77}
		T_A
		\geq
		\max\left\{
		\eta\kappa b_{A,\beta}M_\kappa-1,	\frac{\eta\kappa b_{A,\beta}M_\kappa}{2\underbar{c}_W}-1,
		0
		\right\}.
	\end{equation}
	Then, for every \(t\geq T_A\),
	\begin{equation}\label{D-78}
		\omega(t)
		=
		\frac{\eta\kappa b_{A,\beta}}{2(1+t)}
		\leq
		\frac{1}{2M_\kappa}
		\leq
		\frac12\quad \text{and} \quad 	\frac{1}{2\underbar{c}_W}\omega(t)\le \frac{1}{2}.
	\end{equation}
	In particular,
	\begin{equation*}\label{D-79}
	\min\left\{1-\omega(t),1-\frac{1}{2\underbar{c}_W}\omega(t)\right\}\geq\frac12,
		\qquad
		t\geq T_A.
	\end{equation*}
	Moreover, \eqref{D-78} implies
	\[
	1+M_\kappa\omega(t)
	\leq
	\frac32.
	\]
	Thus, by \eqref{D-74} and \eqref{D-76},
	\begin{equation*}\label{D-80}
		\kappa\mu_{A} (t)
		=
		\frac{
			\kappa b_{A,\beta}m_\eta
		}{
			1+M_\kappa\omega(t)
		}
		\geq
		\frac{
			2r_*
		}{
			\frac32
		}
		=
		\frac43r_*
		>
		r_*
		=
		\frac{2(q-1)}{\beta},
		\quad
		t\geq T_A.
	\end{equation*}
	Consequently, by first choosing \(A\) as in
	\eqref{D-75} and then choosing \(T_A\) as in
	\eqref{D-77}, we simultaneously obtain
	\[
	\min\left\{1-\omega(t),1-\frac{1}{2\underbar{c}_W}\omega(t)\right\}\geq\frac12
	\]
	and
	\[
	\kappa\mu_{A} (t)
	>
	\frac{2(q-1)}{\beta},
	\qquad
	t\geq T_A.
	\]
\end{remark}

\subsubsection{Proof of the second assertion} \label{sec:4.2.4}
Now, we are ready to prove the second assertion in Theorem \ref{T3.3}. Suppose that $\beta$ and initial datum $f_0$ satisfy 
\[ 0<\beta \leq 2, \quad \cM_q(f_0)<\infty \quad \mbox{for some $q>1$}. \]
It follows from Lemma \ref{L4.9} that 
\begin{equation}
\begin{cases} \label{D-81}
\displaystyle 	\frac{
	\min\left\{
	2,\frac{1}{\bar{c}_W}
	\right\}
}{
	C_{L,\eta}
}
\cE(t)
\leq
\cF(t)
\leq
\frac{
	\max\left\{
	2,\frac{1}{\underbar{c}_W}
	\right\}
}{
	c_{L,\eta}
}
\cE(t), \quad t \geq 0, \vspace{8pt}\\
\displaystyle \cE^{\prime}(t) \leq -\kappa \mu_{A}  (1+t)^{-\frac{\gamma\beta}{2}} \cE(t) +  8 U_A   \kappa  b_{A,\beta} (1 + t)^{-\frac{\gamma \beta}{2}}  (1 + t)^{-\gamma(q-1)}. 
\end{cases}
\end{equation}
Next, we consider two cases:
\[ 0 < \beta < 2 \quad \mbox{and} \quad \beta = 2. \]
\noindent $\bullet$~Case 1 $(0 < \beta < 2)$:~We first choose $ \gamma=1,~A=2C_*+2$. In order to apply Lemma \ref{L2.5} to $\eqref{D-81}_2$, we set 
\[
c := \kappa \mu_{A}, \quad \alpha := \frac{\gamma\beta}{2}<1, \quad C :=  8 U_A \kappa  b_{A,\beta}, \quad \lambda := \gamma(q-1)  \]
to get 
\[ 
\cE(t) \leq  {\tilde C}  (1+t)^{- \gamma(q-1)}, \quad t \gg 1.
\]
Note that by Proposition \ref{P4.1}, \eqref{D-63} and the above decay estimates, we have 
\begin{align*}	
	h_t\bigl(Z_t(z)\bigr)
&\leq h_0(z)+\left( \frac{\kappa\bar c_\phi}{4}+C_W \right)\int_0^{t}(\cE_{K}+\cE_{I,e})(s)\dd s\\ &\le	h_0(z)+\frac{\left( \frac{\kappa\bar c_\phi}{4}+C_W \right)}{c_{L,\eta}}\int_0^{t}\cE(s)\dd s\le h_0(z)+o(t).
\end{align*}
Combining this with Corollary \ref{C4.1} shows that, for any sufficiently small $A$, we still have 
\[\cE^{\prime}(t) \leq -\kappa \mu_{A}(t) (1+t)^{-\frac{\gamma\beta}{2}} \cE(t) +  8 	\tilde{U}_A \kappa  b_{A,\beta} (1 + t)^{-\frac{\gamma \beta}{2}}  (1 + t)^{-\gamma(q-1)}.\]
Then, we choose $\gamma=\frac{2}{\beta}$ and \[A:=	\frac{\underbar c_W}{16}
\left(
\frac{
	\beta\kappa\underbar c_\phi \min\left\{
	\frac12,\frac{\eta\nu_W}{4}
	\right\}
}{
	4(q-1)
}
\right)^{\frac{2}{\beta}},
\]
as in Remark \ref{R4.1}. Then we have $\kappa \mu_{A}(t)>\frac{2(q-1)}{\beta}$ for all $t\ge T_A$. For simplicity of notation, we use $\kappa \tilde{\mu}_{A}:=\inf_{t\ge T_A}\kappa \mu_{A}(t)$ to replace $\kappa \mu_{A}(t)$. We redefine \(\omega\) and \(\cE\) using this new choice of \(A\) and \(\gamma\).
Now, we use Lemma \ref{L2.7} (1) and  $\kappa \tilde{\mu}_{A}>\frac{2(q-1)}{\beta}$ to obtain the desired decay rate of $\cE$:
\[
\cE(t) \leq  {\tilde C}  (1+t)^{- \frac{2(q-1)}{\beta}}, \quad t \gg T_A.
\]
Therefore, the flocking functional $\cF$ satisfies 
\begin{equation*} \label{D-82}
 {\mathcal F}(t) \lesssim \cE(t)\lesssim {\tilde C} (1+t)^{- \frac{2(q-1)}{\beta}}, \quad t \gg 1.
\end{equation*}
\vspace{.1cm}

\noindent $\bullet$~Case 2 $(\beta = 2)$:~In this case, again we first choose parameters $A$ and $\gamma$ as 
\[
A=2C_*+2=
2\left(
\frac{\kappa\bar c_\phi}{4}+C_W
\right)
(\mathcal E_K+\mathcal E_{I,e})(0)+2, \quad \gamma=1.
\]
In this case, all parameters appearing in $\eqref{D-81}_2$ become
\begin{align*}
\begin{aligned}
b_{A, 2} &= \underbar{c}_{\phi} \Big( \frac{16A}{\underbar{c}_W} \Big)^{-1}, \quad \mu_A =
\frac{
	b_{A,\beta}
	\min\left\{
	\frac12,\frac{\eta\nu_W}{4}
	\right\}
}{	\max\left\{
	1+\omega_0,
	1+	\frac{\kappa\bar{c}_{\phi}+1}{2\underbar{c}_W} ~\omega_0
	\right\}
}, \quad c_0 = \frac{A}{2}=C_*+1, \\
U_A&= \max \{1, C_* \} (1 + c_0^{-1}) c_0^{-(q-1)}  \cM_q(f_0).
\end{aligned}
\end{align*}
Then, we can rewrite $\eqref{D-81}_2$ as 
\begin{equation} \label{D-83}
\cE^{\prime}(t) \leq - \frac{\kappa \mu_A}{1+t} \cE(t) +  \frac{8 U_A \kappa  b_{A,\beta}}{1 + t}  (1 + t)^{-(q-1)}.
\end{equation}
Now, we apply Lemma \ref{L2.7} (1) for \eqref{D-83} with 
\[ c_1 := \kappa \mu_A, \quad C_1 :=  8 U_A  \kappa  b_{A,\beta}, \quad g(t) : = (1 + t)^{-(q-1)}, \quad \lambda = q-1  \]
to find 
\[
\cE(t) \lesssim \begin{cases}
(1+t)^{-\min\{\kappa \mu_A, q-1 \}}, \quad &\kappa \mu_A \neq q-1,\\
(1+t)^{-(q-1)}\log(2+t), \quad  & \kappa \mu_A= q-1.
\end{cases}
\]
Moreover, we can use the same bootstrap argument in Case 1 to find some sufficiently small  \[A:=	\frac{\underbar c_W}{16}
\left(
\frac{
	\beta\kappa\underbar c_\phi \min\left\{
	\frac12,\frac{\eta\nu_W}{4}
	\right\}
}{
	4(q-1)
}
\right)^{\frac{2}{\beta}}
\] such that $\kappa \tilde{\mu}_A> q-1$ (see Remark \ref{R4.1}) and 
\begin{equation} \label{D-84}
	\cE^{\prime}(t) \leq - \frac{\kappa \tilde{\mu}_A}{1+t} \cE(t) +  \frac{8 \tilde{U}_A   \kappa  b_{A,\beta}}{1 + t}  (1 + t)^{-(q-1)}, \quad t \gg T_A.
\end{equation}
Here, $\tilde{U}_A$ and $\tilde{\mu}_A=\inf_{t\ge T_A}\mu_A(t)$ are defined in \eqref{D-50} and \eqref{D-73}, respectively. Now, we again apply Lemma \ref{L2.7} (1) for \eqref{D-84} with new constants
\[ c_1 := \kappa \tilde{\mu}_A, \quad C_1 :=  8 \tilde{U}_A   \kappa  b_{A,\beta}, \quad g(t) : = (1 + t)^{-(q-1)}, \quad \lambda = q-1  \]
to see
\[
\cE(t) \lesssim 
	(1+t)^{- (q-1)}.
\]
By $\eqref{D-81}_1$ and Remark \ref{R4.1}, the flocking functional $\cF$ satisfies 
\begin{equation*} \label{D-85}
 {\mathcal F}(t) \lesssim \cE(t)\lesssim  (1+t)^{-(q-1 )}.
\end{equation*}
This finishes the proof of  Theorem \ref{T3.3} (ii).

\subsection{Third assertion}\label{sec:4.3}
In this subsection, we derive the optimal weak flocking of the KCS model \eqref{A-4} with exponential mechanical-energy tails. The proof has two stages. First, the time-varying effective-region method gives an integrable preliminary decay. This makes the total characteristic mechanical-energy shift uniformly bounded in time. Second, we use the resulting uniform exponential moment of the \emph{current} mechanical energy to prove a high-energy descent estimate along characteristics. The latter produces a fixed effective region and ultimately restores an exponential hypocoercive relaxation.

\subsubsection{Preparatory lemmas}  \label{sec:4.3.1}
In the sequel, we provide several elementary estimates to be used in the proof of the third assertion. 
\begin{lemma} \label{L4.10}
	Suppose that parameters and initial datum satisfy the following conditions:
	\[ 0<\beta\leq 2, \quad \cM_{e,a}(f_0)<\infty, \]
	and let $f$ be a global Lagrangian weak solution to \eqref{A-4}. Then, the following assertions hold.
\begin{enumerate}
\item
\emph{(Coarse exponential tail)}:~Suppose that parameters and initial datum satisfy the following conditions:
\[  \mbox{Either} \quad \gamma>1 \quad \mbox{or} \quad \gamma=1~~\mbox{and}~~A>C_*, \]
where $C_*$ is defined in \eqref{D-27}. Then, we have
\begin{equation}\label{D-86}
		{\mathfrak M}_{A, \gamma}(t)\leq C\exp\left[-c(1+t)^\gamma\right], \quad t \gg 1.
	\end{equation}
\item
\emph{(Uniform energy shift after integrability)}:~~Suppose that parameters satisfy 
\[
\gamma > 0, \quad A > 0,
\]
and let $f$ be a global Lagrangian weak solution to \eqref{A-4} satisfying a priori condition:
\begin{equation} \label{D-87}
\int_0^\infty (\cE_K + \cE_{I,e})(s) \dd s<\infty, \quad S_\infty:=\bigg(\frac{ \kappa\bar{c}_\phi}{4} + C_W\bigg)\int_0^\infty (\cE_K + \cE_{I,e})(s) \dd s. 
\end{equation}
Then, for $t \geq 0$, we have
\begin{align}
\begin{aligned} \label{D-88}
& (i)~h_t(X(t),V(t))\leq h_0(x,v)+S_\infty. \\
& (ii)~ {\mathfrak M}_{A, \gamma}(t)\leq C\exp\left[-c(1+t)^\gamma\right].
\end{aligned}
\end{align} 
\end{enumerate}	
\end{lemma}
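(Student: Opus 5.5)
The plan is to reuse the structure of Lemma \ref{L4.6}, with the polynomial Chebyshev bound replaced by an exponential one. In both parts the first step is the pushforward representation $f_t=(Z_t)_\#f_0$, which gives
\[
\mathfrak M_{A,\gamma}(t)=\int_{\bbr^{2d}}\bigl(1+h_t(Z_t(z))\bigr)\mathbf 1_{\{h_t(Z_t(z))>R_{A,\gamma}(t)\}}f_0(z)\dd z .
\]
Next I bound the characteristic energy by $h_t(Z_t(z))\le h_0(z)+s(t)$. In part (1), Proposition \ref{P4.1} (ii) gives $s(t)=C_*t$. In part (2), the a priori integrability \eqref{D-87} together with Proposition \ref{P4.1} (i) gives $s(t)\le S_\infty$ uniformly in $t$, because the integrand $(\cE_K+\cE_{I,e})$ is nonnegative. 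That is exactly assertion (2)(i). Either way we get
\[
\mathfrak M_{A,\gamma}(t)\le \int_{\{h_0>\hat R(t)\}}\bigl(1+h_0(z)+s(t)\bigr)f_0(z)\dd z, \qquad \hat R(t):=R_{A,\gamma}(t)-s(t).
\]

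The second step is a lower bound for $\hat R$. In part (1) I copy Cases 1.1 and 1.2 of Lemma \ref{L4.6}. If $\gamma>1$, then $C_*t\le \tfrac A2(1+t)^\gamma$ for large $t$. If $\gamma=1$ and $A>C_*$, then $\hat R(t)=1+A+(A-C_*)t$. In both cases $\hat R(t)\ge c_0(1+t)^\gamma$ for $t\ge t_0$, with some $c_0>0$. In part (2) the bound is immediate for all $t$ once $A(1+t)^\gamma\ge 2S_\infty$: $\hat R(t)\ge 1+A(1+t)^\gamma-S_\infty\ge \tfrac A2(1+t)^\gamma$. For the finitely many remaining times, I simply enlarge $C$, using the trivial bound $\mathfrak M_{A,\gamma}(t)\le 1+\int h_tf_t\le 1+(\cE_K+\cE_{I,e})(0)$, which follows from Remark \ref{R2.1}. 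This is why (2)(ii) holds for all $t\ge0$, while (1) is stated for $t\gg1$.

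The third step is the exponential Chebyshev estimate. On $\{h_0>\hat R\}$ I use $1+h_0\le C_a e^{a h_0/2}$ and
\[
e^{a h_0/2}=e^{a h_0}e^{-a h_0/2}\le e^{-a\hat R/2}e^{a h_0}.
\]
This gives
\[
\int_{\{h_0>\hat R\}}(1+h_0)f_0\le C_a e^{-a\hat R/2}\cM_{e,a}(f_0), \qquad \int_{\{h_0>\hat R\}}f_0\le e^{-a\hat R}\cM_{e,a}(f_0).
\]
The extra factor $s(t)$ is at most $C_*t$ in part (1) and $S_\infty$ in part (2). In part (1) the factor $C_*t$ is absorbed by sacrificing half the exponent, since $t\,e^{-a c_0(1+t)^\gamma/2}\le C e^{-a c_0(1+t)^\gamma/4}$. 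The result is \eqref{D-86} with $c=a c_0/4$, and in part (2) \eqref{D-88}(ii) with $c=aA/8$.

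I expect no serious obstacle. The only delicate points are the bookkeeping of $t_0$ against the claimed range of validity, and checking that Proposition \ref{P4.1} (i) applies for $f_0$-a.e. $z$ along the Lagrangian flow.
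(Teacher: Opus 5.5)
Your proposal is correct and follows the paper's proof: the pushforward representation, the characteristic energy shift from Proposition \ref{P4.1} ($C_*t$ in part (1), $S_\infty$ in part (2)), a lower bound $\hat R(t)\gtrsim(1+t)^\gamma$, and the exponential Chebyshev bound $e^{ah_0/2}\mathbf 1_{\{h_0>\hat R\}}\le e^{-a\hat R/2}e^{ah_0}$. The differences are cosmetic: the paper absorbs the $C_*t$ prefactor by noting $t\le C(1+h_0)$ on $\{h_0>\hat R\}$, whereas you absorb it into the exponential decay; and your extension of (2)(ii) to small times via $\mathfrak M_{A,\gamma}(t)\le 1+(\cE_K+\cE_{I,e})(0)$ spells out a step the paper leaves implicit.
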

\begin{proof}
\noindent (1)~We set
\[  {\hat R}(t)= R_{A,\gamma}(t)-C_*t\geq c(1+t)^\gamma. \]
Then, the push-forward formula gives
	\[
	{\mathfrak M}_{A, \gamma}(t)
	\leq\int_{\{h_0(z) > {\hat R}(t)\}}(1+h_0(z) +C_*t)f_0(z) \dd z.
	\]
	In the admissible set, $h_0(z)> {\hat R}(t)$ implies 
\[ t\leq C(1+h_0(z)). \]
Hence the prefactor is bounded by $Ce^{ah_0(z)/2}$, while
	\[
	e^{ah_0(z)/2}\mathbf{1}_{\{h_0(z) > {\hat R}(t)\}}
	\leq e^{-a {\hat R}(t)/2}e^{ah_0(z)}.
	\]
	This proves \eqref{D-86} without using any  a priori decay of $\cE_K + \cE_{I,e}$. \newline
	
\noindent (2)~The first statement in \eqref{D-88} follows directly from Proposition \ref{P4.1}. We set
	\[
	{\hat R}_\infty(t):= R_{A,\gamma}(t)-S_\infty.
	\]
	For all sufficiently large $t$, 
\[  {\hat R}_\infty(t)\geq A(1+t)^\gamma/2. \]
If a characteristic lies outside $\Omega_{A,\gamma}(t)$, then  the a priori condition \eqref{D-87} implies 
\[ h_0> {\hat R}_\infty(t). \]
Therefore, we have
	\begin{align*}
		{\mathfrak M}_{A, \gamma}(t)
		&\leq\int_{\{h_0(z) > {\hat R}_\infty(t)\}}(1+h_0(z) +S_\infty)f_0(z) \dd z\\
		&\leq C_{a,S_\infty}e^{-a{\hat R}_\infty(t)/2}
		\int_{\bbr^{2d}} e^{ah_0(z)}f_0(z) \dd z
		\leq C\exp\left[-c(1+t)^\gamma\right].
	\end{align*}
\end{proof}
\begin{lemma}  \label{L4.11} Suppose that parameters and initial datum satisfy the following conditions:
\[  \mbox{Either} \quad \gamma>1 \quad \mbox{or} \quad \gamma=1~~\mbox{and}~~A>C_*; \quad \cM_{e,a}(f_0)<\infty, \]
where $C_*$ is defined in \eqref{D-27}, and let $f$ be a global Lagrangian weak solution to \eqref{A-4}. Then, there exists some large constant ${\tilde t}_1$ such that 
\begin{equation}\label{D-89}
\Lambda(t) \geq   2\kappa b_{A,\beta}(1+t)^{-\frac{\gamma\beta}{2}} \cE_K(t) - C \kappa  b_{A,\beta} (1 + t)^{-\frac{\gamma \beta}{2}} \exp\left[-c(1+t)^\gamma\right], \quad t  \geq {\tilde t}_1.
\end{equation}
\end{lemma}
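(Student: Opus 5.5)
The plan is to repeat the argument of Proposition~\ref{P4.2} verbatim and change only the tail input. In that argument the polynomial bound of Lemma~\ref{L4.6} is replaced by the coarse exponential tail \eqref{D-86} of Lemma~\ref{L4.10}(1). The hypotheses of Lemma~\ref{L4.11} ($\gamma>1$, or $\gamma=1$ with $A>C_*$, and $\cM_{e,a}(f_0)<\infty$) are exactly those of Lemma~\ref{L4.10}(1). Hence there is a time $t_2$ with
\[
\mathfrak M_{A,\gamma}(t)\leq C\exp\left[-c(1+t)^\gamma\right],\qquad t\geq t_2 .
\]

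Concretely, the steps are as follows. First restrict the dissipation integral to $\Omega_{A,\gamma}(t)^2$; this is allowed since the integrand is nonnegative. On this set, Lemma~\ref{L4.8}(1) gives $\phi(|x-x_*|)\geq b_{A,\beta}(1+t)^{-\gamma\beta/2}$ for $t\geq t_1(A,\gamma)$. Lemma~\ref{L4.8}(1) uses only Lemma~\ref{L4.7} and the definition of $\Omega_{A,\gamma}$, not the tail class, so it applies unchanged. Next write the restricted integral as the full two-point velocity correlation minus its exterior part. The full correlation equals $2\cE_K(t)$ by Lemma~\ref{L2.2}(3) and the normalizations \eqref{B-2}. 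The exterior part is bounded by $8\mathfrak M_{A,\gamma}(t)$ through Lemma~\ref{L4.8}(2), which is again purely structural. This gives
\[
\Lambda(t)\geq \kappa b_{A,\beta}(1+t)^{-\frac{\gamma\beta}{2}}\bigl(2\cE_K(t)-8\mathfrak M_{A,\gamma}(t)\bigr).
\]
Inserting the exponential bound on $\mathfrak M_{A,\gamma}$ and absorbing the factor $8$ into $C$ yields \eqref{D-89} with $\tilde t_1:=\max\{t_1,t_2\}$.

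The only point needing care is the validity of Lemma~\ref{L4.10}(1) itself, which the statement already provides. Behind it lies the characteristic shift $h_t(Z_t(z))\leq h_0(z)+C_*t$ from Proposition~\ref{P4.1}, which holds for any Lagrangian weak solution. It is combined with the lower bound $\hat R(t)\geq c(1+t)^\gamma$, and this lower bound is exactly where the condition $A>C_*$ is needed when $\gamma=1$. One must also make sure the constants $C,c$ in \eqref{D-89} do not depend on $t$. They depend only on $a$, $A$, $\gamma$, $C_*$ and $\cM_{e,a}(f_0)$, so this holds.
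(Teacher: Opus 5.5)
Your proposal is correct and matches the paper's proof: the paper reruns the argument of Proposition~\ref{P4.2} to reach $\Lambda(t)\geq \kappa b_{A,\beta}(1+t)^{-\gamma\beta/2}\bigl(2\cE_K(t)-8\mathfrak M_{A,\gamma}(t)\bigr)$, then inserts the coarse exponential tail \eqref{D-86} from Lemma~\ref{L4.10}(1) in place of Lemma~\ref{L4.6} and absorbs the factor $8$ into $C$. You also correctly note that Lemma~\ref{L4.8}(1)--(2) does not use the tail class, which is why it carries over to the exponential setting unchanged.
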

\begin{proof}
We use the same arguments in Proposition \ref{P4.2} to find
\begin{align*}
\begin{aligned}
\Lambda(t)  &\geq  \kappa   b_{A,\beta}(1+t)^{-\frac{\gamma\beta}{2}}
\Big ( 2 \cE_K(t)-8  \mathfrak{M}_{A,\gamma}(t) \Big) \\
& \geq  \kappa  b_{A,\beta}(1+t)^{-\frac{\gamma\beta}{2}}
\Big ( 2 \cE_K(t)- C\exp\left[-c(1+t)^\gamma\right] \Big)  \\
& =  2\kappa b_{A,\beta}(1+t)^{-\frac{\gamma\beta}{2}} \cE_K(t) - C \kappa  b_{A,\beta} (1 + t)^{-\frac{\gamma \beta}{2}} \exp\left[-c(1+t)^\gamma\right].
\end{aligned}
\end{align*}
\end{proof}
\begin{remark}\label{R4.2}
	Similar to Corollary \ref{C4.1},  if 	\begin{equation*}
		h_t\bigl(Z_t(z)\bigr)
		\leq h_0(z)+C(1+t)^{r}
		\qquad
		\text{for \(f_0\)-a.e. }z\in\bbr^{2d},
	\end{equation*} then for any $\gamma>r>0$ and $A>0$ we can derive the similar estimates as in  \eqref{D-89}.
\end{remark}
\begin{lemma}[Gauge-dependent Lyapunov inequality]\label{L4.12}
	Suppose that parameters and initial datum satisfy the following conditions:
\[ 0<\beta\leq 2, \quad \cM_{e,a}(f_0)<\infty, \]
and let $f$ be a global Lagrangian weak solution to \eqref{A-4}. There exists a positive constant ${\tilde t}_*$ such that 
\begin{align*}
	\cE'(t) \leq  -\kappa \mu_A (1+t)^{-\frac{\gamma\beta}{2}} \cE(t)+  C \kappa  b_{A,\beta} (1 + t)^{-\frac{\gamma \beta}{2}} \exp\left[-c(1+t)^\gamma\right], \quad t \geq {\tilde t}_*. \end{align*}
\end{lemma}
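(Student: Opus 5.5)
The plan is to repeat the proof of Lemma~\ref{L4.9}~(2) almost word for word. The only change is that the localized dissipation bound of Proposition~\ref{P4.2} is replaced by its exponential-tail counterpart, Lemma~\ref{L4.11}. We keep the gauge
\[
\omega(t)=\frac{\eta\kappa b_{A,\beta}}{2}(1+t)^{-p},\qquad p=\frac{\gamma\beta}{2},
\]
with $0<\eta\leq1$ small enough that $\max\{\omega_0,\frac{1}{2\underbar{c}_W}\omega_0\}<\frac12$. We also keep the admissible choices of $(A,\gamma)$ from Lemma~\ref{L4.10}~(1): either $\gamma>1$, or $\gamma=1$ with $A>C_*$. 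Under these choices the two-sided bound \eqref{D-62} on $\cE_{I,s}$ holds, and hence the equivalence $c_{L,\eta}\cB\leq\cE\leq C_{L,\eta}\cB$ from \eqref{D-64}, where $\cB=\cE_K+\cE_{I,e}$. Neither of these uses the tail class of $f_0$.

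Next we differentiate $\cE=\cB+\omega\cE_{I,s}$. Remark~\ref{R2.1}~(1), Lemma~\ref{L2.3}~(iii) and the virial condition $\eqref{C-1}_3$ give
\[
\cE'\leq-\Lambda+\omega'\cE_{I,s}+\omega\,(2\cE_K-\nu_W\cE_{I,e}).
\]
Since $\omega'\leq0$, the term $\omega'\cE_{I,s}$ is bounded using the lower bound in \eqref{D-62}, exactly as in \eqref{D-67}. The term $-\Lambda$ is bounded by \eqref{D-89}:
\[
-\Lambda\leq-2\kappa b_{A,\beta}(1+t)^{-p}\cE_K+C\kappa b_{A,\beta}(1+t)^{-p}\exp\bigl[-c(1+t)^\gamma\bigr],\qquad t\geq\tilde t_1 .
\]
Collecting terms, the coefficients of $\cE_K$ and $\cE_{I,e}$ are the same as in the proof of Lemma~\ref{L4.9}:
\[
-\kappa b_{A,\beta}(1+t)^{-p}\Bigl(2-\eta-\tfrac{\eta p}{2(1+t)}\Bigr)\ \text{ for }\cE_K,\qquad
-\tfrac{\eta\kappa b_{A,\beta}}{2}(1+t)^{-p}\Bigl(\nu_W-\tfrac{p}{2\underbar{c}_W(1+t)}\Bigr)\ \text{ for }\cE_{I,e}.
\]

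We then choose $\tilde t_*\geq\tilde t_1$ large enough that $\frac{\eta p}{2(1+t)}\leq\frac12$ and $\frac{p}{2\underbar{c}_W(1+t)}\leq\frac{\nu_W}{2}$. For $t\geq\tilde t_*$ this gives
\[
\cE'\leq-\kappa b_{A,\beta}\min\Bigl\{\tfrac12,\tfrac{\eta\nu_W}{4}\Bigr\}(1+t)^{-p}\cB+C\kappa b_{A,\beta}(1+t)^{-p}e^{-c(1+t)^\gamma}.
\]
Finally, $\cB\geq\cE/C_{L,\eta}$ produces the coefficient $\kappa\mu_A$, with $\mu_A$ as defined in Lemma~\ref{L4.9}.

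There is no real obstacle here. The one point to check is that the dissipation estimate \eqref{D-89} really holds under the exponential-tail hypothesis with no a priori decay of $\cB$. This is true because Lemma~\ref{L4.10}~(1) uses only the linear shift $C_*t$ from Proposition~\ref{P4.1}~(ii), and that shift is dominated by $R_{A,\gamma}(t)$ for the admissible $(A,\gamma)$. The localized lower bound of Lemma~\ref{L4.8}~(1) also needs $t\geq t_1$, so we take $\tilde t_*\geq\max\{t_1,\tilde t_1\}$.
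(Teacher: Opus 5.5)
Your proposal is correct and is the paper's own proof: the paper also reruns the argument of Lemma~\ref{L4.9}~(2), replaces Proposition~\ref{P4.2} by the exponential-tail dissipation bound of Lemma~\ref{L4.11}, and closes with $\cB\geq \cE/C_{L,\eta}$ from \eqref{D-64}. Your explicit bookkeeping only makes precise what the paper leaves implicit, namely the admissible $(A,\gamma)$ and the choice $\tilde t_*\geq\max\{t_1,\tilde t_1\}$.
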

\begin{proof} 
	We use the same arguments in the proof of Lemma \ref{L4.9} together with Lemma \ref{L4.11} to get the desired estimate. 
\end{proof}

\begin{proposition}[Preliminary integrable decay]\label{NP4.3}
Suppose that
\[
0<\beta\leq2,\qquad \cM_{e,a}(f_0)<\infty
\]
for some $a>0$, and let $f$ be a global Lagrangian weak solution to \eqref{A-4}.
Set
\[
\cB(t):=\cE_K(t)+\cE_{I,e}(t).
\]
Then
\begin{equation*}
\cB(t)\longrightarrow0,
\qquad
\int_0^\infty\cB(s)\dd s<\infty.
\end{equation*}
Moreover, there exist constants $C_B\geq1$, $c_B>0$, and $\theta_B>0$, depending only on the data and the structural parameters such that
\begin{equation}\label{ND-60}
\cB(t)\leq C_B\exp\left[-c_B(1+t)^{\theta_B}\right],
\qquad t\geq0.
\end{equation}
\end{proposition}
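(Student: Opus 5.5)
The plan is to combine the coarse exponential tail of Lemma~\ref{L4.10}(1) with the Lyapunov inequality of Lemma~\ref{L4.12} and the Gr\"onwall Lemma~\ref{L2.6}. None of these needs any a priori decay of $\cB$, so the argument does not bootstrap.

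First, fix $\gamma=1$ and $A:=2C_*+2>C_*$, with $C_*$ from \eqref{D-27}. Lemma~\ref{L4.10}(1) then gives $\mathfrak M_{A,1}(t)\le C e^{-c(1+t)}$ for $t\gg1$. Lemma~\ref{L4.11} gives the localized dissipation bound \eqref{D-89}, and Lemma~\ref{L4.12} gives, for $t\ge\tilde t_*$,
\[
\cE'(t)\le -\kappa\mu_A(1+t)^{-\beta/2}\cE(t)+C\kappa b_{A,\beta}(1+t)^{-\beta/2}e^{-c(1+t)}.
\]
Here $\cE$ is the gauged functional with $\omega(t)=\tfrac{\eta\kappa b_{A,\beta}}{2}(1+t)^{-\beta/2}$, and $\eta$ is chosen small as in Lemma~\ref{L4.9}.

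Second, apply Lemma~\ref{L2.6}. When $0<\beta<2$, set $\alpha=\beta/2<1$, $\lambda=1$ and $\sigma=c$. This yields $\cE(t)\le \tilde C_0\exp[-\tilde c_0(1+t)^{\min\{1-\beta/2,1\}}]$ for $t\gg1$. By the equivalence \eqref{D-64}, i.e.\ $\cB\le c_{L,\eta}^{-1}\cE$, the bound \eqref{ND-60} follows with $\theta_B=1-\beta/2$ for large $t$. On the bounded interval $[0,t_0]$, I would use $\cB(t)\le\cB(0)$ from \eqref{B-22} and enlarge $C_B$. Stretched-exponential decay immediately gives $\cB\to0$ and $\int_0^\infty\cB\,\dd s<\infty$.

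The main obstacle is the critical case $\beta=2$. There $\alpha=1$, Lemma~\ref{L2.6} no longer applies, and Lemma~\ref{L2.7}(2) gives only $\cE\lesssim(1+t)^{-\eta'}$ for every $\eta'<\kappa\mu_A$. That rate is integrable only if $\kappa\mu_A>1$, and $\mu_A$ is small because $A$ is tied to $C_*$. I would try two fixes.

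The first fix is a two-step bootstrap. From the polynomial rate $(1+t)^{-\eta'}$ and Proposition~\ref{P4.1}(i), the energy shift grows like $C(1+t)^{r}$ with $r=1-\eta'<1$. Remark~\ref{R4.2} then allows \emph{any} $A>0$ with some $\gamma\in(r,1]$. Shrinking $A$ makes $b_{A,2}\propto A^{-1}$ large, so $\kappa\mu_A$ becomes as large as desired. Rerunning Lemma~\ref{L2.7}(2) then gives integrability.

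The second fix obtains a genuine stretched exponential by taking $\gamma<1$ close to $r$. Then $\alpha=\gamma\beta/2=\gamma<1$, and Lemma~\ref{L2.6} applies with exponent $\min\{1-\gamma,\gamma\}>0$. This yields \eqref{ND-60} with $\theta_B=\min\{1-\gamma,\gamma\}$. The same device can also be used uniformly for $0<\beta<2$ if needed.

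I expect the delicate point to be checking that the constants $\tilde t_*$, $\eta$ and $U_A$ remain controlled when $A$ is taken small. I also need to check that Remark~\ref{R4.2} is valid with $\gamma<1$, since Lemma~\ref{L4.8} uses $A(1+t)^\gamma\ge1$, which still holds for large $t$.
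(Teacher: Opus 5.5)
Your proposal is correct and follows essentially the paper's route. For $0<\beta<2$ you use the linear gauge $\gamma=1$ with Lemma~\ref{L4.10}(1), Lemma~\ref{L4.12} and Lemma~\ref{L2.6} exactly as the paper does, and for $\beta=2$ your ``second fix'' (coarse polynomial rate from Lemma~\ref{L2.7}(2), sublinear energy shift, then Remark~\ref{R4.2} with some $\gamma\in(r,1)$ followed by Lemma~\ref{L2.6}) is precisely the paper's Steps B.1--B.2 with $\tilde\gamma=1-\bar\mu_A/2$; the ``first fix'' of shrinking $A$ is unnecessary, and as stated it would not work directly: the smallness condition on $\eta$ in Lemma~\ref{L4.9} ties $\eta$ to $b_{A,\beta}$, so you would need the decaying-gauge device of Remark~\ref{R4.1}, and even then it only gives polynomial integrability rather than \eqref{ND-60}.
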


\begin{proof}
We split the proof into the case $0<\beta<2$ and the case $\beta=2$.\newline 

\noindent $\bullet$ Case 1 ($0<\beta<2$): We first choose the linear gauge:
\[  \gamma=1, \quad  A>C_*. \]
Then, it follows from the first assertion of Lemma \ref{L4.10} that
\[ 	{\mathfrak M}_{A, \gamma}(t)\leq C\exp\left[-ct\right], \quad t \gg 1. \]
On the other hand, it follows from Lemma \ref{L4.12} that 
\begin{equation} \label{D-92}
	\cE'(t) \leq  -\kappa \mu_A  (1+t)^{-\frac{\beta}{2}} \cE +  C \kappa  b_{A,\beta} (1 + t)^{-\frac{\beta}{2}} e^{-ct}, \quad t \geq {\tilde t}_*.
\end{equation}
Now, we set 
\[ c_0 = \kappa \mu_A, \quad \alpha = \frac{\beta}{2}, \quad C_0 =   C \kappa  b_{A,\beta}, \quad \sigma = c, \quad \lambda  = 1,  \]
and apply Lemma \ref{L2.6} for \eqref{D-92} to get the desired estimate:
\begin{equation} \label{D-93}
	\cB(t)=	(\cE_{K}+\cE_{I,e})(t)	\lesssim\cE(t) \leq  {\tilde C}_0\exp\left[-{\tilde c}_0(1+t)^{1-\frac{\beta}{2}} \right], \quad t \gg 1, 
\end{equation}
where we used the relation:
\[ 0 <  1 - \frac{\beta}{2} < 1. \]
In particular, the relation \eqref{D-93} implies the integrability of $\cE$ and $\cE_{K}+\cE_{I,e}$:
\[\int_0^t(\cE_{K}+\cE_{I,e})(s)\dd s\lesssim \int_0^{\infty} \cE(s) \dd s \leq {\tilde C}_0  \int_0^{\infty} \exp\left[-{\tilde c}_0(1+t)^{1-\frac{\beta}{2}} \right] \dd t < \infty. \]
\vspace{0.2cm}
	
\noindent $\bullet$~Case 2 ($\beta=2$):~Assume that $\beta$ and initial datum $f_0$ satisfy
\[  \beta = 2, \quad  \cM_{e,a}(f_0)<\infty. \]
We further split the proof  into two bootstrap steps. \vspace{0.2cm}

\noindent $\diamond$~Step B.1 (Derivation of rough decay estimate):~We first take a coarse linear gauge for $R_{A, \gamma}$:
\[  \gamma = 1, \quad A > C_*. \]
Then, it follows from Lemma \ref{L4.10} that 
\[ {\mathfrak M}_{A, 1}(t)\leq C e^{-c t}, \quad t \gg 1. \]
Note that $\cE$ satisfies 
\begin{equation} \label{D-98}
	\cE'(t) \leq  -\kappa \mu_A  (1+t)^{-1} \cE +  C \kappa  b_{A,\beta} (1 + t)^{-1} e^{-ct}, \quad t \geq {\tilde t}_*. 
\end{equation}
We apply Lemma \ref{L2.7} (2) for \eqref{D-98} to get 
\[
\cE(t)\leq C(1+t)^{-\bar{\mu}_A} \quad \mbox{for some $0<\bar{\mu}_A<\min\{\kappa\mu_A,1\}$}. 
\]
This implies 	
\[ \int_0^t(\cE_{K}+\cE_{I,e})(s)\dd s\lesssim \int_0^t\cE(s)\dd s\le C (1+t)^{1-\bar{\mu}_A}. \]

\noindent $\diamond$~Step B.2 (Derivation of decay estimate):~Similar to Lemma \ref{L4.6}, we fix  $A=2C_*+2>0$ and $\tilde{\gamma}=1-\frac{\bar{\mu}_A}{2}<1$ to see for sufficiently large $t$,
\[
{\mathfrak M}_{A,\tilde{\gamma}}(t)\leq Ce^{-c t^{\tilde{\gamma}}}.
\]
Repeating the proof of Lemma~\ref{L4.12}, now using
Remark~\ref{R4.2} in place of Lemma~\ref{L4.11}, we obtain
\begin{equation*}\label{D-99}
	\cE'(t) \leq  -\kappa \mu_A  (1+t)^{-\frac{\tilde{\gamma}\beta}{2}} \cE +  C \kappa  b_{A,\beta} (1 + t)^{-\frac{\tilde{\gamma}\beta}{2}} e^{-ct^{\tilde{\gamma}}}, \quad t \geq {\tilde t}_*.
\end{equation*}
Since $\frac{\tilde{\gamma}\beta}{2}<1$, we can use Lemma \ref{L2.6} to see
\begin{equation*}
\cB(t)=(\cE_{K}+\cE_{I,e})(t)	\lesssim \cE(t) \leq  {\tilde C}_1\exp\left[-{\tilde c}_1(1+t)^{1-\frac{\tilde{\gamma}\beta}{2}} \right], \quad t \gg 1.
\end{equation*}
This implies the integrability of $\cE$ and $\cE_{K}+\cE_{I,e}$:
\[\int_0^t(\cE_{K}+\cE_{I,e})(s)\dd s\lesssim  \int_0^{\infty} \cE(t) \dd t \leq {\tilde C}_1  \int_0^{\infty} \exp\left[-{\tilde c}_1(1+t)^{1-\frac{\tilde{\gamma}\beta}{2}} \right] \dd t < \infty. \]
In both cases, the displayed decay estimates hold for all sufficiently large $t$. Since $\cB$ is non-increasing by Remark~\ref{R2.1}, we may enlarge the multiplicative constant and obtain \eqref{ND-60} for every $t\geq0$.
\end{proof}
The integrability of $\cB$ allows the initial exponential moment to be propagated uniformly in the \emph{current} mechanical energy.

\begin{lemma}[Uniform current exponential moment]\label{NL4.13}
Define
\begin{equation}\label{ND-64}
S_\infty:=\left(\frac{\kappa\bar c_\phi}{4}+C_W\right)
\int_0^\infty\cB(s)\dd s<\infty
\end{equation}
and
\begin{equation*}
M_a^*:=e^{aS_\infty}\cM_{e,a}(f_0),
\qquad
K_a:=\max\left\{1,\frac{2}{a}e^{-1+a/2}\right\}.
\end{equation*}
Then
\begin{equation}\label{ND-65}
\sup_{t\geq0}\int_{\bbr^{2d}}e^{ah_t(z)}f_t(z)\dd z\leq M_a^*.
\end{equation}
Moreover, for every $R\geq0$ and every $t\geq0$,
\begin{align}
\int_{\{h_t>R\}}f_t(z)\dd z
&\leq M_a^*e^{-aR},\label{ND-66}\\
\int_{\{h_t>R\}}(1+h_t)f_t(z)\dd z
&\leq K_aM_a^*e^{-aR/2},\label{ND-67}\\
\int_{\{h_t>R\}}|v|f_t(z)\dd z
&\leq K_aM_a^*e^{-aR/2}.\label{ND-68}
\end{align}
\end{lemma}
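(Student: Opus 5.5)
The plan is to push the exponential moment forward along the Lagrangian flow and then obtain the three tail bounds from Chebyshev-type inequalities.

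\emph{Uniform moment \eqref{ND-65}.} Since $f_t=(Z_t)_\#f_0$, we can write
\[
\int_{\bbr^{2d}}e^{ah_t(z)}f_t(z)\dd z=\int_{\bbr^{2d}}e^{ah_t(Z_t(z))}f_0(z)\dd z .
\]
Proposition \ref{P4.1}(i) gives, along characteristics,
\[
h_t(Z_t(z))\le h_0(z)+\Big(\frac{\kappa\bar c_\phi}{4}+C_W\Big)\int_0^t\cB(s)\dd s\le h_0(z)+S_\infty .
\]
Here $S_\infty<\infty$ because $\cB$ is integrable by Proposition \ref{NP4.3}. It follows that $e^{ah_t(Z_t(z))}\le e^{aS_\infty}e^{ah_0(z)}$. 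Integrating against $f_0$ yields the bound $M_a^*$, uniformly in $t$. This step only requires that the Lagrangian flow be defined $f_0$-a.e. and that the pointwise bound of Proposition \ref{P4.1} hold for such characteristics. Both facts are already in place.

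\emph{Tail bounds.} The mass bound \eqref{ND-66} is Markov's inequality. On the set $\{h_t>R\}$ we have $1\le e^{a(h_t-R)}$, and \eqref{ND-65} finishes the estimate.

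For \eqref{ND-67}, we split $e^{ah_t}=e^{ah_t/2}e^{ah_t/2}$. On $\{h_t>R\}$ this gives
\[
1+h_t\le (1+h_t)e^{-ah_t/2}\,e^{-aR/2}\,e^{ah_t}.
\]
It therefore suffices to bound $\sup_{s\ge0}(1+s)e^{-as/2}$ by $K_a$; note that $h_t\ge0$ because $W\ge0$. The function $g(s)=(1+s)e^{-as/2}$ has derivative $g'(s)=e^{-as/2}\big(1-\tfrac a2(1+s)\big)$. If $a\ge2$, then $g$ is decreasing and $\sup g=g(0)=1$. Otherwise $g$ attains its maximum at $1+s=2/a$, where $g=\tfrac 2a e^{-1+a/2}$. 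In both cases $\sup g\le K_a$, which gives \eqref{ND-67}.

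For \eqref{ND-68}, we use $h_t\ge\frac12|v|^2$, so that $|v|\le\sqrt{2h_t}\le 1+h_t$ by AM--GM. Then \eqref{ND-68} follows from \eqref{ND-67}.

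\emph{Main obstacle.} The only delicate point is the first step: we must justify that Proposition \ref{P4.1}(i) applies to every characteristic of the Lagrangian weak solution. This is not a purely pointwise matter, since $h_t$ depends on $\rho_t$ and $j_t$. I would handle it through the compactly supported approximation scheme (Appendix \ref{app-A}), passing to the limit using lower semicontinuity of $z\mapsto e^{ah}$. The remaining steps are elementary calculus.
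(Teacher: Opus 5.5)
Your proposal is correct and takes essentially the paper's route. Both arguments push the exponential moment forward along characteristics using $h_t(Z_t(z))\le h_0(z)+S_\infty$, which comes from Proposition~\ref{P4.1} and the integrability in Proposition~\ref{NP4.3}. Both then read off the three tail bounds from $\mathbf 1_{\{r>R\}}\le e^{-aR}e^{ar}$, $\sup_{r\ge0}(1+r)e^{-ar/2}\le K_a$, and $|v|\le 1+h_t$. Your one-sided bound $\sup g\le K_a$ is slightly more accurate than the paper's stated equality: for $a>2$ the supremum is $1$, while $\frac{2}{a}e^{-1+a/2}>1$.
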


\begin{proof}
By Proposition~\ref{P4.1} and Proposition~\ref{NP4.3}, for $f_0$-a.e. initial label $z$,
\[
h_t(Z_t(z))
\leq h_0(z)+\left(\frac{\kappa\bar c_\phi}{4}+C_W\right)
\int_0^t\cB(s)\dd s
\leq h_0(z)+S_\infty.
\]
Since $f_t=(Z_t)_\#f_0$,
\begin{align*}
\int_{\bbr^{2d}}e^{ah_t(z)}f_t(z)\dd z
=\int_{\bbr^{2d}}e^{ah_t(Z_t(z))}f_0(z)\dd z\leq e^{aS_\infty}\int_{\bbr^{2d}}e^{ah_0(z)}f_0(z)\dd z
=M_a^*.
\end{align*}
This proves \eqref{ND-65}. The mass estimate \eqref{ND-66} follows from
\[
\mathbf 1_{\{r>R\}}\leq e^{-aR}e^{ar}.
\]
For the energy-weighted tail \eqref{ND-67}, we observe that
\[
\sup_{r\geq0}(1+r)e^{-ar/2}
=\max\left\{1,\frac{2}{a}e^{-1+a/2}\right\}=K_a.
\]
Hence we have
\[
(1+r)\mathbf 1_{\{r>R\}}
\leq K_ae^{-aR/2}e^{ar}.
\]
This gives \eqref{ND-67} and \eqref{ND-68} follows from $|v|\leq1+h_t$.
\end{proof}

\begin{lemma}[Pointwise estimates used in the high-energy argument]\label{NL4.14}
Set
\[
U_t(x):=(W*\rho_t)(x),
\qquad
F_t(x):=(\nabla W*\rho_t)(x).
\]
Then the following estimates hold.
\begin{enumerate}[label=\textnormal{(\roman*)}]
\item For every $x\in\bbr^d$,
\begin{equation}\label{ND-69}
U_t(x)\geq\underbar c_W\bigl(|x|^2+\cI_x(t)\bigr)\geq\underbar c_W|x|^2.
\end{equation}
\item For every $x\in\bbr^d$,
\begin{equation}\label{ND-70}
x\cdot F_t(x)
\geq\nu_WU_t(x)-C_{\rm vir}\cB(t),
\qquad
C_{\rm vir}:=\frac{\|D^2W\|_{L^\infty}}{2\underbar c_W}.
\end{equation}

\item The force fields satisfy
\begin{equation*}
|\cA[f_t](X,V)|
\leq\kappa\bar c_\phi\bigl(|V|+\cE_K(t)^{1/2}\bigr),
\qquad
|F_t(X)|\leq \|D^2W\|_{L^\infty}\bigl(|X|+\cI_x(t)^{1/2}\bigr).
\end{equation*}
\end{enumerate}
\end{lemma}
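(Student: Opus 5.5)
Each of the three estimates follows directly from the structure assumptions \eqref{C-1}, the normalizations \eqref{B-2}, and Cauchy--Schwarz. We treat them in order.

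For (i), we repeat the computation \eqref{D-39} from the proof of Lemma~\ref{L4.7}. The lower bound $W(y)\geq\underbar c_W|y|^2$ gives $U_t(x)\geq\underbar c_W\int|x-x_*|^2\rho_t(x_*)\dd x_*$. Expanding the square and using $\int\rho_t=1$ and $\int x_*\rho_t=0$, the cross term vanishes. This leaves $\underbar c_W(|x|^2+\cI_x(t))$, and the second inequality is trivial since $\cI_x\geq0$.

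For (ii), which is the only step requiring care, we write
\[
x\cdot F_t(x)=\int (x-x_*)\cdot\nabla W(x-x_*)\rho_t(x_*)\dd x_*+\int x_*\cdot\nabla W(x-x_*)\rho_t(x_*)\dd x_*.
\]
The first term is bounded below by $\nu_W U_t(x)$ by the virial condition $\eqref{C-1}_3$. For the second term, we use $\int x_*\rho_t=0$ to subtract $\nabla W(x)$. The term $\int x_*\cdot\nabla W(x)\rho_t\dd x_*$ vanishes, so the second term equals
\[
\int x_*\cdot\bigl[\nabla W(x-x_*)-\nabla W(x)\bigr]\rho_t(x_*)\dd x_*.
\]
The Hessian bound then gives absolute value at most $\|D^2W\|_{L^\infty}\int|x_*|^2\rho_t=\|D^2W\|_{L^\infty}\cI_x(t)$. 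Finally, \eqref{B-8} gives $\cI_x\leq \cE_{I,e}/(2\underbar c_W)\leq\cB/(2\underbar c_W)$. This produces exactly the constant $C_{\rm vir}=\|D^2W\|_{L^\infty}/(2\underbar c_W)$. The key point is that the perturbation is $x$-independent, because subtracting $\nabla W(x)$ removes any growth in $x$.

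For (iii), we write $\cA[f_t](X,V)=\kappa\int\phi(|X-x_*|)(v_*-V)f_t\dd z_*$. We bound $\phi\leq\bar c_\phi$ and use unit mass, which gives $|\cA|\leq\kappa\bar c_\phi(|V|+\int|v_*|f_t)$. Cauchy--Schwarz then gives $\int|v_*|f_t\leq\cE_K^{1/2}$. Similarly, since $\nabla W(0)=0$ (Remark~\ref{R3.1}), we have $|\nabla W(y)|\leq\|D^2W\|_{L^\infty}|y|$. Hence
\[
|F_t(X)|\leq\|D^2W\|_{L^\infty}\int(|X|+|x_*|)\rho_t\dd x_*\leq\|D^2W\|_{L^\infty}\bigl(|X|+\cI_x^{1/2}\bigr),
\]
where the last step again uses Cauchy--Schwarz.
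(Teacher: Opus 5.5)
Your proposal is correct and follows the paper's proof essentially line by line. For (i) you expand the square using the normalizations. For (ii) you use the virial split, subtract $\nabla W(x)$ via the zero center of mass, apply the Hessian bound, and finish with \eqref{B-8}. For (iii) you use unit mass, $\phi\leq\bar c_\phi$, $\nabla W(0)=0$, and Cauchy--Schwarz.
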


\begin{proof}
(i) We use the normalization conditions to see
\[
\int_{\bbr^d}|x-x_*|^2\rho_t(x_*)\dd x_*=|x|^2+\cI_x(t).
\]
This combines $W(z)\geq\underbar c_W|z|^2$ yield \eqref{ND-69}.
\vspace{0.2cm}

\noindent(ii) Note that
\begin{align*}
x\cdot F_t(x)=\int_{\bbr^d}(x-x_*)\cdot\nabla W(x-x_*)\rho_t(x_*)\dd x_*+\int_{\bbr^d}x_*\cdot\nabla W(x-x_*)\rho_t(x_*)\dd x_*.
\end{align*}
The first term is bounded below by $\nu_WU_t(x)$ by \eqref{C-1}$_3$. Since $\int x_*\rho_t(x_*)\dd x_*=0$, we get
\begin{align*}
\left|\int_{\bbr^d}x_*\cdot\nabla W(x-x_*)\rho_t(x_*)\dd x_*\right|
&=\left|\int_{\bbr^d}x_*\cdot\bigl(\nabla W(x-x_*)-\nabla W(x)\bigr)\rho_t(x_*)\dd x_*\right|\\
&\leq \|D^2W\|_{L^\infty}\cI_x(t)
\leq \frac{\|D^2W\|_{L^\infty}}{2\underbar c_W}\cE_{I,e}(t)
\leq C_{\rm vir}\cB(t),
\end{align*}
where Lemma~\ref{L2.2} was used. This proves \eqref{ND-70}.
\vspace{0.2cm}

\noindent(iii) Finally, we combine the unit mass, $0\leq\phi\leq\bar c_\phi$, $\nabla W(0)=0$, and the Cauchy--Schwarz inequality to obtain the desired estimates 
\begin{equation*}
	\begin{aligned}
		|\cA[f_t](X,V)|
		\leq
		\kappa\bar c_\phi
		\int_{\bbr^{2d}}(|v_*|+|V|)f_t(z_*)\dd z_*
		\leq
		\kappa\bar c_\phi
		\left(
		|V|+\cE_K(t)^{1/2}
		\right)
	\end{aligned}
\end{equation*}
and 
\begin{equation*}
	\begin{aligned}
		|F_t(X)|
		&\leq
		\int_{\bbr^d}
		|\nabla W(X-x_*)|\rho_t(x_*)\dd x_*
		\leq
		\|D^2W\|_{L^\infty}
		\int_{\bbr^d}
		(|X|+|x_*|)\rho_t(x_*)\dd x_*\\
		&\leq
		\|D^2W\|_{L^\infty}
		\left(
		|X|+\cI_x(t)^{1/2}
		\right).
	\end{aligned}
\end{equation*}

\end{proof}

\subsubsection{High-energy descent along characteristics}\label{sec:4.3.2}
We now prove the key dynamical estimate. Throughout this subsection, we set
\begin{equation*}
\begin{aligned}
	 C_{\rm vir}:=\frac{\|D^2W\|_{L^\infty}}{2\underbar c_W},\quad
K_A:=\frac{2\kappa\bar c_\phi}{\sqrt{\underbar c_W}}, \quad C_h=\frac{ \kappa\bar{c}_\phi}{4} + C_W,
\end{aligned}
\end{equation*}
\begin{equation}\label{ND-71}
\quad
C_{XV}:=4\sqrt{\frac{2}{\underbar c_W}},\quad	\sigma
	:=\min\left\{
	\frac12,\quad
	\frac{\nu_W}{8(1+\nu_W/2)},\quad 
	\left(\frac{\nu_W}{8K_A}\right)^2
	\right\},
\end{equation}
\begin{equation*}
\varepsilon:=\frac{\sigma}{32},	\quad b_\varepsilon
	:=\underbar c_\phi
	\left(1+\frac{8+2\varepsilon}{\underbar c_W}\right)^{-\beta/2},
	\quad
	c_1:=\frac{3\kappa b_\varepsilon}{8},
	\quad
	c_2:=\sigma c_1,
\end{equation*}
\begin{equation*}
	\begin{aligned}
	\cB_0:=\cB(0),\quad	C_1:=8
		+\frac{2\kappa\bar c_\phi}{\sqrt{\underbar c_W}}
		\left(2\sqrt2+\sqrt{\cB_0}\right)+\frac{2\|D^2W\|_{L^\infty}}{\sqrt{\underbar c_W}}
		\left(\frac{2}{\sqrt{\underbar c_W}}
		+\sqrt{\frac{\cB_0}{2\underbar c_W}}\right),
	\end{aligned}
\end{equation*}
and choose
\begin{equation}\label{ND-72}
	\delta_*:=\min\left\{1,\frac{c_2}{4C_1}\right\}, \quad \alpha:=\frac{a\varepsilon}{2},
	\qquad
	D_{\rm tail}:=2\sqrt2\,\kappa\bar c_\phi K_aM_a^*.
\end{equation}
\begin{proposition}[High-energy descent]\label{NP4.4}
Under the assumptions of Proposition \ref{NP4.3}, we define
\begin{equation}\label{ND-73}
H_*:=\max\left\{1,\quad \frac{1}{a\varepsilon}\log(2M_a^*),\quad \frac{2}{\alpha}
\log\left(\max\left\{1,
\frac{8D_{\rm tail}}{c_2\sqrt{\alpha e}}
\right\}\right),\quad \left(8C_{XV}\delta_*\right)^{2/\beta}\right\},
\end{equation}
\begin{align}
B_*&:=\min\left\{H_*\min\left\{
\left(\frac{\nu_W}{8K_A}\right)^2,
\frac{\nu_W}{8C_{\rm vir}}
\right\},\quad H_*^{1-\beta/2}
\min\left\{
\frac{c_2}{8C_W},
\frac{\delta_*\nu_W}{4C_h}
\right\}\right\}.
\label{ND-74}
\end{align}
Then, for every $H\geq H_*$ and every characteristic $Z_t(z)=(X(t),V(t))$ satisfying
\begin{equation}\label{ND-75}
H\leq h_t(X(t),V(t))\leq4H
\end{equation}
for a.e. $t$ in an interval contained in $[T_*,\infty)$, the level-dependent compensated energy
\begin{equation*}
\mathscr H_H(t)
:=h_t(X(t),V(t))+\delta_*H^{-\beta/2}X(t)\cdot V(t)
\end{equation*}
satisfies
\begin{equation}\label{ND-76}
\frac{\dd}{\dd t}\mathscr H_H(t)
\leq-c_{\rm des}H^{1-\beta/2}
\end{equation}
for a.e. $t$ in that interval. Moreover,
\begin{equation}\label{ND-77}
\left|\mathscr H_H(t)-h_t(X(t),V(t))\right|
\leq\frac18H.
\end{equation}
Here, 
\begin{equation}\label{ND-79}
	c_{\rm des}:=\min\left\{\frac{c_2}{2},\frac{\delta_*\nu_W}{4}\right\}.
\end{equation}\begin{equation}\label{ND-78}
	T_*:=\max\left\{
	0,
	\left[
	\frac{1}{c_B}
	\log\left(\max\left\{1,\frac{C_B}{B_*}\right\}\right)
	\right]^{1/\theta_B}-1
	\right\}.
\end{equation}

\end{proposition}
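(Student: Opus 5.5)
The plan is to differentiate $\mathscr H_H$ along the characteristic and sort the result into a damping part, a virial part, and error terms. The error terms will be made small relative to $H^{1-\beta/2}$ using three facts: the time restriction $t\ge T_*$, which by \eqref{ND-60} and \eqref{ND-78} gives $\cB(t)\le B_*$; the threshold $H\ge H_*$; and the smallness of $\delta_*$.

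\textbf{Step 1: the derivative.} By Lemma~\ref{L4.4} and \eqref{D-21},
\begin{align*}
\frac{\dd}{\dd t}\mathscr H_H
&=V\cdot\cA[f_t](X,V)-\int_{\bbr^d}\nabla W(X-x_*)\cdot j_t(x_*)\dd x_*\\
&\quad+\delta_*H^{-\beta/2}\Bigl(|V|^2+X\cdot\cA[f_t](X,V)-X\cdot F_t(X)\Bigr).
\end{align*}
The second term is bounded by $C_W\cB(t)\le C_WB_*$ via Lemma~\ref{L4.5}(1). By \eqref{ND-74}, $C_WB_*\le \tfrac{c_2}{8}H^{1-\beta/2}$. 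For the last term, Lemma~\ref{NL4.14}(ii) gives
\[
-\delta_*H^{-\beta/2}X\cdot F_t(X)\le -\delta_*H^{-\beta/2}\bigl(\nu_WU_t(X)-C_{\rm vir}B_*\bigr),
\]
and the $C_{\rm vir}B_*$ piece is at most $\tfrac{\nu_W}{8}H$ by the first entry of \eqref{ND-74}. On the shell \eqref{ND-75}, Lemma~\ref{NL4.14}(i) and the definition of $h_t$ give
\[
|X|\le\sqrt{4H/\underbar c_W},\qquad |V|\le\sqrt{8H}.
\]
These two bounds immediately give \eqref{ND-77}:
\[
\delta_*H^{-\beta/2}|X||V|\le \delta_*C_{XV}H^{1-\beta/2}/4\le H/8,
\]
where the last inequality uses $H^{\beta/2}\ge H_*^{\beta/2}\ge8C_{XV}\delta_*$.

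\textbf{Step 2: localized alignment damping.} This is the main obstacle. Write $V\cdot\cA=\kappa V\cdot m_\phi-\kappa\varrho_\phi|V|^2$. To bound $\varrho_\phi(t,X)$ from below, restrict to the low-energy set $\{h_t\le 1+\varepsilon\}$, or a comparable fixed level. By \eqref{ND-66} this set carries mass at least $1-M_a^*e^{-a(1+\varepsilon)}$. Alternatively, one can use levels $\varepsilon H$, where the choice $H\ge\frac1{a\varepsilon}\log(2M_a^*)$ forces mass at least $\tfrac12$. On this set $|x_*|\lesssim\sqrt{H/\underbar c_W}$, so $|X-x_*|^2\le(8+2\varepsilon)H/\underbar c_W$. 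Together with $H\ge1$ this yields
\[
\phi\ge b_\varepsilon H^{-\beta/2},
\qquad\text{hence}\qquad
\kappa\varrho_\phi\ge \tfrac{\kappa b_\varepsilon}{2}H^{-\beta/2}.
\]
For $|m_\phi|$, split into the low part and the tail. The low part is bounded using $\cE_K^{1/2}\le B_*^{1/2}$, and this becomes small after Young's inequality. The tail part $\int_{\{h_t>\varepsilon H\}}|v_*|f_t$ is bounded via \eqref{ND-68} by $D_{\rm tail}e^{-\alpha H}$. Multiplied by $|V|\le\sqrt{8H}$ it is absorbed by the third entry of \eqref{ND-73}, since $\sqrt He^{-\alpha H/2}\le(\alpha e)^{-1/2}$. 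The net result I aim for is
\[
V\cdot\cA\le -c_1H^{-\beta/2}|V|^2+\tfrac{c_2}{8}H^{1-\beta/2}.
\]

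\textbf{Step 3: cross terms and the dichotomy.} The term $\delta_*H^{-\beta/2}|V|^2$ is absorbed by $c_1H^{-\beta/2}|V|^2$ because $\delta_*\le c_2/(4C_1)$. The term $\delta_*H^{-\beta/2}|X||\cA|$ is bounded using Lemma~\ref{NL4.14}(iii) and the shell bounds by $\delta_*C_1H^{1-\beta/2}\le \tfrac{c_2}{4}H^{1-\beta/2}$. This is exactly what the constant $C_1$ is built to absorb. It remains to use $\tfrac12|V|^2+U_t(X)=h_t\ge H$, which gives two cases.
\begin{itemize}
\item If $|V|^2\ge2\sigma H$, the damping contributes at most $-2\sigma c_1H^{1-\beta/2}=-2c_2H^{1-\beta/2}$.
\item Otherwise $U_t(X)\ge(1-\sigma)H\ge H/2$, and the virial term contributes at most $-\delta_*\nu_WH^{1-\beta/2}/2$.
\end{itemize}
In either case the accumulated errors, each at most a fixed fraction of the gain by the choices of $\sigma$, $B_*$ and $H_*$, leave the bound \eqref{ND-76} with $c_{\rm des}=\min\{c_2/2,\delta_*\nu_W/4\}$.

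I expect the delicate part to be the bookkeeping in Step 2. The lower bound on $\varrho_\phi$ must use the current-energy tail from Lemma~\ref{NL4.13} rather than any support bound, and all constants must be made uniform in $H$ through the specific entries of $H_*$.
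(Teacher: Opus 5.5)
\textbf{Gap: the potential-dominated case does not close.} Your overall architecture matches the paper's: differentiate $\mathscr H_H$, use $t\ge T_*$ to get $\cB(t)\le B_*$, split into a kinetic-dominated and a potential-dominated regime, and read off \eqref{ND-77} from the shell bounds. The trouble is the case $|V|^2<2\sigma H$.

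There the only guaranteed gain is the virial term, $-\delta_*H^{-\beta/2}\nu_W U_t(X)\le-\delta_*\nu_W(1-\sigma)H^{1-\beta/2}$. It carries the small factor $\delta_*\le c_2/(4C_1)$. Moreover $\nu_W\le2$ is forced: the virial inequality gives $W(\lambda x)\ge\lambda^{\nu_W}W(x)$ for $\lambda\ge1$, which is compatible with $W\le\bar c_W|x|^2$ only if $\nu_W\le2$.

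Yet the errors you carry into this case are all budgeted against $c_2$: the tail term ($\frac{c_2}{8}H^{1-\beta/2}$), the $C_W\cB$ term ($\frac{c_2}{8}H^{1-\beta/2}$), and the cross term $\delta_*H^{-\beta/2}X\cdot\cA$, which you bound uniformly by $\delta_*C_1H^{1-\beta/2}$. Since $C_1\ge8\ge4\nu_W$, this last bound alone exceeds the guaranteed virial gain by a factor of at least four. Your estimates therefore give at best $\frac{\dd}{\dd t}\mathscr H_H\le\bigl(\frac{c_2}{2}-\frac38\delta_*\nu_W\bigr)H^{1-\beta/2}$, which is positive because $\delta_*\nu_W\le2\delta_*\le c_2/16$.

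The missing idea is that in this regime you must drop both the refined alignment estimate and the uniform $C_1$ bound, and use the smallness of $V$ instead (threshold $|V|^2<\sigma H$, as in the paper):
\begin{enumerate}
\item[(i)] By Lemma~\ref{L4.5}(2), $V\cdot\cA\le\frac{\kappa\bar c_\phi}{4}\cE_K$. Hence $\frac{\dd}{\dd t}h_t\le C_h\cB(t)\le C_hB_*\le\frac{\delta_*\nu_W}{4}H^{1-\beta/2}$, with no tail term at all. This is exactly what the entry $\frac{\delta_*\nu_W}{4C_h}H_*^{1-\beta/2}$ of $B_*$ is for.
\item[(ii)] In $\frac{\dd}{\dd t}(X\cdot V)$, bound $|V|^2+|X|\,|\cA|\le\sigma H+K_A\sqrt\sigma\,H+K_AH^{1/2}\cB^{1/2}$, and use $U_t(X)\ge(1-\sigma/2)H$. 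The choices $\sigma\le\frac{\nu_W}{8(1+\nu_W/2)}$ and $\sigma\le(\nu_W/(8K_A))^2$, together with the entries $H_*(\nu_W/(8K_A))^2$ and $H_*\nu_W/(8C_{\rm vir})$ of $B_*$, give $\frac{\dd}{\dd t}(X\cdot V)\le-\frac{\nu_W}{2}H$.
\end{enumerate}
Together these yield $\frac{\dd}{\dd t}\mathscr H_H\le-\frac{\delta_*\nu_W}{4}H^{1-\beta/2}$. That $K_A$, $C_h$ and two of the three entries defining $\sigma$ never enter your plan is a symptom of this missing mechanism.

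\textbf{Secondary problem in Step 2 for $1<\beta\le2$.} You bound the low part of $m_\phi$ by $\bar c_\phi\cE_K^{1/2}\le\bar c_\phi B_*^{1/2}$ and then apply Young against $\kappa\varrho_\phi|V|^2\gtrsim H^{-\beta/2}|V|^2$. This leaves a remainder of order $H^{\beta/2}B_*$ (or $\sqrt H\,B_*^{1/2}$ without Young). Because $B_*$ does not depend on $H$, that remainder is not $\lesssim H^{1-\beta/2}$ uniformly in $H\ge H_*$.

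The paper avoids any $\cE_K$-remainder in the kinetic regime $|V|^2\ge\sigma H$ by a pointwise sign argument. On $G_H=\{h_t\le\varepsilon H\}$ one has $|v_*|^2\le2\varepsilon H=\sigma H/16\le|V|^2/16$, so $V\cdot(v_*-V)\le-\frac34|V|^2$. Combined with $\int_{G_H}f_t\ge\frac12$ and $\phi\ge b_\varepsilon H^{-\beta/2}$, this gives exactly $-c_1H^{-\beta/2}|V|^2$ with $c_1=\frac{3\kappa b_\varepsilon}{8}$. Only the tail $D_{\rm tail}\sqrt H e^{-\alpha H}$ and $C_W\cB$ remain. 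The $\phi$-weighted bound $|m_\phi|^2\le\bar c_\phi\varrho_\phi\cE_K$ would also give an $H$-independent remainder, though with a smaller damping constant than $c_1$.

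Finally, a harmless slip: $|X||V|\le C_{XV}H$, not $C_{XV}H/4$; \eqref{ND-77} still follows.
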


\begin{proof}
We split the proof into five steps.

\medskip
\noindent \noindent $\bullet$~\textbf{Step A: consequences of the annular restriction.}
By Lemma~\ref{NL4.14} (i) and \eqref{ND-75}, we have
\begin{equation}\label{ND-80}
|X|\leq\frac{2}{\sqrt{\underbar c_W}}\sqrt H,
\qquad
|V|\leq2\sqrt2\,\sqrt H.
\end{equation}
Moreover, Remark~\ref{R2.1} and Lemma~\ref{L2.2} give
\begin{equation}\label{ND-81}
\cE_K(t)\leq\cB_0,
\qquad
\cI_x(t)\leq\frac{\cB_0}{2\underbar c_W}.
\end{equation}
Using Lemma~\ref{NL4.14} (iii), \eqref{ND-80}, and \eqref{ND-81}, we obtain
\begin{align}\label{ND-82}
	\begin{aligned}
\left|\frac{\dd}{\dd t}(X\cdot V)\right|
&\leq |V|^2+|X|\,|\cA[f_t](X,V)|+|X|\,|F_t(X)|\\
&\leq 8H
+\frac{2\kappa\bar c_\phi}{\sqrt{\underbar c_W}}
\left(2\sqrt2+\sqrt{\cB_0}\right)H
+\frac{2\|D^2W\|_{L^\infty}}{\sqrt{\underbar c_W}}
\left(\frac{2}{\sqrt{\underbar c_W}}
+\sqrt{\frac{\cB_0}{2\underbar c_W}}\right)H=:C_1H.
	\end{aligned}
\end{align}

\medskip
\noindent\noindent $\bullet$~\textbf{Step B: a uniformly large low-energy background.}
For $H\geq1$ define
\[
G_H(t):=\{(x_*,v_*)\in\bbr^{2d}:h_t(x_*,v_*)\leq\varepsilon H\}.
\]
By \eqref{ND-66}, 
\[
\int_{G_H(t)^c}f_t(z_*)\dd z_*
\leq M_a^*e^{-a\varepsilon H}.
\]
The definition of $H_{*}$ in $\eqref{ND-73}_2$ therefore gives
\begin{equation}\label{ND-83}
\int_{G_H(t)}f_t(z_*)\dd z_*\geq\frac12,
\qquad H\geq H_*,\quad t\geq0.
\end{equation}

\medskip
\noindent\noindent $\bullet$~\textbf{Step C: kinetic-dominated region.}
Assume
\begin{equation*}
|V|^2\geq\sigma H.
\end{equation*}
For $(x_*,v_*)\in G_H(t)$,
\[
|v_*|^2\leq2\varepsilon H
=\frac{\sigma H}{16}
\leq\frac{|V|^2}{16},
\]
so that
\begin{equation}\label{ND-84}
V\cdot(v_*-V)\leq-\frac34|V|^2.
\end{equation}
Also, Lemma~\ref{NL4.14} (i) gives
\[
|x_*|^2\leq\frac{\varepsilon H}{\underbar c_W}.
\]
Together with \eqref{ND-80}, we obtain
\[
|X-x_*|^2
\leq2|X|^2+2|x_*|^2
\leq\frac{8+2\varepsilon}{\underbar c_W}H.
\]
Since $H\geq1$, the lower bound in \eqref{C-2} yields
\begin{equation}\label{ND-85}
\phi(|X-x_*|)
\geq b_\varepsilon H^{-\beta/2},
\qquad (x_*,v_*)\in G_H(t).
\end{equation}
Combining \eqref{ND-83}, \eqref{ND-84}, and \eqref{ND-85}, we have 
\begin{align*}
\kappa\int_{G_H(t)}\phi(|X-x_*|)V\cdot(v_*-V)f_t(z_*)\dd z_*
&\leq-\frac{3\kappa b_\varepsilon}{8}H^{-\beta/2}|V|^2=:-c_1H^{-\beta/2}|V|^2.
\end{align*}
On $G_H(t)^c$, only the positive part is relevant. We combine Lemma~\ref{NL4.13}, \eqref{ND-80}, and the definition of $D_{\rm tail}$  to give
\begin{align*}
&\kappa\int_{G_H(t)^c}\phi(|X-x_*|)V\cdot(v_*-V)f_t(z_*)\dd z_*\nonumber\\
&\qquad\qquad\leq\kappa\bar c_\phi|V|
\int_{G_H(t)^c}|v_*|f_t(z_*)\dd z_*\leq D_{\rm tail}\sqrt H\,e^{-\alpha H}.
\end{align*}
Hence, by Lemma \ref{L4.4} and Lemma~\ref{L4.5} we have 
\begin{equation}\label{ND-86}
\frac{\dd}{\dd t}h_t(X,V)
\leq-c_2H^{1-\beta/2}
+D_{\rm tail}\sqrt H\,e^{-\alpha H}
+C_W\cB(t).
\end{equation}
We next verify that the two error terms in \eqref{ND-86} can be absorbed into the first term. Since $0<\beta\leq2$ and $H\geq1$,
\[
H^{(\beta-1)/2}\leq H^{1/2}.
\]
Furthermore,
\[
H^{1/2}e^{-\alpha H}
=\bigl(H^{1/2}e^{-\alpha H/2}\bigr)e^{-\alpha H/2}
\leq\frac{1}{\sqrt{\alpha e}}e^{-\alpha H/2}.
\]
Thus the definition of $H_{*}$ in $\eqref{ND-73}_3$ implies
\begin{equation}\label{ND-87}
D_{\rm tail}\sqrt H\,e^{-\alpha H}
\leq\frac{c_2}{8}H^{1-\beta/2},
\qquad H\geq H_*.
\end{equation}
By \eqref{ND-60} and \eqref{ND-78}, we have
\begin{align}\label{ND-88}
	\begin{aligned}
	\cB(t)
	\leq
	C_B
	\exp\left[
	-c_B(1+t)^{\theta_B}
	\right]\leq
	\frac{C_B}{
		\max\left\{
		1,\frac{C_B}{B_*}
		\right\}
	}
	\leq
	B_*,\quad \text{for any} \quad t\ge T_*.
	\end{aligned}
\end{align}
Therefore, we combine $\eqref{ND-74}_2$ and $H\geq H_*$ to give
\begin{equation}\label{ND-89}
C_W\cB(t)
\leq\frac{c_2}{8}H_*^{1-\beta/2}
\leq\frac{c_2}{8}H^{1-\beta/2}.
\end{equation}
Finally, by \eqref{ND-72} and \eqref{ND-82}, we get
\[
\delta_*H^{-\beta/2}
\left|\frac{\dd}{\dd t}(X\cdot V)\right|
\leq\frac{c_2}{4}H^{1-\beta/2}.
\]
Combining this with \eqref{ND-86}, \eqref{ND-87}, and \eqref{ND-89}, we obtain
\begin{equation}\label{ND-90}
\frac{\dd}{\dd t}\mathscr H_H(t)
\leq-\frac{c_2}{2}H^{1-\beta/2}.
\end{equation}

\medskip
\noindent \noindent $\bullet$~\textbf{Step D: potential-dominated region.}
Assume now
\begin{equation*}
|V|^2<\sigma H.
\end{equation*}
Then
\begin{equation*}
U_t(X)
=h_t(X,V)-\frac12|V|^2
\geq\left(1-\frac{\sigma}{2}\right)H.
\end{equation*}
We use Lemma~\ref{L4.4} and Lemma \ref{L4.5} to find
\begin{equation}\label{ND-91}
\frac{\dd}{\dd t}h_t(X,V)
\leq C_h\cB(t).
\end{equation}
Next, we utilize Lemma~\ref{NL4.14} (ii) and (iii), together with \eqref{ND-80} to derive
\begin{align*}
\frac{\dd}{\dd t}(X\cdot V)
&=|V|^2+X\cdot\cA[f_t](X,V)-X\cdot F_t(X)\\
&\leq-\nu_W\left(1-\frac{\sigma}{2}\right)H
+\sigma H+K_A\sqrt\sigma\,H
+K_AH^{1/2}\cB(t)^{1/2}
+C_{\rm vir}\cB(t).
\end{align*}
By the definition of $\sigma$ in $\eqref{ND-71}_2$,
\[
\left(1+\frac{\nu_W}{2}\right)\sigma+K_A\sqrt\sigma
\leq\frac{\nu_W}{4},
\]
so the first three terms are bounded above by $-\frac{3\nu_W}{4}H$.
On the other hand, $\eqref{ND-74}_1$, and \eqref{ND-88} imply
\[
K_AH^{1/2}\cB(t)^{1/2}
\leq\frac{\nu_W}{8}H,
\qquad
C_{\rm vir}\cB(t)
\leq\frac{\nu_W}{8}H.
\]
Consequently, we have
\begin{equation}\label{ND-92}
\frac{\dd}{\dd t}(X\cdot V)
\leq-\frac{\nu_W}{2}H.
\end{equation}
Furthermore,  we again use $\eqref{ND-74}_2$ and \eqref{ND-88} to see
\[
C_h\cB(t)
\leq\frac{\delta_*\nu_W}{4}H_*^{1-\beta/2}
\leq\frac{\delta_*\nu_W}{4}H^{1-\beta/2}.
\]
Thus \eqref{ND-91} and \eqref{ND-92} give
\begin{equation}\label{ND-93}
\frac{\dd}{\dd t}\mathscr H_H(t)
\leq-\frac{\delta_*\nu_W}{4}H^{1-\beta/2}.
\end{equation}
Combining \eqref{ND-90} and \eqref{ND-93}, we  obtain \eqref{ND-76} with the explicit constant $c_{\rm des}$ in \eqref{ND-79}
\begin{equation*}
	\frac{\dd}{\dd t}\mathscr H_H(t)
	\leq-\min\left\{\frac{c_2}{2},\frac{\delta_*\nu_W}{4}\right\}H^{1-\beta/2}=:-c_{\rm des}H^{1-\beta/2}.
\end{equation*}
\medskip
\noindent\noindent $\bullet$~\textbf{Step E: comparison of $\mathscr H_H$ and $h_t$.}
From \eqref{ND-80}, we have
\[
|X\cdot V|\leq C_{XV}H.
\]
Therefore, we obtain
\[
|\mathscr H_H-h_t(X,V)|
\leq C_{XV}\delta_*H^{1-\beta/2}.
\]
The definition of $H_{*}$ in $\eqref{ND-73}_4$ gives
\[
C_{XV}\delta_*H^{-\beta/2}\leq\frac18,
\qquad H\geq H_*,
\]
which is exactly \eqref{ND-77}.
\end{proof}

\begin{corollary}[Crossing time of an energy annulus]\label{NC4.2}

Let $H\geq H_*$ and $s\geq T_*$. If
\[
h_s(X(s),V(s))=2H,
\]
then the characteristic reaches the level $H$ at some time
\[
\tau\in\left[s,s+\frac{3}{2c_{\rm des}}H^{\beta/2}\right].
\]
Before reaching $H$, it cannot reach the level $4H$.
\end{corollary}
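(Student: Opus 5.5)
We will run a continuity argument on the compensated energy $\mathscr H_H$ of Proposition~\ref{NP4.4}, starting at time $s$. Set $h(t):=h_t(X(t),V(t))$. This function is absolutely continuous in $t$, by Lemma~\ref{L4.4} together with the bounds of Lemma~\ref{NL4.14}~(iii). We define the exit time
\[
\tau:=\inf\{t\geq s:\ h(t)\notin(H,4H)\}\in(s,\infty].
\]
Since $h(s)=2H$ lies strictly inside the annulus, $\tau>s$. On $[s,\tau)$ the annular condition \eqref{ND-75} holds, and this interval lies in $[T_*,\infty)$ because $s\geq T_*$. Hence Proposition~\ref{NP4.4} applies there, so \eqref{ND-76} and \eqref{ND-77} hold on $[s,\tau)$.

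\textbf{Step 1: $h$ cannot reach $4H$ first.} We integrate \eqref{ND-76} from $s$ to $t<\tau$, which gives $\mathscr H_H(t)\leq\mathscr H_H(s)$. By \eqref{ND-77} applied at both times,
\[
h(t)\leq \mathscr H_H(t)+\tfrac18H\leq \mathscr H_H(s)+\tfrac18H\leq h(s)+\tfrac14H=\tfrac94H.
\]
Therefore $h<\tfrac94H<4H$ on $[s,\tau)$. By continuity, if $\tau<\infty$ then $h(\tau)\leq\tfrac94H$, so the exit must occur through the lower level: $h(\tau)=H$. This proves the second assertion, provided $\tau$ is finite.

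\textbf{Step 2: a time bound for $\tau$.} Again from \eqref{ND-76} and \eqref{ND-77}, for $t\in[s,\tau)$,
\[
H< h(t)\leq \mathscr H_H(t)+\tfrac18H\leq \mathscr H_H(s)-c_{\rm des}H^{1-\beta/2}(t-s)+\tfrac18H\leq \tfrac94H-c_{\rm des}H^{1-\beta/2}(t-s).
\]
Rearranging yields $t-s<\tfrac{5}{4c_{\rm des}}H^{\beta/2}$. Since this holds for every $t\in[s,\tau)$, we get $\tau\leq s+\tfrac{5}{4c_{\rm des}}H^{\beta/2}\leq s+\tfrac{3}{2c_{\rm des}}H^{\beta/2}$. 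In particular $\tau$ is finite, Step~1 applies, and $h(\tau)=H$. This gives the claimed crossing time.

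\textbf{Main obstacle.} The delicate point is regularity. Proposition~\ref{NP4.4} gives the differential inequality only for almost every $t$. Integrating it therefore requires $\mathscr H_H$ to be absolutely continuous along the characteristic.

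We expect this to hold for the following reasons. First, $X\cdot V$ is Lipschitz on bounded intervals, by the estimate \eqref{ND-82}. Second, $t\mapsto h_t(Z_t(z))$ is absolutely continuous: its derivative, computed in Lemma~\ref{L4.4}, is locally bounded by Lemma~\ref{L4.5} and by the uniform bound $\cB(t)\leq\cB_0$.

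A second subtlety is that the annulus condition is only known on $[s,\tau)$. The continuity argument above handles this, because every bound is used on $[s,\tau)$ and then passed to the endpoint $\tau$ by continuity.
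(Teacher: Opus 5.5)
Your proof is correct and follows essentially the same route as the paper: the first exit time from the annulus $(H,4H)$, monotonicity of $\mathscr H_H$ from \eqref{ND-76}, and the comparison \eqref{ND-77} at the endpoints. The differences are cosmetic. You argue directly rather than by contradiction, and you obtain the slightly sharper bound $\tau\le s+\frac{5}{4c_{\rm des}}H^{\beta/2}$, which is exactly the margin $\frac{17}{8}-\frac{7}{8}=\frac54$ implicit in the paper's contradiction step.
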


\begin{proof}
Since $t\mapsto h_t(X(t),V(t))$ is continuous, define the first exit time
\[
\tau:=\inf\left\{t\geq s:
h_t(X(t),V(t))\notin(H,4H)\right\}.
\]
We first prove that $\tau$ is finite and satisfies the asserted upper bound. Suppose the contrary holds, we have
\[
\tau>s+\frac{3}{2c_{\rm des}}H^{\beta/2}.
\]
Then \eqref{ND-75} holds on
$[s,s+\frac{3}{2c_{\rm des}}H^{\beta/2}]$. Since $h_s=2H$, \eqref{ND-77} gives
\[
\mathscr H_H(s)\leq\frac{17}{8}H.
\]
Integrating \eqref{ND-76}, we obtain
\begin{align*}
\mathscr H_H\left(s+\frac{3}{2c_{\rm des}}H^{\beta/2}\right)
&\leq\frac{17}{8}H
-c_{\rm des}H^{1-\beta/2}\frac{3}{2c_{\rm des}}H^{\beta/2}=\frac{17}{8}H-\frac32H
=\frac58H.
\end{align*}
However, as long as the characteristic remains in $[H,4H]$, \eqref{ND-77} implies
\[
\mathscr H_H(t)\geq h_t(X(t),V(t))-\frac18H\geq\frac78H,
\]
a contradiction. Thus, we have 
\[
\tau\leq s+\frac{3}{2c_{\rm des}}H^{\beta/2}.
\]
It remains to identify the exit boundary. If $h_\tau=4H$, then continuity and \eqref{ND-77} give
\[
\mathscr H_H(\tau)\geq\frac{31}{8}H,
\qquad
\mathscr H_H(s)\leq\frac{17}{8}H,
\]
whereas \eqref{ND-76} shows that $\mathscr H_H$ is non-increasing on $[s,\tau]$. This is impossible. Hence $h_\tau=H$, and the proof is complete.
\end{proof}

\subsubsection{Dyadic descent and the dynamically generated exterior tail}\label{sec:4.3.3}
We now iterate the annulus-crossing estimate. 

\begin{lemma}[Finite descent time and no high-energy re-entry]\label{NL4.15}
The following assertions hold.
\begin{enumerate}[label=\textnormal{(\roman*)}]
\item If
\[
E(z):=h_{T_*}(Z_{T_*}(z))>2H_*,
\]
then the characteristic reaches $\{h_t\leq2H_*\}$ no later than
\begin{equation*}
T_*+\frac{3}{2c_{\rm des}(2^{\beta/2}-1)}E(z)^{\beta/2}=:T_*+C_{\rm dy}E(z)^{\beta/2}.
\end{equation*}
\item If $s\geq T_*$ and $h_s(Z_s(z))\leq2H_*$, then
\begin{equation*}
h_t(Z_t(z))<4H_*,
\qquad t\geq s.
\end{equation*}
\end{enumerate}
\end{lemma}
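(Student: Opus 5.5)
The plan is to iterate Corollary~\ref{NC4.2} along a dyadic sequence of energy levels for part (i). For part (ii) we use the same corollary once at the level $H=H_*$, together with a last-passage-time argument. Throughout, $t\mapsto h_t(Z_t(z))$ is continuous along the characteristic: $Z_t(z)$ is continuous and $(t,x)\mapsto(W*\rho_t)(x)$ is continuous, since $\rho_t$ is weakly continuous with uniformly bounded second moments. This lets us speak of hitting times of exact levels.

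\emph{Part (i).} Let $E:=E(z)>2H_*$ and let $n\geq1$ be the smallest integer with $2^{-n}E\leq2H_*$. For $k=1,\dots,n$ set $H_k:=2^{-k}E$. By minimality of $n$, $2^{-(k-1)}E>2H_*$, hence $H_k>H_*$ for all $k\leq n$.

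Set $\tau_0:=T_*$, so that $h_{\tau_0}=E=2H_1$. Corollary~\ref{NC4.2} with $H=H_1$ and $s=\tau_0\geq T_*$ gives a time $\tau_1\in[\tau_0,\tau_0+\frac{3}{2c_{\rm des}}H_1^{\beta/2}]$ with $h_{\tau_1}=H_1=2H_2$. We then repeat inductively. Given $h_{\tau_{k-1}}=2H_k$ with $\tau_{k-1}\geq T_*$ and $H_k\geq H_*$, the corollary yields $\tau_k\leq\tau_{k-1}+\frac{3}{2c_{\rm des}}H_k^{\beta/2}$ with $h_{\tau_k}=H_k$. At $k=n$ we get $h_{\tau_n}=2^{-n}E\leq2H_*$, so the characteristic has entered $\{h_t\leq 2H_*\}$ by time $\tau_n$.

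Summing the geometric series gives
\[
\tau_n-T_*\leq\frac{3}{2c_{\rm des}}E^{\beta/2}\sum_{k\geq1}2^{-k\beta/2}=\frac{3}{2c_{\rm des}(2^{\beta/2}-1)}E^{\beta/2},
\]
which is exactly the bound asserted in (i).

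\emph{Part (ii).} Argue by contradiction. Suppose $h_s\leq2H_*$ for some $s\geq T_*$, and that $h_{t_1}\geq4H_*$ for some $t_1>s$. Continuity gives a first time $t_2\in(s,t_1]$ with $h_{t_2}=4H_*$. Let
\[
s':=\sup\{t\in[s,t_2]:h_t\leq2H_*\}.
\]
By continuity $h_{s'}=2H_*$, $s'<t_2$, $s'\geq T_*$, and $h_t>2H_*$ on $(s',t_2]$.

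Now apply Corollary~\ref{NC4.2} with $H=H_*$ at the starting time $s'$. It says that the characteristic reaches level $H_*$ at some $\tau\geq s'$ and does not reach $4H_*$ before doing so. Since $h>2H_*>H_*$ on $(s',t_2]$, we must have $\tau>t_2$. But then level $4H_*$ is attained at $t_2<\tau$, which contradicts the corollary. Hence $h_t<4H_*$ for all $t\geq s$.

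\emph{Main obstacle.} The delicate point is not the estimates but the bookkeeping that justifies applying Corollary~\ref{NC4.2} at exact levels. This requires the continuity of $t\mapsto h_t(Z_t(z))$ and the attainment of the exact values $2H_k$ and $2H_*$ at the chosen times. I would state the continuity explicitly, via the uniform second-moment bounds and the global Lipschitz property of $\nabla W$. I would also restrict to the $f_0$-a.e. labels $z$ for which the characteristic is defined.
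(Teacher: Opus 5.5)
Your proof is correct and follows the paper's argument: part (i) is the same dyadic iteration of Corollary~\ref{NC4.2} with the same geometric sum, and part (ii) uses the same first-hitting time at $4H_*$ and last-passage time at $2H_*$. The only difference is in (ii): you invoke Corollary~\ref{NC4.2} at $H=H_*$, whereas the paper applies the monotonicity \eqref{ND-76} of $\mathscr H_{H_*}$ and the comparison \eqref{ND-77} directly on $[s_1,\tau]$, which is exactly the argument inside that corollary's proof.
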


\begin{proof}
\noindent (i).
Let $E:=E(z)>2H_*$ and define
\[
n_*:=\min\left\{n\in\bbn:\frac{E}{2^n}\leq2H_*\right\}.
\]
Then $n_*\geq1$, and by minimality, we get
\[
\frac{E}{2^j}>2H_*,
\qquad j=0,\ldots,n_*-1.
\]
Suppose the characteristic is at level $E/2^j$. Set
\[
H_j:=\frac{E}{2^{j+1}}.
\]
For $j=0,\ldots,n_*-1$, one has $H_j\geq H_*$, and the current level equals $2H_j$. Corollary~\ref{NC4.2} therefore shows that the characteristic reaches $E/2^{j+1}$ within time at most
\[
\frac{3}{2c_{\rm des}}H_j^{\beta/2}
=\frac{3}{2c_{\rm des}}\left(\frac{E}{2^{j+1}}\right)^{\beta/2}.
\]
Summing over $j=0,\ldots,n_*-1$ gives
\begin{align*}
\tau_{n_*}-T_*
&\leq \frac{3}{2c_{\rm des}}E^{\beta/2}
\sum_{j=0}^{n_*-1}2^{-(j+1)\beta/2}\leq \frac{3}{2c_{\rm des}}E^{\beta/2}
\sum_{j=1}^{\infty}2^{-j\beta/2}\\
&=\frac{\frac{3}{2c_{\rm des}}}{2^{\beta/2}-1}E^{\beta/2}
=:C_{\rm dy}E^{\beta/2}.
\end{align*}
At time $\tau_{n_*}$ guaranteed by Corollary~\ref{NC4.2}, the energy is $E/2^{n_*}\leq2H_*$, which proves Lemma \ref{NL4.15} (i).

\medskip
\noindent (ii).
Assume, by contradiction, that $h_s(Z_s(z))\leq2H_*$ but $h_t(Z_t(z))$ reaches $4H_*$ at a later time. Let
\[
\tau:=\inf\{t>s:h_t(Z_t(z))=4H_*\}.
\]
By continuity there exists a last time
\[
s_1:=\sup\{r\in[s,\tau):h_r(Z_r(z))=2H_*\}.
\]
Then
\[
2H_*\leq h_t(Z_t(z))\leq4H_*,
\qquad s_1\leq t\leq\tau.
\]
Proposition~\ref{NP4.4} with $H=H_*$ therefore applies on $[s_1,\tau]$, so $\mathscr H_{H_*}$ is non-increasing. However, \eqref{ND-77} gives
\[
\mathscr H_{H_*}(s_1)\leq\frac{17}{8}H_*,
\qquad
\mathscr H_{H_*}(\tau)\geq\frac{31}{8}H_*,
\]
a contradiction. Hence Lemma \ref{NL4.15} (ii) holds.
\end{proof}

\begin{corollary}[Dynamic exponential exterior tail]\label{NC4.3}
Define
\begin{equation*}
\Omega_*(t):=\{z\in\bbr^{2d}:h_t(z)\leq4H_*\},\quad
\mathfrak M_*(t):=\int_{\Omega_*(t)^c}(1+h_t(z))f_t(z)\dd z.
\end{equation*}
We set
\begin{equation*}
c_0:=\frac12(2C_{\rm dy})^{-2/\beta},
\quad
c_{\rm ext}:=\frac{ac_0}{2},\quad
t_{\rm ext}:=\max\left\{2T_*,\,2C_{\rm dy}(2S_\infty)^{\beta/2}\right\},
\end{equation*}
and
\begin{equation*}
C_{\rm ext}:=\max\left\{
(1+S_\infty)K_a\cM_{e,a}(f_0),\quad
(1+\cB_0)\exp\left(c_{\rm ext}t_{\rm ext}^{2/\beta}\right)
\right\}.
\end{equation*}
Then, we have 
\begin{equation}\label{ND-94}
\mathfrak M_*(t)\leq C_{\rm ext}\exp\left[-c_{\rm ext}t^{2/\beta}\right],
\qquad t\geq0.
\end{equation}
\end{corollary}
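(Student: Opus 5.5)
The plan is to pull the tail back to the initial labels via the push-forward $f_t=(Z_t)_\#f_0$ and to use Lemma~\ref{NL4.15} to identify which labels can still lie outside $\Omega_*(t)$ at a large time $t$. We write
\[
\mathfrak M_*(t)=\int_{\bbr^{2d}}\bigl(1+h_t(Z_t(z))\bigr)\mathbf 1_{\{h_t(Z_t(z))>4H_*\}}f_0(z)\dd z .
\]
We would treat two regimes: the initial layer $0\leq t<t_{\rm ext}$ and the long-time regime $t\geq t_{\rm ext}$.

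\textbf{Initial layer.} Here we use the crude bound
\[
\mathfrak M_*(t)\leq\int(1+h_t)f_t\dd z\leq 1+\cB_0 .
\]
This holds because $\int h_tf_t\,\dd z=\tfrac12\cE_K+\cE_{I,e}\leq\cB(t)\leq\cB_0$ by Remark~\ref{R2.1}. Since $t<t_{\rm ext}$, multiplying by $\exp(c_{\rm ext}t_{\rm ext}^{2/\beta})\exp(-c_{\rm ext}t^{2/\beta})\geq 1$ gives \eqref{ND-94}, using the second entry of $C_{\rm ext}$.

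\textbf{Long-time regime, reduction to high initial energy.} Fix $t\geq t_{\rm ext}\geq 2T_*$ and a label $z$ with $h_t(Z_t(z))>4H_*$. Set $E(z):=h_{T_*}(Z_{T_*}(z))$.
\begin{itemize}
\item If $E(z)\leq 2H_*$, Lemma~\ref{NL4.15}(ii) with $s=T_*$ forbids reaching $4H_*$ afterwards, a contradiction.
\item Hence $E(z)>2H_*$. If the characteristic had reached $\{h\leq 2H_*\}$ at some time $s\in[T_*,t]$, Lemma~\ref{NL4.15}(ii) would again give a contradiction. So by Lemma~\ref{NL4.15}(i) we need $T_*+C_{\rm dy}E(z)^{\beta/2}>t$.
\item Since $t\geq 2T_*$, we have $t-T_*\geq t/2$, hence $E(z)>(t/(2C_{\rm dy}))^{2/\beta}=2c_0t^{2/\beta}$.
\end{itemize}

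\textbf{Long-time regime, transfer to $h_0$.} By Lemma~\ref{NL4.13}'s argument (Proposition~\ref{P4.1} with $S_\infty$), $E(z)\leq h_0(z)+S_\infty$. The condition $t\geq 2C_{\rm dy}(2S_\infty)^{\beta/2}$ ensures $c_0t^{2/\beta}\geq S_\infty$. Therefore
\[
h_0(z)>2c_0t^{2/\beta}-S_\infty\geq c_0t^{2/\beta}.
\]
We also have $1+h_t(Z_t(z))\leq (1+S_\infty)(1+h_0(z))$. Consequently
\[
\mathfrak M_*(t)\leq(1+S_\infty)\int_{\{h_0>c_0t^{2/\beta}\}}(1+h_0)f_0\dd z .
\]
Applying the pointwise bound $(1+r)\mathbf 1_{\{r>R\}}\leq K_ae^{-aR/2}e^{ar}$ from the proof of Lemma~\ref{NL4.13} gives
\[
\mathfrak M_*(t)\leq (1+S_\infty)K_a\cM_{e,a}(f_0)\,e^{-ac_0t^{2/\beta}/2},
\]
which is \eqref{ND-94} with $c_{\rm ext}=ac_0/2$.

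\textbf{Main obstacle.} The delicate step is the logical case analysis in the long-time regime. We must rule out both non-monotone scenarios: a characteristic that dips below $2H_*$ and later re-enters the high-energy region, and one that never descends. Lemma~\ref{NL4.15}(ii) has to be applied at the actual hitting time $s\geq T_*$, and we must verify that $s$ is indeed at least $T_*$. Keeping track of the factors $2$ between $t$, $t-T_*$ and $S_\infty$ in the definitions of $c_0$ and $t_{\rm ext}$ is routine once this case split is in place.
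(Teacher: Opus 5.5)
Your proposal is correct and follows the paper's proof essentially step by step. Both arguments use the push-forward to initial labels and the crude bound $1+\cB_0$ on the initial layer. Both then combine no-reentry (Lemma~\ref{NL4.15}(ii)) with the finite descent time (Lemma~\ref{NL4.15}(i)) to force $E(z)>(t/(2C_{\rm dy}))^{2/\beta}$, transfer this to $h_0$ via $E(z)\le h_0(z)+S_\infty$, and finish with the $K_a$-weighted Chebyshev bound. Your explicit case split on $E(z)\le 2H_*$ versus $E(z)>2H_*$ only spells out what the paper compresses into one sentence.
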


\begin{proof}
Fix $t\geq T_*$ and an initial label $z$ such that
\[
h_t(Z_t(z))>4H_*.
\]
By the no-reentry property in Lemma~\ref{NL4.15} (ii), this characteristic cannot have reached the level $2H_*$ at any time in $[T_*,t]$. In particular, we have
\[
E(z):=h_{T_*}(Z_{T_*}(z))>2H_*.
\]
Otherwise it would already lie in the inner region at time $T_*$ and could not be outside $4H_*$ at time $t$.

Lemma~\ref{NL4.15} (i) says that a characteristic with initial energy $E(z)$ at time $T_*$ must reach $\{h\leq2H_*\}$ by time
\[
T_*+C_{\rm dy}E(z)^{\beta/2}.
\]
Since this has not happened by time $t$,
\begin{equation}\label{ND-95}
E(z)>\left(\frac{t-T_*}{C_{\rm dy}}\right)^{2/\beta}.
\end{equation}
For $t\geq t_{\rm ext}$, we have $t-T_*\geq t/2$ and
\[
\left(\frac{t}{2C_{\rm dy}}\right)^{2/\beta}\geq2S_\infty.
\]
We use \eqref{ND-64} and Proposition \ref{P4.1} to derive
\[
E(z)=h_{T_*}(Z_{T_*}(z))\leq h_0(z)+S_\infty.
\]
Combining this with \eqref{ND-95}, for $t\geq t_{\rm ext}$ we obtain
\begin{align*}
h_0(z)
&>\left(\frac{t-T_*}{C_{\rm dy}}\right)^{2/\beta}-S_\infty\geq\left(\frac{t}{2C_{\rm dy}}\right)^{2/\beta}-S_\infty\geq\frac12\left(\frac{t}{2C_{\rm dy}}\right)^{2/\beta}
=c_0t^{2/\beta}.
\end{align*}
Therefore, for $t\geq t_{\rm ext}$,
\begin{equation*}
h_t(Z_t(z))>4H_*
\quad\Longrightarrow\quad
h_0(z)>c_0t^{2/\beta}.
\end{equation*}
Using $f_t=(Z_t)_\#f_0$ and \eqref{ND-64}, we get
\begin{align*}
\mathfrak M_*(t)
&=\int_{\{h_t(Z_t(z))>4H_*\}}
\bigl(1+h_t(Z_t(z))\bigr)f_0(z)\dd z\\
&\leq\int_{\{h_0>c_0t^{2/\beta}\}}
(1+h_0+S_\infty)f_0(z)\dd z\\
&\leq(1+S_\infty)K_a\cM_{e,a}(f_0)
\exp\left[-\frac{ac_0}{2}t^{2/\beta}\right].
\end{align*}
This proves \eqref{ND-94} for $t\geq t_{\rm ext}$. For $0\leq t\leq t_{\rm ext}$, note that
\[
\mathfrak M_*(t)
\leq\int_{\bbr^{2d}}(1+h_t)f_t(z)\dd z
=1+\frac12\cE_K(t)+\cE_{I,e}(t)
\leq1+\cB_0.
\]
The definition of $C_{\rm ext}$ then extends \eqref{ND-94} to every $t\geq0$.
\end{proof}

\subsubsection{Proof of the third assertion}\label{sec:4.3.4}
Inside $\Omega_*(t)$, the communication weight has a time-independent positive lower bound. Indeed, if $z=(x,v)$ and $z_*=(x_*,v_*)$ belong to $\Omega_*(t)$, then Lemma~\ref{NL4.14} (i) gives
\[
|x|^2,\ |x_*|^2\leq\frac{4H_*}{\underbar c_W},
\]
and hence
\begin{equation}\label{ND-96}
	\phi(|x-x_*|)\geq b_*:=\underbar c_\phi
	\left(1+\frac{16H_*}{\underbar c_W}\right)^{-\beta/2}>0.
\end{equation}

\begin{proposition}[Fixed-region localized dissipation]\label{NP4.5}
	The following estimate holds:
	\begin{equation}\label{ND-97}
		\Lambda(t)\geq 2\kappa b_*\cE_K(t)
		-2\kappa b_*\bigl(2+\cB_0\bigr)C_{\rm ext}\exp\left[-c_{\rm ext}t^{2/\beta}\right],
		\qquad t\geq0.
	\end{equation}
Here, $c_{\rm ext}$ and $C_{\rm ext}$ are defined in
	Corollary~\ref{NC4.3}.
\end{proposition}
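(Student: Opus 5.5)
The argument copies the proof of Proposition~\ref{P4.2}, with the time-varying region $\Omega_{A,\gamma}(t)$ replaced by the fixed-level region $\Omega_*(t)$. The communication lower bound \eqref{ND-96} replaces Lemma~\ref{L4.8}~(1), and the exterior-tail bound \eqref{ND-94} replaces Lemma~\ref{L4.6}.

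First, I restrict the dissipation integral to $\Omega_*(t)\times\Omega_*(t)$, which is allowed since the integrand is nonnegative. Using \eqref{ND-96} there, I get
\[
\Lambda(t)\geq \kappa b_*\int_{\Omega_*(t)^2}|v-v_*|^2f_t(z)f_t(z_*)\dd z\dd z_* .
\]
I then write this as the full integral minus the integral over $(\Omega_*(t)^2)^c$. By Lemma~\ref{L2.2}~(3), the full integral equals $2\cE_K(t)$.

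The remaining task is to bound the exterior correlation by $(2+\cB_0)\mathfrak M_*(t)$. I will repeat the argument of Lemma~\ref{L4.8}~(2) with $R=4H_*$. The symmetric inclusion and momentum normalization give
\[
\int_{(\Omega_*^2)^c}|v-v_*|^2f_tf_t\leq 2K_{\rm out}+2m_{\rm out}\cE_K(t).
\]
Here $K_{\rm out}\leq 2H_{\rm out}\leq 2\mathfrak M_*(t)$. For the second term I will use the cruder bound $m_{\rm out}\leq \mathfrak M_*(t)$ together with $\cE_K(t)\leq\cB_0$ from \eqref{B-22}. Alternatively, the sharper bound $m_{\rm out}\cE_K\leq 2H_{\rm out}$ from Lemma~\ref{L4.8} also works, since $R=4H_*$ and $|v|^2\le 8H_*$ on $\Omega_*$. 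Either route yields
\[
\int_{(\Omega_*^2)^c}|v-v_*|^2f_tf_t\leq (4+2\cB_0)\mathfrak M_*(t)=2(2+\cB_0)\mathfrak M_*(t),
\]
which matches the constant in \eqref{ND-97}.

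Combining these pieces,
\[
\Lambda(t)\geq 2\kappa b_*\cE_K(t)-2\kappa b_*(2+\cB_0)\mathfrak M_*(t).
\]
Inserting Corollary~\ref{NC4.3} then gives \eqref{ND-97} for all $t\geq0$. No threshold time is needed, because both \eqref{ND-96} and \eqref{ND-94} hold for every $t\ge0$. The bound \eqref{ND-96} is time independent, since Lemma~\ref{NL4.14}~(i) holds at every time.

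The only point requiring care is the constant bookkeeping in the exterior two-point estimate, specifically checking that the $m_{\rm out}\cE_K$ term is controlled by $\cB_0\,\mathfrak M_*$ using $1+h_t\geq1$ on $\Omega_*^c$. Everything else is a direct transcription of Proposition~\ref{P4.2}.
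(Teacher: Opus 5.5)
Your proof is correct and follows the paper's argument essentially line by line. You restrict $\Lambda$ to $\Omega_*(t)\times\Omega_*(t)$, use \eqref{ND-96}, bound the exterior correlation by $2K_{\rm out}+2m_{\rm out}\cE_K\le 4\mathfrak M_*(t)+2\cB_0\mathfrak M_*(t)$, and insert Corollary~\ref{NC4.3}. One small slip is in your side remark: the Lemma~\ref{L4.8}-type route gives the exterior bound $8\mathfrak M_*(t)$, not $(4+2\cB_0)\mathfrak M_*(t)$, so it reproduces the stated constant only when $\cB_0\ge2$. The crude route ($m_{\rm out}\le\mathfrak M_*$, $\cE_K\le\cB_0$) is the one that yields \eqref{ND-97} exactly.
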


\begin{proof}
	For simplicity, write
	\[
	\Omega_*:=\Omega_*(t).
	\]
	By \eqref{ND-96}, for every
	$z=(x,v),z_*=(x_*,v_*)\in\Omega_*$,
	\[
	\phi(|x-x_*|)\geq b_*.
	\]
	Hence,
	\begin{align*}
		\Lambda(t)
		&=
		\kappa
		\int_{\bbr^{4d}}
		\phi(|x-x_*|)
		|v-v_*|^2
		f_t(z)f_t(z_*)\dd z\dd z_*
	\geq
		\kappa b_*
		\int_{\Omega_*\times\Omega_*}
		|v-v_*|^2
		f_t(z)f_t(z_*)\dd z\dd z_*.
	\end{align*}
	Since the total mass is one and the mean velocity vanishes, we have
	\begin{align*}
		\int_{\Omega_*\times\Omega_*}
		|v-v_*|^2f_t(z)f_t(z_*)\dd z\dd z_*
		&=
		2\cE_K(t)
		-
		\int_{(\Omega_*\times\Omega_*)^c}
		|v-v_*|^2f_t(z)f_t(z_*)\dd z\dd z_*.
	\end{align*}
	It remains to estimate the exterior contribution. Since
	\[
	\mathbf 1_{(\Omega_*\times\Omega_*)^c}(z,z_*)
	\leq
	\mathbf 1_{\Omega_*^c}(z)
	+
	\mathbf 1_{\Omega_*^c}(z_*),
	\]
	and the integrand is symmetric under
	$(z,z_*)\leftrightarrow(z_*,z)$, we obtain
	\begin{align*}
		\int_{(\Omega_*\times\Omega_*)^c}
		|v-v_*|^2f_t(z)f_t(z_*)\dd z\dd z_*\leq
		2\int_{\Omega_*^c}|v|^2f_t(z)\dd z
		+
		2\cE_K(t)
		\int_{\Omega_*^c}f_t(z)\dd z.
		\label{ND-97-proof-6}
	\end{align*}
	Moreover, by the energy dissipation estimate
$	\cE_K(t)
	\leq
	\cB(t)
	\leq
	\cB_0
$
we have
	\begin{align*}
		2\int_{\Omega_*^c}|v|^2f_t(z)\dd z
		&\leq
		4\int_{\Omega_*^c}h_t(z)f_t(z)\dd z
		\leq
		4\mathfrak M_*(t),\\
		2\cE_K(t)
		\int_{\Omega_*^c}f_t(z)\dd z
		&\leq
		2\cB_0
		\int_{\Omega_*^c}f_t(z)\dd z
		\leq
		2\cB_0\mathfrak M_*(t).
	\end{align*}
	Hence,
	\begin{equation*}
		\int_{(\Omega_*\times\Omega_*)^c}
		|v-v_*|^2f_t(z)f_t(z_*)\dd z\dd z_*
		\leq
		2\bigl(2+\cB_0\bigr)\mathfrak M_*(t).
	\end{equation*}
	This implies
	\begin{align*}
		\Lambda(t)
		&\geq
		2\kappa b_*\cE_K(t)
		-
		2\kappa b_*
		\bigl(2+\cB_0\bigr)\mathfrak M_*(t).
	\end{align*}
	Finally, Corollary~\ref{NC4.3} gives
	\[
	\mathfrak M_*(t)
	\leq
	C_{\rm ext}
	\exp\left[-c_{\rm ext}t^{2/\beta}\right],
	\qquad t\geq0.
	\]
	Therefore, we obtain the desired estimate
	\[
	\Lambda(t)
	\geq
	2\kappa b_*\cE_K(t)
	-
	2\kappa b_*
	\bigl(2+\cB_0\bigr)
	C_{\rm ext}
	\exp\left[-c_{\rm ext}t^{2/\beta}\right].
	\]
\end{proof}
We can now complete the proof of Theorem~\ref{T3.3}~\textnormal{(iii)}. The global bound $0\leq\phi\leq\bar c_\phi$ and the nonnegativity of $\Phi$ imply
\begin{equation*}
	-\cE_K-\frac{1}{2\underbar c_W}\cE_{I,e}
	\leq\cE_{I,s}
	\leq\cE_K+\frac{1+\kappa\bar c_\phi}{2\underbar c_W}\cE_{I,e}.
\end{equation*}
Thus there exists $C_s>0$ such that
\[
|\cE_{I,s}(t)|\leq \max\left\{1+\frac{1}{2\underbar c_W}, 1+\frac{1+\kappa\bar c_\phi}{2\underbar c_W}\right\}\cB(t)=:C_s\cB(t).
\]
Choose a constant gauge $\omega_*>0$ satisfying
\begin{equation*}
	C_s\omega_*\leq\frac12,
	\qquad
	\omega_*\leq\frac{\kappa b_*}{2},
\end{equation*}
and define
\begin{equation*}
	\cE_*(t):=\cE_K(t)+\cE_{I,e}(t)+\omega_*\cE_{I,s}(t).
\end{equation*}
Then
\begin{equation}\label{ND-98}
	\frac12\cB(t)\leq\cE_*(t)\leq\frac32\cB(t).
\end{equation}
By Remark~\ref{R2.1} (1), Lemma~\ref{L2.3} (iii), the virial condition \eqref{C-1}$_3$, and Proposition~\ref{NP4.5}, we have
\begin{align*}
	\frac{\dd}{\dd t}\cE_*
	&\leq-\Lambda(t)+\omega_*\left(2\cE_K-\nu_W\cE_{I,e}\right)\\
	&\leq-2(\kappa b_*-\omega_*)\cE_K-\omega_*\nu_W\cE_{I,e}
	+2\kappa b_*
	\bigl(2+\cB_0\bigr)
	C_{\rm ext}
	\exp\left[-c_{\rm ext}t^{2/\beta}\right]\\
	&\leq-\min\left\{\kappa b_*, \omega_*\nu_W\right\}\cB(t)+2\kappa b_*
	\bigl(2+\cB_0\bigr)
	C_{\rm ext}
	\exp\left[-c_{\rm ext}t^{2/\beta}\right].
\end{align*}
Here, $c_{\rm ext}$ and $C_{\rm ext}$ are defined in
Corollary~\ref{NC4.3}. We set
\[
\lambda_*:=
\frac12\min\left\{\kappa b_*,\omega_*\nu_W\right\},
\qquad
	\tilde{C}_{\rm ext}:=
2\kappa b_*
\bigl(2+\cB_0\bigr)
C_{\rm ext}.
\]
Then, we have
\begin{equation*}
	\frac{\dd}{\dd t}\cE_*(t)
	+\lambda_*\cE_*(t)
	\leq
	\tilde{C}_{\rm ext}\exp\left[-c_{\rm ext}t^{2/\beta}\right].
\end{equation*}
Since \(0<\beta\leq2\), we have
$
p:=\frac{2}{\beta}\geq1.$
For every \(t\geq0\),
\[
t-t^p\leq1.
\]
Indeed, if \(t\geq1\), then \(t^p\geq t\), whereas if
\(0\leq t\leq1\), then \(t-t^p\leq t\leq1\).
Consequently,
\[
\exp\left[-c_{\rm ext}t^{2/\beta}\right]
=
e^{-c_{\rm ext}t}
e^{c_{\rm ext}(t-t^{2/\beta})}
\leq
e^{c_{\rm ext}}e^{-c_{\rm ext}t},
\qquad t\geq0.
\]
Hence we obtain
\begin{equation*}
	\frac{\dd}{\dd t}\cE_*(t)
	+\lambda_*\cE_*(t)
	\leq
		\tilde{C}_{\rm ext}e^{c_{\rm ext}}e^{-c_{\rm ext}t}.
\end{equation*}
Multiplying the above differential inequality by \(e^{\lambda_*t}\) and
integrating over \([0,t]\), we obtain
\begin{align*}
	\cE_*(t)
	&\leq
	e^{-\lambda_*t}\cE_*(0)
	+
	\tilde{C}_{\rm ext}e^{c_{\rm ext}}
	\int_0^t
	e^{-\lambda_*(t-s)}
	e^{-c_{\rm ext}s}\dd s\\
&	\le 	e^{-\lambda_*t}\cE_*(0)+
\frac{
	8\kappa b_*
	\bigl(2+\cB_0\bigr)
	C_{\rm ext}
	e^{c_{\rm ext}}
}{
	\min\left\{\kappa b_*,\omega_*\nu_W\right\}
}e^{-\min\left\{
\frac14
\kappa b_*,~\frac14\omega_*\nu_W,~\frac12
c_{\rm ext}
\right\}t}\\
&\le \left(\cE_*(0)
+
\frac{
	8\kappa b_*
	\bigl(2+\cB_0\bigr)
	C_{\rm ext}
	e^{c_{\rm ext}}
}{
	\min\left\{\kappa b_*,\omega_*\nu_W\right\}
}\right)e^{-\min\left\{
\frac14
\kappa b_*,~\frac14\omega_*\nu_W,~\frac12
c_{\rm ext}
\right\}t}\\
&=:C_{\rm exp}e^{-c_{\rm exp}t}.
\end{align*}
Here, we used
\[
\lambda_*=
\frac12\min\left\{\kappa b_*,\omega_*\nu_W\right\},
\qquad
\min\left\{\frac{\lambda_*}{2},\frac{c_{\rm ext}}{2}\right\}=\min\left\{
\frac14
\kappa b_*,~\frac14\omega_*\nu_W,~\frac12
c_{\rm ext}
\right\},
\]
to obtain
\[
\begin{aligned}
	\int_0^t
	e^{-\lambda_*(t-s)}e^{-c_{\rm ext}s}\dd s
	&\leq
	e^{-c_{\rm exp}t}
	\int_0^t
	e^{-(\lambda_*-c_{\rm exp})(t-s)}\dd s\\
	&\leq
	\frac{4}{
		\min\{\kappa b_*,\omega_*\nu_W\}
	}
	e^{-\min\left\{
		\frac14
		\kappa b_*,~\frac14\omega_*\nu_W,~\frac12
		c_{\rm ext}
		\right\}t}.
\end{aligned}
\]
Finally, we combine \eqref{ND-98} and Lemma~\ref{L2.2}~(4) to obtain the desired upper bound in \eqref{C-19}
\[
\cF(t)
\leq
2\max\left\{
2,\frac1{\underbar c_W}
\right\}
C_{\rm exp}e^{-c_{\rm exp}t}=:Ce^{-ct},
\qquad t\geq0.
\]
The proof of the lower bound in \eqref{C-19} is identical to that of Theorem~\ref{T3.3}~\textnormal{(i)}. Hence, the proof of Theorem~\ref{T3.3}~\textnormal{(iii)} is complete.\newline

\section{Optimality of polynomial convergence rate}\label{sec:5}
\setcounter{equation}{0}
In this section, we provide a proof of Theorem \ref{T3.4} by explicitly constructing a nonconvex refinement of the rotating-shell potential. Throughout this section, we take $\Omega>0$ to be a fixed constant.

\subsection{Preparatory estimates} \label{sec:5.1}
In this subsection, we study preparatory estimates to be crucially used in the proof of the second main result.  We begin with two-dimensional setting with $d=2$.

\begin{lemma}[Construction of a radial potential]\label{L5.1}
For a fixed constant $\Omega>0$, there exists a radially symmetric force potential $U\in \mathcal C_c^\infty(\bbr^2)$, supported in the annulus $\{1<|z|<2\}$ such that
\begin{align*}
\begin{aligned} \label{E-1}
& (i)~W_{\rm rs}(z)~~\mbox{is not convex}. \\
& (ii)~W_{\rm rs}(z):=\frac{\Omega^2}{2}|z|^2+U(z)~~\mbox{satisfies the conditions in $({\mathcal F}_A1)$}.
\end{aligned}
\end{align*}
\end{lemma}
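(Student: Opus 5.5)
We will build $U$ explicitly as a radial bump, $U(z)=\varepsilon\,\psi(|z|)$, where $\psi\in\mathcal C_c^\infty((1,2))$ is a fixed nonnegative profile and $\varepsilon>0$ is a free amplitude. Set $W_{\rm rs}(z)=\frac{\Omega^2}{2}|z|^2+\varepsilon\psi(|z|)$. Since $\psi$ vanishes near $r=0$, $W_{\rm rs}$ equals the harmonic potential near the origin and is therefore $\mathcal C^\infty$. It is clearly even.

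To get nonconvexity (i), we use the radial second derivative. The Hessian of a radial function $w(r)$ has eigenvalues $w''(r)$ (radial direction) and $w'(r)/r$ (tangential direction). Here $w''(r)=\Omega^2+\varepsilon\psi''(r)$. We pick $\psi$ with $\min\psi''<0$ at some $r_0\in(1,2)$; any nontrivial nonnegative bump has this property, since $\psi''$ must be negative near an interior maximum. Then we choose $\varepsilon$ so that $\varepsilon\,|\psi''(r_0)|>\Omega^2$, which makes $D^2W_{\rm rs}(z)$ have a negative eigenvalue at $|z|=r_0$.

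The main obstacle is that this $\varepsilon$ must remain compatible with the virial and quadratic bounds in (ii). The competing quantities scale differently in the bump width, so we use that freedom.

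\textbf{Choice of profile.} Take $\psi(r)=\chi((r-r_0)/\ell)$ with $r_0=3/2$, a fixed smooth bump $\chi$ supported in $(-1,1)$, $0\le\chi\le1$, and small width $\ell<1/2$. Then
\[
\|\psi\|_\infty\le 1,\qquad \|\psi'\|_\infty\sim \ell^{-1},\qquad |\psi''(r_0)|\sim \ell^{-2}.
\]
Choose $\varepsilon=\ell^{3/2}$ (times a constant). This gives:
\begin{itemize}
\item $\varepsilon\psi''\sim \ell^{-1/2}\to\infty$, which beats $\Omega^2$ and yields nonconvexity;
\item $\varepsilon\|\psi\|_\infty$ and $\varepsilon\|\psi'\|_\infty\sim \ell^{1/2}$ are both small.
\end{itemize}

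\textbf{Checking $({\mathcal F}_A1)$.}
\begin{itemize}
\item \emph{Quadratic bounds.} Since $U\ge0$ and $U$ is supported in $1<|z|<2$, where $|z|^2>1$, we get
\[
\frac{\Omega^2}{2}|z|^2\le W_{\rm rs}\le \Big(\frac{\Omega^2}{2}+\varepsilon\Big)|z|^2 .
\]
\item \emph{Bounded Hessian.} $D^2U$ is compactly supported and smooth, so $\|D^2W_{\rm rs}\|_\infty<\infty$.
\item \emph{Virial inequality.} We have $z\cdot\nabla W_{\rm rs}=\Omega^2r^2+\varepsilon r\psi'(r)\ge \Omega^2r^2-C\ell^{1/2}r^2$ on the support, using $r\ge 1$ so that $r\le r^2$. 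Hence, once $\ell$ is small enough that $C\ell^{1/2}\le\Omega^2/4$, we get $z\cdot\nabla W_{\rm rs}\ge \tfrac34\Omega^2 r^2\ge \nu_W W_{\rm rs}$ with $\nu_W=\tfrac34\Omega^2/(\tfrac{\Omega^2}{2}+\varepsilon)>0$. The tangential eigenvalue $w'/r$ stays positive for the same reason.
\end{itemize}

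Finally, for $d\ge2$ in Theorem \ref{T3.4} the same radial formula applies verbatim with $|z|$ computed in $\bbr^d$.
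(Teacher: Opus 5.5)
Your proof is correct and uses the same mechanism as the paper: a radial perturbation supported in $\{1<|z|<2\}$ that is small in $C^0$ and $C^1$ but large in $C^2$. The paper realizes it with $u(r)=\frac{\delta}{k}\chi(r)\cos(kr)$ and $k\to\infty$, while your nonnegative bump of width $\ell$ and height $\ell^{3/2}$ plays the same role and also gives $W_{\rm rs}\ge\frac{\Omega^2}{2}|z|^2$ for free; the quadratic bounds, the virial step via $|ru'|\le Cr\le Cr^2$, and the Hessian bound are then checked exactly as you do. One cosmetic fix: evaluate $\psi''$ at $r_0+\ell s_0$ with $\chi''(s_0)<0$ rather than at the centre $r_0$. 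Such an $s_0$ exists because $\int\chi''=0$ and $\chi''\not\equiv0$, whereas $\chi''(0)$ vanishes if $\chi$ has a plateau there.
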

\begin{proof}
We choose a smooth bump function $\chi\in \mathcal C_c^\infty((1,2))$ with $\chi\equiv1$ on $[4/3,5/3]$.  For a small $\delta>0$ and a large integer $k$, we set
\begin{equation} \label{E-2}
U(z)=u(|z|),
\quad
u(r):=\frac\delta k\chi(r)\cos(kr).
\end{equation}
For such $U$, we define a radially symmetric potential $W_{\rm rs}$:
\begin{equation} \label{E-3}
W_{\rm rs}(z):=\frac{\Omega^2}{2}|z|^2+U(z). 
\end{equation}
\noindent (i)~Note that the ansatz \eqref{E-2} clearly shows that $W_{\rm rs}$ is not convex. For all sufficiently large $k$, the interval $[4/3,5/3]$ contains a point $r_k$ with $kr_k=2\pi\ell$ for some integer $\ell$.  Since $\chi\equiv1$ near $r_k$,
\[
\frac{\dd^2}{\dd r^2}W_{\rm rs}(r_k)
=\Omega^2-\delta k<0
\]
when $k>\Omega^2/\delta$.  The radial Hessian eigenvalue is therefore negative at $r_k$. 

\vspace{0.2cm}

\noindent (ii)~Next, we claim that $W_{\rm rs}$ satisfies a set of conditions in $(\cF_A 1)$:
\begin{align}
\begin{aligned} \label{E-4}
& W_{\rm rs}(-x) = W_{\rm rs}(x), \quad \underbar{c}_W |x|^2\leq W_{\rm rs}(x)\leq \bar{c}_W |x|^2, \\
& x \cdot\nabla W_{\rm rs}(x)\geq \nu_W W_{\rm rs}(x), \quad \|D^2W_{\rm rs} \|_{L^{\infty}} < \infty.
\end{aligned}
\end{align}
\noindent $\bullet$~Case 1 (verification of $\eqref{E-4}_1$):~Since $W_{\rm rs}$ is radially symmetric,  it is clearly an even function:
\[ W_{\rm rs}(-x) = W_{\rm rs}(x). \]
\noindent $\bullet$~Case 2 (verification of $\eqref{E-4}_2$):~By \eqref{E-2}, one has 
\begin{equation} \label{E-5}
 ru^{\prime}(r) = \frac{\delta}{k}  r \chi^{\prime}(r) \cos(kr) - \delta \chi(r) r\sin (kr). 
\end{equation}
Because the support is separated from the origin, this defines a smooth radial function on $\bbr^2$. \newline

\noindent On $1\leq r\leq2$, we use \eqref{E-2} and \eqref{E-5} to see
\begin{align}
\begin{aligned} \label{E-6}
|u(r)| &= \Big |\frac\delta k\chi(r)\cos(kr) \Big | \leq\frac\delta k \quad \mbox{and} \\
|ru'(r)| &= \Big|  \frac{\delta}{k}  r \chi^{\prime}(r) \cos(kr) - \delta \chi(r) r\sin (kr)  \Big| \leq 2\delta\left(1+\frac{\norm{\chi'}_\infty}{k}\right).
\end{aligned}
\end{align}
We choose $\delta>0$ sufficiently small and sufficiently large $k$ such that 
\begin{equation} \label{E-7}
\delta < \frac{\Omega^2}{8} \quad \mbox{and} \quad    \frac{\delta}{k} \leq \min \Big \{1,~\frac{1}{ \|\chi^{\prime} \|_\infty} \Big \}  \frac{\Omega^2}{8}.
\end{equation}
Since $r = |z| \geq1$ on the support, we use \eqref{E-6} and \eqref{E-7} to find 
\begin{align}
\begin{aligned} \label{E-8}
W_{\rm rs}(z) &\geq \frac{\Omega^2}{2}|z|^2 - |u(|z|)| \geq \frac{\Omega^2}{2}|z|^2 - \frac{\delta}{k} \geq \frac{\Omega^2}{2}|z|^2 -   \frac{\Omega^2}{8} \\
& \geq  \frac{\Omega^2}{2}|z|^2  -  \frac{\Omega^2}{8}|z|^2 = \frac{3\Omega^2}{8}|z|^2. \\
\end{aligned}
\end{align}
\noindent $\diamond$~Outside the support of $U$, the potential $W_{\rm rs} = \frac{\Omega^2}{2}|z|^2$ is exactly harmonic. \newline

\noindent $\diamond$~On the support of $U$, the relations \eqref{E-6} and \eqref{E-7} yield
\begin{align}
\begin{aligned} \label{E-9}
W_{\rm rs}(z) &\leq  \frac{\Omega^2}{2}|z|^2 +  |u(|z|)| \leq \frac{\Omega^2}{2}|z|^2  +  \frac{\delta}{k} \leq  \frac{\Omega^2}{2}|z|^2  +  \frac{\Omega^2}{8} \\
&\leq \frac{\Omega^2}{2}|z|^2  +  \frac{\Omega^2}{8} |z|^2 = \frac{5\Omega^2}{8}|z|^2.
\end{aligned}
\end{align}
Finally, we combine \eqref{E-8} and \eqref{E-9} to get 
\begin{equation} \label{E-10}
\frac{3\Omega^2}{8}|z|^2 \leq W_{\rm rs}(z) \leq  \frac{5\Omega^2}{8}|z|^2.
\end{equation}
Thus, the relation  $\eqref{E-4}_2$ holds with 
\[ \underbar{c}_W = \frac{3\Omega^2}{8} \quad \mbox{and} \quad \bar{c}_W = \frac{5\Omega^2}{8}. \]

\noindent $\bullet$~Case 3 (verification of $\eqref{E-4}_3$):~We take a gradient of \eqref{E-3} with respect to $z$ to get 
\begin{equation} \label{E-11}
\nabla W_{\rm rs} = \nabla \Big(  \frac{\Omega^2}{2}|z|^2+U(z) \Big ) = \Omega^2 z + u^{\prime}(|z|) \frac{z}{r}.  
\end{equation}
\noindent $\diamond$ Outside of the support of $U$,  
\[ W_{\rm rs}(z) = \frac{\Omega^2}{2}|z|^2 \quad \mbox{and} \quad \nabla W_{\rm rs}(z) = \Omega^2 z. \]
This yields
\begin{equation} \label{E-12}
 z\cdot\nabla W_{\rm rs}(z) = \Omega^2 |z|^2 = 2 W_{\rm rs}(z).
\end{equation} 
\noindent $\diamond$~On the support of $U$, we use  \eqref{E-10}, \eqref{E-5} and \eqref{E-11} to get 
\begin{align}
\begin{aligned} \label{E-13}
z\cdot\nabla W_{\rm rs}(z) &= \Omega^2 |z|^2 + r u^{\prime}(|z|) =   \Omega^2 |z|^2 +  \frac{\delta}{k}  r \chi^{\prime}(r) \cos(kr) - \delta \chi(r) r\sin (kr) \\
&\geq \Omega^2 |z|^2 -  \Big|  \frac{\delta}{k}  r \chi^{\prime}(r) \cos(kr) \Big| - \Big|  \delta \chi(r) r\sin (kr) \Big| \\
& \geq  \Omega^2 |z|^2 -  \Big( \frac{\delta}{k}   \| \chi^{\prime} \|_{\infty}  +  \delta \Big) r  \geq \Omega^2 |z|^2 -  \Big( \frac{\delta}{k}   \| \chi^{\prime} \|_{\infty}  +  \delta \Big) r^2  \\
& \geq \Omega^2 |z|^2  - \frac{\Omega^2}{4} |z|^2 =  \frac{6\Omega^2}{8} |z|^2 = \frac{6}{5} \times \Big( \frac{5 \Omega^2}{8} |z|^2  \Big) \geq  \frac{6}{5} W_{\rm rs}(z),
\end{aligned}
\end{align}
where we used the fact $r^2 \geq r$ on the support of $U$. Finally, we combine \eqref{E-12} and \eqref{E-13} to get the virial estimate with $\nu_W = \frac{6}{5}$:
\[ z\cdot\nabla W_{\rm rs}(z) \geq \frac{6}{5} W_{\rm rs}(z). \]

\noindent $\bullet$~Case 4 (verification of $\eqref{E-4}_4$): On the outside of the support of $U$, we have
\[  \partial_{x_j} \partial_{x_i} W_{\rm rs}(x) = \Omega^2 \delta_{ij}. \]
On $\mbox{supp}(U) \subset \{x\in\mathbb R^2:1\leq |x|\leq2\}$, we have
\[
 \partial_{x_j} \partial_{x_i} W_{\rm rs}(x) = \Omega^2 \delta_{ij} +  \partial_{x_j} \partial_{x_i} u(|x|)  = \Omega^2 \delta_{ij} +  u^{\prime \prime}(r) \frac{x_i x_j}{r^2}
 +  u^{\prime}(r) \Big(  \frac{\delta_{ij}}{r} - \frac{x_i x_j}{r^3}  \Big),
\]
which is bounded. Here $\delta_{ij}$ is the Kronecker-delta function defined as follows.
\[ \delta_{ij} = \begin{cases}
1, \quad & i = j, \\
0, \quad & i \neq j.
\end{cases}
\]
Therefore, we have
\[  \|D^2W_{\rm rs} \|_{L^{\infty}} < \infty.\]
\end{proof}
\begin{remark}\label{NR5.1}
	    The construction in Lemma \ref{L5.1} can also be applied to the general case $d\ge2$. 
\end{remark}

\subsection{Explicit example for optimal decay} \label{sec:5.2}
In this subsection, we construct a concrete example to show that the polynomial decay rate in  Theorem \ref{T3.3} (ii) is in fact optimal. We first decompose the index set ${\mathbb Z}$ as follows:
\[ {\mathbb Z} = \{(n, +):~n \in \bbn \} \cup \{0 \}  \cup \{ (n,-):~ n \in \bbn \}.  \]
Then, we consider the Cauchy problem for the infinite CS model with confining potential $W_{\rm rs}$:
\begin{equation}\label{E-14}
	\begin{cases}
\displaystyle \dot x_i=v_i, \quad  t > 0, \quad i \in {\mathbb Z},  \vspace{6pt}\\
\displaystyle\dot v_i=\kappa\sum_{j \in \bbz} m_j\phi_\beta(|x_i-x_j|)(v_j-v_i)
-\sum_{j \in \bbz} m_j\nabla W_{\rm rs}(x_i-x_j),  \vspace{6pt}\\
\displaystyle  (x_i, v_i) \Big|_{t = 0} = (x_{i}^0, v^0_{i}).
	\end{cases}
\end{equation}
For a given infinite initial configuration $\{ (x^0_i, v^0_i) \}_{i \in \bbz}$, we define the empirical measure:
\[
\mu_0:=\sum_{i\in \bbz}m_i\delta_{(x_i^0,v_i^0)}.
\]
Then, it follows from Proposition \ref{P5.1} below that the Cauchy problem \eqref{E-14} admits a global solution:
\[
\{(x_i(t),v_i(t))\}_{i\in \bbz},
\]
and hence it generates a measure-valued solution to KCS model \eqref{A-4}:
\[
\mu_t:=\sum_{i\in \bbz}m_i\delta_{(x_i(t),v_i(t))}.
\]
For the centered configuration, we define the discrete energy at time $t$:
\begin{equation*}\label{E-15}
\mathscr E_\infty(t)
:=\sum_{i \in \bbz} m_i|v_i(t)|^2
+\sum_{i,j \in \bbz} m_im_jW_{\rm rs}(x_i(t)-x_j(t)), \quad t \geq 0.
\end{equation*}
\begin{proposition}[Countable dynamics]\label{P5.1}
Suppose that initial configuration satisfies 
\[ \sum_{i \in \bbz} m_i(|x_i^0|^2+|v_i^0|^2)<\infty. \]
Then, the following assertions hold. 
\vspace{0.1cm}
\begin{enumerate}
\item
Symmetric finite truncations have a subsequence converging coordinatewise and locally uniformly to a global solution to \eqref{E-14}. 
\vspace{0.2cm}
\item
For the symmetric masses $m_{n,+}=m_{n,-}$ and symmetric initial data, a solution satisfies
\begin{equation}\label{E-16}
\mathscr E_\infty(t)+\kappa\int_0^t
\sum_{i,j \in \bbz} m_im_j\phi_\beta(|x_i(s) -x_j(s)|)|v_i(s) -v_j(s)|^2\dd s
\leq\mathscr E_\infty(0),
\end{equation}
and it preserves the symmetries:
\[
x_{n,-}(t) =-x_{n,+}(t),
\quad
v_{n,-}(t)=-v_{n,+}(t),~~ n \in \bbn,
\quad
x_0=v_0=0.
\]
\end{enumerate}
\end{proposition}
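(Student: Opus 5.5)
Our plan is to construct the solution as a limit of symmetric finite truncations. For $N\in\bbn$, let $\bbz_N:=\{0\}\cup\{(n,\pm):n\leq N\}$. We solve the finite system \eqref{E-14} restricted to $\bbz_N$, with masses $m_i$ and the same initial data; it is globally well posed by Cauchy--Lipschitz, since $\phi_\beta$ is bounded Lipschitz and $\nabla W_{\rm rs}$ is globally Lipschitz by Lemma \ref{L5.1}. The truncated system is a weighted finite CS system with a pairwise even potential, so the computations of Lemma \ref{L2.3} and Remark \ref{R2.1} apply verbatim with the empirical measure $\sum_{i\in\bbz_N}m_i\delta_{(x_i,v_i)}$. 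They give the exact identity
\[
\mathscr E_N(t)+\kappa\int_0^t\sum_{i,j\in\bbz_N}m_im_j\phi_\beta(|x_i-x_j|)|v_i-v_j|^2\dd s=\mathscr E_N(0),
\]
where $\mathscr E_N$ is the truncated discrete energy. We do not need to renormalize the total mass: the antisymmetry and evenness arguments never used unit mass. Symmetry of the data, together with uniqueness for the finite ODE, preserves $x_{n,-}=-x_{n,+}$, $v_{n,-}=-v_{n,+}$ and $x_0=v_0=0$, because the reflected trajectory solves the same system. Hence the center of mass and the momentum vanish, and by \eqref{E-10} the quantity $\mathscr E_N$ controls $\sum m_i(|x_i|^2+|v_i|^2)$ (as in Lemma \ref{L2.2}). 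Also $\mathscr E_N(0)\le C\sum_i m_i(|x_i^0|^2+|v_i^0|^2)$ uniformly in $N$, using $W_{\rm rs}\le\bar c_W|z|^2$.

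From this uniform bound we get, for each fixed index $i$, $|x_i^N(t)|+|v_i^N(t)|\le C m_i^{-1/2}$ uniformly in $N$ and $t$. The right-hand side of the velocity equation is bounded by $C(|v_i|+|x_i|+\sum_j m_j(|v_j|+|x_j|))$, and by Cauchy--Schwarz with finite total mass this is bounded uniformly in $N$. So each coordinate family is equi-Lipschitz on $[0,T]$. Arzel\`a--Ascoli and a diagonal argument over $i\in\bbz$ and $T\in\bbn$ then give a subsequence converging locally uniformly, coordinatewise.

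The main obstacle is passing to the limit in the infinite sums on the right-hand side of \eqref{E-14}. We plan to use a uniform tail estimate. Since $\phi_\beta\le1$ and $|\nabla W_{\rm rs}(z)|\le C|z|$, the contribution of indices $|j|>J$ is bounded by
\[
C\sum_{|j|>J}m_j(1+|x_i|+|v_i|+|x_j|+|v_j|)\le C\Big(\sum_{|j|>J}m_j\Big)^{1/2}\big(1+\mathscr E_N(0)\big)^{1/2}m_i^{-1/2},
\]
which is small uniformly in $N$ provided $\sum_j m_j<\infty$. This summability is implicit in a probability/empirical-measure setting; we assume it, or deduce it from the weighted second moment if needed. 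The finitely many remaining terms converge by continuity. The integral form of the ODE therefore passes to the limit, which yields part (1).

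For part (2), the symmetry relations pass to the limit pointwise. For the energy inequality we use Fatou's lemma: at each time, $\mathscr E_\infty(t)\le\liminf_N\mathscr E_N(t)$ by lower semicontinuity of sums of nonnegative terms, and likewise for the nonnegative dissipation integrand. Combining this with $\mathscr E_N(0)\to\mathscr E_\infty(0)$ (dominated convergence) gives \eqref{E-16} as an inequality. Equality is lost only because we cannot control energy escaping to high indices.
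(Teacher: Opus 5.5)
Your proposal is correct and follows essentially the same route as the paper's proof in Appendix D. Both arguments use symmetric finite truncations and the exact finite energy identity to get $N$-uniform bounds $|x_i^N|+|v_i^N|\lesssim m_i^{-1/2}$; then Arzel\`a--Ascoli with diagonal extraction, uniform-in-$N$ tail bounds in terms of $\sum_{j>K}m_j$ to pass to the limit in the infinite sums, and Fatou for \eqref{E-16}, with the symmetries inherited from uniqueness of the finite ODEs. The only difference is cosmetic: the paper keeps unit mass by placing the omitted tail mass on the central particle, whereas you simply drop it. Dropping it is harmless, because the truncated total mass lies in $[m_0,1]$ with $m_0\geq 1/2$. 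Note also that the summability $\sum_i m_i=1$ is part of the setup; it cannot be deduced from the weighted second moment, as you suggest it might be.
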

\begin{proof}
Since the proof is very lengthy, we postpone it to Appendix \ref{app-D}.
\end{proof}

\noindent For a fixed \(q>1\), we first choose the total size of the exterior
shells, and we set 
\begin{equation}\label{E-17}
	0<\eta\leq \frac{1}{2\sum_{n\geq1} 1/n^{2}} = \frac{3}{\pi^2}.
\end{equation}
After the radii \(R_n\geq1\) have been chosen, we define
\begin{equation*}\label{E-18}
	m_n:=\frac{\eta}{n^2R_n^{2q}},
	\quad
	m_{n,+}=m_{n,-}:=\frac{m_n}{2},
	\quad
	m_0:=1-\sum_{n\geq1}m_n.
\end{equation*}
Since \(R_n\geq1\), we use \eqref{E-17} to see
\[
\sum_{n\geq1}m_n
\leq
\eta\sum_{n\geq1}\frac1{n^2}
\leq\frac12.
\]
Therefore, we have
\[  m_0\geq1/2. \]
We next prescribe the symmetric rotating initial configuration:
\begin{equation*}\label{E-19}
	x_{n,+}^0=R_ne_1,
	\quad
	v_{n,+}^0=\Omega R_ne_2,
	\quad
	(x_{n,-}^0,v_{n,-}^0)
	=-(x_{n,+}^0,v_{n,+}^0), \quad x_0^0=v_0^0=0.
\end{equation*}
The initial energy admits an upper bound independent of the
particular choice of the radii. Indeed, we use the upper quadratic bound:
\[
W_{\rm rs}(z)\leq \bar{c}_W |z|^2
\]
and the fact that the weighted center is zero to obtain
\begin{align*}
	\mathscr E_\infty(0)
	&\leq
	\Omega^2\sum_{n\geq1} m_n R_n^2
	+ \bar{c}_W \sum_{i,j}m_im_j|x_i^0-x_j^0|^2 =
	 \Big (\Omega^2 +2 \bar{c}_W \Big)\sum_{n\geq1}m_nR_n^2 \\
	&=
	 \Big (\Omega^2 +2 \bar{c}_W \Big )\eta
	\sum_{n\geq1}\frac{1}{n^2R_n^{2(q-1)}}
	\leq \Big( \Omega^2 +2 \bar{c}_W \Big )\eta\sum_{n\geq1}\frac1{n^2}.
\end{align*}
We therefore fix
\begin{equation*}\label{E-20}
	\overline E
	:= \Big (\Omega^2 +2 \bar{c}_W \Big )\eta\sum_{n\geq1}\frac1{n^2}.
\end{equation*}
With the above preparations, Figure~\ref{fig:1} provides a roadmap for the proof of Theorem \ref{T3.4}. We now proceed by following this roadmap.\newline 
	\begin{figure}[htbp]
	\centering
	\includegraphics[width=1\textwidth]{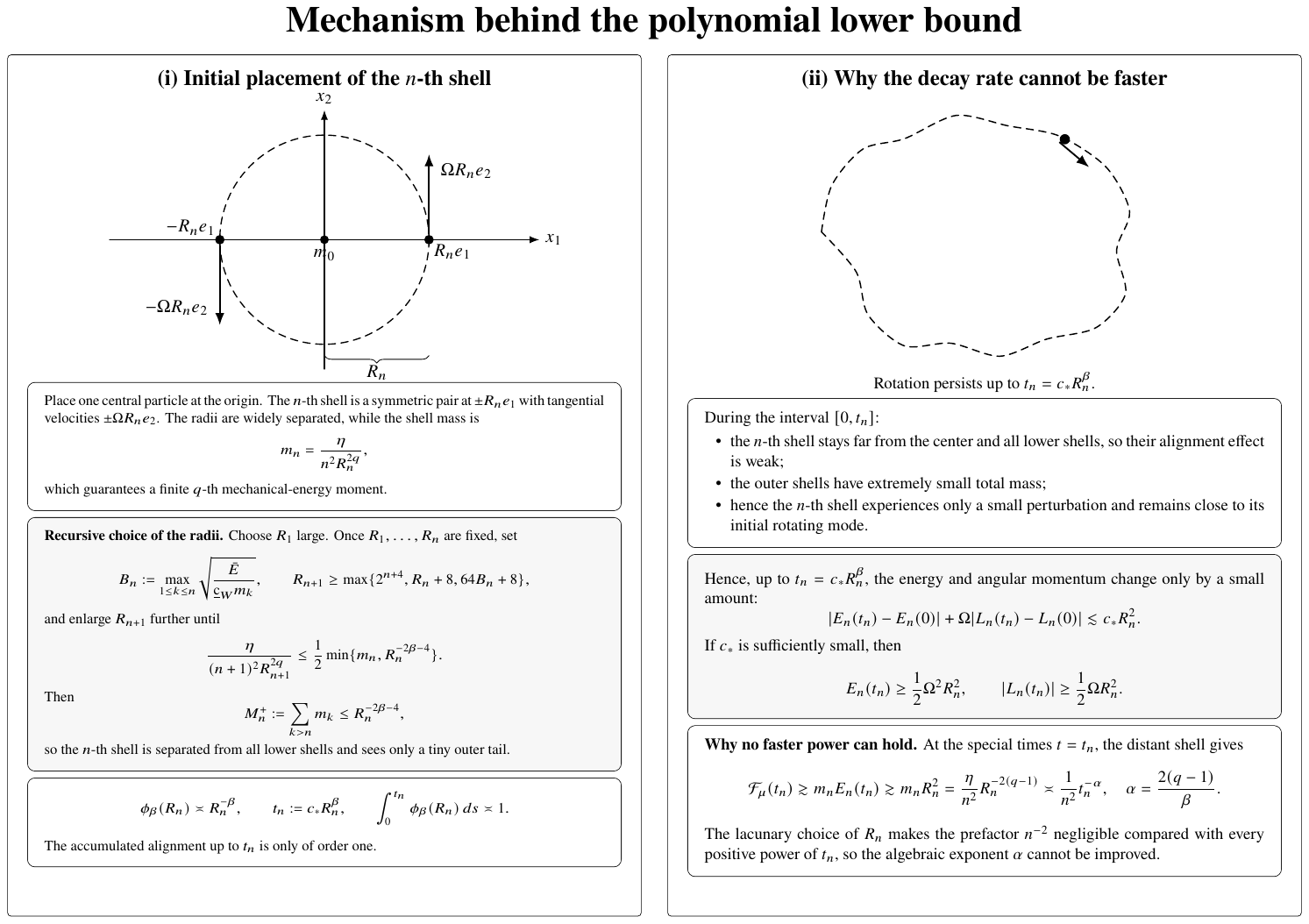}
	\caption{Mechanism behind the polynomial convergence lower bound.}
	\label{fig:1}
\end{figure}

The radii will now be chosen recursively. The following lemma records two properties needed later: separation from all lower shells and
smallness of the total mass of all higher shells.

\begin{lemma}[Recursive choice of lacunary scales]
	\label{L5.2}
	There exists an increasing sequence $(R_n)$  diverging to $\infty$ as $n \to \infty$ such that
	\begin{align}
	\begin{aligned} \label{E-21}
	&(i)~R_1\geq16, \quad
		R_n\geq2^{n+3}, \quad
		R_{n+1}\geq R_n+8. \\
	& (ii)~R_n \geq
		64\max_{1\leq k<n}
		\sqrt{\frac{\overline E}{\underbar{c}_W m_k}}
		+8, \quad n \geq 2. \\
      & (iii)~M_n^+ :=
		\sum_{k>n}m_k
		\leq
		R_n^{-2\beta-4},
		\quad n \geq 1. \\
     & (iv)~|R_n-R_m| \geq 8, \quad n \neq m.		
	\end{aligned}
	\end{align}
\end{lemma}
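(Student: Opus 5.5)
The plan is a recursive construction. We choose $R_1,R_2,\dots$ one at a time, and at step $n$ we impose finitely many lower bounds on $R_n$. The only subtle condition is (iii): it involves the tail masses $m_k$ for $k>n$, and these are not yet defined when $R_n$ is chosen. We handle it by reversing the direction of the constraint, as follows.

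Recall that $m_k=\eta/(k^2R_k^{2q})$ and that $\overline E$ does not depend on the radii. Hence condition (ii) for index $n$ involves only $m_1,\dots,m_{n-1}$, which are fixed once $R_1,\dots,R_{n-1}$ are fixed. We therefore set $R_1:=16$. For $n\geq 2$ we define
\[
R_n:=\max\Big\{2^{n+3},\ R_{n-1}+8,\ 64\max_{1\leq k<n}\sqrt{\tfrac{\overline E}{\underbar{c}_W m_k}}+8,\ R_{n-1}^{(2\beta+4)/(2q)}\Big\}.
\]
The last entry is the one that will secure (iii). With this definition, (i) and (ii) hold by construction. Since $R_{n+1}\geq R_n+8$, the sequence is strictly increasing and diverges. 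Condition (iv) then follows from monotonicity: for $n>m$ we have $R_n-R_m\geq R_{m+1}-R_m\geq 8$.

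For (iii), the requirement $R_{k}^{2q}\geq R_{k-1}^{2\beta+4}$ for $k\geq2$ gives
\[
m_k=\frac{\eta}{k^2R_k^{2q}}\leq \frac{\eta}{k^2}R_{k-1}^{-(2\beta+4)}\leq \frac{\eta}{k^2}R_n^{-(2\beta+4)}\qquad\text{for } k>n,
\]
where the last step uses that $R_{k-1}\geq R_n$ when $k-1\geq n$. Summing over $k>n$ yields
\[
M_n^+\leq \eta\sum_{k>n}\frac1{k^2}\,R_n^{-2\beta-4}\leq \tfrac12 R_n^{-2\beta-4},
\]
by \eqref{E-17}, and this gives (iii).

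The main obstacle is exactly this apparent circularity in (iii): $M_n^+$ depends on radii chosen after $R_n$. The resolution is that each later radius needs only a polynomial lower bound in terms of its predecessor, so the recursion stays consistent. All lower bounds imposed at step $n$ are finite, since every earlier $m_k$ is positive. Therefore the recursion is well defined, and the lemma follows.
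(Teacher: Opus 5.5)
Your proof is correct and uses the same recursive strategy as the paper: the apparent circularity in (iii) is removed by imposing on each new radius a lower bound that depends only on its predecessor. The difference lies in how the tail is summed. The paper requires $m_{n+1}\le\frac12\min\{m_n,R_n^{-2\beta-4}\}$ and sums a geometric series to get $M_n^+\le 2m_{n+1}$. You instead enforce $R_n^{2q}\ge R_{n-1}^{2\beta+4}$ and combine the monotonicity of $(R_k)$ with $\eta\sum_k k^{-2}\le\frac12$ from \eqref{E-17}. This gives an explicit recursion and the slightly sharper bound $\frac12R_n^{-2\beta-4}$. Deriving (iv) directly from $R_{n+1}\ge R_n+8$ is also cleaner than the paper's Step B.
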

\begin{proof}
We first construct a required sequence $(R_n)$ satisfying the first three conditions \eqref{E-21} inductively, and then we show the constructed sequence $(R_n)$ satisfies the last condition in \eqref{E-21}. 
\vspace{0.1cm}

\noindent $\bullet$~Step A (Construction of $(R_n)$ satisfying the first three conditions \eqref{E-21}): Below, we construct the desired sequence $(R_n)$ satisfying the first three conditions in \eqref{E-21} inductively. \newline

\noindent $\diamond$~Step A.1 (Initial step):~We choose $R_1$ such that 
\[  R_1\geq16. \]

\noindent $\diamond$~Step A.2 (Inductive step):~Suppose that a finite sequence $(R_1,\ldots,R_n)$  has already been chosen to satisfy a set of the first two conditions in \eqref{E-21}.  At this stage, all quantities
	\[
	m_1,\ldots,m_n~\mbox{are fixed}.
	\]
Now, we introduce 	
	\begin{equation}\label{E-22}
		B_n
		:=
		\max_{1\leq k\leq n}
		\sqrt{\frac{\overline E}{ \underbar{c}_W m_k}}
		\quad \mbox{and} \quad 
		a_n
		:=
		\frac12\min\left\{
		m_n,\,
		R_n^{-2\beta-4}
		\right\}.
	\end{equation}
Next, we choose \(R_{n+1}\) sufficiently large so that
	\begin{equation}\label{E-23}
		R_{n+1}
		\geq
		\max\left\{
		2^{n+4},\,
		R_n+8,\,
		64B_n+8
		\right\} \quad \mbox{and} \quad m_{n+1}
		:=
		\frac{\eta}{(n+1)^2R_{n+1}^{2q}}
		\leq a_n.
	\end{equation}
Such a choice is always possible, because all quantities on the
	right-hand sides of
	\eqref{E-22}--\eqref{E-23}
	are already fixed, whereas we have
	\[
	 \lim_{R \to \infty} \frac{\eta}{(n+1)^2R^{2q}} = 0.
	\]
	The conditions (i) and (ii) in 
	\eqref{E-21} follow directly from
	\eqref{E-23}. Moreover, the relations 
	\eqref{E-23} imply
	\begin{equation}\label{E-24}
		m_{n+1}\leq\frac12m_n \quad \mbox{and} \quad m_{n+1}
		\leq
		\frac12R_n^{-2\beta-4}.
	\end{equation}
	Applying \eqref{E-24} repeatedly, we find
	\[
	m_{n+j}\leq2^{-(j-1)}m_{n+1},
	\qquad j\geq1.
	\]
	Consequently, we have $\eqref{E-21}_3$:
	\begin{align*}
		M_n^+
		=
		\sum_{j\geq1}m_{n+j}
		&\leq
		m_{n+1}\sum_{j\geq1}2^{-(j-1)}
		=
		2m_{n+1} \leq
		R_n^{-2\beta-4}.
	\end{align*}
\noindent $\bullet$~Step B (the sequence $(R_n)$ satisfies separation property $\eqref{E-21}_4$):~The center is at
	distance \(R_n\geq16\) from the \(n\)-th shell, and two particles
	on the same shell are separated by \(2R_n\geq32\). \newline
	
\noindent If \(k<n\), then
	particles on the same side satisfy
	\[
	|R_n-R_k|\geq8,
	\]
	while particles on opposite sides satisfy
	\[
	R_n+R_k\geq32.
	\]
	Thus every two distinct initial particles are separated by at least
	\(8\).
\end{proof}
\vspace{0.2cm}

\noindent 
\textbf{Proof of Theorem \ref{T3.4} (1).} Now, we set 
	\[
	M_{2,0}:=\sum_i m_i|x_i^0|^2.
	\]
	By the definition of the mass $(m_n)$ in $\eqref{E-23}_2$,
	\[
	M_{2,0}
	=
	\sum_{n\geq1}m_nR_n^2
	=
	\eta\sum_{n\geq1}
	\frac{1}{n^2R_n^{2(q-1)}}
	\leq
	\eta\sum_{n\geq1}\frac1{n^2}<\infty.
	\]
	Since the weighted spatial center is zero, we have
	\[
	\sum_jm_j|x_i^0-x_j^0|^2
	=
	|x_i^0|^2+M_{2,0}.
	\]
	Using the upper quadratic bound for \(W_{\rm rs}\), we obtain
	\[
	h_{\mu_0}(x_i^0,v_i^0)
	\leq
	\frac12|v_i^0|^2
	+ \bar{c}_W \bigl(|x_i^0|^2+M_{2,0}\bigr).
	\]
	For the \(n\)-th symmetric pair, we have
	\[
	1+h_{\mu_0}(x_{n,\sigma}^0,v_{n,\sigma}^0)
	\leq C(1+R_n^2).
	\]
	Consequently, we use \eqref{E-23} to get
\begin{align*}
\int_{\bbr^{2d}}(1+h_{\mu_0}(z))^q\dd\mu_0(z)
&\leq C+C\sum_{n\geq1}m_n(1+R_n^2)^q\\
&\leq C+C\sum_{n\geq1}m_nR_n^{2q}=C+C\eta\sum_{n\geq1}\frac1{n^2}<\infty.
\end{align*}
This completes the proof of Theorem \ref{T3.4} (1).

\subsection{Proof of Theorem \ref{T3.4}} \label{sec:5.3}
In this subsection, we provide a proof of second and third assertions in Theorem \ref{T3.4}. \newline 

 For each \(n\geq1\), we set
\[
y_n(t):=x_{n,+}(t),
\quad
z_n(t):=v_{n,+}(t),
\]
and define the oscillator energy and angular momentum:
\begin{equation*}\label{E-25}
	E_n(t)
	:=
	\frac12\Bigl(|z_n(t)|^2+\Omega^2|y_n(t)|^2\Bigr),
	\quad
	L_n(t)
	:=
	y_n(t)\wedge z_n(t).
\end{equation*}
Here, for \(a=(a_1,a_2),b=(b_1,b_2)\in\mathbb R^2\),
\[
a\wedge b:=a_1b_2-a_2b_1.
\]
\begin{proposition}[Persistence of the rotating modes]
	\label{P5.2}
	For \(\beta>0\), let $(R_n)$ be an increasing sequence whose existence is guaranteed by Lemma~\ref{L5.2}.  Then there exists a constant \(c_*>0\) independent of \(n\) such
	that
	\begin{equation}\label{E-26}
		t_n:=c_*R_n^\beta, \quad	E_n(t)\geq\frac12\Omega^2R_n^2,
		\quad
		|L_n(t)|\geq\frac12\Omega R_n^2,
		\quad
		0\leq t\leq t_n.
	\end{equation}
\end{proposition}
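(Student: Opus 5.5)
The plan is a bootstrap (continuity) argument on each rotating mode separately. The harmonic part of $W_{\rm rs}$ drives exact uniform rotation, and every other force acting on $y_n$ is a perturbation of size $O(R_n^{1-\beta})$.

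\textbf{Step 1 (reduction to a forced oscillator).} Write $\nabla W_{\rm rs}(w)=\Omega^2 w+\nabla U(w)$. By Proposition~\ref{P5.1}, the symmetries $x_{n,-}=-x_{n,+}$, $v_{n,-}=-v_{n,+}$, $x_0=v_0=0$ are preserved, so the weighted center vanishes, $\sum_j m_jx_j(t)=0$. The harmonic part of the interaction therefore collapses to $\Omega^2 y_n$, and
\[
\dot y_n=z_n,\qquad \dot z_n=-\Omega^2y_n+P_n,
\]
\[
P_n:=\kappa\sum_j m_j\phi_\beta(|y_n-x_j|)(v_j-z_n)-\sum_j m_j\nabla U(y_n-x_j).
\]
Hence $\dot E_n=z_n\cdot P_n$ and $\dot L_n=y_n\wedge P_n$. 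At $t=0$ we have $E_n(0)=\Omega^2R_n^2$ and $L_n(0)=\Omega R_n^2$.

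\textbf{Step 2 (bootstrap hypothesis and a priori geometry).} Let $T_n$ be the supremum of times $\tau\le t_n$ such that, on $[0,\tau]$,
\[
\tfrac12\Omega^2R_n^2\le E_n\le 2\Omega^2R_n^2,\qquad |L_n|\ge\tfrac12\Omega R_n^2 .
\]
On $[0,T_n]$ the upper bound gives $|z_n|\le 2\Omega R_n$ and $|y_n|\le 2R_n$. The inequality $|L_n|\le |y_n||z_n|$ then forces $|y_n|\ge R_n/4$.

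The energy inequality \eqref{E-16} gives control of the other particles. Since $W_{\rm rs}\ge\underbar c_W|\cdot|^2$, $m_0\ge\tfrac12$ and $x_0=0$, we get $\mathscr E_\infty(t)\le\overline E$. This yields:
\begin{itemize}
\item $\sum_jm_j|v_j|\le\overline E^{1/2}$ (by Cauchy--Schwarz);
\item $|x_k(t)|\le \sqrt{\overline E/(\underbar c_W m_k)}$ for every $k$.
\end{itemize}
By Lemma~\ref{L5.2}(ii), this is at most $R_n/64$ for every lower shell $k<n$.

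\textbf{Step 3 (estimate of $P_n$).} I split the sum over $j$ into four groups.
\begin{itemize}
\item The center, all lower shells $k<n$, and the partner $(n,-)$ all lie at distance $\ge R_n/4-R_n/64\gtrsim R_n$ from $y_n$. For these, $\phi_\beta\lesssim R_n^{-\beta}$, so their alignment contribution is bounded by $\kappa C R_n^{-\beta}\bigl(\overline E^{1/2}+|z_n|\bigr)\lesssim R_n^{1-\beta}$.
\item These same particles are at distance $>2$, so $\nabla U$ vanishes on them.
\item The higher shells $k>n$ have total mass $M_n^+\le R_n^{-2\beta-4}$ by Lemma~\ref{L5.2}(iii). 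Using $\phi_\beta\le1$ and Cauchy--Schwarz, their alignment contribution is at most $\kappa\bigl(M_n^+|z_n|+(M_n^+\overline E)^{1/2}\bigr)\lesssim R_n^{-\beta-1}$.
\item The $U$-force of the higher shells is at most $\|\nabla U\|_\infty M_n^+$.
\end{itemize}
Altogether $|P_n|\le C_0R_n^{1-\beta}$ on $[0,T_n]$, with $C_0$ independent of $n$ (using $R_n\ge16$).

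\textbf{Step 4 (closing).} On $[0,T_n]$, Step 3 gives
\[
|\dot E_n|\le 2\Omega C_0R_n^{2-\beta},\qquad |\dot L_n|\le 2C_0R_n^{2-\beta}.
\]
Integrating up to $t_n=c_*R_n^\beta$, the total changes are at most $2C_0c_*\max\{\Omega,1\}R_n^2$. Choose
\[
c_*<\min\{\Omega^2,\Omega\}\big/\bigl(8C_0\max\{\Omega,1\}\bigr).
\]
Then the strict inequalities $\tfrac34\Omega^2R_n^2<E_n<\tfrac54\Omega^2R_n^2$ and $|L_n|>\tfrac34\Omega R_n^2$ hold on $[0,T_n]$. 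By continuity of the trajectory, $T_n=t_n$, which gives \eqref{E-26}.

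\textbf{Main obstacle.} I expect Step 3 to be the delicate point: obtaining an $n$-uniform bound on $P_n$. This depends on two facts. First, the energy bound places every lower shell deep inside the ball of radius $R_n/64$, which is exactly what Lemma~\ref{L5.2}(ii) is designed for; without it, a wandering lower particle could sit close to $y_n$ with strong communication. Second, the higher shells, which may come arbitrarily close or carry large velocities, must be neutralized purely by their tiny mass $M_n^+$. Turning $|L_n|$ into the lower bound $|y_n|\ge R_n/4$, which keeps $y_n$ away from the center, is the other essential ingredient.
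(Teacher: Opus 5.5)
Your proposal is correct and follows the paper's proof (Appendix~\ref{app-E}) essentially step for step. It uses the same bootstrap on $\frac12\Omega^2R_n^2\le E_n\le 2\Omega^2R_n^2$, $|L_n|\ge\frac12\Omega R_n^2$; the annular bounds $R_n/4\le|y_n|\le 2R_n$ obtained from $|L_n|\le|y_n||z_n|$; separation from the center, the partner and the lower shells via the energy bound and Lemma~\ref{L5.2}(ii); an $O(R_n^{1-\beta})$ bound on the non-harmonic forcing, with the higher shells controlled only through $M_n^+\le R_n^{-2\beta-4}$; and integration over a window of length $c_*R_n^\beta$ to strictly improve the bootstrap. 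One cosmetic point: bounding $|x_{k,\sigma}|$ only through the pair interaction with the central mass $m_0\ge\frac12$ costs a harmless factor $\sqrt2$, while the center-of-mass identity $\sum_{i,j}m_im_j|x_i-x_j|^2=2\sum_i m_i|x_i|^2$ gives your stated bound $\sqrt{\overline E/(\underbar c_W m_k)}$ exactly.
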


\begin{proof}
	Throughout the proof, the constant \(C>0\) denotes a generic constant depending only on
	the fixed parameters of the system, but not on \(n\). We split the proof into five steps.
	\begin{itemize}
		\item \textbf{Step A} (Bootstrap interval and annular bounds):~
		The lower bound on the angular momentum and the upper bound on the
		oscillator energy imply that both the position and velocity of the
		\(n\)-th mode remain in an annular region of size \(R_n\).
		
		\vspace{0.2cm}
		
		\item \textbf{Step B} (Separation from the center and all lower modes):~
		The gap of the radii, together with the global energy bound,
		separates the \(n\)-th mode from the center, its symmetric partner,
		and all lower modes. Consequently, the compactly supported
		perturbation \(U\) vanishes on all these interactions.
		
		\vspace{0.2cm}
		
		\item \textbf{Step C} (Equation for the \(n\)-th mode):
		Using conservation of total mass and of the weighted center, the
		harmonic part of the interaction produces the exact restoring force
		$
		-\Omega^2 y_n.$
		Thus the dynamics of the \(n\)-th mode can be written as a perturbed
		harmonic oscillator.
		
		\vspace{0.2cm}
		
		\item \textbf{Step D} (Estimate of the non-harmonic acceleration):
		We estimate the non-harmonic acceleration by separately treating:
		\begin{enumerate}
			\item the alignment interaction with the center and lower modes;
			\item the alignment interaction with the higher modes;
			\item the compact perturbation generated by the higher modes.
		\end{enumerate}
		
		\vspace{0.2cm}
		
		\item \textbf{Step E} (Variation of energy and angular momentum):
		The above bound implies that the oscillator energy and angular
		momentum vary by at most \(\mathcal{O}(R_n^2)\) on a time interval of length
		\(\mathcal{O}(R_n^\beta)\). Choosing the proportionality constant sufficiently
		small strictly improves the bootstrap assumptions and closes the
		argument.
	\end{itemize}
	\vspace{0.2cm} 
	The detailed arguments for each step will be given in Appendix \ref{app-E}. 	
\end{proof}
Therefore, the empirical measure generated by system \eqref{E-14} has zero first moments and remains fully noncompact at every finite time because all sufficiently large shells are still in their persistence intervals.  Moreover, we use the zero weighted center and zero momentum to find
\[
\cF_\mu(t)=2\sum_im_i\bigl(|x_i(t)|^2+  |v_i(t)|^2\bigr).
\]
At $t_n=c_*R_n^\beta$, we use Proposition~\ref{P5.2} and $\eqref{E-23}_2$ to obtain the second desired estimate \eqref{C-22}:
\[
\cF_\mu(t_n)\geq cm_n E_n(t_n)\geq cm_nR_n^2
=\frac{c\eta}{n^2}R_n^{-2(q-1)}
=\frac{c'}{n^2}t_n^{-2(q-1)/\beta}.
\]
Since $R_n\geq2^{n+3}$, multiplication by $t_n^\delta$ proves the third desired estimate \eqref{C-23}. \newline

For every $d>2$, the same construction is embedded in a fixed two-dimensional coordinate plane, with the radial potential defined on $\bbr^d$. This verifies the optimality assertion for all $d\geq2$.

\section{Conclusion}\label{sec:6}
In this paper, we have studied the KCS model with an attractive nonconvex potential in a phase-spatially extended setting. Since the spatial and velocity diameters may be infinite, classical spatial and velocity support-based arguments are no longer applicable. To overcome this difficulty, we have combined the method of time-varying effective region with a compensated Lyapunov functional. We first established the global existence of Lagrangian weak solutions for centered initial data with finite second moments. Under an exponential mechanical-energy tail, we further obtained an Osgood-type stability estimate in $\mathcal{W}_1$ and hence uniqueness. For a long-time behavior, we have considered initial data with polynomial and exponential mechanical-energy tails. In the polynomial case, we derived the optimal algebraic decay exponent and proved its sharpness by a countably infinite rotating-shell construction. In the exponential case, a two-stage localization argument yielded an optimal exponential decay scale. Our analysis shows that uniform convexity is not essential. Quadratic confinement, a bounded Hessian, and a strict virial inequality are sufficient even when the Hessian changes sign. In particular, our results can also be applied to the hydrodynamic CS model in \cite{ShuTadmor2020,C-C-K-T-2025},  although the global existence of a classical solution in the general high-dimensional hydrodynamic CS model is still open (see \cite{Tadmor2021}). Of course, there are several remaining interesting problems, e.g.,  uniqueness of a weak solution under polynomial tails, and extensions to stochastic and relativistic models. These questions will be left for future work. 
\vspace{.3cm}

\section*{Conflict of interest statement}
The authors declare no conflicts of interest.

\section*{Data availability statement}
The data supporting the findings of this study are available from the corresponding author upon reasonable request.

\section*{Ethical statement}
The authors declare that this manuscript is original, has not been published before, and is not currently being considered for publication elsewhere. The study was conducted by the principles of academic integrity and ethical research practices. All sources and contributions from others have been properly acknowledged and cited. The authors confirm that there is no fabrication, falsification, plagiarism, or inappropriate manipulation of data in the manuscript.
\vspace{1cm}

\appendix
\section{Truncation and justification of the identities}
\label{app-A}
In this appendix, we justify moment identities, energy
dissipation law, and differentiated compensated functional used in
the main text.  Since the solutions under consideration need not be
compactly supported, the relevant identities are first established with
compactly supported test functions and we recover the desired estimates by a
truncation argument as in Lemma 3.1 of \cite{H-W-CS-2026}.  Throughout this appendix, we write
\[
z=(x,v)\in\bbr^{2d},
\qquad
\langle z\rangle^2:=1+|x|^2+|v|^2.\]
Let $\zeta\in \mathcal C_c^\infty(\bbr^{2d})$ satisfy
\[
0\leq\zeta\leq1,
\quad
\zeta\equiv1\ \hbox{on }B_1,
\quad
\zeta\equiv0\ \hbox{outside }B_2,
\]
and define
\[
\zeta_R(z):=\zeta(z/R).
\]
Then, we have
\[
|\nabla\zeta_R(z)|
\leq \frac{C}{R}
\mathbf 1_{{R\leq|z|\leq2R}}.
\]
Let $P=P(x,v)$ be one of
\[
1,\quad x,\quad v,\quad
|x|^2,\quad |v|^2,\quad x\cdot v.
\]
The vector-valued observables $x$ and $v$ are tested componentwise. Testing the weak formulation of the KCS model with
\[
\psi_R(t,z)=\eta(t)P(z)\zeta_R(z),
\qquad
\eta\in {\mathcal C}_c^1((0,T)),
\]
produces the expected identity for $P$ together with cutoff errors.
Using the finite-time second-moment bound and the linear-growth
estimate for the force, the cutoff errors are bounded by
\begin{equation}\label{A.1}
	C_T
	\int_0^T
	\int_{{R\leq|z|\leq2R}}
	(1+|z|^2)f_t(z)\dd z\dd t.
\end{equation}
Note that the quantity in
\eqref{A.1} tends to zero as $R\to\infty$.
Consequently, all moment identities obtained from the above polynomial
test functions are valid in the sense of distributions in time.  The
corresponding moment functions are locally absolutely continuous, and
their differentiated identities hold for a.e. $t>0$. Therefore, all time derivatives of  moments,
energies, and compensated functionals are understood in the following
sense. 
\begin{itemize}
	\item The corresponding identities are first derived using compactly
	supported truncations of the relevant observables.
	\item The cutoff is then
	removed by means of the propagated uniform integrability of the second
	moments and the linear-growth bounds on the force.
	\item Then, we show that the resulting moment and energy functionals are locally absolutely continuous on $[0,\infty)$.
	\item  Moreover, the differential identities
	therefore hold for almost every time, while the corresponding
	integrated identities hold for every \(0\leq s\leq t\).
	\item Furthermore, the derivatives
	along characteristics are likewise understood as almost-everywhere
	derivatives of locally absolutely continuous characteristic
	quantities.
\end{itemize}   

\vspace{0.5cm}

\section{Proofs of  Gr\"onwall-type lemmas}\label{app-B}
In this appendix, we provide proofs of Gr\"onwall-type lemmas in Section \ref{sec:2.3}.

\subsection{Proof of Lemma \ref{L2.5}}   \label{app-B-1}
Let \(y :[t_0,\infty)\to \bbr_+\) be a locally absolutely continuous function such that 
\[
y'(t)\leq-c(1+t)^{-\alpha} y(t)+C(1+t)^{-\alpha-\lambda}, \quad  \mbox{a.e.}~t > t_0,
\]
where parameters are constants satisfying 
\[  c > 0, \quad  C > 0,  \quad 0 \leq \alpha < 1, \quad \lambda > 0. \]
We claim that there exist positive constants ${\tilde C} = {\tilde C}(c, C, y(t_0) )$ and $t_0 = t_0(c, \alpha, \lambda)$ such that 
\begin{equation} \label{B.1}
y(t)\leq {\tilde C} (1+t)^{-\lambda}, \quad t \geq t_0.
\end{equation}
{\it Derivation of \eqref{B.1}}:~We set 
\[ {\mathfrak y}(t)= {\tilde C}(1+t)^{-\lambda}, \]
where ${\tilde C}$ is a positive constant to be determined later.  Then $\mathfrak{y}$ satisfies
\begin{equation} \label{B.2}
{\mathfrak y}'+c(1+t)^{-\alpha} {\mathfrak y} -C(1+t)^{-\alpha-\lambda}
=(1+t)^{-\alpha-\lambda} \Big (c{\tilde C}-C-{\tilde C} \lambda (1+t)^{\alpha-1} \Big).
\end{equation}
Now, we choose constants ${\tilde C} \gg 1$ and $t_0 \gg 1$ to satisfy 
\begin{equation} \label{B.3}
\frac{c{\tilde C}}{2} > C, \quad {\mathfrak y}(t_0)\geq y(t_0), \quad \mbox{and} \quad   \lambda (1+t)^{\alpha-1}\leq \frac{c}{2}, \qquad \forall~t \geq t_0,
\end{equation}
where we used the relations:
\[  \alpha<1 \quad \mbox{and} \quad  \lim_{t \to \infty} (1+t)^{\alpha-1} = 0. \]
Then, it follows from \eqref{B.2} and \eqref{B.3} that
\[ {\mathfrak y}'+c(1+t)^{-\alpha} {\mathfrak y} -C(1+t)^{-\alpha-\lambda} \geq 0, \]
i.e., ${\mathfrak y}$ is a super-solution. Finally, we use the comparison principle to find 
\[ y(t)\leq \mathfrak{y}(t) = {\tilde C}(1+t)^{-\lambda}, \quad \mbox{for $t\geq t_0$}. \]
This completes the proof.

\subsection{Proof of Lemma \ref{L2.6}}   \label{app-B-2}
Let \(y :[t_0,\infty)\to \bbr_+\) be a locally absolutely continuous function such that 
\begin{equation} \label{B.4}
y'(t) + c_0(1+t)^{-\alpha} y(t) \leq C_0(1+t)^{-\alpha}e^{-\sigma(1+t)^\lambda}, \quad t > t_0,
\end{equation}
where parameters are constants satisfying 
\[  c_0 > 0, \quad  C_0 > 0,  \quad \sigma > 0,  \quad 0 \leq \alpha < 1, \quad \lambda > 0. \]
We claim that 
\begin{equation}\label{B.5}
y(t)\leq {\tilde C}_0 \exp\left[-{\tilde c}_0 (1+t)^{\min\{1-\alpha,\lambda \}}\right], \quad t \geq t_0,
\end{equation}
where ${\tilde c}_0$ and ${\tilde C}_0$ are positive constants to be determined to satisfy the relations:
\begin{equation} \label{B.6}
\frac{c_0 {\tilde C}_0}{2} > C_0 \quad \mbox{and} \quad  0 < {\tilde c}_0 < \sigma.
\end{equation}
\noindent {\it Derivation of \eqref{B.5}}:~We set 
\[ \omega:=\min\{1-\alpha,\lambda \} \]
and we choose a sufficiently small $\delta>0$ such that 
\[ \delta<  \sigma \quad \mbox{and} \quad  \delta \omega< \frac{c_0}{2}. \]
We set
\begin{equation} \label{B.7}
{\mathfrak y}(t)= {\tilde C}_0 e^{-\delta(1+t)^\omega} = {\tilde C}_0 e^{-\delta(1+t)^{ \min\{1-\alpha,\lambda \}}}.
\end{equation}
Then, it is easy to see that 
\begin{equation} \label{B.8}
{\mathfrak y}^{\prime}(t) = - \delta \omega(1 + t)^{\omega-1}  {\tilde C}_0   e^{-\delta(1+t)^\omega} = - \delta \omega(1 + t)^{\omega-1} {\mathfrak y}(t).
\end{equation}
We use $\omega\leq1-\alpha$ and \eqref{B.8} to get 
\begin{equation} \label{B.9}
| {\mathfrak y}'(t)|\leq \delta\omega(1+t)^{-\alpha} {\mathfrak y}(t).
\end{equation}
Next, we use \eqref{B.8} and \eqref{B.9} to find 
\begin{align}
\begin{aligned} \label{B.10}
{\mathfrak y}'(t)+ c_0(1+t)^{-\alpha} {\mathfrak y}(t) &\geq (c_0 -\delta\omega) (1+t)^{-\alpha} {\mathfrak y}(t) \\
&  \geq \frac{c_0}{2}(1+t)^{-\alpha} {\mathfrak y}(t) = \frac{c_0}{2} {\tilde C}_0 (1+t)^{-\alpha}  e^{-\delta(1+t)^\omega}, \quad t \gg 1.
\end{aligned}
\end{align}
On the other hand, since $\omega\leq \lambda~\mbox{and} ~\delta< \sigma,$ we have
\begin{equation} \label{B.11}
e^{-\sigma(1+t)^\lambda}\leq e^{-\delta(1+t)^\omega}, \quad t \gg 1.
\end{equation}
Now, we use \eqref{B.4}, \eqref{B.6}, \eqref{B.7}, \eqref{B.10} and \eqref{B.11} to obtain  
\begin{align}
\begin{aligned} \label{B.12}
& y'(t) + c_0(1+t)^{-\alpha} y(t)  \\
& \hspace{0.2cm} \leq C_0(1+t)^{-\alpha}e^{-\sigma(1+t)^\lambda} \leq  C_0(1+t)^{-\alpha} e^{-\delta(1+t)^\omega} \leq  \frac{c_0}{2} {\tilde C}_0 (1+t)^{-\alpha}  e^{-\delta(1+t)^\omega} \\
& \hspace{0.2cm} \leq {\mathfrak y}'(t)+ c_0(1+t)^{-\alpha} {\mathfrak y}(t), \quad t \gg 1.
\end{aligned}
\end{align}
On the other hand, we choose ${\tilde C}_0$ sufficiently large and set $\delta$  such that 
\begin{equation} \label{B.13}
{\mathfrak y}(t_0) = {\tilde C}_0 e^{-\delta(1+t_0)^{ \min\{1-\alpha,\lambda \}}}  \geq y(t_0), \quad \mbox{and} \quad \delta = {\tilde c}_0.
\end{equation}
Then, it follows from \eqref{B.12} and \eqref{B.13} that 
\[ y(t) \leq {\mathfrak y}(t) = {\tilde C}_0 e^{-\delta(1+t)^{ \min\{1-\alpha,\lambda \}}}  =  {\tilde C}_0 e^{-{\tilde c}_0 (1+t)^{ \min\{1-\alpha,\lambda \}}}, \quad t \gg 1. \]
This completes the proof.
\subsection{Proof of Lemma \ref{L2.7}} \label{app-B-3}
Let \(y :[t_0,\infty)\to \bbr_+\) be a locally absolutely continuous function such that 
\begin{equation}\label{B.14}
y'(t) + \frac{c_1}{{1+t}} y(t) \leq \frac{C_1}{1+t} g(t), \quad t > t_0.
\end{equation}
We multiply \eqref{B.14} by an integrating factor $(1+t)^{c_1}$ to get 
\[
\frac{\dd}{\dd t}\bigl((1+t)^{c_1} y(t)\bigr)
\leq C_1(1+t)^{c_1-1}g(t).
\]
We integrate the above relation from $t_0$ to $t$ to obtain
\begin{align}
\begin{aligned} \label{B.15}
y(t) &\leq  y(t_0) \Big( \frac{1 + t_0}{1 + t} \Big)^{c_1}  + C_1  (1 + t)^{-c_1} \int_{t_0}^{t} (1 + s)^{c_1 - 1} g(s) \dd s \\
& =: y(t_0)\Big( \frac{1 + t_0}{1 + t} \Big)^{c_1}  + C_1  (1 + t)^{-c_1} {\mathcal J}(t), \quad t \geq t_0.
\end{aligned}
\end{align}
Next, we consider two cases to estimate ${\mathcal J}(t)$.  \newline

\noindent $\bullet$~Case 1: Suppose that the source $g$ satisfies 
\[ g(t)\leq C_2(1+t)^{-\lambda}. \]
\noindent $\diamond$~Case 1.1 $(c_1 \neq \lambda)$: If $c_1\neq \lambda$, the integral ${\mathcal J}$ satisfies 
\begin{align}
\begin{aligned} \label{B.16}
{\mathcal J}(t) &=  \int_{t_0}^{t} (1 + s)^{c_1 - 1} g(s) \dd s \leq C_2  \int_{t_0}^{t} (1 + s)^{c_1 - \lambda - 1} \dd s \\
&= \frac{C_2}{c_1 - \lambda} \Big[  (1 + t)^{c_1 - \lambda}  - (1 + t_0)^{c_1 - \lambda}   \Big ].
\end{aligned}
\end{align}
By \eqref{B.15} and \eqref{B.16}, we have
\begin{equation*} \label{B.17}
y(t)  \leq y(t_0) \Big( \frac{1 + t_0}{1 + t} \Big)^{c_1}  +  \frac{C_1 C_2}{c_1 - \lambda} \Big[  (1 + t)^{- \lambda}  -  (1 + t_0)^{-\lambda} \Big(  \frac{1 + t_0}{1 + t} \Big)^{c_1} \Big ].
\end{equation*}
\vspace{0.2cm}
~We use 
\[ (1 + t)^{-c_1} \leq (1 + t)^{- \min\{c_1, \lambda \}}, \quad (1 + t)^{- \lambda} \leq (1 + t)^{- \min\{c_1, \lambda \}} \]
to get 
\begin{equation} \label{B.18}
y(t) \leq \Big[  y(t_0)(1 + t_0)^{c_1} +    \frac{C_1 C_2}{|c_1 - \lambda|} \Big( 1 + (1 + t_0)^{c_1 - \lambda} \Big)  \Big ] (1 + t)^{- \min\{c_1, \lambda \}}.
\end{equation}
\noindent $\diamond$~Case 1.2 $(c_1 = \lambda)$:~It follows from definition of ${\mathcal J}(t)$ and $c_1=\lambda$ that 
\begin{equation} \label{B.19}
{\mathcal J}(t) =  \int_{t_0}^{t} (1 + s)^{c_1 - 1} g(s) \dd s \leq C_2  \int_{t_0}^{t} (1 + s)^{-1} \dd s = C_2 \Big( \ln (1 + t) - \ln (1 + t_0)  \Big).
\end{equation}
By \eqref{B.15} and \eqref{B.19}, we have
\begin{align}
\begin{aligned} \label{B.20}
y(t) &\leq y(t_0) \Big( \frac{1 + t_0}{1 + t} \Big)^{c_1}  + C_1C_2  (1 + t)^{-c_1} \Big( \ln (1 + t) - \ln (1 + t_0)  \Big) \\
&\leq (1 + t)^{-c_1} \ln (2 + t) \Big[  y(t_0)(1 + t_0)^{c_1} + C_1 C_2        \Big], \quad t \geq t_0.
\end{aligned}
\end{align}
By \eqref{B.18} and \eqref{B.20}, we have
\begin{equation} \label{B.21}
y(t)\leq {\tilde C}_1
\begin{cases}
(1+t)^{-\min\{c_1, \lambda \}}, \qquad  &c_1 \neq  \lambda,\\
(1+t)^{-\lambda} \ln(2+t), \qquad  &c_1= \lambda.
\end{cases}
\end{equation}
\noindent $\bullet$~Case 2: Suppose that the source $g$ satisfies 
\[ g(t)\leq C_2e^{-ct^\lambda}. \]
In this case, the integral ${\mathcal J}$ satisfies 
\begin{equation} \label{B.22}
{\mathcal J}(t) =  \int_{t_0}^{t} (1 + s)^{c_1 - 1} g(s) \dd s \leq C_2  \int_{t_0}^{t} (1 + s)^{c_1- 1} e^{-cs^\lambda}  \dd s \leq {\tilde C}_2, \quad t \geq t_0.
\end{equation}
By \eqref{B.15} and \eqref{B.22}, we have
\begin{equation} \label{B.23}
y(t) \leq \Big[ y(t_0)(1 + t_0)^{c_1} +  C_1 \tilde{C}_2  \Big] (1 + t)^{-c_1} \leq C_\eta(1+t)^{-\eta},  \quad t \geq t_0,
\end{equation}
for every $0<\eta< c_1$. Finally, we use \eqref{B.21} and \eqref{B.23} to derive the desired estimate. This completes the proof. 
\vspace{1cm}

\section{Finite-time stability estimate}\label{app-C}
\setcounter{equation}{0}
In this appendix, we provide a proof for Theorem~\ref{T3.2} \textnormal{(ii)}.  For $T \in (0, \infty)$, let $\mu$ and $\nu$ be two Lagrangian weak solutions on $[0,T]$ and let $\pi_0\in\Pi(\mu_0,\nu_0)$ be an arbitrary coupling of initial measures $\mu_0$ and $\nu_0$. We set the bi-characteristics:
\[
Z_\mu(t,z)=(X_\mu(t,z),V_\mu(t,z)),
\quad
Z_\nu(t,\bar z)=(X_\nu(t,\bar z),V_\nu(t,\bar z)),
\]
associated with \eqref{A-4} issued from points $z, \bar z \in \bbr^{2d}$. Then, we define the Lagrangian deviation functional:
\begin{equation*}\label{C.1}
	\Delta(t):=\int_{\bbr^{4d}} \bigl(|X_\mu(t,z)-X_\nu(t,\bar z)|
	+|V_\mu(t,z)-V_\nu(t,\bar z)|\bigr)\dd\pi_0(z,\bar z).
\end{equation*}
The transported coupling belongs to $\Pi(\mu_t,\nu_t)$, and therefore
\begin{equation}\label{C.2}
	\mathcal{W}_1(\mu_t,\nu_t)\leq\Delta(t),
	\quad
	\Delta(0)=\int_{\bbr^{4d}} |z-\bar z|\dd\pi_0(z,\bar z).
\end{equation}
All derivatives below can be justified by replacing $|\xi|$ with $(|\xi|^2+\varepsilon^2)^{1/2}$ and then sending $\varepsilon\downarrow0$. \newline

\noindent First, we record uniform moment bounds. We use \eqref{B-22} (more details see Proposition \ref{P4.1}) to obtain 
\[
h_{\mu_t}(Z_\mu(t,z))\leq h_{\mu_0}(z)+C_T,
\qquad
h_{\nu_t}(Z_\nu(t,\bar z))\leq h_{\nu_0}(\bar z)+C_T.
\]
Since $h_{\lambda_t}\geq |v|^2/2$, Young's inequality implies that for some $\alpha_T>0$,
\begin{equation}\label{C.3}
	\sup_{0\leq t\leq T}
	\left(
	\int_{\bbr^{2d}} e^{\alpha_T|V_\mu(t,z)|}\dd\mu_0(z)
	+\int_{\bbr^{2d}} e^{\alpha_T|V_\nu(t,\bar z)|}\dd\nu_0(\bar z)
	\right)\leq C_{T,M}.
\end{equation}
The energy inequality and quadratic confinement also give
\begin{equation}\label{C.4}
	\sup_{0\leq t\leq T}
	\left[
	\int_{\bbr^{2d}} (|X_\mu|^2+|V_\mu|^2)\dd\mu_0
	+\int_{\bbr^{2d}} (|X_\nu|^2+|V_\nu|^2)\dd\nu_0
	\right]\leq C_{T,M}.
\end{equation}
Let $(z_*,\bar z_*)$ be an independent copy with law $\pi_0$. For the alignment force, we insert the transported coupling and split
\begin{align*}
	&\phi(|X_\mu-X_{\mu,*}|)(V_{\mu,*}-V_\mu)
	-\phi(|X_\nu-X_{\nu,*}|)(V_{\nu,*}-V_\nu)\\
	&\quad=
	\phi(|X_\mu-X_{\mu,*}|)
	\bigl[(V_{\mu,*}-V_\mu)-(V_{\nu,*}-V_\nu)\bigr]\\
	&\qquad+
	\bigl[\phi(|X_\mu-X_{\mu,*}|)-\phi(|X_\nu-X_{\nu,*}|)\bigr]
	(V_{\nu,*}-V_\nu).
\end{align*}
The first line is bounded after integration by $C\Delta(t)$. The Lipschitz continuity of $\phi$ bounds the second by
\[
C\bigl(|X_\mu-X_\nu|+|X_{\mu,*}-X_{\nu,*}|\bigr)
\bigl(|V_\nu|+|V_{\nu,*}|\bigr).
\]
After integration and symmetry,
\begin{align}\label{C.5}
	\begin{aligned}
	I_{\cA}(t)&:=\int_{\bbr^{8d}}\Big|\phi(|X_\mu-X_{\mu,*}|)(V_{\mu,*}-V_\mu)-\phi(|X_\nu-X_{\nu,*}|)(V_{\nu,*}-V_\nu)\Big|\dd\pi_0(z,\bar z)\dd\pi_0(z_*,\bar z_*) \\
	&\leq C_{T,M}\Delta(t)
	+C\int_{\bbr^{2d}} |V_\nu(t,\bar z)|
	|X_\mu(t,z)-X_\nu(t,\bar z)|\dd\pi_0.
	\end{aligned}
\end{align}
 For the potential force, the independent-copy representation and the Hessian bound give directly
\begin{align}
	\begin{aligned}
	I_W(t)&:=\int_{\bbr^{8d}}\Big|\nabla W(X_\mu-X_{\mu,*})-\nabla W(X_\nu-X_{\nu,*})\Big|\dd\pi_0(z,\bar z)\dd\pi_0(z_*,\bar z_*)\\
	&\leq  \|D^2W \|_{\infty}\int_{\bbr^{8d}}
	\bigl(|X_\mu-X_\nu|+|X_{\mu,*}-X_{\nu,*}|\bigr)
	\dd\pi_0\dd\pi_0 
	\leq2 \|D^2W \|_{\infty}\Delta(t).
	\label{C.6}
		\end{aligned}
\end{align}
Combining the characteristic equations with \eqref{C.5}--\eqref{C.6}, we obtain
\begin{equation*}\label{C.7}
	\Delta'(t)
	\leq C_{T,M}\Delta(t)
	+C\int_{\bbr^{2d}} |V_\nu|\,|X_\mu-X_\nu|\dd\pi_0, \quad \mbox{for a.e. $t\in[0,T]$.}
\end{equation*}
Fix $R\geq1$. The part with $|V_\nu|\leq R$ is at most $R\Delta(t)$. On the complement, the Cauchy--Schwarz inequality, \eqref{C.3}, and \eqref{C.4} yield
\[
\int_{\{|V_\nu|>R\}}|V_\nu|\,|X_\mu-X_\nu|\dd\pi_0 \leq C_{T,M}
\left(\int_{\{|V_\nu|>R\}}|V_\nu|^2\dd\nu_0\right)^{1/2}
\leq C_{T,M}e^{-cR}.
\]
Consequently, after changing constants, we derive 
\begin{equation}\label{C.8}
	\Delta'(t)\leq C_{T,M} \Big( (1+R)\Delta(t)+ e^{-R} \Big),
	\quad R\geq1.
\end{equation}
We first choose $R=1$ to derive a rough uniform bound 
\[ \Delta(t)\leq K_{T,M}. \]
We set 
\[ a(t)=\Delta(t)/K_{T,M}. \]
If necessary, we enlarge $K_{T,M}$ so that 
\[ a(t)\leq e^{-1}. \]
Since \eqref{C.8} holds for every $R\geq1$, we choose $R=-\log a(t)$. Then, we have
\[
a'(t)\leq C_{T,M}a(t)(1-\log a(t))
\leq-2C_{T,M}a(t)\log a(t).
\]
Equivalently,
\[
\frac{\dd}{\dd t}\log(-\log a(t))\geq-2C_{T,M}.
\]
For $a(0)>0$, we have
\[
a(t)\leq a(0)^{e^{-2C_{T,M}t}}.
\]
For $a(0)=0$, we apply the same scalar comparison with initial value $a(0)+\delta$ and let $\delta\downarrow0$. Hence in all cases
\begin{equation}\label{C.9}
	\sup_{0\leq t\leq T}\Delta(t)
	\leq C_{T,M}\Delta(0)^{e^{-2C_{T,M}T}}=:G_T(\Delta(0)).
\end{equation}
For $\Delta(0)>1$ the same estimate follows from the rough bound after increasing the constant. Taking the infimum in \eqref{C.9} over all initial couplings and using \eqref{C.2} proves \eqref{C-14}.

\vspace{1cm}
\section{Proof of Proposition \ref{P5.1}} \label{app-D}
In this appendix, we provide a detailed proof of Proposition \ref{P5.1}.\newline 

\noindent (1)~Since $W_{\rm rs}$ is even, $\nabla W_{\rm rs}$ is odd. Moreover, since the communication weight depends only on the relative distances, the finite truncated system is invariant under the reflection:
\[(x_{n,+},v_{n,+})
\longleftrightarrow
(-x_{n,-},-v_{n,-}),
\quad
(x_0,v_0)
\longmapsto
(-x_0,-v_0).\]
We truncate after the $N$-th pair and place the omitted mass at the central particle. More precisely, we set
\[
m_0^N:=m_0+\sum_{n>N}m_n,\quad
m_{n,\pm}^N:=m_{n,\pm}\quad  n \in [N],
\]
and keep the central initial state 
\[ x_0^N(0)=0, \quad v_0^N(0)=0. \]
Then the total mass is one and the weighted center and momentum are zero. The finite system is globally well posed and has zero weighted center and momentum.
Together with the symmetric masses $(m_{n,+}=m_{n,-})$, the reflected trajectory solves the same finite system with the same initial data. By the uniqueness of the finite-dimensional ODE, the truncated solution satisfies 
\[x^N_{n,-}(t)=-x^N_{n,+}(t),\quad v^N_{n,-}(t)=-v^N_{n,+}(t),\quad x^N_0(t)=v^N_0(t)=0, \quad t \geq 0.\] 
The initial kinetic and potential energies are bounded uniformly in $N$ by the assumed weighted second moment and the upper quadratic bound on $W_{\rm rs}$. Its exact energy identity therefore gives an $N$-independent bound $E_*$. 

Let \(\mathcal I_N\) denote the index set of the \(N\)-th symmetric
truncation, and let
\[
\mathscr E_N(t)
:=
\sum_{i\in\mathcal I_N} m_i^N|v_i^N(t)|^2
+
\sum_{i,j\in\mathcal I_N}
m_i^Nm_j^N
W_{\rm rs}(x_i^N(t)-x_j^N(t)).
\]
The omitted mass is placed at the central particle, so that the total
mass remains equal to one. By the assumed weighted second moment and the upper quadratic bound
\[
W_{\rm rs}(z)\leq \bar{c}_W |z|^2,
\]
the initial energies are bounded uniformly in \(N\).  Therefore, we set
\begin{equation*}
	E_*:=\sup_{N\geq1}\mathscr E_N(0)<\infty.
\end{equation*}
The exact energy identity for the finite truncated system gives
\begin{align*}
	\mathscr E_N(t)
	&+
	\kappa\int_0^t
	\sum_{i,j\in\mathcal I_N}
	m_i^Nm_j^N
	\phi_\beta(|x_i^N-x_j^N|)
	|v_i^N-v_j^N|^2\dd s
	=
	\mathscr E_N(0)
	\leq E_*.
\end{align*}
In particular, we have
\[
\mathscr E_N(t)\leq E_*
\qquad
\text{for every }N\geq1\text{ and }t\geq0.
\] Since the total mass is one and the weighted center is zero, we have
\[
\sum_{i,j  \in \mathcal{I}_N}m_i^Nm_j^N|x_i^N-x_j^N|^2
=2\sum_{i \in  \mathcal{I}_N}  m_i^N|x_i^N|^2.
\]
Since
\[
\sum_{i,j \in  \mathcal{I}_N}m_i^Nm_j^NW_{\rm rs}(x_i^N-x_j^N)
\geq \underbar{c}_W \sum_{i,j \in \mathcal{I}_N }m_i^Nm_j^N|x_i^N-x_j^N|^2
=2 \underbar{c}_W \sum_{i\in  \mathcal{I}_N} m_i^N|x_i^N|^2,
\]
for every fixed index $i$,
\begin{equation*}\label{D.1}
	|x_i^N(t)|\leq\sqrt{\frac{E_*}{2 \underbar{c}_W m_i^N}},
	\qquad
	|v_i^N(t)|\leq\sqrt{\frac{E_*}{m_i^N}}.
\end{equation*}
Moreover, $\nabla W_{\rm rs}(0)=0$ and its Hessian is bounded, so
\[
|\nabla W_{\rm rs}(x_i^N-x_j^N)|\leq  \|D^2W_{\rm rs} \|_{\infty}(|x_i^N|+|x_j^N|).
\]
Together with 
\[ \sum_{j\in  \mathcal{I}_N}m_j^N|x_j^N|+\sum_{j\in  \mathcal{I}_N}m_j^N|v_j^N|\leq C(E_*), \]
this gives the equicontinuity of every fixed coordinate on compact time intervals.  Arzel\`a--Ascoli and a diagonal extraction yield coordinatewise local uniform convergence. \newline

\noindent (2)~It remains to pass to the infinite sums. We set
\[ M_K=\sum_{j>K}m_j. \]
For fixed $i$, one has 
\begin{align*}
	\begin{aligned}
		& \sum_{j>K}m_j^N\phi_\beta(|x_i^N-x_j^N|)|v_j^N-v_i^N| \leq\bar{c}_{\phi}\bigl(M_K|v_i^N|+\sqrt{M_K}\,\sqrt{E_*}\bigr),\\
		& \sum_{j>K}m_j^N|\nabla W_{\rm rs}(x_i^N-x_j^N)| \leq  \|D^2W_{\rm rs} \|_{\infty}\bigl(M_K|x_i^N|+C\sqrt{M_K}\,\sqrt{E_*}\bigr).
	\end{aligned}
\end{align*}
Both tails vanish uniformly in $N$ on compact time intervals.  We may therefore pass to the equations coordinatewise.  Fatou's lemma and finite second moments applied to the energy and the nonnegative integrated dissipation yield \eqref{E-16}.  Symmetry and the fixed central particle follow from uniqueness of every finite symmetric truncation and passage to the limit.
\vspace{1cm}
\section{Proof of Proposition \ref{P5.2}} \label{app-E}
\setcounter{equation}{0}
In this appendix, we provide details which are sketched in the proof of Proposition \ref{P5.2}. \newline
	
\noindent $\bullet$ \textbf{Step A} (Bootstrap interval and annular bounds): Recall that
	\[
	E_n(0)=\Omega^2R_n^2,
	\qquad
	L_n(0)=\Omega R_n^2.
	\]
Consider a temporal set ${\mathcal T}_n$ and its supremum $T_n=\sup_{\tau\in \mathcal{T}_n} \tau$:
\begin{align}\label{E.1}
{\mathcal T}_n:=\Bigg\{\tau>0 \quad \Bigg|\quad 	\frac12\Omega^2R_n^2
\leq E_n(t)
\leq2\Omega^2R_n^2,
\qquad
|L_n(t)|
\geq\frac12\Omega R_n^2, \quad \forall ~ t\in[0,\tau)\Bigg\}.
\end{align}
Since the initial inequalities are strict, by continuity of $E_n$ and $L_n$
	we have 
	\[  T_n>0. \]
	On \([0,T_n)\), the upper bound for \(E_n\) gives
	\[
	|y_n(t)|\leq2R_n,
	\qquad
	|z_n(t)|\leq2\Omega R_n.
	\]
	On the other hand,
	\[
	|L_n(t)|
	=|y_n(t)\wedge z_n(t)|
	\leq |y_n(t)|\,|z_n(t)|.
	\]
	Combining this inequality with the lower bound for \(|L_n|\) and the
	preceding upper bounds, we obtain
	\[
	|y_n(t)|
	\geq
	\frac{|L_n(t)|}{|z_n(t)|}
	\geq\frac14R_n \quad 
	\mbox{and} \quad 
	|z_n(t)|
	\geq
	\frac{|L_n(t)|}{|y_n(t)|}
	\geq\frac14\Omega R_n.
	\]
	Consequently, we have
	\begin{equation}\label{E.2}
		\frac14R_n\leq |y_n(t)|\leq2R_n,
		\quad
		\frac14\Omega R_n\leq |z_n(t)|\leq2\Omega R_n,
		\quad 0\leq t<T_n.
	\end{equation}
	
	\medskip
	\noindent
$\bullet$	\textbf{Step B} (Separation from the center and all lower modes):~For every \(k<n\) and \(\sigma\in\{+,-\}\), the global energy
	inequality gives
	\[
	2\underbar{c}_Wm_{k,\sigma}|x_{k,\sigma}(t)|^2
	\leq\mathscr E_\infty(t)
	\leq\overline E.
	\]
	Since \(m_{k,\sigma}=m_k/2\), we have
	\begin{equation*}\label{E.3}
		|x_{k,\sigma}(t)|
		\leq
		\sqrt{\frac{\overline E}{\underbar{c}_Wm_k}}.
	\end{equation*}
	By the choice of the radii gap in Lemma \ref{L5.2},
	\[
	\sqrt{\frac{\overline E}{\underbar{c}_Wm_k}}
	\leq\frac{R_n-8}{64}
	\leq\frac1{64}R_n.
	\]
	Thus, we use \eqref{E.2} to get 
	\[
	|y_n(t)-x_{k,\sigma}(t)|
	\geq
	|y_n(t)|-|x_{k,\sigma}(t)|
	\geq
	\left(\frac14-\frac1{64}\right)R_n
	=\frac{15}{64}R_n.
	\]
	The distances from \(y_n\) to the center and to its symmetric partner
	are similarly bounded below:
	\[
	|y_n(t)-x_0(t)|=|y_n(t)|\geq\frac14R_n, \quad 
	|y_n(t)-x_{n,-}(t)|
	=2|y_n(t)|
	\geq\frac12R_n.
	\]
	Since \(R_1\ge16\), all these distances are
	larger than \(2\). Note that
	\[
	\operatorname{supp}U\subset{\{z\in\mathbb R^2:1<|z|<2\}},
	\]
	the perturbation \(U\) produces no force on \(y_n\) from the center,
	the symmetric partner, or any lower mode.
	
	\medskip
	\noindent
$\bullet$	\textbf{Step C} (Equation for the \(n\)-th mode):~Recall that
\[
y_n(t):=x_{n,+}(t),
\qquad
z_n(t):=v_{n,+}(t).
\]
Since
\[
W_{\rm rs}(\xi)
=
\frac{\Omega^2}{2}|\xi|^2+U(\xi),
\]
we have
\[
\nabla W_{\rm rs}(\xi)
=
\Omega^2\xi+\nabla U(\xi).
\]
Let $\mathcal I$ denote the set of all indices. Therefore, the equation for the particle \((n,+)\) can be written as
\begin{align*}
	\begin{cases}
		\displaystyle 
		\dot y_n=z_n,
		\vspace{6pt}\\
		\displaystyle
		\dot z_n
		=
		\underbrace{\kappa\sum_{j\in\mathcal I}
			m_j\phi_\beta(|y_n-x_j|)(v_j-z_n)}_{\text{Alignment}}
		-
		\underbrace{\Omega^2\sum_{j\in\mathcal I}m_j(y_n-x_j)}_{\text{Harmonic}}
		-
		\underbrace{\sum_{j\in\mathcal I}m_j\nabla U(y_n-x_j)}_{\text{Perturbation}}.
		\end{cases}
\end{align*}
$\diamond$ Case 1: We first simplify the harmonic part. Since the total mass is normalized and the weighted center of mass is zero:
\[
\sum_{j\in\mathcal I}m_j=1, \quad \sum_{j\in\mathcal I}m_jx_j(t)=0,
\]
we obtain
\begin{align*}
	-\Omega^2\sum_{j\in\mathcal I}m_j(y_n-x_j)
	&=
	-\Omega^2
	\left(
	y_n\sum_{j\in\mathcal I}m_j
	-\sum_{j\in\mathcal I}m_jx_j
	\right) =
	-\Omega^2y_n.
\end{align*}
Thus, in the absence of alignment and of the compact perturbation
\(U\), the \(n\)-th mode would solve the harmonic oscillator equation
\[
\ddot y_n+\Omega^2y_n=0.
\]
$\diamond$ Case 2: To describe the remaining terms, introduce the index sets
\[
\mathcal I_n^{\rm low}
:=\{0,(n,-)\}
\cup\bigl\{(k,\sigma):1\leq k<n,\ \sigma\in\{+,-\}\bigr\},
\]
\[
\mathcal I_n^{\rm high}
:=\bigl\{(k,\sigma):k>n,\ \sigma\in\{+,-\}\bigr\}.
\]
The self-index \((n,+)\) is excluded from both sets. Its alignment
contribution vanishes because
\[
v_{n,+}-z_n=0,
\]
while its potential contribution vanishes because
\[
y_n-x_{n,+}=0
\qquad\text{and}\qquad
\nabla U(0)=0.
\]
We define the alignment contribution of the center, the symmetric
partner, and all lower modes by
\begin{equation*}
	A_n^{\rm low}(t)
	:=
	\kappa
	\sum_{j\in\mathcal I_n^{\rm low}}
	m_j\phi_\beta(|y_n-x_j|)(v_j-z_n),
\end{equation*}
and the alignment contribution of all higher modes by
\begin{equation*}
	A_n^{\rm high}(t)
	:=
	\kappa
	\sum_{j\in\mathcal I_n^{\rm high}}
	m_j\phi_\beta(|y_n-x_j|)(v_j-z_n).
\end{equation*}
$\diamond$ Case 3:  For the perturbation \(U\), define initially
\[
F_n^U(t)
:=
-\sum_{j\in\mathcal I}
m_j\nabla U(y_n-x_j).
\]
By Step B, the distances from \(y_n\) to the center, to its symmetric
partner, and to every lower mode are all larger than \(2\). Since
\[
\operatorname{supp}U
\subset
\{\xi\in\mathbb R^2:1<|\xi|<2\},
\]
we have
\[
\nabla U(y_n-x_j)=0,
\qquad
j\in\mathcal I_n^{\rm low}.
\]
The self-interaction also vanishes. Hence only particles belonging to
higher modes can possibly contribute, and therefore
\begin{equation*}
	F_n^U(t)
	=
	-\sum_{j\in\mathcal I_n^{\rm high}}
	m_j\nabla U(y_n-x_j)
	=
	-\sum_{k>n}\sum_{\sigma\in\{+,-\}}
	m_{k,\sigma}
	\nabla U(y_n-x_{k,\sigma}).
\end{equation*}
Combining the preceding identities, the \(n\)-th mode satisfies the
perturbed harmonic oscillator system
\begin{equation}\label{E.4}
	\dot y_n=z_n,
	\qquad
	\dot z_n=-\Omega^2y_n+A_n,
\end{equation}
where
\begin{equation*}\label{E.5}
	A_n
	:=
	A_n^{\rm low}
	+
	A_n^{\rm high}
	+
	F_n^U.
\end{equation*}
Thus \(A_n\) contains all effects that are not part of the exact
harmonic restoring force.
	
	\medskip
	\noindent
$\bullet$	\textbf{Step D} (Estimate of the non-harmonic acceleration):~We split the estimates into three cases. 

\noindent $\diamond$ Case 1:  For the center, the symmetric partner, and the lower modes, the
	preceding separation estimates in Step B imply
	\[
	|y_n-x_j|\geq c_0R_n
	\]
	for some \(c_0>0\) independent of \(n\). Hence, we use 
	\[
	\phi_\beta(r)\leq C_\beta r^{-\beta},
	\qquad r\geq1
	\]
 to obtain
	\begin{align*}
		|A_n^{\mathrm{low}}|
		&\leq
		C R_n^{-\beta}
		\sum_{j\in\mathcal I_n^{\rm low}}m_j|v_j-z_n|
		\leq
		C R_n^{-\beta}
		\left(
		|z_n|+\sum_jm_j|v_j|
		\right).
		\label{E.6}
	\end{align*}
	Since \(\sum_jm_j=1\), the Cauchy--Schwarz inequality and the energy inequality give
	\[
	\sum_jm_j|v_j|
	\leq
	\left(\sum_jm_j|v_j|^2\right)^{1/2}
	\leq\sqrt{\overline E}.
	\]
	Together with \(|z_n|\leq2\Omega R_n\), this yields
	\begin{equation}\label{E.7}
		|A_n^{\mathrm{low}}|
		\leq CR_n^{1-\beta}.
	\end{equation}
	\noindent $\diamond$ Case 2: 	For the higher modes we do not use geometric separation. Note that
	\begin{align*}
		|A_n^{\mathrm{high}}|
		\leq
		\kappa\bar{c}_{\phi}
		\sum_{k>n,\sigma}
		m_{k,\sigma}|v_{k,\sigma}-z_n|
		\leq
		C M_n^+|z_n|
		+
		C\sum_{k>n,\sigma}
		m_{k,\sigma}|v_{k,\sigma}|.
		\label{E.8}
	\end{align*}
	By the Cauchy--Schwarz inequality,
	\[
	\sum_{k>n,\sigma}
	m_{k,\sigma}|v_{k,\sigma}|
	\leq
	\sqrt{M_n^+}
	\left(
	\sum_{k>n,\sigma}
	m_{k,\sigma}|v_{k,\sigma}|^2
	\right)^{1/2}
	\leq
	\sqrt{\overline E\,M_n^+}.
	\]
We use 
	\[
	M_n^+\leq R_n^{-2\beta-4}
	\]
	in $\eqref{E-21}_3$ and \(|z_n|\leq2\Omega R_n\) to obtain
\begin{equation} \label{E.9}
		|A_n^{\mathrm{high}}|
		\leq
		C R_n^{-2\beta-3}
		+
		C R_n^{-\beta-2}
		\leq
		C R_n^{1-\beta}.
\end{equation}
\noindent $\diamond$ Case 3: 	Finally, consider the force generated by the compactly supported
perturbation \(U\). Since
\[
\operatorname{supp}U
\subset
\{z\in\mathbb R^2:1<|z|<2\},
\]
we have \(\nabla U(z)=0\) whenever \(|z|\notin(1,2)\).
Therefore only particles belonging to higher
modes can possibly contribute, and
\[
F_n^U=
-\sum_{k>n}\sum_{\sigma\in\{+,-\}}
m_{k,\sigma}
\nabla U(y_n-x_{k,\sigma}).
\]
Consequently, we have
\begin{align} \label{E.10}
	|F_n^U|
	\leq
	\|\nabla U\|_{L^\infty}
	\sum_{k>n}\sum_{\sigma\in\{+,-\}}m_{k,\sigma}
=
	\|\nabla U\|_{L^\infty}M_n^+ \leq
	C R_n^{-2\beta-4}.
\end{align}
Since \(R_n\geq1\) and \(\beta>0\), this stronger estimate also implies
\[
|F_n^U|\leq C R_n^{1-\beta}.
\]
	Combining
	\eqref{E.7},
	\eqref{E.9}, and
	\eqref{E.10}, we conclude that
	\begin{equation}\label{E.11}
		|A_n(t)|
		\leq
		C R_n^{1-\beta},
		\qquad 0\leq t<T_n.
	\end{equation}
	
	\medskip
	\noindent
$\bullet$	\textbf{Step E} (Variation of energy and angular momentum):~From \eqref{E.4},
\[
		\frac{\dd}{\dd t}E_n(t)
		=
		z_n\cdot\dot z_n
		+\Omega^2y_n\cdot\dot y_n
		=
		z_n\cdot(-\Omega^2y_n+A_n)
		+\Omega^2y_n\cdot z_n\ =z_n\cdot A_n.
\]
	Similarly, one has 
	\[
		\frac{\dd}{\dd t}L_n(t)
		=
		\dot y_n\wedge z_n+y_n\wedge\dot z_n 
		=
		z_n\wedge z_n
		+y_n\wedge(-\Omega^2y_n+A_n) 
		=y_n\wedge A_n.
	\]
	Using \eqref{E.2} and
	\eqref{E.11}, we obtain
	\begin{equation}\label{E.12}
		|E_n'(t)|+\Omega|L_n'(t)|
		\leq
		C R_n^{2-\beta},
		\qquad 0\leq t<T_n.
	\end{equation}
We also set
	\[
	\tau_n:=\min\{T_n,~c_*R_n^\beta\}.
	\]
	For every \(0\leq t<\tau_n\), we integrate
	\eqref{E.12} to find 
	\begin{equation}\label{E.13}
		|E_n(t)-E_n(0)|
		+
		\Omega|L_n(t)-L_n(0)|
		\leq
		Cc_*R_n^2.
	\end{equation}
	We choose \(c_*>0\), independently of \(n\), so small that
	\begin{equation*}\label{E.14}
		Cc_*\leq\frac14\Omega^2.
	\end{equation*}
	Since
	\[
	E_n(0)=\Omega^2R_n^2,
	\qquad
	\Omega L_n(0)=\Omega^2R_n^2,
	\]
	it follows from \eqref{E.13} that
	\[
	\frac34\Omega^2R_n^2
	\leq E_n(t)
	\leq\frac54\Omega^2R_n^2, \quad 
	|L_n(t)|
	\geq\frac34\Omega R_n^2,
	\qquad 0\leq t<\tau_n.
	\]
	These inequalities strictly improve the bootstrap assumptions \eqref{E.1}. Hence \(T_n\) cannot be smaller than
	\(c_*R_n^\beta\). Therefore
	$
	T_n\geq c_*R_n^\beta,$
	and by continuity \eqref{E-26} follows.

\end{document}